\numberwithin{equation}{section}
\newtheorem{theorem}{Theorem}[section]
\newtheorem{lemma}[theorem]{Lemma}
\newtheorem{proposition}[theorem]{Proposition}
\theoremstyle{definition}
\theoremstyle{remark}
\newtheorem{remark}[theorem]{Remark}
\newcommand{\dis}{\displaystyle}
\renewcommand{\dim}{3}
\newcommand{\rmi}{{\rm i}}
\newcommand{\rmre}{{\rm Re}}
\newcommand{\R}{\mathbb{R}}
\newcommand{\threed}{{\mathbb R}^{\dim}}
\newcommand{\semiG}{\mathbb{A}}
\newcommand{\highG}{\Theta}
\newcommand{\highB}{\Lambda}
\newcommand{\sourceG}{g}
\newcommand{\Jcont}{j}
\newcommand{\wN}{\ell}
\newcommand{\wE}{j}
\newcommand{\wB}{b}
\newcommand{\solU}{f}
\newcommand{\CHd}{\dot{\mathcal{H}}}
\newcommand{\FM}{\mu}
\newcommand{\vel}{v}
\newcommand{\nsm}{|}
\newcommand{\FP}{\mathbf{P}}
\newcommand{\FL}{L}
\newcommand{\FI}{\mathbf{I}}
\newcommand{\CE}{\mathcal{E}}
\newcommand{\CF}{\mathcal{F}}
\newcommand{\CH}{\mathcal{H}}
\newcommand{\CL}{\mathcal{L}}
\newcommand{\CN}{\mathcal{N}}
\newcommand{\CZ}{\mathcal{Z}}
\newcommand{\na}{\nabla}
\newcommand{\al}{\alpha}
\newcommand{\be}{\beta}
\newcommand{\la}{\lambda}
\newcommand{\de}{\delta}
\newcommand{\si}{\sigma}
\newcommand{\pa}{\partial}
\newcommand{\Ga}{\Gamma}
\newcommand{\eqdef}{\overset{\mbox{\tiny{def}}}{=}}
\newcommand{\ind}{ {\mathbf 1}}
\newcommand{\testF}{C_{c}^{\infty}(\R^3_x \times \R^3_\vel)}
\newcommand{\simnew}{\approx}
\begin{document}                        %% Standard LaTeX command

\title{The Vlasov-Poisson-Landau System in $\R^3_x$}

\author[R. M. Strain]{Robert M. Strain}
%\address{(RMS) 
%University of Pennsylvania, Department of Mathematics, David RittenhouseLab, 209 South 33rd Street, Philadelphia, PA 19104-6395, USA} 
\thanks{R.M.S. was partially supported by the NSF grant DMS-0901463, and an Alfred P. Sloan Foundation Research Fellowship.}

\author[K. Zhu]{Keya Zhu}
\address{University of Pennsylvania, Department of Mathematics, David Rittenhouse Lab, 209 South 33rd Street, Philadelphia, PA 19104-6395, USA} 
\email{strain at math.upenn.edu  \& zhuk at math.upenn.edu}
\urladdr{http://www.math.upenn.edu/~strain/  \&   http://www.math.upenn.edu/~zhuk/}

\begin{abstract}
For the Landau-Poisson system with Coulomb interaction in $\R^3_x$, we prove the global existence, uniqueness, and large time convergence rates to the Maxwellian equilibrium for solutions which start out sufficiently close.
\end{abstract}

%\date{\today} 

\setcounter{tocdepth}{1}

\maketitle
\tableofcontents

\thispagestyle{empty}

%{\small {\it Keywords:} { ***}

%\medskip

%AMS Subject Classification (2000): ***}

%
%\setcounter{tocdepth}{1}
%\tableofcontents
%
\section{Introduction}\label{intro}
In this paper we consider the important Landau-Poisson system:
\begin{eqnarray}
&&\partial _{t}F_{+}+v\cdot \nabla _{x}F_{+}+\frac{e_{+}}{m_{+}}E\cdot
\nabla _{v}F_{+}=Q(F_{+},F_{+})+Q(F_{-},F_{+}),  \notag  \label{boltzmann} \\
&&\partial _{t}F_{-}+v\cdot \nabla _{x}F_{-}-\frac{e_{-}}{m_{-}}E\cdot
\nabla _{v}F_{-}=Q(F_{+},F_{-})+Q(F_{-},F_{-}),  \label{vlasov} \\
&&F_{\pm }(0,x,v)=F_{0,\pm }(x,v).  \notag  \label{initial}
\end{eqnarray}%
Here $F_{\pm }(t,x,v)\geq 0$ are the spatially periodic number density
functions for the ions (+) and electrons (-) respectively, at time $t\geq 0$%
, position $x=(x_{1},x_{2},x_{3})\in \mathbb{R}^{3}$, velocity $v=(v_{1},v_{2},v_{3})\in \mathbb{R}^{3}$, and $e_{\pm },$ $m_{\pm }$ the magnitude of their charges and masses.  The collision between charged particles is given by 
\begin{multline}  \label{q}
Q(G_{\pm},G_{\mp})(v)
=
\frac{c_{\pm \mp}}{m_{\pm}}\nabla _{v}\cdot \int_{\mathbb{R}^{3}}\Phi (v-v_*)\frac{G_{\pm}(v_*)\nabla _{v}G_{\mp}(v)%
}{m_{\mp}} dv_*
\\
-\frac{c_{\pm \mp}}{m_{\pm}}
\nabla _{v}\cdot \int_{\mathbb{R}^{3}}\Phi (v-v_*)\frac{G_{\mp}(v)\nabla _{v_*}G_{\pm}(v_*)}{m_{\pm}} dv_*
\end{multline}
where $\Phi $ is the fundamental Landau (or Fokker-Planck) kernel \cite{hinton}: 
\begin{equation}
\Phi (v)=\frac{1}{|v|}\left( I-\frac{v\otimes v}{|v|^{2}}\right).    \label{landau}
\end{equation}%
Here $c_{\pm \mp}=2\pi e_{\pm}^{2}e_{\mp}^{2}\ln \Lambda ,$ $\ln \Lambda =\ln (\frac{%
\lambda _{D}}{b_{0}}),$ $\lambda _{D}=(\frac{T_0}{4\pi n_{e}e^{2}})^{1/2}$
is of course the Debye shielding distance and $b_{0}=\frac{e^{2}}{3T_0}$ is the
typical `distance of closest approach' for a thermal particle \cite{hinton}.   Note that the $\pm$ and the $\mp$ signify the possibility of either the $+$ or the $-$ in the sign configuration.

The self-consistent electrostatic field $E(t,x)=-\nabla \phi $, and the electric
potential $\phi $ will then satisfy:
\begin{equation}   \label{poisson}
\nabla \cdot E
=
-\Delta \phi =4\pi \rho 
=
4\pi \int_{\mathbb{R}^{3}}\{e_{+}F_{+}-e_{-}F_{-}\}dv. 
\end{equation}
We consider the global Maxwellian equilibrium: 
\begin{equation*}
\mu _{+}(v)=\frac{n_{0}}{e_{+}}(\frac{m_{+}}{2\pi \kappa T_{0}}%
)^{3/2}e^{-m_{+}|v|^{2}/2\kappa T_{0}},\;\;\;\mu _{-}(v)=\frac{n_{0}}{e_{-}}(%
\frac{m_{-}}{2\pi \kappa T_{0}})^{3/2}e^{-m_{-}|v|^{2}/2\kappa T_{0}}.
\end{equation*}
Our main goal will be to prove global in time existence of solutions to the Cauchy problem \eqref{vlasov}, and the convergence to these equilibria in large time for perturbative initial conditions.

For notational simplicity and without loss of generality, we normalize all constants in the
Vlasov-Poisson-Landau system to be one. Accordingly, we normalize the
Maxwellian as 
\begin{equation}
\mu (v)\equiv \mu _{+}(v)=\mu _{-}(v)=(2\pi)^{-3/2}e^{-|v|^{2}/2}.  \label{mu}
\end{equation}%
We define the standard perturbation $f_{\pm }(t,x,v)$ to $\mu $ as 
\begin{equation}
F_{\pm }=\mu +\sqrt{\mu }f_{\pm }.  \label{f}
\end{equation}%
Consider the vector $f(t,x,v)= [{f_{+}(t,x,v)}, {f_{-}(t,x,v)}]$, %\binom{f_{+}(t,x,v)}{f_{-}(t,x,v)},$ 
the Vlasov-Poisson-Landau
system for the perturbation now takes the form (two equations):
\begin{eqnarray}
\{\partial _{t}+v\cdot \nabla _{x}\pm E\cdot \nabla _{v}\}f_{\pm }
\mp
2\{E\cdot v\}\sqrt{\mu }
+
L_{\pm }f &=&\pm \{E\cdot v\}f_{\pm }+\Gamma _{\pm
}(f,f),  \label{vl} \\
&&  \notag \\
-\Delta \phi &=&\int \sqrt{\mu}[f_{+}-f_{-}]dv.  \label{p}
\end{eqnarray}%
For any $g=[{g_{+}}, {g_{-}}],$ the
linearized collision operator $Lg$ in (\ref{vl}) is given %by the vector 
\begin{equation}
Lg\equiv \binom{L_{+}g\;}{L_{-}g}\equiv -\frac{1}{\sqrt{\mu }}\binom{2Q(\mu ,%
\sqrt{\mu }g_{+})+Q(\sqrt{\mu }\{g_{+}+g_{-}\},\mu )\;}{2Q(\mu ,\sqrt{\mu }%
g_{-})+Q(\sqrt{\mu }\{g_{+}+g_{-}\},\mu )}.  \label{L}
\end{equation}%
For $g=[g_{+},g_{-}]$ and $h=[h_{+},h_{-}],$ the nonlinear collision
operator $\Gamma (g,h)$ is %given by the vector 
\begin{equation}
\Gamma (g,h)\equiv \binom{\Gamma _{+}(g,h)}{\Gamma _{-}(g,h)}\equiv \frac{1}{%
\sqrt{\mu }}\binom{Q(\sqrt{\mu }g_{+},\sqrt{\mu }h_{+})+Q(\sqrt{\mu }g_{-},%
\sqrt{\mu }h_{+})}{Q(\sqrt{\mu }g_{+},\sqrt{\mu }h_{-})+Q(\sqrt{\mu }g_{-},%
\sqrt{\mu }h_{-})}.  \label{gamma}
\end{equation}
This is the model that we will study in the rest of this paper.

\subsection{Previous Results and Our Approach}
We recall that there have been many constructions of global in time perturbative solutions to kinetic equations in the last decade, see for instance  \cite{G1,G2,G4,G5,GS,gsNonCutJAMS,GrS2,SG1,SG2,MR2100057}.  

In particular we now discuss some specific results in the perturbative context.  We point out the global existence proof for the full relativistic Landau-Maxwell system from \cite{MR2100057} in 2004, and very recently the global result for the relativistic Vlasov-Maxwell-Boltzmann system in \cite{GS}.  In 2011, Guo constructed global smooth solution near Maxwellian for the (non-relativistic) Vlasov-Poisson-Landau system, as in \eqref{vl}, in \cite{G0} in the torus.  Notice that this Vlasov-Poisson-Landau system is physically motivated from the relativistic case because it arises as the formal Newtonian limit of the relativistic Landau-Maxwell system (\cite{MR2100057})

For other work related to the Landau equation from
different approaches, see \cite{AB,AV,HY,L,V,Z1,Z2}
and the references therein. 

Unfortunately, despite these advances, the stability of the Maxwellian equilibrium for the non-relativistic Vlasov-Poisson-Landau system in the whole space, as in \eqref{vl}, has remained out of reach.  Since the Poincar{\'e} inequality fails in $\R^3_x$, the strategy in \cite{G0} can not be directly applied.  But we can develop new ideas, building upon the methods in \cite{DS}, \cite{sNonCutOp} and \cite{S2} to overcome the main new difficulties. 

We first prove optimal large time decay rates for the linearized system by paying the cost of a gain in a velocity weight.
Then we build upon strategies from \cite{sNonCutOp}, \cite{G0}, and \cite{S2} to develop the following energy inequality for the solution of \eqref{p}:
$$\frac{d}{dt}\mathcal{\tilde{E}}(f)+\mathcal{\tilde{D}}\lesssim (||\pa_t\phi||_\infty+||\na_x\phi||_\infty)\mathcal{\tilde{E}}(f),
$$
where $\mathcal{\tilde{E}}(f)$ denotes some energy functional and  $\mathcal{\tilde{D}}(f)$ denotes some dissipation functional.  To use this energy inequality we notice, as usual, that $\mathcal{\tilde{D}}$ controls $\mathcal{\tilde{E}}$ when the weight $|v|$ is small compared to time variable $t^{p'}$ except for the term $\FP f$. When the weight $|v|$ is big compared to time variable $t^{p'}$, we  have control of the energy growth by putting more weights on the initial data. The gain in the  weight  function in our linear theory will not pose a problem for $\FP f$ in the Duhamel formula, because $\FP f$ has exponential decay in $v$. So by Gronwall's inequality, we  have nice control, \eqref{polydecay1}, as long as $\int (||\pa_t\phi||_\infty+||\na_x\phi||_\infty)(s)ds<\infty$. 

Unfortunately, the decay of $\mathcal{\tilde{E}}$ itself does not guarantee that  $$\int (||\pa_t\phi||_\infty+||\na_x\phi||_\infty)(s)ds<\infty.$$  To overcome this difficulty we develop another energy  inequality for the purpose of controlling  $\int (||\pa_t\phi||_\infty+||\na_x\phi||_\infty)(s)ds$ as follows:
$$\frac{d}{dt}\mathcal{\tilde{E}}^{(2)}(f)+\mathcal{\tilde{D}}^{(2)}\lesssim (||\pa_t\phi||_\infty+||\na_x\phi||_\infty)\mathcal{\tilde{E}}^{(2)}(f)+||\na_x \FP f||_2^2,$$
where $\mathcal{\tilde{E}}^{(2)}(f)$ no longer contains $\FP f$.
Then this new energy inequality gives us enough decay of $\mathcal{\tilde{E}}^{(2)}(f)$, which guarantees  $\int (||\pa_t\phi||_\infty+||\na_x\phi||_\infty)(s)ds<\infty$. 

We also mention the recent works of \cite{DYZ-VPBsoft,DYZ-VPL} for another approach to this problem.

In Section \ref{sec:not} we will introduce the notations used in this paper.  Then in Section \ref{sec:thm} we will state our main theorem.

\subsection{Notation} \label{sec:not}
For notational simplicity, we use $||\cdot ||_{p}$ to denote $L^{p}$ norms
in $\mathbb{R}^{3}_x\times \mathbb{R}^{3}_v$ or in $\R^3_x$, and $||\cdot ||_{p,w}$ for $L^{p}$ norms with weight $w(v)$ in  $\mathbb{R}^{3}_x\times \mathbb{R}^{3}_v$ or in $\R^3_x$.  We use $|\cdot |_{p}$ to denote $L^{p}$ norms
in $\mathbb{R}^{3}_v$, and $|\cdot |_{p,w}$ for $L^{p}$ norms with weight $w(v)$ in $\mathbb{R}^{3}_v$.
Let the multi-indices $\alpha $ and $\beta $ be $\alpha =[\alpha _{1},\alpha
_{2},\alpha _{3}],\;\;\beta =[\beta _{1},\beta _{2},\beta _{3}],$ and we
define $\partial _{\beta }^{\alpha }\equiv \partial _{x_{1}}^{\alpha
_{1}}\partial _{x_{2}}^{\alpha _{2}}\partial _{x_{3}}^{\alpha _{3}}\partial
_{v_{1}}^{\beta _{1}}\partial _{v_{2}}^{\beta _{2}}\partial _{v_{3}}^{\beta
_{3}}.$  For such a multi-index $\theta = [\theta_1, \theta_2, \theta_3]$, If each component of $\theta$ is not greater than that of $\bar{\theta}$'s, we denote by $\theta \leq \bar{\theta};$ $\theta <\bar{\theta}$
means $\theta \leq \bar{\theta},$ and $|\theta |<|\bar{\theta}|$ where $|\theta | = \theta_1 +  \theta_2 +  \theta_3$. 
Let $w(v)\geq 1$ be a weight function and $||\cdot ||_{2,w}$
to denote the weighted $L^{2}$ norm. We define 
\begin{equation}
||f||_{\sigma ,w}\equiv %||f||_{H_{w}^{\sigma }}\equiv 
||\langle v\rangle ^{-%
\frac{1}{2}}f||_{2,w}+||\langle v\rangle ^{-\frac{3}{2}}\nabla _{v}f\cdot 
\frac{v}{|v|}||_{2,w}+||\langle v\rangle ^{-\frac{1}{2}}\nabla _{v}f\times 
\frac{v}{|v|}||_{2,w},  \label{sigma}
\end{equation}
with\ $\langle v\rangle =\sqrt{1+|v|^{2}}.$ Also $||f||_{\sigma}=||f||_{\sigma ,1}$. Similarly 
\begin{equation}
|f|_{\sigma ,w} %\equiv |f|_{H_{w}^{\sigma }}
\equiv |\langle v\rangle ^{-%
\frac{1}{2}}f|_{2,w}+|\langle v\rangle ^{-\frac{3}{2}}\nabla _{v}f\cdot 
\frac{v}{|v|}|_{2,w}+|\langle v\rangle ^{-\frac{1}{2}}\nabla _{v}f\times 
\frac{v}{|v|}|_{2,w},  \label{sigma'}
\end{equation}
and $|f|_{\sigma}=|f|_{\sigma ,1}$.

In this paper, the norm of a vector means the sum of the norms of all components of the vector. Also the norm of $\na_x^k f$ means the sum of the norms of functions $\partial^\al f$ where $|\al|=k$.

We define
the velocity weight 
\begin{equation}  \label{weight}
w(\alpha ,\beta )(v)\equiv 
\langle v\rangle^{2(l-|\alpha |-|\beta |)},\text{ \ \ \ }l\geq |\alpha |+|\beta |.  
\end{equation}%
We also define 
\begin{equation}
w_k\equiv \langle v\rangle^k.\label{weight'}
\end{equation}
We use $||\cdot||_{\si,k}$ and $|\cdot|_{\si,k}$ to denote $||\cdot||_{\si, w_k}$ and $|\cdot|_{\si, w_k}$ respectively. We use $||\cdot||_{p,k}$ and $|\cdot|_{p,k}$ to denote $||\cdot||_{p, w_k}$ and $|\cdot|_{p, w_k}$ respectively.

Recall (\ref{sigma}). We define the following continuous functionals, which are called the instant energy and the dissipation rate, by
\begin{eqnarray}
\mathcal{\tilde{E}}_{m;l}(f)(t) \label{penergy2}
&\equiv &\sum_{|\alpha |+|\beta |\leq m}\sum_{\pm
}||\partial _{\beta }^{\alpha }f_{\pm }(t)||_{2,w(\alpha ,\beta )}^{2}+||E(t)||_2^2,\text{
\ }   \notag\\
\mathcal{\tilde{D}}_{m;l}(f)(t) \label{pdiss2}
&\equiv &\sum_{|\alpha |+|\beta |\leq m}\sum_{\pm
}||\partial _{\beta }^{\alpha }\{\FI-\FP\}f_{\pm }(t)||_{\sigma ,w(\alpha ,\beta )}^{2}\notag\\
&  &+\sum_{1\leq |\alpha |\leq m}\sum_{\pm
}||\partial^{\alpha}\FP f_{\pm }(t)||_{2}^{2}+||E(t)||_2^2,%
\text{ \ } \notag \\
\mathcal{\tilde{\tilde{D}}}_{m;l}(f)(t) \label{pdiss3}
&\equiv &\sum_{|\alpha |\leq m}\sum_{\pm
}||\partial^{\alpha }\{\FI-\FP\}f_{\pm }(t)||_{\sigma ,w(\alpha ,0 )}^{2}\notag\\
&  &+\sum_{1\leq |\alpha |\leq m}\sum_{\pm
}||\partial^{\alpha}\FP f_{\pm }(t)||_{2}^{2}+||E(t)||_2^2,%
\text{ \ } \notag 
\end{eqnarray}

For an integrable function $g: \threed\to\R$, its Fourier transform is defined by
\begin{equation*}
  \widehat{g}(k)= \CF g(k)\eqdef \int_{\threed} e^{-2\pi \rmi x\cdot k} g(x)dx, \quad
  x\cdot
   k\eqdef\sum_{j=1}^\dim x_jk_j,
   \quad
   k\in\threed,
\end{equation*}
where $\rmi =\sqrt{-1}\in \mathbb{C}$. For two
complex vectors $a,b\in\mathbb{C}^\dim$, $(a\mid b)=a\cdot
\overline{b}$ denotes the dot product over the complex field, where
$\overline{b}$ is the ordinary complex conjugate of $b$.

We use $\langle\cdot,\cdot\rangle$ to denote the inner product over the Hilbert space $L^2_\vel$, i.e.
\begin{equation*}
    \langle g,h\rangle=\int_{\threed} g(\vel)\cdot \overline{h(\vel)} ~ d\vel,\ \ g=g(\vel), ~h=h(\vel)\in
    L^2_\vel.
\end{equation*}
Analogously $\left(\cdot,\cdot\right)$ denotes the inner product over $L^2(\threed_x\times\threed_\vel)$.
For $r\geq 1$, we define the mixed Lebesgue
space $Z_r=L^2_\vel(L^r_x)=L^2(\threed_\vel;L^r(\threed_x))$ with the norm
\begin{equation*}
\|g\|_{Z_r}\eqdef \left(\int_{\threed}\left(\int_{\threed}
    |g(x,\vel)|^rdx\right)^{2/r}d\vel\right)^{1/2}.
\end{equation*}
We introduce the norms
$\|\cdot\|_{\CHd^m}$ and $\|\cdot\|_{\CH^m}$ with $m\geq 0$ and $r\geq
1$ given by
\begin{equation}\label{brief.norm0}
    \|\solU\|_{\CHd^m}^2\eqdef \|\solU\|_{L^2_\vel(\dot{H}^m_x)}^2,
    \quad
    \|\solU\|_{\CH^m}^2\eqdef \|\solU\|_{L^2_\vel(H^m_x)}^2,
    \quad
    \|\solU\|_{\CL^2}^2 \eqdef\|\solU\|_{\CH^0}^2.
\end{equation}
Here $\dot{H}^m_x=\dot{H}^m(\threed_x)$ is the standard homogeneous $L^2_x$ based Sobolev space:
$$
\| g \|_{\dot{H}^m(\threed_x)}^2 \eqdef \int_{\threed} dk ~ |k|^{2m} | \hat{g}(k)|^2.
$$
In the next section we will discuss our main results.

Throughout this paper  we let $C$  denote
some positive (generally large) inessential constant and $\la$ denotes some positive (generally small) inessential constant, where both $C$ and
$\la$ may change values from line to line. 
Furthermore $A \lesssim B$ means $A \le C B$, and 
$A \gtrsim B$ means $B \lesssim A$.
In addition,
$A\approx B$ means $A \lesssim B$ and $B \lesssim A$.

\subsection{Main Theorem}  \label{sec:thm}
Here is our main theorem of this paper:

\begin{theorem}
\label{maintheorem}  Consider the initial data $f_{0}$ with $F_{0,\pm }(x,v)=\mu +\sqrt{\mu }f_{0,\pm
}(x,v)\geq 0.$  
Fix  $\ell > 5$, then there exists a small $M=M(\ell)>0$ such that if 
\begin{equation*}
\mathcal{\tilde{E}}_{3;\ell}({f}_{0})+||f_0||_{Z_1}^2\leq M,
\end{equation*}%
then there exists a unique global solution $f(t,x,v)$ to the
Vlasov-Poisson-Laudau system (\ref{vl}) and (\ref{p}) with $F_{\pm
}(t,x,v)=\mu +\sqrt{\mu }f_{\pm }(t,x,v)\geq 0.$

(1) Furthermore, we have 
\begin{eqnarray}
\mathcal{\tilde{E}}_{3;3}(f(t))
&\leq &
C_{\ell}(1+t)^{-\frac{3}{2}}\left(\mathcal{\tilde{E}}_{3;3+\frac{3}{5}\left(\ell - \frac{5}{2}\right)}(f_{0})+||f_0||_{Z_1}^2\right),
\label{polydecay1} \end{eqnarray}
and moreover
\begin{equation}
||\partial _{t}\phi (t)||_{\infty }+||\nabla _{x}\phi (t)||_{\infty}
 \leq 
 C_{\ell}(1+t)^{-\frac{5}{4}+\varepsilon}\left(\sqrt{\mathcal{\tilde{E}}_{3;\ell}(f_{0})}+||f_0||_{Z_1}\right),
\label{polydecay2} 
\end{equation}
where $\varepsilon \equiv \left( \ell - \frac{5}{2}\right)^{-1} \frac{5}{8} \in ( 0, \frac{1}{4})$.

(2) In addition, if $\mathcal{\tilde{E}}_{m;l}(f_{0})<\infty $ for any $l\geq \ell,$ $%
l\geq m$ $\geq 3,$  there exists an increasing continuous function 
$P_{m,l}(\cdot )$\ with $P_{m,l}(0 )=0$ such that the unique solution
satisfies 
\begin{equation}
\sup_{0\leq t\leq \infty }\mathcal{\tilde{E}}_{m;l}(f(t))+\int_{0}^{\infty }%
\mathcal{D}_{m;l}(f(s))ds\leq P_{m,l}(\mathcal{\tilde{E}}_{m;l}(f_{0}) ).
\label{highnorm}
\end{equation}
\end{theorem}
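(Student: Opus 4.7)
The plan is to follow a fairly standard continuation--bootstrap scheme, adapted to the loss of the Poincaré inequality on $\R^3_x$. First I would establish local existence of the perturbed system (\ref{vl})--(\ref{p}) with positivity of $F_\pm$ by a standard iteration, and reduce global existence to an a priori bound on $\mathcal{\tilde{E}}_{3;\ell}(f(t))$. The core a priori estimate is the weighted energy inequality
$$\frac{d}{dt}\mathcal{\tilde{E}}_{m;l}(f)+\lambda \mathcal{\tilde{D}}_{m;l}(f)\lesssim \bigl(\|\pa_t\phi\|_\infty+\|\na_x\phi\|_\infty\bigr)\mathcal{\tilde{E}}_{m;l}(f),$$
obtained by applying $\partial_\beta^\alpha$ to (\ref{vl}), pairing with $\partial_\beta^\alpha f_\pm \cdot w(\alpha,\beta)$, using coercivity of $L$ in the $|\cdot|_{\sigma,w}$-norm from \cite{G0,sNonCutOp}, and absorbing the commutator terms $\pm E\cdot\na_v f$ and $\pm(E\cdot v)f_\pm$ via the weight $w(\alpha,\beta)=\langle v\rangle^{2(l-|\alpha|-|\beta|)}$ with $l\ge|\alpha|+|\beta|$.

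The second ingredient is the linearized decay. Fourier-transforming in $x$ and decomposing $\hat f=\FP\hat f+\{\FI-\FP\}\hat f$, I would run a spectral/multiplier analysis following \cite{DS,S2} to get, for the solution $e^{tA}f_0$ of the linearized system,
$$\|e^{tA}f_0\|_{2}\lesssim (1+t)^{-3/4}\bigl(\|f_0\|_{Z_1}+\|f_0\|_{2,w_q}\bigr),$$
for some $q>0$: the polynomial rate is paid for by a gain in a velocity weight, exactly as flagged in the introduction. Plugging this linear bound into Duhamel's formula for the full nonlinear equation (\ref{vl}), the nonlinear source $\pm E\cdot v\,f_\pm+\Gamma_\pm(f,f)\mp 2(E\cdot v)\sqrt\mu$ is handled by splitting the $v$-integration into $\{|v|\lesssim t^{p'}\}$, on which $\mathcal{\tilde{D}}$ controls the non-$\FP f$ part of $\mathcal{\tilde{E}}$, and $\{|v|>t^{p'}\}$, on which one pays with the extra weight $\ell>5$ on the initial data; the $\FP f$ contribution is harmless because $\FP f$ is exponentially decaying in $v$. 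Gronwall then gives \eqref{polydecay1}, conditional on $\int_0^\infty(\|\pa_t\phi\|_\infty+\|\na_x\phi\|_\infty)(s)\,ds<\infty$.

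The hardest step, and the one closing the loop, is obtaining that integrability of $\|\pa_t\phi\|_\infty+\|\na_x\phi\|_\infty$. Since $\mathcal{\tilde{E}}_{3;3}(f)\sim (1+t)^{-3/2}$ alone only yields $\|\na_x\phi\|_\infty\lesssim (1+t)^{-3/4}$ via Sobolev, which is not integrable, I would introduce the second-tier functional $\mathcal{\tilde{E}}^{(2)}$ built out of $\{\FI-\FP\}f$ plus derivatives of $\FP f$ (so that $\mathcal{\tilde{D}}^{(2)}$ directly controls $\mathcal{\tilde{E}}^{(2)}$) and prove
$$\frac{d}{dt}\mathcal{\tilde{E}}^{(2)}+\lambda\mathcal{\tilde{D}}^{(2)}\lesssim \bigl(\|\pa_t\phi\|_\infty+\|\na_x\phi\|_\infty\bigr)\mathcal{\tilde{E}}^{(2)}+\|\na_x\FP f\|_2^2.$$
The forcing $\|\na_x\FP f\|_2^2$ is controlled by \eqref{polydecay1} and the $Z_1$-decay of the linearized semigroup, leading to a faster decay of $\mathcal{\tilde{E}}^{(2)}$ itself; Sobolev embedding on $\na_x\phi=-\na_x(-\De)^{-1}\int\sqrt\mu(f_+-f_-)dv$ and $\pa_t\phi$ (substituting (\ref{p}) differentiated in $t$ and then using the equation for $\pa_t f$) then yields \eqref{polydecay2} with $\varepsilon=\frac{5}{8(\ell-5/2)}$, which is integrable in $s$.

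Finally, the higher-order bound \eqref{highnorm} follows by induction on $m$ and $l$: given the control at level $(3,\ell)$, the same energy identity at level $(m,l)$ is now linear in $\mathcal{\tilde{E}}_{m;l}$ with coefficient $\|\pa_t\phi\|_\infty+\|\na_x\phi\|_\infty$ already known to be integrable, so Gronwall closes the estimate and the time-integral of $\mathcal{\tilde{D}}_{m;l}$ is absorbed in the polynomial $P_{m,l}$. The main technical obstacle throughout is the simultaneous management of the weight loss in the linear decay, the $L^\infty_{t,x}$-control of $\phi$ without Poincaré, and the lowest-order closure of the two-tier energy system; the splitting in $v$ at threshold $t^{p'}$ and the weight parameter $\ell>5$ are what make the arithmetic of decay rates work.
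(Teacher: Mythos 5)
Your proposal is correct and follows essentially the same route as the paper: local existence by iteration, the two-tier energy inequalities (your $\mathcal{\tilde{E}}^{(2)}$ is the paper's $\zeta^h$, built from $\{\FI-\FP\}f$ and derivatives of $\FP f$), linearized decay paid for by a velocity-weight gain via a time-frequency Lyapunov functional, the $v$-splitting at threshold $t^{p'}$ combined with Duhamel for $\FP f$, and induction on $m$ for the higher norms. The arithmetic you give for $\varepsilon$ and the reason the bare $(1+t)^{-3/4}$ bound on $\|\nabla_x\phi\|_\infty$ fails both match the paper's argument.
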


The rest of this paper is organized as follows.  In Section \ref{local} we study the local existence theory for solutions to the Vlasov-Poisson-Landau system.  In Section \ref{sec:LD} we study time decay properties of solutions to the linear model.  Then in Section \ref{energy inequality} we prove a collection of non-linear energy inequalities for a local solution.
After that in Section \ref{proofmain} we use the time decay estimates and the energy inequalities to deduce that the local solution is in fact global.
Lastly in Appendix \ref{timeDecayInt} we estimate some special time integrals.

\section{Local in time solutions}\label{local}

In this section, we construct a unique local-in time solution to the
Vlasov-Poisson-Landau system (\ref{vl}) and (\ref{p}) if $\mathcal{E}%
_{2;2}(f_{0})$ is sufficiently small.

We first note that the linearized collision operator $\FL$ is non-negative and is further known to be locally coercive in
the sense that there is a constant $\la_0>0$ such that:
\begin{equation}\label{coercLANDAU}
\langle f, \FL f  \rangle \geq \la_0 |\{\FI-\FP\} f |^2_\sigma,
\end{equation}
where, for fixed $(t,x)$, $\FP$ denotes the orthogonal projection from
$L^2_\vel$ to $\CN$, which is the null space of $L:$ 
\begin{equation}
\CN
\equiv
\mathrm{span}\left\{ \sqrt{\mu }\binom{1}{0},\;\sqrt{\mu }\binom{0}{1}%
,\;v_{i}\sqrt{\mu }\binom{1}{1},\;|v|^{2}\sqrt{\mu }\binom{1}{1}\right\} ,
\label{nulll}
\end{equation}%
with $1\leq i\leq 3.$
 
Here are some Sobolev inequalities frequently used throughout this paper.
\begin{lemma}
For $f:\R^3_x\to\R$, we have
\begin{eqnarray}
||f||_{\infty} &\lesssim & 
||\nabla_x f||_2^{1/2}||\nabla_x^2 f||_2^{1/2}
\lesssim 
||\nabla_x f||_2 + ||\nabla_x^2 f||_2;\label{inftySob}
\\
||f||_{6} &\lesssim & ||\nabla_x f||_2;\label{6Sob}\\
||f||_{4} &\lesssim & 
|| f||_2^{1/2}||\nabla_x f||_2^{1/2}
\lesssim
|| f||_2+||\nabla_x f||_2;\label{4Sob}
\\
||\nabla_x f||_{2} &\lesssim & 
|| f||_2^{1/2}||\nabla_x^2 f||_2^{1/2}
\lesssim
|| f||_2+||\nabla_x^2 f||_2.\label{2Sob}
\end{eqnarray}
\end{lemma}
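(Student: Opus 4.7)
\medskip
\noindent\textbf{Proof proposal.} These four bounds are standard Gagliardo--Nirenberg--Sobolev inequalities on $\R^3$, and the plan is to obtain them all from the basic Sobolev embedding \eqref{6Sob} combined with Hölder interpolation, Fourier-side splitting, and integration by parts. The only genuinely nontrivial ingredient is \eqref{6Sob}; the other three inequalities are essentially interpolation corollaries, so I expect no real technical obstacle beyond \eqref{6Sob}.

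First I would establish \eqref{6Sob}, i.e.\ the embedding $\dot H^1(\R^3)\hookrightarrow L^6(\R^3)$. The cleanest route is to represent $f$ as a Riesz potential of $\na_x f$ and invoke the Hardy--Littlewood--Sobolev inequality; alternatively, one can run the classical Gagliardo--Nirenberg argument based on an $L^1$--trace estimate applied to $|f|^q$ for an appropriate exponent $q$, which avoids any reference to fractional integration.

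Next, for \eqref{inftySob}, I would use the Fourier representation:
\begin{equation*}
\|f\|_\infty \leq \int_{\R^3} |\widehat f(k)|\, dk
= \int_{|k|\leq R} |\widehat f(k)|\, dk + \int_{|k|>R} |\widehat f(k)|\, dk,
\end{equation*}
for any $R>0$. Applying Cauchy--Schwarz with weights $|k|^{-1}$ and $|k|^{-2}$ to the two pieces, and exploiting $\int_{|k|\leq R}|k|^{-2}dk \simnew R$ and $\int_{|k|>R}|k|^{-4}dk \simnew R^{-1}$ in three dimensions, one obtains
\begin{equation*}
\|f\|_\infty \lesssim R^{1/2}\|\na_x f\|_2 + R^{-1/2}\|\na_x^2 f\|_2.
\end{equation*}
Optimizing in $R$ (take $R\simnew \|\na_x^2 f\|_2/\|\na_x f\|_2$) yields the geometric-mean form on the left of \eqref{inftySob}, and Young's inequality $ab\leq \tfrac12 a^2+\tfrac12 b^2$ then produces the sum form on the right.

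For \eqref{4Sob} I would interpolate between $L^2$ and $L^6$ via Hölder, writing $\|f\|_4\leq \|f\|_2^{\theta}\|f\|_6^{1-\theta}$ for the exponent determined by matching reciprocals of the $p$'s, and then using \eqref{6Sob} to replace $\|f\|_6$ with $\|\na_x f\|_2$; a final application of Young's inequality passes to the sum form. Finally for \eqref{2Sob}, integration by parts gives
\begin{equation*}
\|\na_x f\|_2^2 = -\int_{\R^3} f\,\De f\, dx \leq \|f\|_2\,\|\De f\|_2 \lesssim \|f\|_2\,\|\na_x^2 f\|_2,
\end{equation*}
and taking square roots followed by Young's inequality delivers both forms of \eqref{2Sob}. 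The worst step in the whole proof is really just recording \eqref{6Sob} carefully; every other line is elementary once that embedding is in hand.
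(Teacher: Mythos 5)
The paper states this lemma without proof, treating all four bounds as standard, so there is no in-paper argument to compare against. Your derivations of \eqref{inftySob}, \eqref{6Sob} and \eqref{2Sob} are correct and are the expected ones: the Fourier splitting at $|k|=R$ with Cauchy--Schwarz and optimization in $R$ gives the Agmon-type bound \eqref{inftySob}, the embedding $\dot H^1(\R^3)\hookrightarrow L^6$ gives \eqref{6Sob}, and integration by parts plus Cauchy--Schwarz gives \eqref{2Sob}; Young's inequality converts each product form to the sum form.

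The one place to be careful is \eqref{4Sob}. Solving your Hölder exponent equation $\tfrac14=\tfrac{\theta}{2}+\tfrac{1-\theta}{6}$ gives $\theta=\tfrac14$, so your interpolation actually produces $\|f\|_4\lesssim\|f\|_2^{1/4}\|\nabla_x f\|_2^{3/4}$, not the geometric mean $\|f\|_2^{1/2}\|\nabla_x f\|_2^{1/2}$ printed in the statement. Indeed the printed product form cannot hold on $\R^3$: with $f_\lambda(x)=f(\lambda x)$ the left side scales like $\lambda^{-3/4}$ while the right side scales like $\lambda^{-1}$, so letting $\lambda\to\infty$ would force $\|f\|_4=0$; the $\tfrac12$--$\tfrac12$ product form is the two-dimensional Ladyzhenskaya inequality, and in three dimensions the correct exponents are $\tfrac14$ and $\tfrac34$. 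This is a defect of the statement rather than of your method: your interpolation followed by Young's inequality still yields the sum form $\|f\|_4\lesssim\|f\|_2+\|\nabla_x f\|_2$, which is the only version of \eqref{4Sob} the paper ever uses, so nothing downstream is affected. But as written your proof silently claims exponents it does not reach; you should either record $\tfrac14$ and $\tfrac34$ explicitly or state that only the sum form is being established.
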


In this section we will use some basic estimates from Section 2 of \cite{G0}. 
We would like to improve the result of Proposition 6 in \cite{G0}. 
For any vector-valued function ${\bf g(}v{\bf )=}[g_i],$ we define the
projection to the vector $[v_i]$ as 
\begin{equation}
P_vg_i\equiv \sum \{g_jv_j\}\frac{v_i}{|v|^2},\text{ }1\le i\le 3.
\label{pv}
\end{equation}
We will also use the norm estimate from Corollary 1 in \cite{G1}
\begin{multline}
|g|_{\sigma ,w_\wN }^2 \gtrsim
 \left|  [1+|v|]^{-\frac 3 2}\{P_v\partial _ig\}\right|_{2,w_\wN}^2+
 \left|  [1+|v|]^{-\frac{1} 2}\{[I-P_v]\partial _ig\}\right|_{2,w_\wN}^2
\\
+\left|  [1+|v|]^{- \frac{1}2}g\right|_{2,w_\wN}^2,
\quad \forall \wN \in \R.
\label{lbound}
\end{multline}
We establish the estimate for the nonlinear term $\Gamma [g_{1} , g_2]$.

\begin{proposition}\label{nonlinearTHM}
Fix 
$\wN\in \mathbb{R}$.
Then
\begin{equation}\label{langle} 
\langle w_{2\wN }\partial _\beta ^\alpha \Gamma [g_1,g_2],\;\partial
_\beta ^\alpha g_3\rangle  
\lesssim
\sum_{\alpha_1 \le \alpha}
\sum_{\bar{\beta}\le\beta_1 \le \beta}
|\partial _{\bar{\beta}}^{\alpha _1}g_1|_{2,-\wB}
|\partial _{\beta -\beta _1}^{\alpha -\alpha _1}g_2|_{\sigma ,\wN}
|\partial _\beta^\alpha g_3|_{\sigma ,\wN}. 
\end{equation}
Here $\wB>0$ can be arbitrarily large. 
\end{proposition}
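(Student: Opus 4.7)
\emph{Plan.} Using \eqref{gamma}--\eqref{q}, each component of $\Ga[g_1,g_2]$ has the form $\sqrt{\mu}^{-1}\na_v\cdot\int \Phi(v-v_*)[\cdots]\,dv_*$ with a bilinear integrand involving $\sqrt{\mu(v_*)}g_1(v_*)$ and $\sqrt{\mu(v)}g_2(v)$ (or their gradients). Applying $\pa_\be^\al$ and expanding via Leibniz distributes the $\al$-derivatives as $\al_1$ onto $g_1$ and $\al-\al_1$ onto $g_2$, and the $\be$-derivatives so that $\bar\be$ fall on $g_1$ itself (with the remaining $\be_1 - \bar\be$ falling on the $\sqrt{\mu(v_*)}$ factor) and $\be-\be_1$ land on the $v$-factors. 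Pairing against $w_{2\wN}\pa_\be^\al g_3$ and integrating by parts once in $v$ to shift the outer $\na_v$ onto the test function, every resulting term becomes a double integral
\begin{equation*}
\int\!\!\int \Phi(v-v_*)\, A_1(v_*)\, A_2(v)\, B(v)\,dv_*\,dv,
\end{equation*}
with $A_1$ built from $\pa^{\al_1}_{\bar\be}g_1$ times a Gaussian-times-polynomial in $v_*$, $A_2$ from $\pa^{\al-\al_1}_{\be-\be_1}g_2$ times polynomial growth in $v$, and $B$ from $w_{2\wN}\pa^\al_\be g_3$ or its $v$-gradient.

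\emph{Sigma norm and weight redistribution.} I would then follow the Landau-type Cauchy--Schwarz argument from Section~2 of \cite{G0}: using the null property $\Phi(v-v_*)(v-v_*)=0$, decompose every $v$-gradient appearing on $g_2$ and $g_3$ into its $P_v$- and $(I-P_v)$-parts (cf.\ \eqref{pv}), and invoke \eqref{lbound} together with the definition \eqref{sigma'} to recognize the quadratic form as a bilinear combination of $|\pa^{\al-\al_1}_{\be-\be_1}g_2|_{\sigma,\wN}$ and $|\pa^\al_\be g_3|_{\sigma,\wN}$. The $v_*$-integral is then handled by freezing $v_*$, using $|v|\lesssim |v-v_*|+|v_*|$ to shift all extraneous $\langle v\rangle$-growth (from $\Phi$ and from $v$-derivatives of $\sqrt{\mu}$) onto $v_*$, and noting that $\sqrt{\mu(v_*)}$ absorbs an arbitrarily large polynomial $\langle v_*\rangle^{2\wB}$. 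This Gaussian absorption is precisely what permits $g_1$ to appear in $|\cdot|_{2,-\wB}$ with any $\wB>0$.

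\emph{Main obstacle.} The delicate step is the combinatorial bookkeeping that guarantees every $v$-derivative hitting $g_2$ (up to order $|\be-\be_1|$) is packaged inside a $\sigma$-norm factor rather than left stranded against the singular kernel $\Phi$; this matching is exactly where one improves on Proposition~6 of \cite{G0}, since the target estimate puts no weight at all on $g_1$. A secondary point is that when the integration-by-parts gradient hits $w_{2\wN}(v)$ instead of $g_3$, the resulting factor $\langle v\rangle^{2\wN-1}$ must be reabsorbed into $|g_3|_{\sigma,\wN}$ using \eqref{sigma'}; because $\wN\in\R$ may be negative, this is handled uniformly by the same Gaussian-absorption trick on the $v_*$-side. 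Once these accounting issues are settled, a straightforward Cauchy--Schwarz in $(v,v_*)$ closes the bound in the stated form.
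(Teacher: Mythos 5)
Your outline assembles the standard Guo-style machinery correctly: the Leibniz expansion, the single integration by parts in $v$ to move the outer divergence onto $g_3$ or the weight, the convolution bound $\{|\phi^{ij}|^2*\mu^{1/4}\}^{1/2}\lesssim [1+|v|]^{-1}$ with Gaussian absorption of arbitrary polynomial growth in $v_*$, and the Taylor expansion of $\phi^{ij}(v-v_*)$ around $v$ on the region $2|v_*|\le|v|$, $|v|\ge1$ combined with the null identity $\sum_i\phi^{ij}(v)v_i=0$, the $P_v$/$(I-P_v)$ decomposition, and \eqref{lbound}. That covers the terms \eqref{gamma4}, \eqref{gamma2}, \eqref{gamma1}, \eqref{part1}, \eqref{part2} in essentially the way the paper does.

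However, there is a genuine gap at precisely the term the paper flags as the key new point, namely \eqref{gamma3}. Because of the $\nabla_{v_*}$ in the second term of \eqref{q}, one Leibniz term necessarily carries an \emph{extra} $v_*$-derivative $\partial_j$ on $\mu^{1/2}\partial^{\alpha_1}g_1$ inside the convolution, paired against $\partial_i\partial_\beta^\alpha g_3$ outside. Since the claimed right-hand side allows only $|\partial_{\bar\beta}^{\alpha_1}g_1|_{2,-\wB}$ with $\bar\beta\le\beta_1$ (no additional derivative on $g_1$), you are forced to integrate by parts in $v_*$, and when that derivative lands on the kernel you obtain $\partial_j\phi^{ij}(v-v_*)=O(|v-v_*|^{-2})$, which is \emph{not} in $L^2_{\mathrm{loc}}(\R^3)$. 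On the region $|v-v_*|\le1$ the Cauchy--Schwarz convolution trick therefore fails outright ($\int_{|v-v_*|\le1}|v-v_*|^{-4}\,dv_*=\infty$), so your concluding claim that ``a straightforward Cauchy--Schwarz in $(v,v_*)$ closes the bound'' is false for this term. The paper closes it by invoking the Hardy--Littlewood--Sobolev-type trilinear estimate of Proposition~3.5 in \cite{gsNonCutJAMS}, which trades the non-square-integrable singularity for an $H^1_v$ norm on $g_2$ that the $\sigma$-norm can absorb. Without this (or an equivalent harmonic-analysis input), the proof does not close; and since this is exactly the step that makes the proposition simpler than Proposition~6 of \cite{G0}, identifying it is the heart of the argument rather than ``combinatorial bookkeeping.''
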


The key new point in this estimate is to use directly Proposition 3.5 in \cite{gsNonCutJAMS} on the worst term \eqref{gamma3}.

\begin{proof} 
Recall $\Gamma [g_1,g_2]$ in (\ref{gamma}). By the product
rule, we expand 
\[
\langle w_{2\wN }\partial _\beta ^\alpha \Gamma [g_{1,}g_2],\partial
_\beta ^\alpha g_2\rangle =\sum C_\alpha ^{\alpha _1}C_\beta ^{\beta
_1}\times G_{\alpha _1\beta _1}, 
\]
where $G_{\alpha _1\beta _1}$takes the form: 
\begin{eqnarray}
&&+\langle w_{2\wN }\{\phi ^{ij}*\partial _{\beta _1}[v_i\mu
^{1/2}\partial _j\partial ^{\alpha _1}g_1]\}\partial _{\beta -\beta
_1}^{\alpha -\alpha _1}g_2,\partial _\beta ^\alpha g_3\rangle  \label{gamma4}
\\
&&-\langle w_{2\wN }\{\phi ^{ij}*\partial _{\beta _1}[v_i\mu
^{1/2}\partial ^{\alpha _1}g_1]\}\partial _j\partial _{\beta -\beta
_1}^{\alpha -\alpha _1}g_2,\partial _\beta ^\alpha g_3\rangle  \label{gamma2}
\\
&&-\langle w_{2\wN }\{\phi ^{ij}*\partial _{\beta _1}[\mu ^{1/2}\partial
^{\alpha _1}g_1]\}\partial _j\partial _{\beta -\beta _1}^{\alpha -\alpha
_1}g_2,\partial _i\partial _\beta ^\alpha g_3\rangle  \label{gamma1} \\
&&+\langle w_{2\wN }\ \{\phi ^{ij}*\partial _{\beta _1}[\mu
^{1/2}\partial _j\partial ^{\alpha _1}g_1]\}\partial _{\beta -\beta
_1}^{\alpha -\alpha _1}g_2,\partial _i\partial _\beta ^\alpha g_3\rangle
\label{gamma3} \\
&&-\langle \partial _i[w_{2\wN }]\{\phi ^{ij}*\partial _{\beta _1}[\mu
^{1/2}\partial ^{\alpha _1}g_1]\}\partial _j\partial _{\beta -\beta
_1}^{\alpha -\alpha _1}g_2,\partial _\beta ^\alpha g_3\rangle  \label{part1}
\\
&&+\langle \partial _i[w_{2\wN }]\{\phi ^{ij}*\partial _{\beta _1}[\mu
^{1/2}\partial _j\partial ^{\alpha _1}g_1]\}\partial _{\beta -\beta
_1}^{\alpha -\alpha _1}g_2,\partial _\beta ^\alpha g_3\rangle .
\label{part2}
\end{eqnarray}
The last two terms appear when we integrate by parts over $v_i$ variable.

We establish the upper bound in \eqref{langle} for each of the terms above. For the last two terms (\ref{part1}) and (%
\ref{part2}), we have 
\[
\phi ^{ij}(v)=O(|v|^{-1})\in L_{loc}^2({\R}^3). 
\]
From
\[
|\partial _{\beta _1-\bar{\beta}}\{\mu ^{1/2}\}| \lesssim
\mu ^{1/4}, 
\]
the Cauchy-Schwartz inequality implies  for any large $\wB > 0$ that
\begin{eqnarray} \notag
&&\{\phi ^{ij}*\partial _{\beta _1}[\mu ^{1/2}\partial ^{\alpha _1}g_1]\}\\
 &
\lesssim
&\sum_{\bar{\beta}\le \beta _1}|\phi ^{ij}*\{\partial _{\beta _1-\bar{\beta}%
}\mu ^{1/2}\partial _{\bar{\beta}}^{\alpha _1}g_1\}| \notag
\\
&\lesssim &
\{|\phi ^{ij}|^2*\mu ^{1/4}\}^{1/2}(v)\left\{ \sum_{\bar{\beta}\le
\beta _1}\int \mu ^{1/4}(v_*)
|\partial _{\bar{\beta}}^{\alpha _1}g_1(v_*)|^{2} dv_*\right\} ^{1/2}  \nonumber \\
&\lesssim &
[1+|v|]^{-1}\sum_{\bar{\beta}\le \beta _1}\left| 
\partial _{\bar{\beta}}^{\alpha _1}g_1\right| _{2, - \wB}.  
 \label{convolution} 
\end{eqnarray}
Since $|\partial _i[w_{2\wN }]|\le C[1+|v|]^{-1}w_{2\wN },$ we
estimate (\ref{part1}) from (\ref{convolution}) as follows
\begin{eqnarray*}
&&
\sum_{\bar{\beta}\le \beta _1}
\left|  \partial _{\bar{\beta}}^{\alpha _1}g_1\right|_{2,-\wB }
\int w_{2\wN }[1+|v|]^{-2}|\partial
_j\partial _{\beta -\beta _1}^{\alpha -\alpha _1}g_2\partial _\beta ^\alpha
g_3|dv \\
&\lesssim&
\sum_{\bar{\beta}\le \beta _1}
\left|  \partial _{\bar{\beta}}^{\alpha _1}g_1\right|_{2,-\wB }
\left| w_\wN [1+|v|]^{-\frac {3}{ 2}}\partial_j\partial _{\beta -\beta _1}^{\alpha -\alpha _1}g_2\right| _2\left|
w_\wN [1+|v|]^{-\frac {1}{ 2}}\partial _\beta ^\alpha g_3\right| _2 \\
&\lesssim&
\sum_{\bar{\beta}\le \beta _1}|\partial _{\bar{\beta}}^{\alpha
_1}g_1|_{2,-\wB }
|\partial _{\beta -\beta _1}^{\alpha -\alpha_1}g_2|_{\sigma ,\wN }
|\partial _\beta ^\alpha g_3|_{\sigma ,\wN },
\end{eqnarray*}
by (\ref{lbound}).

To estimate \eqref{part2}, we integrate by parts on the $\partial_j$ derivative in the $v_*$ integration (inside the convolution).  This splits
\begin{equation}
\phi ^{ij}*\partial _{\beta _1}[\mu ^{1/2}\partial _j\partial ^{\alpha
_1}g_1]=\partial _j\phi ^{ij}*\partial _{\beta _1}[\mu ^{1/2}\partial
^{\alpha _1}g_1]-\phi ^{ij}*\partial _{\beta _1}[\partial _j\mu
^{1/2}\;\partial ^{\alpha _1}g_1].  \label{parts}
\end{equation}
When the derivative hits $\mu^{1/2}$ we proceed exactly as in the estimate for \eqref{part1}.  When the derivative hits $\phi^{ij}(v-v_*)$ we use the symmetry in $v$ and $v_*$ to convert the $\partial_j$ to be a derivative in $v$ (instead of one in $v_*$) and then we integrate by parts in $\partial_j$ again (now within the $v$ integration).  All of the remaining terms resulting from this integration by parts can be estimated as in \eqref{part1}.

We now estimate \eqref{gamma4}.  Again recall that $\phi ^{ij}(v)=O(|v|^{-1})\in L_{loc}^2.$ By (\ref{convolution}), we have for any large $\wB>0$ that
\begin{equation}
|\phi ^{ij}*\partial _{\beta _1}[v_i\mu ^{1/2}\partial _j\partial ^{\alpha
_1}g_1]|
\lesssim
[1+|v|]^{-1}\sum_{\bar{\beta}\le \beta }|\partial _{\bar{\beta}%
}^{\alpha _1}g_1|_{\sigma ,-\wB }.  \nonumber
\end{equation}
Hence the corresponding integrand \eqref{gamma4} is bounded by 
\begin{equation*}
C\sum \left| \partial _{\bar{\beta}}^{\alpha _1}g_1\right| _{2,-\wB}
[1+|v|]^{-1}
\left|\partial _{\beta -\beta_1}^{\alpha -\alpha _1}g_2 \right| 
\left| \partial _\beta ^\alpha g_3\right|.
\end{equation*}
Again using Cauchy-Schwartz and \eqref{lbound}, the $v$ integral of the above is clearly bounded by right hand side
of \eqref{langle}.

Next we consider the estimates for \eqref{gamma2} and \eqref{gamma1}.  We decompose their double
integration region $[v,v_{*}]\in $ ${\R}^3\times {\R}^3$ into
three parts: 
\[
\{|v|\le 1\},\;\;\;\{2|v_{*}|\ge |v|,\;|v|\ge 1\}\text{ \ \ and \ \ }%
\{2|v_{*}|\le |v|,\;|v|\ge 1\}. 
\]
For the first part $\{|v|\le 1\},$ recall $\phi ^{ij}(v)=O(|v|^{-1})\in L_{loc}^2.$ By (\ref{convolution}), we have 
\begin{eqnarray}
&&|\phi ^{ij}*\partial _{\beta _1}[\mu ^{1/2}\partial ^{\alpha
_1}g_1]|+|\phi ^{ij}*\partial _{\beta _1}[v_i\mu ^{1/2}\partial ^{\alpha
_1}g_1]|  \nonumber \\
&\le &C[1+|v|]^{-1}\sum_{\bar{\beta}\le \beta }|\partial _{\bar{\beta}%
}^{\alpha _1}g_1|_{2,-\wB }.  \label{l2} 
\end{eqnarray}
Hence their corresponding integrands over the region $\{|v|\le 1\}$ are
bounded by 
\begin{equation*}
C\sum \left| \partial _{\bar{\beta}}^{\alpha _1}g_1\right| _{2,-\wB}w^{2\wN }[1+|v|]^{-1}|\partial _j\partial _{\beta -\beta
_1}^{\alpha -\alpha _1}g_2|[|\partial _i\partial _\beta ^\alpha
g_3|+|\partial _\beta ^\alpha g_3|],
\end{equation*}
whose $v-$integral over $\{|v|\le 1\}$ is clearly bounded by right hand side
of (\ref{langle}). We thus conclude the first part of $\{|v|\le 1\}$ for 
(\ref{gamma2}) and \eqref{gamma1}.

For the second part $\{2|v_{*}|\ge |v|,|v|\ge 1\},$ we have 
\[
|\partial _{\beta _1}\{\mu ^{1/2}(v_{*})\}|
+
|\partial _{\beta_1}\{v_{*j}\mu ^{1/2}(v_{*})\}|
\lesssim
\mu^{1/8} (v_{*}) \mu^{1/8} (v). 
\]
By the same type of estimates as in (\ref{convolution}), the $v-$ integrands
in (\ref{gamma2}) and (\ref{gamma1}) are bounded by: 
\begin{eqnarray*}
&&
\mu^{1/8} (v)
w_{2\wN }|\partial _j\partial _{\beta -\beta _1}^{\alpha
-\alpha _1}g_2|[|\partial _i\partial _\beta ^\alpha g_3|+|\partial _\beta
^\alpha g_3|]
\int  \left| \phi ^{ij}(v-v_{*}) \right|
\mu^{1/8} (v_{*}) 
|\partial _{\bar{\beta}}^{\alpha _1}g_1(v_{*})|dv_{*}
\\
&\lesssim &
\left| \partial _{\bar{\beta}}^\alpha g_1\right| _{2,-\wB}
[1+|v|]^{-1} \mu^{1/8} (v)
w_{2\wN }|\partial _j\partial _{\beta
-\beta _1}^{\alpha -\alpha _1}g_2|
~ [|\partial _i\partial _\beta ^\alpha
g_3|+|\partial _\beta ^\alpha g_3|].
\end{eqnarray*}
By (\ref{lbound}), its $v-$integral is bounded by the right hand side of (%
\ref{langle}) because of the fast decaying factor $\mu^{1/8} (v)$.   
We thus conclude the second part of $\{2|v_{*}|\ge |v|,|v|\ge 1$ $\}$ for the terms (\ref
{gamma2}) and (\ref{gamma1}).

We finally consider the third part of \{$2|v_{*}|\le |v|,|v|\ge 1\},$
for which we shall estimate the terms in (\ref{gamma2}) and (\ref{gamma1}).
The key is to expand $\phi ^{ij}(v-v_{*}).$

To estimate (\ref{gamma1}) over the region $|v|\ge 1$ and $2|v_{*}|\le
|v|,$ we expand $\phi ^{ij}(v-v_{*})$ to get 
\begin{equation}
\phi ^{ij}(v-v_{*})=\phi ^{ij}(v)-\sum_k\partial _k\phi
^{ij}(v)v_{*k}+\frac 12\sum_{k,l}\partial _{kl}\phi ^{ij}(\bar{v}%
)v_{*k}v_{*l}.  \label{taylor2}
\end{equation}
where $\bar{v}$ is between $v$ and $v-v_{*}.$ We plug (\ref{taylor2})
into the integrand of (\ref{gamma1}). Notice that for either fixed $i$ or $%
j, $%
\begin{equation}
\sum_i\phi ^{ij}(v)v_i=\sum_j\phi ^{ij}(v)v_j=0.  \label{phiv}
\end{equation}
>From (\ref{pv}), Lemma 3 in \cite{G1} and (\ref{phiv}), we can decompose $\partial
_j\partial _{\beta -\beta _1}^{\alpha -\alpha _1}g_2$ and $\partial
_i\partial _\beta ^\alpha g_3$ into their $P_v$ parts as well as $I-P_v$
parts. For the first term in the expansion (\ref{taylor2}) 
\begin{eqnarray}
&&|\sum_{ij}\phi ^{ij}(v)\partial _j\partial _{\beta -\beta _1}^{\alpha
-\alpha _1}g_2(v)\partial _i\partial _\beta ^\alpha g_3(v)|  \nonumber \\
&=&|\sum_{ij}\phi ^{ij}(v)\{[I-P_v]\partial _j\partial _{\beta -\beta
_1}^{\alpha -\alpha _1}g_2(v)\}\{[I-P_v]\partial _i\partial _\beta ^\alpha
g_3(v)\}|  \label{expand1} \\
&\le &C[1+|v|]^{-1}|[I-P_v]\partial _j\partial _{\beta -\beta
_1}^{\alpha -\alpha _1}g_2(v)|\times |[I-P_v]\partial _i\partial _\beta
^\alpha g_3(v)|.  \nonumber
\end{eqnarray}
Here we have used (\ref{phiv}) so that sum of terms with either 
$$
P_v\partial_j\partial _{\beta -\beta _1}^{\alpha -\alpha _1}g_2,
\quad
\text{or}
\quad
P_v\partial_i\partial _{\beta -\beta _1}^{\alpha -\alpha _1}g_3,
$$ 
vanishes. For the
second term in the expansion (\ref{taylor2}), by taking a $\partial_k$ derivative of 
\[
\sum_{i,j}\phi ^{ij}(v)v_iv_j=0, 
\]
we have 
$
\sum_{i,j}\partial _k\phi ^{ij}(v)v_iv_j=-2\sum_j\phi ^{kj}(v)v_j=0. 
$
Therefore, expanding $\partial _j\partial _{\beta -\beta _1}^{\alpha -\alpha
_1}g_2$ and $\partial _i\partial _\beta ^\alpha g_3$ into their $P_v$ and $%
I-P_v$ parts yields 
\begin{eqnarray*}
&&\sum_{i,j}\partial _k\phi ^{ij}(v)\partial _j\partial _{\beta -\beta
_1}^{\alpha -\alpha _1}g_2(v)\partial _i\partial _\beta ^\alpha
g_3(v)=\sum_{i,j}\partial _k\phi ^{ij}(v)\times \\
\times \{ &&[I-P_v]\partial _j\partial _{\beta -\beta _1}^{\alpha -\alpha
_1}g_2[I-P_v]\partial _i\partial _\beta ^\alpha g_3+[I-P_v]\partial
_j\partial _{\beta -\beta _1}^{\alpha -\alpha _1}g_2[P_v\partial _i\partial
_\beta ^\alpha g_3] \\
&&+[P_v\partial _j\partial _{\beta -\beta _1}^{\alpha -\alpha
_1}g_2][I-P_v]\partial _i\partial _\beta ^\alpha g_3\},
\end{eqnarray*}
where 
\[
\sum_{i,j}\partial _k\phi ^{ij}(v)P_v\partial _j\partial _{\beta -\beta
_1}^{\alpha -\alpha _1}g_2\times P_v\partial _i\partial _\beta ^\alpha
g_3=0. 
\]
Notice that $|\partial _k\phi ^{ij}(v)|\le C[1+|v|]^{-2},$ for $|v|\ge 1,$ we majorize the above by 
\begin{eqnarray}
C &&[1+|v|]^{-3 /2}\{|P_v\partial _j\partial _{\beta -\beta _1}^{\alpha
-\alpha _1}g_2|+|P_v\partial _i\partial _\beta ^\alpha g_3|\}\times 
\nonumber \\
&&\times [1+|v|]^{-1/2}\{|[I-P_v]\partial _j\partial _{\beta -\beta
_1}^{\alpha -\alpha _1}g_2|+|[I-P_v]\partial _i\partial _\beta ^\alpha g_3|\}
\label{expand2} \\
&&+C[1+|v|]^{-1/2}|[I-P_v]\partial _j\partial _{\beta -\beta
_1}^{\alpha -\alpha _1}g_2||[I-P_v]\partial _i\partial _\beta ^\alpha g_3|. 
\nonumber
\end{eqnarray}
The third term in (\ref{taylor2}) now can be estimated as follows. Since 
\begin{equation}
\frac 12|v|\le |v|-|v_{*}|\le |\bar{v}|\le |v_{*}|+|v|\le \frac
32|v|,  \label{vbound}
\end{equation}
thus 
$
|\partial _{kl}\phi ^{ij}(\bar{v})|\le C[1+|v|]^{-3} , 
$
and we have 
\begin{eqnarray}
&&|\sum_{k,l}\partial _{kl}\phi ^{ij}(\bar{v})\partial _j\partial _{\beta
-\beta _1}^{\alpha -\alpha _1}g_2(v)\partial _i\partial _\beta ^\alpha
g_3(v)|  \nonumber \\
&\le &C[1+|v|]^{-3}|\partial _j\partial _{\beta -\beta _1}^{\alpha
-\alpha _1}g_2(v)\partial _i\partial _\beta ^\alpha g_3(v)|.  \label{expand3}
\end{eqnarray}
Combining (\ref{taylor2}), (\ref{expand1}), (\ref{expand2}) and (\ref
{expand3}), we have 
\begin{eqnarray*}
&&|\sum_{i,j}\phi ^{ij}(v-v_{*})\partial _i\partial _{\beta -\beta
_1}^{\alpha -\alpha _1}g_2\partial _i\partial _\beta ^\alpha g_3| \\
&\le &C[1+|v_{*}|]^2\{|\sum_{i,j}\phi ^{ij}(v)\partial _j\partial
_{\beta -\beta _1}^{\alpha -\alpha _1}g_2(v)\partial _i\partial _\beta
^\alpha g_3(v)| \\
&&+|\sum_{i,j}\partial _k\phi ^{ij}(v)\partial _j\partial _{\beta -\beta
_1}^{\alpha -\alpha _1}g_2(v)\partial _i\partial _\beta ^\alpha g_3(v)| \\
&&+|\sum_{i,j}\partial _{kl}\phi ^{ij}(\bar{v})\partial _j\partial _{\beta
-\beta _1}^{\alpha -\alpha _1}g_2(v)\partial _i\partial _\beta ^\alpha
g_3(v)|\} \\
&\le &C[1+|v_{*}|]^2\{\sigma ^{ij}\partial _i\partial _{\beta -\beta
_1}^{\alpha -\alpha _1}g_2\partial _j\partial _{\beta -\beta _1}^{\alpha
-\alpha _1}g_2\}^{1/2}\{\sigma ^{ij}\partial _i\partial _\beta ^\alpha
g_3\partial _j\partial _\beta ^\alpha g_3\}^{1/2},
\end{eqnarray*}
where we have used (\ref{lbound}). The $v-$ integrand over $\{2|v^{\prime
}|\le |v|,$ $|v|\ge 1\}$ in (\ref{gamma1}) is thus bounded by 
\begin{eqnarray*}
&&w_{2\wN }\int [1+|v_{*}|^2]\mu ^{1/4}(v_{*})|\partial _{%
\bar{\beta}}^{\alpha _1}g_1(v_{*})|dv_{*}\times \\
&&\times \{\sigma ^{ij}\partial _i\partial _{\beta -\beta _1}^{\alpha
-\alpha _1}g_2\partial _j\partial _{\beta -\beta _1}^{\alpha -\alpha
_1}g_2\}^{1/2}\{\sigma ^{ij}\partial _i\partial _\beta ^\alpha g_3\partial
_j\partial _\beta ^\alpha g_3\}^{1/2} \\
&\le &C|\partial _{\bar{\beta}}^\alpha g_1|_{2,-\wB }\{w_{2\wN }\sigma
^{ij}\partial _i\partial _{\beta -\beta _1}^{\alpha -\alpha _1}g_2\partial
_j\partial _{\beta -\beta _1}^{\alpha -\alpha _1}g_2\}^{1/2}\{w_{2\wN
}\sigma ^{ij}\partial _i\partial _\beta ^\alpha g_3\partial _j\partial
_\beta ^\alpha g_3\}^{1/2}.
\end{eqnarray*}
Its further integration over $v$ is bounded by the right hand side of (\ref
{langle}).

We now consider the term (\ref{gamma2}). We again expand $\phi
^{ij}(v-v_{*})$ as 
\begin{equation}
\phi ^{ij}(v-v_{*})=\phi ^{ij}(v)-\sum_k\partial _k\phi ^{ij}(\bar{v}%
)v_{*k},  \label{phi1}
\end{equation}
with $\bar{v}$ between $v$ and $v-v_{*}.$ Since $\sum_j\phi
^{ij}(v)v_j=0,$ we obtain as before 
\begin{eqnarray}
&&\sum_{i,j}\phi ^{ij}(v)\partial _j\partial _{\beta -\beta _1}^{\alpha
-\alpha _1}g_2(v)\partial _\beta ^\alpha g_3(v)  \nonumber \\
&=&\sum_{i,j}\phi ^{ij}(v)\{I-P_v\}\partial _j\partial _{\beta -\beta
_1}^{\alpha -\alpha _1}g_2(v)\times \partial _\beta ^\alpha g_3(v)
\label{name1} \\
&\le &C|[1+|v|]^{-1/2}\{I-P_v\}\partial _j\partial _{\beta -\beta
_1}^{\alpha -\alpha _1}g_2(v)||[1+|v|]^{-1/2}\partial _\beta ^\alpha
g_3(v)|.  \nonumber
\end{eqnarray}
Notice that from (\ref{vbound}), $|\partial _k\phi ^{ij}(\bar{v})|\le
C[1+|v|]^{-2}.$ Hence 
\begin{eqnarray}
&&|\partial _k\phi ^{ij}(\bar{v})\partial _j\partial _{\beta -\beta
_1}^{\alpha -\alpha _1}g_2(v)\partial _\beta ^\alpha g_3(v)|  \label{name2}
\\
&\le &\{[1+|v|]^{-3 /2}|\partial _j\partial _{\beta -\beta _1}^{\alpha
-\alpha _1}g_2(v)|\}\{[1+|v|]^{-1/2}|\partial _\beta ^\alpha
g_3(v)|\}.  \nonumber
\end{eqnarray}
By (\ref{lbound}), we conclude that the integrand in (\ref{gamma2}) can be
majorized as 
\begin{eqnarray*}
&\int &|w_{2\wN }\phi ^{ij}(v-v_{*})\partial _{\beta
_1}[v_i^{\prime }\mu ^{1/2}(v_{*})\partial ^{\alpha _1}g_1(v^{\prime
})]\partial _j\partial _{\beta -\beta _1}^{\alpha -\alpha _1}g_2(v)\partial
_\beta ^\alpha g_3(v)|dv_{*} \\
&=&\int |w_{2\wN }[\phi ^{ij}(v)-\partial _k\phi ^{ij}(\bar{v}%
)v_{*k}]\partial _{\beta _1}[v_i^{\prime }\mu ^{1/2}(v^{\prime
})\partial ^{\alpha _1}g_1(v_{*})]\partial _j\partial _{\beta -\beta
_1}^{\alpha -\alpha _1}g_2(v)\partial _\beta ^\alpha g_3(v)|dv_{*} \\
&\le &C\int [1+|v_{*}|]\mu ^{1/4}(v_{*})|\partial _{\bar{\beta}%
}^{\alpha _1}g_1(v_{*})|dv_{*}\times \\
&&\times \{w_{2\wN }\sigma ^{ij}\partial _i\partial _{\beta -\beta
_1}^{\alpha -\alpha _1}g_2\partial _j\partial _{\beta -\beta _1}^{\alpha
-\alpha _1}g_2\}^{1/2}\{w_{2\wN }\sigma ^{ij}\partial _i\partial _\beta
^\alpha g_3\partial _j\partial _\beta ^\alpha g_3\}^{1/2},
\end{eqnarray*}
where the summation is over $1\le i,j\le 3$ and its further integration over 
$v$ is bounded by the right hand side of (\ref{langle}).

We are now ready to prove the desired estimate for the most singular term \eqref{gamma3}.   Now to handle \eqref{gamma3}  we integrate by parts on the $\partial_j$ derivative in the $v_*$ integration (inside the convolution).  This splits
\begin{equation}
\phi ^{ij}*\partial _{\beta _1}[\mu ^{1/2}\partial _j\partial ^{\alpha
_1}g_1]=
\partial _j\phi ^{ij}*\partial _{\beta _1}[\mu ^{1/2}\partial^{\alpha _1}g_1]
-
\phi ^{ij}*\partial _{\beta _1}[\partial _j\mu^{1/2}
\;\partial ^{\alpha _1}g_1].  \label{partsSECONDtime}
\end{equation}
For  \eqref{gamma3}, the term with the integrand in \eqref{partsSECONDtime} involving $\partial _j\mu^{1/2}$ can now be estimated exactly as in the previous estimate of \eqref{gamma2}.  Thus in the remainder of this estimate we focus or attention on the term in \eqref{partsSECONDtime} which involves $\partial _j\phi ^{ij}$.

Notice $\partial _j\phi ^{ij}(v) = O \left( |v|^{-2}\right)$.  Therefore, in the convolution $\partial _j\phi ^{ij}*\partial _{\beta _1}[\mu ^{1/2}\partial^{\alpha _1}g_1]$, 
we now split into the two regions $|v-v_*| \ge 1$ and $|v-v_*| \le 1$.  When we restrict the convolution to the region where $|v-v_*| \ge 1$, similar to \eqref{convolution}, we have
\begin{equation}\notag
\left| \partial _j\phi ^{ij}*\partial _{\beta _1}[\mu ^{1/2}\partial^{\alpha _1}g_1] \right|
\lesssim
[1+|v|]^{-2}\sum_{\bar{\beta}\le \beta _1}\left| 
\partial _{\bar{\beta}}^{\alpha _1}g_1\right| _{2, - \wB},
\end{equation}
which will hold for any $\wB >0$.  Plugging this upper bound into the integrand of \eqref{gamma3} and applying the approach which was used to estimate \eqref{part1} yields the desired estimate for this term.

It now only remains to estimate  \eqref{gamma3} with the integrand $\partial _j\phi ^{ij}*\partial _{\beta _1}[\mu ^{1/2}\partial^{\alpha _1}g_1]$ on the region $|v-v_*| \le 1$.  However from the previous estimates, on this region, we can observe that \eqref{gamma3} with the integrand $\partial _j\phi ^{ij}*\partial _{\beta _1}[\mu ^{1/2}\partial^{\alpha _1}g_1]$ is bounded above in absolute value by a constant multiple of 
$$
\int_{\mathbb{R}^3\times \mathbb{R}^3} dv dv_* ~ w_{2\wN}(v)  |v-v_*|^{-2} ~ \mu^{1/4}(v) \left|\partial^{\alpha _1}_{\bar{\beta}}g_1(v_*) 
\partial _{\beta -\beta_1}^{\alpha -\alpha _1}g_2(v) \partial _i\partial _\beta ^\alpha g_3(v)
\right| 
{\bf 1}_{|v-v_*| \le 1}.
$$
Here ${\bf 1}_{|v-v_*| \le 1}$ is the usual indicator function of the indicated region: $|v-v_*| \le 1$.  In the above expression we are implicitly summing in particular over $i$ and $\bar{\beta}$.
Now we can apply the HLS estimate from Proposition 3.5 in  \cite{gsNonCutJAMS}.  Using this estimate the expression in the previous display is bounded above by a constant multiple of 
\begin{multline*}
\left| \mu^\delta\partial _{\bar{\beta}}^{\alpha _1}g_1\right| _{L^2_v}
\left| \mu^\delta  \partial _{\beta -\beta_1}^{\alpha -\alpha _1}g_2\right| _{H^1_v}
\left|  \mu^\delta  \partial _i\partial _\beta ^\alpha g_3\right| _{L^2_v}
\\
\lesssim
\left| 
\partial _{\bar{\beta}}^{\alpha _1}g_1\right| _{2, - \wB}
\left|   \partial _{\beta -\beta_1}^{\alpha -\alpha _1}g_2\right| _{\sigma, \ell}
\left|  \partial _i\partial _\beta ^\alpha g_3\right| _{\sigma, \ell}.
\end{multline*}
As in Proposition 3.5 in \cite{gsNonCutJAMS}, $\delta>0$ is some small number. 
In Proposition 3.5 of \cite{gsNonCutJAMS}, we have chosen $b_1 =1$ and $b_2 = 0$.  Of course, again, above $\wB>0$ can be taken arbitrarily large.  We thus conclude the proof of Theorem \ref{nonlinearTHM}. 
\end{proof}

Now we start our construction of the local in time solution. We start with 
\begin{equation}
F^{0}(t,x,v)=\mu \text{ \ \ or \ \ }f^{0}\equiv 0.  \label{start}
\end{equation}%
To preserve the positivity for $F^{n+1}$, we design the following iterating
sequence of $F_{\pm }^{n+1}$ as in \cite{MR2100057}: 
\begin{eqnarray}
\lbrack \partial _{t}+v\cdot \nabla _{x}\mp \nabla _{x}\phi ^{n}\cdot \nabla
_{v}]F_{\pm }^{n+1} &=&Q(F_{\pm }^{n},F_{\pm }^{n+1})-8\pi F_{\pm
}^{n}(F_{\pm }^{n+1}-F_{\pm }^{n})  \notag \\
&&+Q(F_{\mp }^{n},F_{\pm }^{n+1})-8\pi F_{\mp }^{n}(F_{\pm }^{n+1}-F_{\pm
}^{n})  \notag \\
&=&\Phi ^{ij}\ast \lbrack F_{\pm }^{n}+F_{\mp }^{n}]\partial _{ij}F_{\pm
}^{n+1}+8\pi \{F_{\pm }^{n}\}^{2}+8\pi F_{\mp }^{n}F_{\pm }^{n},  \notag \\
\Delta \phi ^{n+1} &=&-\int (F_{+}^{n+1}-F_{-}^{n+1})dv.  \label{vlpFn}
\end{eqnarray}
We have used the formulation for $Q$ which can be found, for instance, in \cite[Lemma 8]{G1}.
We note that $F_{\pm }^{n}\geq 0$
implies $F_{\pm }^{n+1}\geq 0$ from (\ref{vlpFn}) and the maximum principle.  We now rewrite the
above iteration in the perturbation form of $F^{n+1}=\mu +\sqrt{\mu }%
f^{n+1}: $%
\begin{eqnarray}
&&[\partial _{t}+v\cdot \nabla _{x}\mp \nabla _{x}\phi ^{n}\cdot \nabla
_{v}]f_{\pm }^{n+1}-Af_{\pm }^{n+1}\pm \nabla _{x}\phi ^{n}\cdot vf_{\pm
}^{n+1}  \notag \\
&=&\mp 2\nabla _{x}\phi ^{n}\cdot v\sqrt{\mu }+K_{\pm }f^{n}+\Gamma _{\pm
}(f^{n},f^{n+1})  \notag \\
&&-8\pi (f_{\pm }^{n}+f_{\mp }^{n})\sqrt{\mu }(f_{\pm }^{n+1}-f_{\pm
}^{n})-16\pi \mu (f_{\pm }^{n+1}-f_{\pm }^{n})  \notag \\
&&-\Delta \phi ^{n+1}=\int \sqrt{\mu} (f_{+}^{n+1}-f_{-}^{n+1})dv.  \label{vlpfn}
\end{eqnarray}%
with $f^{n+1}|_{t=0}=f_{0}.$ Here for $g=[g_{+}, g_{-}]$, we denote as
in [G1] [SG1-2]: 
\begin{equation*}
Ag_{\pm } =\frac{2}{\sqrt{\mu }}Q(\mu ,\sqrt{\mu }g_{\pm }), \quad
K_{\pm }g =\frac{1}{\sqrt{\mu }}Q(\sqrt{\mu }[g_{\pm }+g_{\mp }],\mu ).
\end{equation*}%
 
We have the following local existence theorem:

\begin{lemma}
\label{mixnenergy}Assume for $M>0$ sufficiently small, we have 
$
\mathcal{\tilde{E}}_{2;2}(f^{n})\leq M.
$
Suppose that $l \ge m \ge 2$.

(1) We have 
{\small
\begin{eqnarray}
&&\mathcal{\tilde{E}}_{2;l}(f^{n+1})+\int_{0}^{t}\mathcal{D}_{2;l}(f^{n+1})ds 
\notag \\
&\leq &\frac{1}{8}\int_{0}^{t}\mathcal{D}_{2;l}(f^{n})ds+C_{l}[\mathcal{\tilde{E}}%
_{2;l}(f_{0})\notag\\
&&+\int_{0}^{t}\{||\nabla _{x}\phi ^{n}||_2^2+||\nabla _{x}\phi ^{n+1}||_2^2+\sum_{|\alpha |\leq 2}\{||\mu ^{\delta%
}\partial ^{\alpha }f^{n}||_{2}^{2}+||\mu ^{\delta}\partial ^{\alpha
}f^{n+1}||_{2}^{2}\} \}] \notag \\
&&+C_{l}\int_{0}^{t}[\sum_{|\alpha ^{\prime }|+|\beta ^{\prime }|\leq
1}\left\Vert |\partial _{\beta ^{\prime }}^{\alpha ^{\prime }}f^{n}|_{\sigma
}\right\Vert _{H^{\frac{3}{4}}}^{2}+||\nabla _{x}\phi ^{n}||_{\infty
}+||\partial _{t}\phi ^{n}||_{\infty }]\mathcal{\tilde{E}}_{2;l}(f^{n+1})  \notag
\\
&&+C_{l}\int_{0}^{t}\sum_{|\alpha ^{\prime }|+|\beta ^{\prime }|\leq
1}\left\Vert |\partial _{\beta ^{\prime }}^{\alpha ^{\prime
}}f^{n+1}|_{\sigma ,\frac{w(\alpha ^{\prime },\beta ^{\prime })}{\langle
v\rangle ^{2}}}\right\Vert _{H^{\frac{3}{4}}}\sqrt{\mathcal{D}_{2;l}(f^{n})%
}\sqrt{\mathcal{\tilde{E}}_{2;l}(f^{n+1})}  \notag \\
&&+C_{l}\int_{0}^{t}\sum_{|\alpha ^{\prime }|+|\beta ^{\prime }|\leq
1}\left\Vert |\partial _{\beta ^{\prime }}^{\alpha ^{\prime }}f^{n+1}|_{2,%
\frac{w(\alpha ^{\prime },\beta ^{\prime })}{\langle v\rangle ^{2}}%
}\right\Vert _{H^{\frac{3}{4}}}\sqrt{\mathcal{D}_{2;l}(f^{n})}\sqrt{%
\mathcal{D}_{2;l}(f^{n+1})}  \notag \\
&&+C_{l}\int_{0}^{t}\sum_{|\alpha ^{\prime }|+|\beta ^{\prime }|\leq 1}\sqrt{%
\mathcal{D}_{2;l}(f^{n})}\left\Vert |\partial _{\beta ^{\prime }}^{\alpha
^{\prime }}(f_{\pm }^{n+1}-f_{\pm }^{n})|_{\sigma }\right\Vert _{H^{\frac{3}{%
4}}}\sqrt{\mathcal{\tilde{E}}_{2;l}(f^{n+1})}  \notag \\
&&+C_{l}\int_{0}^{t}\sum_{|\alpha ^{\prime }|+|\beta ^{\prime }|\leq 1}\sqrt{%
\mathcal{D}_{2;l}(f^{n+1}-f^{n})}\left\Vert |\partial _{\beta ^{\prime
}}^{\alpha ^{\prime }}f^{n}|_{\sigma }\right\Vert _{H^{\frac{3}{4}}}\sqrt{%
\mathcal{\tilde{E}}_{2;l}(f^{n+1})}  \label{e2n} \\
&&+C_{l}\int_{0}^{t}\sqrt{\mathcal{\tilde{E}}_{2;l}(\mu \{f^{n+1}-f^{n}\})}\sqrt{%
\mathcal{\tilde{E}}_{2;l}(f^{n+1})}.  \notag
\end{eqnarray}%
}

(2) For $m\geq 3,$ we have 
{\small
\begin{eqnarray}
&&\mathcal{\tilde{E}}_{m;l}(f^{n+1},\phi ^{n})+\int_{0}^{t}\mathcal{D}%
_{m;l}(f^{n+1})ds  \notag \\
&\leq &\frac{1}{8}\int_{0}^{t}\mathcal{D}_{m;l}(f^{n})ds+C_{l}\mathcal{\tilde{E}}%
_{m;l}(f_{0})\notag\\
&&+C_{l,m}\int_{0}^{t}\{||\nabla _{x}\phi ^{n}||_2^2+||\nabla _{x}\phi ^{n+1}||_2^2+\sum_{|\alpha |=m}\{||\mu ^{\delta%
}\partial ^{\alpha }f^{n}||_{2}^{2}+||\mu ^{\delta}\partial ^{\alpha
}f^{n+1}||_{2}^{2}\}\}  \notag \\
&&+C_{l,m}\int_{0}^{t}[\sum_{|\alpha ^{\prime }|+|\beta ^{\prime }|\leq
\lbrack \frac{m}{2}]}\left\{ \left\Vert |\partial _{\beta ^{\prime
}}^{\alpha ^{\prime }}f^{n}|_{\sigma ,\frac{w(\alpha ^{\prime },\beta
^{\prime })}{\langle v\rangle ^{2}}}\right\Vert _{H^{\frac{3}{4}%
}}^{2}+\left\Vert |\partial _{\beta ^{\prime }}^{\alpha ^{\prime
}}f^{n+1}|_{\sigma ,\frac{w(\alpha ^{\prime },\beta ^{\prime })}{\langle
v\rangle ^{2}}}\right\Vert _{H^{\frac{3}{4}}}^{2}\right\}\notag\\
&&\left. +||\nabla \phi
^{n}||_{\infty }+||\partial _{t}\phi ^{n}||\right]  \times \lbrack \mathcal{\tilde{E}}_{m;l}(f^{n})+\mathcal{\tilde{E}}_{m;l}(f^{n+1})] 
\notag \\
&&+C_{l,m}\int_{0}^{t}\sum_{|\alpha ^{\prime }|+|\beta ^{\prime }|\leq
\lbrack \frac{m}{2}]}\left\Vert |\partial _{\beta ^{\prime }}^{\alpha
^{\prime }}f^{n}|_{\sigma }\right\Vert _{H^{\frac{3}{4}}}\sqrt{\mathcal{D}%
_{m;l}(f^{n+1}-f^{n})}\sqrt{\mathcal{\tilde{E}}_{m;l}(f^{n+1})}  \notag \\
&&+C_{l,m}\int_{0}^{t}\sum_{|\alpha ^{\prime }|+|\beta ^{\prime }|\leq
\lbrack \frac{m}{2}]}\left\Vert |\partial _{\beta ^{\prime }}^{\alpha
^{\prime }}(f^{n+1}-f^{n})|_{\sigma }\right\Vert _{H^{\frac{3}{4}}}\sqrt{%
\mathcal{D}_{m;l}(f^{n})}\sqrt{\mathcal{\tilde{E}}_{m;l}(f^{n+1})}  \notag \\
&&+C_{l,m}\int_{0}^{t}\sqrt{\mathcal{\tilde{E}}_{m;l}(\mu \{f^{n+1}-f^{n}\})}\sqrt{%
\mathcal{\tilde{E}}_{m;l}(f^{n+1})}  \notag \\
&&+C_{l,m}\int_{0}^{t}[\mathcal{D}_{m-1;l}(f^{n})+\mathcal{D}%
_{m-1;l}(f^{n+1})][\mathcal{\tilde{E}}_{m-1;l}(f^{n})+\mathcal{\tilde{E}}%
_{m-1;l}(f^{n+1})+1].  \label{emn}
\end{eqnarray}  }
Above $\delta$ is some small positive number.
\end{lemma}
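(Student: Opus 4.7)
The plan is to obtain the two inequalities by running weighted energy estimates on the iteration equation \eqref{vlpfn} and absorbing the bad terms using the coercivity \eqref{coercLANDAU}, the nonlinear estimate from Proposition \ref{nonlinearTHM}, and standard Sobolev inequalities. Concretely, for each multi-index pair $(\alpha,\beta)$ with $|\alpha|+|\beta|\le m$, I apply $\partial_\beta^\alpha$ to \eqref{vlpfn}, take the $L^2_{x,v}$ inner product against $w(\alpha,\beta)\partial_\beta^\alpha f_\pm^{n+1}$, and sum in $\pm$. The streaming term $v\cdot\nabla_x$ gives zero after summation; the force term $\mp\nabla_x\phi^n\cdot\nabla_v f_\pm^{n+1}$, integrated by parts in $v$, produces only a bounded prefactor $\|\nabla_x\phi^n\|_\infty$ against $\mathcal{\tilde E}_{m;l}(f^{n+1})$ (this is where $\|\nabla_x\phi^n\|_\infty$ enters); the Poisson term $\Delta\phi^{n+1}$ is handled by testing the equation for $\phi^{n+1}$ against $\partial_t\phi^{n+1}$, producing the $\|E^{n+1}\|_2^2$ part of $\mathcal{\tilde E}$ and the factor $\|\partial_t\phi^n\|_\infty$.

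Next I treat the linearized collision pieces: $-A f_\pm^{n+1}$ together with the $-16\pi\mu(f^{n+1}-f^n)$ and the $K_\pm f^n$ terms combine to (essentially) $L f^{n+1}$ plus lower-order pieces, so by \eqref{coercLANDAU} applied after commuting $\partial_\beta^\alpha$ through $L$ I obtain the dissipation $\lambda\sum\|\partial_\beta^\alpha\{\FI-\FP\}f_\pm^{n+1}\|_{\sigma,w(\alpha,\beta)}^2$ together with weighted remainders of the form $\|\mu^\delta\partial^\alpha f^{n+1}\|_2^2$ and $\|\mu^\delta\partial^\alpha f^n\|_2^2$ via a standard commutator lemma; the $\FP$ part of the dissipation for $|\alpha|\ge 1$ is recovered by the macroscopic energy method of \cite{G0} applied to $f^{n+1}$, which is why only $|\alpha|\ge 1$ shows up in the $\FP f$ sum. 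The nonlinear term $\langle w(\alpha,\beta)\partial_\beta^\alpha\Gamma(f^n,f^{n+1}),\partial_\beta^\alpha f^{n+1}\rangle$ is handled directly by Proposition \ref{nonlinearTHM}, with the $|\partial_{\bar\beta}^{\alpha_1}g_1|_{2,-\wB}$ factor taken in $L^2_x$ via Sobolev embedding $H^{3/4}\hookrightarrow L^3$ (this yields the $\|\cdot\|_{H^{3/4}}$ norms appearing on the right of \eqref{e2n} and \eqref{emn}) and the two $\sigma$-factors split one into $\sqrt{\mathcal{D}_{m;l}}$ and the other into $\sqrt{\mathcal{\tilde E}_{m;l}}$ by Cauchy--Schwarz.

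The remaining quadratic terms $-8\pi(f_\pm^n+f_\mp^n)\sqrt\mu(f_\pm^{n+1}-f_\pm^n)$ carry an extra Gaussian factor, which is why one gets the $\sqrt{\mathcal{\tilde E}_{m;l}(\mu\{f^{n+1}-f^n\})}$ term; they are estimated exactly as the $\Gamma$ contribution but with $\mu$ absorbing the $v$-growth. For part (2), the same scheme is applied with only spatial derivatives, but derivative distributions with $|\alpha_1|>[m/2]$ must place the high-derivative factor into $\sqrt{\mathcal{D}_{m;l}}$ while the low-derivative one is controlled in $L^\infty_x$ via Sobolev, which is how the ``difference'' terms with $\sqrt{\mathcal{D}_{m;l}(f^{n+1}-f^n)}$ and the lower-order product $\mathcal{D}_{m-1;l}\cdot\mathcal{\tilde E}_{m-1;l}$ appear. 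The hard step is keeping the velocity weight $w(\alpha,\beta)=\langle v\rangle^{2(l-|\alpha|-|\beta|)}$ consistent through the integration-by-parts in $v$ (giving rise to the weighted $H^{3/4}$ norms with weight $w(\alpha',\beta')/\langle v\rangle^2$) and, simultaneously, ensuring that the prefactors multiplying $\mathcal{\tilde E}_{m;l}(f^{n+1})$ are integrable in time when the smallness of $M$ is eventually invoked; all other terms can be put either into $\frac18\int_0^t\mathcal{D}_{m;l}(f^n)ds$, into the dissipation on the left, or into the Gronwall-absorbable coefficient of $\mathcal{\tilde E}_{m;l}(f^{n+1})$. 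Summing over $|\alpha|+|\beta|\le m$ and collecting terms produces exactly \eqref{e2n} and \eqref{emn}.
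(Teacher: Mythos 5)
Your overall architecture (weighted energy estimates on \eqref{vlpfn}, coercivity of the collision operator, Proposition \ref{nonlinearTHM} plus the embedding $H^{3/4}_x\hookrightarrow L^3_x$ for the $H^{3/4}$ factors) is the right skeleton, but three specific points are wrong or missing, and two of them are exactly where the work of this lemma lies. First, you test against $w(\alpha,\beta)\partial_\beta^\alpha f_\pm^{n+1}$ with no exponential factor. The term $\pm\{E^n\cdot v\}f_\pm^{n+1}$ in \eqref{vlpfn} then produces $\int E^n\cdot v\, w^2(\partial_\beta^\alpha f_\pm^{n+1})^2$, which carries an extra factor of $|v|$ that is controlled neither by $\|\nabla_x\phi^n\|_\infty\mathcal{\tilde E}_{m;l}(f^{n+1})$ nor by the $\sigma$-dissipation (which only supplies $\langle v\rangle^{-1/2}$). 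The paper's multiplier is $e^{\pm2\phi^n}w^2\partial_\beta^\alpha f_\pm^{n+1}$: the identity $v\cdot\nabla_x e^{\pm2\phi^n}=\mp2E^n\cdot v\,e^{\pm2\phi^n}$ cancels that term exactly, and the surviving $\partial_t e^{\pm2\phi^n}$ contribution is precisely the source of the $\|\partial_t\phi^n\|_\infty$ factor in \eqref{e2n} (see \eqref{local5}). Your attribution of $\|\partial_t\phi^n\|_\infty$ to testing the Poisson equation against $\partial_t\phi^{n+1}$ is incorrect; the field energy $\|\nabla_x\phi^{n+1}\|_2^2$ is instead controlled through the continuity equation \eqref{phit}, giving \eqref{ineqE}.

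Second, the linearized pieces do not "combine to $Lf^{n+1}$": in the iteration $K_\pm$ acts on $f^n$, not $f^{n+1}$ (this is what keeps the scheme linear in $f^{n+1}$ and positivity-preserving), and $-16\pi\mu(f^{n+1}-f^n)$ is the artificial regularization term, not part of $L$. The dissipation on the left of \eqref{e2n} comes from the coercivity of $-A$ alone, as in \eqref{ax}--\eqref{av}, which yields the full weighted norm $\|\partial_\beta^\alpha f^{n+1}\|_{\sigma,w}^2$ up to a compactly supported remainder; no macro--micro recovery of $\FP f$ is performed at this stage, and \eqref{coercLANDAU} (which only controls $\{\FI-\FP\}f$) is not the tool used. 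Third, you never address the term $\mp2\nabla_x\partial^\alpha\phi^n\cdot\partial_\beta[v\sqrt\mu]\,\partial_\beta^\alpha f_\pm^{n+1}$, i.e.\ \eqref{local7}. In the torus this is harmless via Poincar\'e, but in $\R^3_x$ it is the reason the terms $\|\nabla_x\phi^n\|_2^2+\|\nabla_x\phi^{n+1}\|_2^2+\sum_{|\alpha|\le2}\|\mu^\delta\partial^\alpha f^{n}\|_2^2+\|\mu^\delta\partial^\alpha f^{n+1}\|_2^2$ appear on the right-hand side of \eqref{e2n}; the paper handles it by moving all $v$-derivatives onto $\partial_\beta[v\sqrt\mu]w^2$ and interpolating, as in \eqref{ineqlocal7}. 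Without these three ingredients the stated right-hand side of \eqref{e2n} and \eqref{emn} cannot be reached.
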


\begin{proof}
By the computation in Lemma 10 of \cite{G0}, we have
\begin{eqnarray}
&&\frac{d}{dt}\left\{ \int \frac{e^{\pm 2\phi ^{n}}w^{2}(\partial
_{\beta }^{\alpha }f_{\pm }^{n+1})^{2}}{2}\right\} -\int \langle
w^{2}\partial _{\beta }^{\alpha }Af^{n+1},\partial _{\beta }^{\alpha
}f^{n+1}\rangle  \label{local1} \\
&=&-\int e^{\pm 2\phi ^{n}}w^{2}\delta _{\beta }^{\mathbf{e}%
_{i}}\partial _{\beta -\mathbf{e}_{i}}^{\alpha +\mathbf{e}_{i}}f_{\pm
}^{n+1}\partial _{\beta }^{\alpha }f_{\pm }^{n+1} +\tilde{A}+\tilde{B},  \label{local2} 
\label{local11}
\end{eqnarray}
where
\begin{eqnarray}
\tilde{A}&=&\pm \sum_{\alpha _{1}<\alpha }C_{\alpha }^{\alpha _{1}}\int e^{\pm
2\phi ^{n}}w^{2}\partial _{\beta }^{\alpha }f_{\pm }^{n+1}\partial
^{\alpha -\alpha _{1}}\nabla _{x}\phi ^{n}\cdot \nabla _{v}\partial _{\beta
}^{\alpha _{1}}f_{\pm }^{n+1}  \label{local3} \\
&&\mp \sum_{\alpha _{1}<\alpha }C_{\alpha }^{\alpha _{1}}\int e^{\pm
2\phi ^{n}}w^{2}\partial _{\beta }^{\alpha }f_{+}^{n+1}\partial
^{\alpha -\alpha _{1}}\nabla _{x}\phi ^{n}\cdot \partial _{\beta }[v\partial
^{\alpha _{1}}f_{\pm }^{n+1}]  \label{local4} \\
&&\mp \int [\frac{2(l-|\alpha |-|\beta |)}{1+|v|^{2}}\nabla _{x}\phi
^{n}\cdot v-\phi _{t}^{n}]e^{\pm 2\phi ^{n}}w^{2}(\partial _{\beta
}^{\alpha }f_{\pm }^{n+1})^{2}  \label{local5} \\
&&+\int w^{2}(e^{\pm 2\phi ^{n}}-1)\partial _{\beta }^{\alpha }f_{\pm
}^{n+1}\partial _{\beta }^{\alpha }Af_{\pm }^{n+1}.  \label{local6} 
\end{eqnarray}
and
\begin{eqnarray}
\tilde{B}&=&\mp 2\int e^{\pm 2\phi ^{n}}w^{2}\nabla _{x}\partial ^{\alpha }\phi
^{n}\cdot \partial _{\beta }[v\sqrt{\mu }]\partial _{\beta }^{\alpha }f_{\pm
}^{n+1}  \label{local7} \\
&&+\int w^{2}e^{\pm 2\phi _{n}}\partial _{\beta }^{\alpha }K_{\pm
}f^{n}\partial _{\beta }^{\alpha }f_{\pm }^{n+1}  \label{local8} \\
&&+\int w^{2}e^{\pm 2\phi ^{n}}\partial _{\beta }^{\alpha }\Gamma _{\pm
}(f^{n},f^{n+1})\partial _{\beta }^{\alpha }f_{\pm }^{n+1}  \label{local9} \\
&&-8\pi \int w^{2}e^{\pm 2\phi ^{n}}\partial _{\beta }^{\alpha }[\sqrt{%
\mu }(f_{\pm }^{n}+f_{\mp }^{n})(f_{\pm }^{n+1}-f_{\pm }^{n})]\partial
_{\beta }^{\alpha }f_{\pm }^{n+1}  \label{local10} \\
&&-16\pi \int w^{2}e^{\pm 2\phi ^{n}}\partial _{\beta }^{\alpha }[\mu
(f_{\pm }^{n+1}-f_{\pm }^{n})]\partial _{\beta }^{\alpha }f_{\pm }^{n+1}.
\label{local11}
\end{eqnarray}
Above 
\begin{equation}
\delta _{\beta }^{\mathbf{e}_{i}}=1\text{ \ if \ \ \ }\mathbf{e}_{i}\leq
\beta ;\text{ \ }\ \text{or \ \ }\delta _{\beta }^{\mathbf{e}_{i}}=0,\text{
\ otherwise}.  \label{de}
\end{equation}

In our case, we have to estimate $\frac{d}{dt}||\nabla _{x}\phi ^{n+1}||_2^2$. Also we use a new estimate of \eqref{local7}.  For the estimate used in \cite{G0} of this term does not hold in $\R^3_x$.

For  $\frac{d}{dt}||\nabla _{x}\phi ^{n+1}||_2^2$, we use the continuity equation. Recall $\rho ^{n}=\int \sqrt{\mu 
}[f_{+}^{n}-f_{-}^{n}]dv$ and $j^{n}=\int v\sqrt{\mu }%
[f_{+}^{n}-f_{-}^{n}]dv $. We now note that from the continuity equation of 
\begin{equation*}
\rho _{t}^{n}+\nabla _{x}\cdot j^{n}=0,
\end{equation*}%
we have 
\begin{equation}
-\Delta \nabla _{x}\phi ^{n}=\nabla _{x}\rho ^{n},\text{ \ \ }\Delta
\partial _{t}\phi ^{n}=\nabla _{x}\cdot j^{n}.  \label{phit}
\end{equation}%
Thus
\begin{equation}\label{ineqE}
|\frac{d}{dt}||\nabla _{x}\phi ^{n+1}||_2^2|\leq ||\nabla _{x}\phi ^{n+1}||_2||j^{n+1}||_2\lesssim ||\nabla _{x}\phi ^{n+1}||_2^2+||\mu ^\delta
f^{n+1}||_{2}^{2}.
\end{equation}
For (\ref{local7}), via repeated integration by parts in $v$, we can move all the $v$ derivative $\partial_\beta$ out of $\partial_\beta^\alpha f^{n+1}$ to the factor $\partial_\beta[v\sqrt{\mu}]w^2$, so we have
\begin{eqnarray}
&&|\mp 2\int e^{\pm 2\phi ^{n}}w^{2}\nabla _{x}\partial ^{\alpha }\phi
^{n}\cdot \partial _{\beta }[v\sqrt{\mu }]\partial _{\beta }^{\alpha }f_{\pm
}^{n+1}|\notag\\
&\lesssim& ||\nabla _{x}\partial ^{\alpha }\phi
^{n}||_2^2+ ||\mu^\delta\partial ^{\alpha }f_{\pm
}^{n+1}||_2^2\notag\\
&\lesssim& ||\nabla _{x}\phi
^{n}||_2^2+ ||\mu^\delta\partial ^{\alpha }f_{\pm
}^{n}||_2^2+||\mu^\delta\partial ^{\alpha }f_{\pm
}^{n+1}||_2^2\label{ineqlocal7},
\end{eqnarray}
where the last line comes from interpolation.

We then collect estimates for all other terms in the proof of Lemma 10 in \cite{G0}.
For $\beta =0,$%
\begin{equation}
-\int \langle w^{2}(\alpha ,0)\partial ^{\alpha }Af_{\pm }^{n+1},\partial
^{\alpha }f_{\pm }^{n+1}\rangle dx\gtrsim ||\partial ^{\alpha
}f^{n+1}||_{\sigma ,w(\alpha ,0)}^{2}-C_{m}||\chi \partial ^{\alpha
}f^{n+1}||_{2}^{2};  \label{ax}
\end{equation}%
$\chi (v)$ being a general cutoff function in $v$. For $\beta \neq 0,$ for
any $\eta >0,$ we have 
\begin{eqnarray}
&&-\int \langle w^{2}(\alpha ,\beta )\partial _{\beta }^{\alpha }Af_{\pm
}^{n+1},\partial _{\beta }^{\alpha }f_{\pm }^{n+1}\rangle dx  \notag \\
&\gtrsim &||\partial _{\beta }^{\alpha }f^{n+1}||_{\sigma ,w(\alpha ,\beta
)}^{2}-\eta \mathcal{D}_{m;l}(f^{n+1})-C_{l,\eta }\sum_{\beta ^{\prime }<\beta
}||\partial _{\beta ^{\prime }}^{\alpha }f^{n+1}||_{\sigma ,w(\alpha ,\beta
^{\prime })}^{2}.  \label{av}
\end{eqnarray}
For any $\eta >0$ and $\beta \geq \mathbf{e}%
_{i}$%
\begin{eqnarray}
(\ref{local2}) &\lesssim &||\delta _{\beta }^{\mathbf{e}_{i}}\partial
_{\beta -\mathbf{e}_{i}}^{\alpha }f^{n+1}||_{\sigma ,w(\alpha ,\beta -%
\mathbf{e}_{i})}||\partial _{\beta -\mathbf{e}_{i}}^{\alpha +\mathbf{e}%
_{i}}f_{\pm }^{n+1}||_{\sigma ,w(\alpha +\mathbf{e}_{i},\beta -\mathbf{e}%
_{i})}  \notag \\
&\leq &\eta \mathcal{D}_{m;l}(f^{n+1})+C_{\eta }||\delta _{\beta }^{%
\mathbf{e}_{i}}\partial _{\beta -\mathbf{e}_{i}}^{\alpha }f_{\pm
}^{n+1}||_{\sigma ,w(\alpha ,\beta -\mathbf{e}_{i})}.  \label{streamlocal}
\end{eqnarray}
For (\ref{local3}) and (\ref{local4}), we have
\begin{eqnarray}
&&(\ref{local3})+(\ref{local4})  \label{fieldnonlinear} \\
&\lesssim &\eta ||\partial _{\beta }^{\alpha }f^{n+1}||_{\sigma
,w}^{2}+C_{\eta }\left\Vert |f_{\pm }^{n+1}|_{\sigma ,\frac{w(0,0)}{\langle
v\rangle ^{2}}}\right\Vert _{H^{\frac{3}{4}}}^{2}\mathcal{E}%
_{m;m}(f^{n})\notag\\
&&+C_{\eta }[\mathcal{E}_{m-1;m-1}(f^{n})+M]\mathcal{D}%
_{m-1;l}(f^{n+1}).  \notag
\end{eqnarray}%
For (\ref{local5}), we have
\begin{equation}
(\ref{local5})\leq C_{l,m}\int_{0}^{t}\{||\phi _{t}^{n}||_{\infty }+||\nabla
_{x}\phi ^{n}||_{\infty }\}w^{2}(\partial _{\beta }^{\alpha }f_{\pm
}^{n+1})^{2}.  \label{dfield}
\end{equation}
For (\ref{local6}), we have for any $\eta >0,$%
\begin{eqnarray}
(\ref{local6}) &\lesssim &\sqrt{M}||\partial _{\beta }^{\alpha
}f^{n+1}||_{\sigma ,w(\alpha ,\beta )}^{2}+C_{l,m}\sqrt{M}\sum_{|\alpha
|\leq m}||\partial ^{\alpha }f^{n+1}\chi ||_{2}^{2}  \label{smalll} \\
&&+\eta \mathcal{D}_{m;l}(f^{n+1})+C_{l,m,\eta }\sum_{\beta ^{\prime }<\beta
}||\partial _{\beta ^{\prime }}^{\alpha }f^{n+1}||_{\sigma ,w(\alpha ,\beta
^{\prime })}^{2}.  \notag
\end{eqnarray}
For (\ref{local8}), we have
\begin{eqnarray}
(\ref{local8}) &\lesssim &\{\eta \sqrt{\mathcal{D}_{m;l}(f^{n})}%
+C_{l,m,\eta }\sum_{|\alpha |\leq m}||\chi \partial ^{\alpha }f^{n}||_{2}\}%
\sqrt{\mathcal{D}_{m;l}(f^{n+1})}  \label{kbound} \\
&\lesssim &\eta \lbrack \mathcal{D}_{m;l}(f^{n})+\mathcal{D}%
_{m;l}(f^{n+1})]+C_{l,m,\eta }\sum_{|\alpha |\leq m}||\chi \partial
^{\alpha }f^{n}||_{2}^{2}.  \notag
\end{eqnarray}
For (\ref{local10}), we have
\begin{eqnarray*}
&&|w\partial _{\beta }^{\alpha }\{\mu ^{\frac{1}{4}}(f_{\pm }^{n}+f_{\mp
}^{n})\}\{\mu ^{\frac{1}{4}}(f_{\pm }^{n+1}-f_{\pm }^{n})\}|_{2} \\
&\lesssim &\sum_{|\alpha ^{\prime }|+|\beta ^{\prime }|\leq m}|w\partial
_{\beta -\beta ^{\prime }}^{\alpha -\alpha ^{\prime }}\{\mu ^{\frac{1}{4}%
}(f_{\pm }^{n}+f_{\mp }^{n})\}\partial _{\beta ^{\prime }}^{\alpha ^{\prime
}}\{\mu ^{\frac{1}{4}}(f_{\pm }^{n+1}-f_{\pm }^{n})\}|_{2} \\
&\lesssim &\sum_{|\alpha ^{\prime }|+|\beta ^{\prime }|\leq m}|w\partial
_{\beta -\beta ^{\prime }}^{\alpha -\alpha ^{\prime }}\{\mu ^{\frac{1}{4}%
}(f_{\pm }^{n}+f_{\mp }^{n})\}|_{L_{v}^{4}}|\partial _{\beta ^{\prime
}}^{\alpha ^{\prime }}\{\mu ^{\frac{1}{4}}(f_{\pm }^{n+1}-f_{\pm
}^{n})\}|_{L_{v}^{4}}\notag\\
&\lesssim &C_{l}\sum_{|\alpha ^{\prime }|+|\beta ^{\prime }|\leq \lbrack 
\frac{m}{2}]}\sqrt{\mathcal{D}_{m;l}(f^{n})}\left\Vert |\partial _{\beta
^{\prime }}^{\alpha ^{\prime }}(f_{\pm }^{n+1}-f_{\pm }^{n})|_{\sigma
}\right\Vert _{H^{\frac{3}{4}}}\notag\\
&&+C_{l}\sum_{|\alpha ^{\prime }|+|\beta ^{\prime }|\leq \lbrack \frac{m}{2}]}%
\sqrt{\mathcal{D}_{m;l}(f^{n+1}-f^{n})}\left\Vert |\partial _{\beta
^{\prime }}^{\alpha ^{\prime }}f^{n}|_{\sigma }\right\Vert _{H^{\frac{3}{4}%
}}.
\end{eqnarray*}%
For (\ref{local11}), we have
$$
(\ref{local11})\lesssim \sqrt{\mathcal{E}_{2;l}(\mu
\{f^{n+1}-f^{n}\})}\sqrt{\mathcal{E}_{2;l}(f^{n+1})}.$$
For (\ref{local9}), when $|\al|+|\be|\leq 2$, we have
$$(\ref{local9})\lesssim \sqrt{\mathcal{E}_{2;2}(f^{n})}\mathcal{D}%
_{2;l}(f^{n+1}).$$
For (\ref{local9}), when $|\alpha |+|\beta |=m\geq 3,$  for any $\eta >0,$ there exists $C_{l,m,\eta }>0$ with
\begin{eqnarray}
(\ref{local9})
&\lesssim & \sqrt{\mathcal{E}_{2;2}(f^{n})}||\partial _{\beta }^{\alpha }f^{n+1}||_{\sigma ,w}^{2}  \notag \\
&&+\eta \{\sqrt{\mathcal{E}_{2;2}(f^{n})}
+1\}\sum_{\substack{ |\alpha ^{\prime }|+|\beta ^{\prime }|=m  \\ \beta
^{\prime }\leq \beta }}||\partial _{\beta ^{\prime }}^{\alpha ^{\prime
}}f^{n+1}||_{\sigma ,w}^{2}  \notag \\
&&+C_{l,m,\eta }\sum_{|\alpha ^{\prime }|+|\beta ^{\prime }|\leq \lbrack 
\frac{m}{2}]} \left\Vert
|\partial _{\beta ^{\prime }}^{\alpha ^{\prime }}f^{n+1}|_{\sigma ,\frac{%
w(\alpha ^{\prime },\beta ^{\prime })}{\langle v\rangle ^{2}}}\right\Vert
_{H^{\frac{3}{4}}}^{2} \mathcal{E}_{m;l}(f^{n}) \notag \\
&&+C_{l,m,\eta }\{\mathcal{E}_{m-1;l}(f^{n})+1\}\{\mathcal{D}%
_{m-1;l}(f^{n+1})\}.  \label{gamma3x}
\end{eqnarray}
We note that our estimate of (\ref{local9}) is much simpler than that in Lemma 10 of \cite{G0}, because our result in Proposition \ref{nonlinearTHM} is much simpler than that of Proposition 6 in \cite{G0}. Combining the estimates for \eqref{local1} to \eqref{local11} completes the proof of this lemma.
\end{proof}

We now have the following uniform bound on the iterating sequence:

\begin{lemma}
\label{uniformbound}Assume $f_{0}\in \testF$ such that $F_{0}=\mu +%
\sqrt{\mu }f_{0}\ge 0$ with (\ref{start}).

(1) There exist small constants $0<T\leq 1$ and $M>0,$ such that if $%
\mathcal{\tilde{E}}_{2;2}(f_{0},\phi _{0})$ is sufficiently small, then
\begin{equation}
\mathcal{\tilde{E}}_{2;2}(f^{n+1})+\int_{0}^{t}\mathcal{D}_{2;2}(f^{n+1})(s)ds%
\leq M.  \label{2uniform}
\end{equation}

(2) $\{f^{n}\}$ is Cauchy in $L^{\infty }([0,T],L_{x,v}^{2})$ and $\{\nabla_x \phi^n\}$ is Cauchy in $L^{\infty }([0,T],L_{x}^{2}).$

(3) There exists $C_{l}>0$ such that for $0\leq t\leq T:$ 
\begin{equation}
\mathcal{\tilde{E}}_{2;l}(f^{n+1})(t)+\int_{0}^{t}\mathcal{D}%
_{2;l}(f^{n+1})(s)ds\leq C_{l}\mathcal{\tilde{E}}_{2;l}(0).  \label{2luniform}
\end{equation}

(4) Assume (2) is valid. For $m\geq 3,$ there exists an increasing
continuous function $P_{m,l}$ with $P_{m,l}(0)=0$ such that for $0\leq t\leq
T:$ 
\begin{equation}
\mathcal{\tilde{E}}_{m;l}(f^{n+1})+\int_{0}^{t}\mathcal{D}_{m;l}(f^{n+1})ds\leq
P_{m,l}(\mathcal{\tilde{E}}_{m;l}(f_{0})).  \label{mluniform}
\end{equation}

(5) $\{F^{n}\geq 0\}.$
\end{lemma}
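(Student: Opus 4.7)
The plan is to prove the five claims by induction on the iteration index $n$, using the energy inequality \eqref{e2n} from Lemma \ref{mixnenergy}(1) and the continuity equation \eqref{phit} at the base level, then bootstrap in the weight $l$ and derivative order $m$. Part (5) is immediate: since $F^{0}=\mu\geq 0$ and the linear equation \eqref{vlpFn} for $F^{n+1}$ has nonnegative-definite diffusion coefficient $\Phi^{ij}\ast[F^n_+ + F^n_-]$ (because $\Phi$ itself is nonnegative-definite) and nonnegative zeroth-order source $8\pi(\{F^n_\pm\}^2 + F^n_\mp F^n_\pm)$, the parabolic maximum principle yields $F^{n+1}_\pm\geq 0$ by induction on $n$.

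For part (1), I induct on $n$: assuming $\sup_{[0,T]}\mathcal{\tilde{E}}_{2;2}(f^n)+\int_0^T\mathcal{D}_{2;2}(f^n)\,ds\leq M$, I apply \eqref{e2n} with $l=2$. The term $\tfrac{1}{8}\int_0^t \mathcal{D}_{2;2}(f^n)\,ds$ contributes at most $M/8$; the $\|\mu^\delta \partial^\alpha f\|_2^2$ and $\|\nabla_x\phi\|_2^2$ terms contribute $O(TM)$ after time integration; the Sobolev factors $\||\partial^{\alpha'}_{\beta'} f|_\sigma\|_{H^{3/4}}$ are dominated by $\sqrt{\mathcal{\tilde{E}}_{2;2}+\mathcal{D}_{2;2}}$ via \eqref{inftySob}--\eqref{2Sob} applied in $x$. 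The pivotal $\|\nabla_x\phi^n\|_\infty+\|\partial_t\phi^n\|_\infty$ factors are controlled via Sobolev embedding on $\nabla_x\phi^n$ together with \eqref{phit}, which gives $\|\partial_t\phi^n\|_\infty\lesssim \|j^n\|_{H^3}\lesssim\sqrt{\mathcal{\tilde{E}}_{2;2}(f^n)}$. Absorbing the resulting $\mathcal{\tilde{E}}_{2;2}(f^{n+1})$ and $\mathcal{D}_{2;2}(f^{n+1})$ prefactors by Young's inequality and applying Gronwall yields $\mathcal{\tilde{E}}_{2;2}(f^{n+1})(t)+\int_0^t\mathcal{D}_{2;2}(f^{n+1})\,ds \leq C(\mathcal{\tilde{E}}_{2;2}(f_0)+TM)e^{CT\sqrt{M}}$, which is $\leq M$ once $\mathcal{\tilde{E}}_{2;2}(f_0)$, $M$, and $T$ are chosen sufficiently small.

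For part (2), subtracting two successive iterations \eqref{vlpfn} produces an equation for $f^{n+1}-f^n$ whose source is linear in $f^n-f^{n-1}$ and $\phi^n-\phi^{n-1}$. Pairing in $L^2_{x,v}$ with $f^{n+1}-f^n$, the transport term integrates to zero, the $A$-term gives a nonnegative coercive contribution via \eqref{coercLANDAU}, and all nonlinear coefficients are $O(\sqrt{M})$ from part (1); Poisson's equation yields $\|\nabla_x(\phi^{n+1}-\phi^n)\|_2\lesssim \|f^{n+1}-f^n\|_2$. A standard Gronwall estimate over $[0,T]$ with $T$ small then produces the contraction
\[
\sup_{[0,T]}\bigl(\|f^{n+1}-f^n\|_2^2+\|\nabla_x(\phi^{n+1}-\phi^n)\|_2^2\bigr)\leq \tfrac{1}{2}\sup_{[0,T]}\|f^n-f^{n-1}\|_2^2,
\]
establishing the Cauchy property.

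For parts (3) and (4), the same induction scheme works. For (3) one uses \eqref{e2n} with weight $l$, observing that all nonlinear prefactors live at the $m=2$ level and are therefore $O(\sqrt{M})$ by (1), while only the initial data $\mathcal{\tilde{E}}_{2;l}(f_0)$ scales with $l$; Gronwall gives the required $n$-uniform bound. For (4) I induct on $m$ in \eqref{emn}: the $\mathcal{D}_{m-1;l}$ and $\mathcal{\tilde{E}}_{m-1;l}$ terms on the right are controlled by the previous step via $P_{m-1,l}$, while the remaining $m$-level terms carry small prefactors or are absorbable, producing the claimed bound for a new continuous $P_{m,l}$. The main obstacle throughout is the pair $\|\nabla_x\phi^n\|_\infty+\|\partial_t\phi^n\|_\infty$ appearing in \eqref{e2n} and \eqref{emn}: unlike the torus setting of \cite{G0}, where Poincar\'e's inequality provides an immediate bound of $\phi$ by $\rho$, on $\R^3_x$ one must instead combine Sobolev embedding \eqref{inftySob} applied to $x$-derivatives of $\phi^n$ with the identities \eqref{phit} expressing $\partial_t\phi^n$ through $j^n=\int v\sqrt{\mu}[f^n_+-f^n_-]\,dv$; this is the point where the whole-space argument departs most substantially from the periodic case.
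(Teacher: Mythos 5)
Your overall architecture (induction on $n$ via \eqref{e2n}--\eqref{emn}, Gronwall, the maximum principle for positivity) matches the paper's, which for parts (1), (3), (4) and (5) simply invokes the corresponding parts of Lemma 11 in \cite{G0}. The one place where the whole-space setting genuinely forces a new argument is part (2), and that is exactly where your proposal breaks: you assert that ``Poisson's equation yields $\|\nabla_x(\phi^{n+1}-\phi^n)\|_2\lesssim\|f^{n+1}-f^n\|_2$.'' In $\R^3_x$ this elliptic estimate is false. From $-\Delta\psi=\varrho$ one has $\|\nabla_x\psi\|_{L^2}^2=\int |k|^{-2}|\hat\varrho(k)|^2\,dk$, which is not controlled by $\|\varrho\|_{L^2}^2$ at low frequencies; this failure is precisely why $\|E\|_2^2$ is carried as an independent component of $\mathcal{\tilde{E}}_{m;l}$ rather than being slaved to $f$, and why the Poincar\'e-based argument of \cite{G0} does not transfer. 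As a consequence your contraction inequality, whose right-hand side contains only $\sup\|f^n-f^{n-1}\|_2^2$, cannot be closed: the field-difference coefficients in the difference equation (e.g.\ $\nabla_x(\phi^n-\phi^{n-1})\cdot\nabla_v f^n$) are left uncontrolled.

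The paper's fix is to propagate $\|\nabla_x(\phi^{n+1}-\phi^n)\|_2^2$ as a separate unknown via the continuity equation \eqref{phit}: since $\Delta\partial_t\phi=\nabla_x\cdot j$, the operator $\nabla_x\Delta^{-1}\nabla_x\cdot$ is a composition of Riesz transforms and hence bounded on $L^2$, which yields the Gronwall inequality \eqref{phicauchy}, namely $\frac{d}{dt}\|\nabla_x(\phi^{n+1}-\phi^n)\|_2^2\lesssim\|\nabla_x(\phi^{n+1}-\phi^n)\|_2^2+\sum_\pm\|f_\pm^{n+1}-f_\pm^n\|_2^2$; the contraction is then run for the combined quantity $\|f^{n+1}-f^n\|_2^2+\|\nabla_x(\phi^{n+1}-\phi^n)\|_2^2$, with the field difference appearing on both sides. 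You should adopt this. A secondary, repairable slip in part (1): the chain $\|\partial_t\phi^n\|_\infty\lesssim\|j^n\|_{H^3}\lesssim\sqrt{\mathcal{\tilde{E}}_{2;2}(f^n)}$ is inconsistent, since $\|j^n\|_{H^3}$ requires three $x$-derivatives of $f^n$, which $\mathcal{\tilde{E}}_{2;2}$ does not control; the correct route is $\|\partial_t\phi^n\|_\infty\lesssim\|\nabla_x\partial_t\phi^n\|_2^{1/2}\|\nabla_x^2\partial_t\phi^n\|_2^{1/2}\lesssim\|j^n\|_{H^1}$, using \eqref{inftySob} together with the same Riesz-transform boundedness.
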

\begin{proof}
For (\ref{2uniform}), with Lemma \ref{mixnenergy}, the proof of part (1) of Lemma 11 in \cite{G0} is still valid in our setting.

Now we turn to part (2); with the continuity equation (\ref{phit}), we have 
\begin{equation}\label{phicauchy}
\frac{d}{dt}||\nabla_x (\phi^{n+1}-\phi^n)||_2^2 \lesssim ||\nabla_x (\phi^{n+1}-\phi^n)||_2^2+\sum_{\pm}||f_\pm^{n+1}-f_\pm^n||_2^2.
\end{equation}
Combinig (\ref{phicauchy}) with the estimates of the difference equation of part (2) of Lemma 11 in \cite{G0} gives
\begin{eqnarray*}
&&||f^{n+1}-f^{n}||_{2}^{2}(t)+||\nabla_x (\phi^{n+1}-\phi^n)||_2^2+\int_{0}^{t}||f^{n+1}-f^{n}||_{\sigma }^{2}\\
&\leq& 
\frac{1}{8}\int_{0}^{t}||f^{n}-f^{n-1}||_{\sigma
}^{2}(s)\\
&&+C\int_{0}^{t}\{||\nabla_x (\phi^{n+1}-\phi^n)||_2^2+||f^{n+1}-f^{n}||_{2}^{2}+||f^{n}-f^{n-1}||_{2}^{2}%
\}.
\end{eqnarray*}%
with $\sqrt{\mathcal{E}_{2;2}(f^{n})}$ sufficiently small and 
$e^{\pm 2\phi ^{n}} \approx 1$, 
By taking $T$ sufficiently small, we obtain%
\begin{eqnarray*}
&&\max \left\{ \sup_{0\leq t\leq T}\{||f^{n+1}-f^{n}||_{2}^{2}(t)+||\nabla_x (\phi^{n+1}-\phi^n)||_2^2\},\text{ }%
\int_{0}^{T}||f^{n+1}-f^{n}||_{\sigma }^{2}\right\} \\
&\leq &\frac{1}{4}\left\{ \sup_{0\leq t\leq
T}\{||f^{n+1}-f^{n}||_{2}^{2}(t)+||\nabla_x (\phi^{n+1}-\phi^n)||_2^2\}+\int_{0}^{T}||f^{n}-f^{n-1}||_{\sigma
}^{2}\right\} .
\end{eqnarray*}%
By iteration of $n,$ 
$$
\sup_{0\leq t\leq T}\{||f^{n+1}-f^{n}||_{2}^{2}(t)+||\nabla_x (\phi^{n+1}-\phi^n)||_2^2\}+\int_{0}^{T}||f^{n+1}-f^{n}||_{\sigma
}^{2}\leq \frac{1}{2^{n}}.
$$  
So $\{f^{n}\}$ and $\nabla_x \phi^n$ are Cauchy sequences in $%
L^{\infty }(L_{x,v}^{2})$.

The proof of part (3), (4) and (5) is the same as the corresponding proof in Lemma 11 in \cite{G0}.
\end{proof}

Similar to \cite{G0}, we summarize the local well-posedness as $n\rightarrow \infty .$

\begin{theorem}
\label{localsolution}Assume that $\mathcal{\tilde{E}}_{2;2}(f_{0})$ is
sufficiently small. Then there exist $0<T\leq 1$ and $M>0$ small such that
there is a unique solution $F=\mu +\sqrt{\mu }f \ge 0$ with 
\begin{equation*}
\mathcal{\tilde{E}}_{2;2}(f)(t)+\int_{0}^{t}\mathcal{D}_{2;2}(f)(s)ds\lesssim 
\mathcal{\tilde{E}}_{2;2}(0 \lesssim M.
\end{equation*}%
In general$,$ if $0\leq t\leq T,$ there exists an increasing continuous
function $P_{m,l}(\cdot )$ with $P_{m,l}(0)=0$ such that 
\begin{equation*}
\mathcal{\tilde{E}}_{m;l}(f)(t)+\int_{0}^{t}\mathcal{D}_{m;l}(f)(s)ds
\lesssim 
P_{m,l}(\mathcal{\tilde{E}}_{m;l}(f_{0})).
\end{equation*}
\end{theorem}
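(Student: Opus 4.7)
The plan is to pass to the limit $n \to \infty$ in the iteration \eqref{vlpfn} and invoke the machinery already assembled in Lemma \ref{mixnenergy} and Lemma \ref{uniformbound}. From Lemma \ref{uniformbound}(2) the sequence $\{f^n\}$ is Cauchy in $L^\infty([0,T];L^2_{x,v})$ and $\{\nabla_x \phi^n\}$ is Cauchy in $L^\infty([0,T];L^2_x)$; hence they converge strongly to limits $f$ and $\nabla_x\phi$. The uniform bound \eqref{2uniform}, together with the reflexivity of the relevant Hilbert spaces, also yields (along a subsequence) weak-$*$ convergence of $\partial^\alpha_\beta f^n$ in $L^\infty_t L^2_{x,v}(w(\alpha,\beta))$ for $|\alpha|+|\beta|\le 2$ and weak convergence of $\partial^\alpha_\beta\{\FI-\FP\}f^n$ in $L^2_t L^2_x(|\cdot|_{\sigma,w(\alpha,\beta)})$, and similarly of $\partial^\alpha \FP f^n$ in $L^2_t H^1_x$. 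By lower semi-continuity of the norms under weak convergence, the energy inequality
$$
\mathcal{\tilde{E}}_{2;2}(f)(t) + \int_0^t \mathcal{D}_{2;2}(f)(s)\,ds \lesssim \mathcal{\tilde{E}}_{2;2}(f_0) \lesssim M
$$
transfers from $f^{n+1}$ to $f$. The higher-order bounds in Theorem \ref{localsolution} follow from the same limiting argument applied to \eqref{2luniform} and \eqref{mluniform}.

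Next I would pass to the limit in the equation \eqref{vlpfn}. The linear transport and $A,K$ terms pass to the limit by the strong $L^2_{x,v}$ convergence of $f^n$ combined with the weak convergence of $\nabla_v f^n$, together with the strong convergence of $\nabla_x\phi^n$ in $L^2_x$ (which suffices to handle the term $\nabla_x\phi^n\cdot \nabla_v f^{n+1}$ after integrating against a test function in $\testF$). For the nonlinear Landau term $\Gamma(f^n,f^{n+1})$, Proposition \ref{nonlinearTHM} (with $\bar\beta=\beta_1=\alpha_1=0$) provides the quantitative estimate which, combined with the Cauchy property in $L^2_{x,v}$ and the uniform $\mathcal{D}_{2;2}$ bound, shows that $\Gamma(f^n,f^{n+1}) \to \Gamma(f,f)$ in the distributional sense on $[0,T]\times\R^3_x\times\R^3_v$; the quadratic $(f^n)^2$ and $\mu(f^{n+1}-f^n)$ terms vanish in the limit since $f^{n+1}-f^n\to 0$ strongly. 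This gives that $f$ is a weak solution of \eqref{vl}, \eqref{p}; standard regularization then upgrades it to a classical solution in the class where the energies are finite. Positivity $F\ge 0$ is inherited from Lemma \ref{uniformbound}(5) by a.e.\ convergence along a subsequence.

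For uniqueness, suppose $f^{(1)}$ and $f^{(2)}$ are two solutions with the stated energy bounds sharing the same initial data. The difference $g=f^{(1)}-f^{(2)}$ and $\psi=\phi^{(1)}-\phi^{(2)}$ satisfy a linear equation of the same structure as the difference equation analyzed in the proof of Lemma \ref{uniformbound}(2); repeating the same energy estimate — using \eqref{phicauchy} for the field part, the coercivity \eqref{coercLANDAU} for the dissipation, and Proposition \ref{nonlinearTHM} to handle the nonlinear difference — yields
$$
\frac{d}{dt}\bigl(\|g\|_2^2 + \|\nabla_x\psi\|_2^2\bigr) + \lambda \|g\|_\sigma^2 \lesssim \|g\|_2^2 + \|\nabla_x\psi\|_2^2,
$$
provided the common energies $\mathcal{\tilde{E}}_{2;2}(f^{(i)})$ remain suitably small, which we have on $[0,T]$. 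Gronwall then forces $g\equiv 0$ and $\nabla_x\psi\equiv 0$.

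The main obstacle is ensuring that the weak limits genuinely solve the nonlinear equation; the Landau nonlinearity is quadratic and involves second $v$-derivatives, so naive weak convergence is not enough. The remedy is to split $\Gamma(f^n,f^{n+1})-\Gamma(f,f)=\Gamma(f^n-f,f^{n+1})+\Gamma(f,f^{n+1}-f)$ and test against smooth compactly supported functions, using Proposition \ref{nonlinearTHM} to dominate each factor by a strong Cauchy piece times a uniformly bounded piece. Everything else is the standard compactness-and-passage-to-limit apparatus once the estimates of Lemma \ref{mixnenergy} and Lemma \ref{uniformbound} are in hand.
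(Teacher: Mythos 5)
Your proposal is correct and follows essentially the same route the paper takes: the paper states Theorem \ref{localsolution} as the $n\to\infty$ limit of the iteration, with all the substance already contained in Lemma \ref{mixnenergy} and Lemma \ref{uniformbound}, and your limit-passage, lower-semicontinuity, and difference-equation uniqueness arguments are exactly the standard details the paper leaves implicit by deferring to \cite{G0}.
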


\section{Linear decay theory}\label{sec:LD}
The linearized Vlasov-Poisson-Landau
system for the perturbation  takes the form 
\begin{eqnarray}\label{VPLlinear} 
\{\partial _{t}+v\cdot \nabla _{x}\}f_{\pm }\mp
2\{E\cdot v\}\sqrt{\mu }+L_{\pm }f 
&=& \sourceG_\pm, \\
&&  \notag \\
-\Delta \phi &=&\int \sqrt{\mu}[f_{+}-f_{-}]dv,  
\label{poissonLINEAR}
\end{eqnarray}%
where in the case of \eqref{vl} the source term is of the form 
$$
 \sourceG_\pm
 \eqdef
 \mp E\cdot \nabla _{v} f_{\pm }
\pm \{E\cdot v\}f_{\pm }+\Gamma _{\pm}(f,f). 
$$
Again we recall the definition  $E \eqdef -\nabla_x \phi$.

\subsection{Moment equations}\label{sec.me}

In this section, we begin with the representation of the macroscopic
projector $\FP$ and then derive from the perturbed system some
macroscopic balance laws and high-order moment equations.

Given any $f(t,x,v) = [f_+, f_-]$, one can write $\FP f$ in \eqref{coercLANDAU}  and \eqref{nulll} using the expansion $\FP f=[\FP_+ f, \FP_- f]$ as
\begin{equation} \label{form.p}
    \FP_\pm f= \{a_\pm(t,x)+b(t,x)\cdot \vel+c(t,x)(|\vel|^2-3)\}\FM^{1/2}. 
\end{equation}
Of course $\FP$ is a projection from $L^2_\vel\times L^2_\vel$ to $\CN$, where the coefficient functions $a_\pm(t,x)$, $b(t,x)\equiv [b_1(t,x),b_2(t,x),b_3(t,x)]$ and $c(t,x)$ depend on $f(t,x,\vel)$. 

Since the projection $\FP$ is orthogonal we have
\begin{equation*}
     \int_{\R^3} \psi(\vel)\cdot \{\FI-\FP\} f ~ d\vel=0,\ \ \forall\,\psi = [\psi_+, \psi_-]\in
    \CN.
\end{equation*}
This together with the 
form \eqref{form.p} of $\FP$ imply
\begin{equation}
\notag
\begin{split}
  \dis   & a_\pm= \langle \FM^{1/2}, f_\pm\rangle= \langle \FM^{1/2}, \FP_\pm f\rangle,
  \\
  \dis  & b_i=\frac{1}{2}\langle \vel_i \FM^{1/2}, f_++f_-\rangle
=\langle \vel_i \FM^{1/2},\FP_\pm f\rangle,
\\
  \dis & c= \frac{1}{12}\langle (|\vel|^2-3) \FM^{1/2}, f_+ + f_-\rangle
= \frac{1}{6}\langle (|\vel|^2-3) \FM^{1/2}, \FP_\pm f\rangle.
    \end{split}
\end{equation}
In the rest of this section we will derive the equations for these macroscopic variables and also the high-order moments as follows.

First consider the linearized system with a non-homogeneous source $\sourceG(t,x,\vel)=[\sourceG_+,\sourceG_-]$ as in \eqref{VPLlinear} and \eqref{poissonLINEAR}.  Taking velocity integrations of \eqref{VPLlinear} with respect to the velocity moments
\begin{equation}
\notag
    \FM^{1/2},\ \  \vel_i  \FM^{1/2}, i=1,2,3,\ \  \frac{1}{6}(|\vel|^2-3)\FM^{1/2},
\end{equation}
one has
\begin{eqnarray}
% \nonumber to remove numbering (before each equation)
&&\quad \pa_t a_\pm +\na_x\cdot b 
+
\na_x \cdot \langle \vel \FM^{1/2},\{\FI_\pm-\FP_\pm\} f\rangle
=
\langle\FM^{1/2},\sourceG_\pm\rangle,
\label{m0}
\\
&& 
\pa_t [b_i+  \langle \vel_i \FM^{1/2},\{\FI_\pm-\FP_\pm\} f\rangle
]+\pa_i (a_\pm+2c)\mp 2E_i
\notag
\\
&&
\qquad 
+\na_x\cdot \langle \vel\vel_i
\FM^{1/2},\{\FI_\pm-\FP_\pm\} f\rangle
=
\langle\vel_i  \FM^{1/2},\sourceG_\pm {-\FL_\pm f}\rangle,
\label{m1}\\
&&
\pa_t \left[c+ \frac{1}{6}\langle (|\vel|^2-3)\FM^{1/2},\{\FI_\pm-\FP_\pm\} f\rangle \right]
+  \frac{1}{3} \na_x\cdot b
\notag
\\
&&\qquad 
+
\frac{1}{6}\na_x\cdot \langle (|\vel|^2-3)\vel\FM^{1/2},\{\FI_\pm-\FP_\pm\} f\rangle
 =\frac{1}{6}\langle (|\vel|^2-3)\FM^{1/2},\sourceG_\pm  {- \FL_\pm f}\rangle,
 \label{m2}
\end{eqnarray}
where we have set $\FI=[\FI_+,\FI_-]$ with $\FI_\pm f=f_\pm$. 

We next define the following high-order moment functions 
$\highG(f_\pm)=(\highG_{ij}(f_\pm))_{3\times 3}$ 
 and
$\highB(f_\pm)=(\highB_1(f_\pm),\highB_2(f_\pm),\highB_3(f_\pm))$ 
by
\begin{equation}
% \nonumber to remove numbering (before each equation)
  \highG_{ij}(f_\pm) = \langle(\vel_i\vel_j-1)\FM^{1/2}, f_\pm\rangle,\ \
  \highB_i(f_\pm)=\frac{1}{10}\langle(|\vel|^2-5)\vel_i\FM^{1/2}, f_\pm\rangle.\label{def.gala}
\end{equation}
Further taking velocity integrations of \eqref{VPLlinear} with respect to the above high-order
moments one has
\begin{eqnarray}
% \nonumber to remove numbering (before each equation)
&&\pa_t [\highG_{ii}( \{\FI_\pm-\FP_\pm\}f)+2c]+2\pa_i b_i
=\highG_{ii}(l_{\pm}+\sourceG_\pm),\label{m2ii}\\
&&\pa_t \highG_{ij}( \{\FI_\pm-\FP_\pm\}f) +\pa_j b_i+\pa_i b_j +\na_x\cdot
\langle \vel\FM^{1/2},\{\FI_\pm-\FP_\pm\} f\rangle
\notag
\\
&&\hspace{4cm}
=
\highG_{ij}(l_{\pm}+\sourceG_\pm)
+
\langle\FM^{1/2},\sourceG_\pm\rangle,\ \ i\neq j,\label{m2ij}
\\
&& 
\pa_t \highB_i( \{\FI_\pm-\FP_\pm\}f)+\pa_i c=\highB_i(l_{\pm}+\sourceG_\pm),
\label{m3}
\end{eqnarray}
where
\begin{equation}\label{def.l}
l_\pm =-\vel \cdot \na_x \{\FI_\pm-\FP_\pm\}f-\FL_\pm f.
\end{equation}
Here we  used the moment values of the normalized
global Maxwellian $\FM$:
\begin{eqnarray*}
&&\langle 1, \FM\rangle=1, \ \ 
\langle |\vel_j|^2, \FM\rangle=1,\ \ 
\langle |\vel|^2, \FM\rangle=3,\\
&&\langle |\vel_j|^2|\vel_m|^2, \FM\rangle=1, \ \ j\neq m,\\
&&\langle |\vel_j|^4, \FM\rangle=3,\ \ \langle |\vel|^2|\vel_j|^2,
\FM\rangle=5.
\end{eqnarray*}
Additionally to derive \eqref{m2ij} we have used \eqref{m0}.

\subsection{The linearized system with micro sources}
\label{sec.decayl}

In this section, we are concerned with time-decay properties of
solutions to the Cauchy problem on the linearized
Vlasov-Poisson-Landau system with microscopic sources.

Consider the Cauchy problem on the linearized system with a
microscopic source $\sourceG=\sourceG(t,x,\vel) = [\sourceG_+,
\sourceG_-]$ as in \eqref{VPLlinear} and \eqref{poissonLINEAR}, where $\sourceG=\{\FI-\FP\} \sourceG$ and $[f_0,\phi_0]$ satisfies the
equation \eqref{poissonLINEAR} initially.

For simplicity, we write
\begin{equation}
\notag
    U=[f,\phi],\ \ U_0=[f_0,\phi_0].
\end{equation}
Formally, the solution to the Cauchy problem \eqref{VPLlinear} and \eqref{poissonLINEAR} is denoted by
\begin{eqnarray}
% \nonumber to remove numbering (before each equation)
&& U(t)=U^{I}(t)+U^{II}(t),\label{def.lu}\\
&& U^{I}(t)=\semiG(t)U_0,\ \ U^{I}=[u^I,\phi^I], \label{def.lu1}\\
&& U^{II}(t)=\int_0^t\semiG(t-s)[\sourceG(s),0,0]ds,\ \ U^{II}=[u^{II},\phi^{II}],\label{def.lu2}
\end{eqnarray}
where $\semiG(t)$ is the linear solution operator for the Cauchy problem
on the linearized homogeneous system corresponding to  \eqref{VPLlinear} and \eqref{poissonLINEAR} with $\sourceG=0$.

Notice that $U^{II}(t)$ is well-defined because $[\sourceG(s),0,0]$ for
any $0\leq s\leq t$ satisfies the compatibility condition \eqref{compatibilityC}
due to the fact that $\FP \sourceG(s)=0$ and hence
\begin{equation}  \label{compatibilityC}
    \int_{\R^3}\FM^{1/2}[\sourceG_+(s)-\sourceG_-(s)]d\vel=0.
\end{equation}
We introduce the norms
$\|\cdot\|_{\dot{\CH}^m_\wN}$, $\|\cdot\|_{\CZ_r^\wN}$ with $m\geq 0$, $\wN \in \R$, and $r\geq
1$ given by
\begin{equation}\label{brief.norm}
    \|U\|_{\dot{\CH}^m_\wN}^2\eqdef \|w_\wN f\|_{L^2_\vel(\dot{H}^m_x)}^2+\|\nabla_x \phi\|_{\dot{H}^m_x}^2,
    \quad
    \|U\|_{\CZ_r^\wN} \eqdef \|w_\wN f\|_{Z_r}+\| \nabla_x \phi \|_{L^r_x},
\end{equation}
for $U=[\solU,\phi]$, we set $\CL^2_\wN=\CH^0_\wN$ and $\CL^2=\CH^0_0$  as usual. 

The main result of this section is stated as follows.

\begin{theorem}\label{thm.ls}
Let $1\leq r\leq 2$,  $\ell \in\R$, and let $m\geq 0$.
%Suppose that \eqref{compatibilityC} and $\FP \sourceG=0$ hold. Assume that
$U$ is defined in \eqref{def.lu}, \eqref{def.lu1} and
\eqref{def.lu2} as the solution to the Cauchy problem \eqref{VPLlinear}.  
Let $\sigma_{r,m}$ be given by
\begin{equation}
\label{rateLIN}
    \sigma_{r,m}\eqdef \frac{3}{2}\left(\frac{1}{r}-\frac{1}{2}\right)+\frac{m}{2}.
\end{equation}
Then for any $\wE > 2 \sigma_{r,m}$ we obtain that
\begin{equation}
 \|\semiG(t) U_0\|_{\CHd^m_\wN}
\lesssim
(1+t)^{-\sigma_{r,m}}  
\|U_0\|_{\CHd^m_{\wN+\wE}  \cap Z_r^{\wN+\wE} },
 \label{thm.ls.1.soft}
\end{equation}
which will hold uniformly for any $t\geq 0$.
\end{theorem}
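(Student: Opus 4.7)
The strategy is to Fourier analyze the linearized equations in $x$, construct a frequency-localized Lyapunov functional, derive a dissipation inequality with the standard Landau factor $\fcnP$, and then integrate in $k$ using Hausdorff--Young for the $Z_r$--$L^2$ gain. First I take the Fourier transform of \eqref{VPLlinear}--\eqref{poissonLINEAR} in $x$, so that the Poisson equation becomes $|k|^2\widehat{\phi}(t,k)=\int\sqrt{\FM}(\widehat f_+-\widehat f_-)\,d\vel$ and for each $k$ the evolution reduces to an ODE in $L^2_\vel$. The moment identities \eqref{m0}, \eqref{m1}, \eqref{m2}, together with \eqref{m2ii}, \eqref{m2ij}, \eqref{m3}, become algebraic relations among $\widehat a_\pm,\widehat b,\widehat c,\widehat\highG,\widehat\highB$ and will be used to extract the fluid dissipation.

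Next, for each $k$ I build a Lyapunov functional of the form
\[
\CE_\wN(t,k)\simnew |w_\wN\widehat f(t,k)|_2^2+|k|^2|\widehat\phi(t,k)|^2+\ka\,\CI_\wN(t,k),
\]
where $\CI_\wN$ is a Kawashima-type interactive piece assembled from $\widehat a_\pm,\widehat b,\widehat c,\widehat\highG,\widehat\highB$ whose time derivative dissipates $|k|^2|(\widehat{a,b,c})|^2$ plus lower order. Coupling this with the coercivity \eqref{coercLANDAU} and the bound \eqref{lbound} on $|\cdot|_\si$ should produce, for small enough $\ka>0$, the frequency-localized inequality
\[
\frac{d}{dt}\CE_\wN(t,k)+\la\,\fcnP\,\CD_{\wN-1/2}(t,k)\le 0,
\]
in which $\CD_{\wN-1/2}\simnew |w_{\wN-1/2}\{\FI-\FP\}\widehat f|_\si^2+|k|^2|(\widehat{a,b,c})|^2+|\widehat E|^2$. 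The half-power weight loss $\wN\to\wN-1/2$ is intrinsic to the Coulomb (soft) Landau operator because $|\cdot|_\si$ only controls a $\ang\vel^{-1/2}$-weighted $L^2_\vel$.

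To close against this loss I interpolate
\[
\CE_\wN(t,k)\le \CD_{\wN-1/2}(t,k)^{\theta}\,\CE_{\wN+\wE}(t,k)^{1-\theta},\qquad \theta=\tfrac{2\wE}{2\wE+1},
\]
and propagate $\CE_{\wN+\wE}$ by the same inequality without using the dissipation gain, so that $\CE_{\wN+\wE}(t,k)\le \CE_{\wN+\wE}(0,k)$. Feeding this into the differential inequality and running a time-weighted Gronwall on $(1+\fcnP\,t)^p \CE_\wN$ produces
\[
\CE_\wN(t,k)\lesssim (1+\fcnP\, t)^{-p}\,\CE_{\wN+\wE}(0,k)
\]
for any admissible $p$ up to $\si_{r,m}$, once $\wE>2\si_{r,m}$. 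Multiplying by $|k|^{2m}$, integrating over $k\in\threed$, and splitting $|k|\le 1$ versus $|k|\ge 1$ finishes the proof: the high-frequency piece decays like $e^{-\la t}$ against the weighted initial data, while the low-frequency piece is dominated by $\sup_{|k|\le 1}|k|^{2m}(1+|k|^2 t)^{-p}\cdot\|\widehat U_0\|_{L^\infty_k}^2$, and Hausdorff--Young $\|\widehat U_0\|_{L^{r'}_k}\lesssim \|U_0\|_{L^r_x}$ combined with H\"older yields precisely the rate $(1+t)^{-\si_{r,m}}$ in \eqref{thm.ls.1.soft}.

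The main obstacle lies in constructing the Kawashima interaction $\CI_\wN$ compatibly with the Poisson coupling: the equation \eqref{m1} for $\widehat b$ contains the force term $\mp 2\widehat E_i$, which must be cancelled against $|k|^2\widehat\phi$ using the Poisson relation. This forbids treating $\widehat b_+-\widehat b_-$ independently of $\widehat E$, and the resulting cancellation has to be tracked carefully together with the weighted interpolation so as not to spoil the sharp $\fcnP$ factor. The restriction $\wE>2\si_{r,m}$ in the statement is precisely the price paid for the soft-potential (Coulomb) character of the Landau kernel.
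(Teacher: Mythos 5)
Your proposal follows essentially the same route as the paper: a Fourier-side Kawashima-type Lyapunov functional built from the micro coercivity of $L$, weighted microscopic estimates, the macroscopic moment system, and the $\widehat{a_+-a_-}$/Poisson interaction term that cancels the force $\mp 2\hat{E}_i$, yielding $\partial_t\CE_\wN(t,k)+\la\,\fcnP\,\CE_{\wN-1}(t,k)\le 0$, followed by velocity-weight interpolation against $\CE_{\wN+\wE}(0,k)$, a time-weighted iteration, and the $|k|\le 1$ versus $|k|\ge 1$ split with Hausdorff--Young --- precisely the final step the paper delegates to \cite{sNonCutOp}. The one bookkeeping slip is that the interaction functional dissipates $\frac{|k|^2}{1+|k|^2}\,|(\hat{a},\hat{b},\hat{c})|^2$, so writing the macroscopic part of $\CD_{\wN-1/2}$ as $|k|^2|(\hat{a},\hat{b},\hat{c})|^2$ \emph{and} multiplying by the prefactor $\fcnP$ double-counts a factor of $|k|^2$ at low frequency; since your interpolation step uses $\CD_{\wN-1/2}\gtrsim\CE_{\wN-1/2}$ you clearly intend the correct version.
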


\subsubsection{A time-frequency Lyapunov inequality}\label{sec.sub.tfli}

In this subsection, we shall construct the desired time-frequency
Lyapunov functional as motioned before. The proof will be carried
out along the similar line as in \cite{DS-VPB}, but additional
efforts need to be made to take care of the weak dissipation of the
electromagnetic field.

{\bf Estimate on the micro dissipation}

The first step for the construction of the time-frequency Lyapunov
functional is to obtain the micro dissipation on the basis of the
coercivity property \eqref{coercLANDAU} of $\FL$. 

Thus \eqref{VPLlinear}-\eqref{poissonLINEAR} also reads with $q_1 = [+1, -1]$
\begin{equation}%\label{ls-1}
\notag
    \left\{\begin{array}{l}
  \dis     \pa_t f+\vel\cdot\na_x f - 2E\cdot \vel \FM^{1/2}q_1 + \FL f = \sourceG,\\
-\Delta \phi  =a_+-a_-.
    \end{array}\right.
\end{equation}
Recall $E = -\nabla_x \phi$.
Notice that the continuity equation is also satisfied:
\begin{equation}\label{continEQ}
\partial_t \rho
+
\nabla _{x}\cdot \Jcont=0,
\end{equation}
where 
$$
\rho =\int \sqrt{\mu }[f_{+}-f_{-}]dv = a_+-a_-,
\quad
\Jcont=\int v\sqrt{\mu }[f_{+}-f_{-}]dv  =  \langle [\vel, - \vel ] \FM^{1/2},\{\FI-\FP\} f\rangle.
$$ 
This follows by subtracting 
\eqref{m0}$_+$ from \eqref{m0}$_-$ and using \eqref{compatibilityC}.

Taking the Fourier transform in $x$  gives
\begin{equation}\label{ls-1f}
    \left\{\begin{array}{l}
  \dis     \pa_t \hat{f}+i\vel\cdot k \hat{f} - 2 i \hat{\phi} k\cdot \vel \FM^{1/2}q_1+\FL \hat{f} = \hat{\sourceG},\\
    \dis     \pa_t \left( \widehat{a_+-a_-} \right)+i k \cdot \widehat{\Jcont}  = 0,\\
\dis |k|^2 \hat{\phi} =\widehat{a_+-a_-}.
    \end{array}\right.
\end{equation}
Then equation \eqref{ls-1f}$_1$  implies
\begin{equation}
\notag
    \frac{1}{2}\pa_t \nsm \hat{f} \nsm_{2}^2+\rmre \int_{\R^3}(\FL \hat{f}\mid \hat{f})d\vel
    -\rmre ( i \hat{\phi}  \mid k \cdot \widehat{\Jcont}  )=\rmre  {\int_{\R^3}(\hat{\sourceG}\mid \hat{f})d\vel}.
\end{equation}
We now observe from \eqref{ls-1f}$_2$-\eqref{ls-1f}$_3$ that
\begin{equation}
\notag
   \frac{1}{2}\pa_t  | \widehat{a_+-a_-} |^2+\rmre ( i k \cdot \widehat{\Jcont}  \mid |k|^2 \hat{\phi} )=0.
\end{equation}
Observe that $| \widehat{a_+-a_-} |^2 =  |k|^4 | \hat{\phi}  |^2$.
Since $\rmre ( i k \cdot \widehat{\Jcont}  \mid  \hat{\phi} )=\rmre ( i \hat{\phi}  \mid k \cdot \widehat{\Jcont}  )$, taking a summation of these two equalities gives
\begin{equation}
\notag
    \frac{1}{2}\pa_t \left(\nsm \hat{f} \nsm_{2}^2+ 2|k|^2 | \hat{\phi}  |^2 \right)+\rmre \int_{\R^3}(\FL \hat{f}\mid \hat{f})d\vel
    =
    \rmre  {\int_{\R^3}(\hat{\sourceG}\mid \hat{f})d\vel}.
\end{equation}
>From \eqref{coercLANDAU},  one has
\begin{equation}\label{diss-micr}
\pa_t \left(\nsm \hat{f} \nsm_{2}^2+2|k|^2 | \hat{\phi}  |^2\right)
+
\la |\{\FI-\FP\}\hat{f}|^2_\sigma
  \le
    \rmre  {\int_{\R^3}(\hat{\sourceG}\mid \hat{f})d\vel},
\end{equation}
for a properly small constant $0<\la\le \la_0$.

Here, we remark that equation \eqref{diss-micr} is the main estimate
for the construction of the time-frequency Lyapunov functional
$\CE(t,k)$. However, notice that for this time, the macroscopic part
$\FP\hat{f}$ and $\hat{E}$ are not included in the
dissipation rate of \eqref{diss-micr}. 

Next, based on the
macroscopic balance laws and high-order moment equation obtained in
Section \ref{sec.me}, we shall introduce some interactive functional
to capture the rest of the dissipation rate related to $\FP\hat{f}$ and $\hat{E}$.

{\bf Weighted energy estimates}\\ 
In this section our first goal is to prove the following instantaneous Lyapunov inequality
with a velocity weight $\wN \in \R$:
\begin{multline}\label{macroWeightINEQ}
\frac{d}{dt}\nsm w_{\wN}\{\FI-\FP\}\hat{\solU}(t,k)\nsm^2_{2} +
\la \nsm \{\FI-\FP\}\hat{\solU}(t,k)\nsm^2_{\sigma,\wN}
\\
\lesssim  |k|^2 \nsm \hat{\solU}\nsm^2_{2,-1}
+
  \nsm\{\FI-\FP\}\hat{\solU}\nsm_{L^2(B_{C})}^2
 +
 |k|^2 |\hat{\phi}|^2
+
\left| \langle w_{2\wN} \{\FI-\FP\}\hat{\sourceG},\{\FI-\FP\} \hat{\solU}\rangle \right|.
\end{multline}
To this end, we now split the solution $\hat{\solU}$ to equation \eqref{ls-1f}$_1$ into $\solU=\FP \solU + \{\FI-\FP\}\solU$  and then apply $\{\FI-\FP\}$ to the resulting equation:
\begin{multline}\notag
\pa_t \{\FI-\FP\}\hat{\solU} + \rmi \vel \cdot k \{\FI-\FP\}\hat{\solU} 
- 
\{\FI-\FP\} \left( 2 i \hat{\phi} k\cdot \vel \FM^{1/2}q_1 \right)
+
\FL\{\FI-\FP\} \hat{\solU} 
 \\
 =\{\FI-\FP\}\hat{\sourceG}
-\{\FI-\FP\}(\rmi \vel \cdot k \FP \hat{\solU} )
 +\FP (\rmi \vel \cdot k \{\FI-\FP\}\hat{\solU}).
\end{multline}
Multiply the last equation by $w_{2\wN} \{\FI-\FP\}\overline{\hat{\solU}}$ and integrate in $\vel$ to obtain
\begin{equation}
\label{app.vw.p05.int1.new}
\frac{1}{2}\frac{d}{dt}\nsm w_{\wN}\{\FI-\FP\}\hat{\solU}(t,k)\nsm^2_{2} +
 \rmre \langle w_{2\wN}\FL\{\FI-\FP\} \hat{\solU},\{\FI-\FP\} \hat{\solU}\rangle
 =
 \Ga_1 + \Ga_2 + \Ga_3,
\end{equation}
where $ \Ga_1 = \rmre \langle w_{2\wN} \{\FI-\FP\}\hat{\sourceG},\{\FI-\FP\} \hat{\solU}\rangle$ and
\begin{multline*}
 \Ga_2 =
-\rmre\left\langle
\{\FI-\FP\}(\rmi \vel \cdot k\FP \hat{\solU} ),
w_{2\wN}  \{\FI-\FP\}\hat{\solU}
\right\rangle
\\
 +\rmre \left\langle
 \FP (\rmi \vel \cdot k \{\FI-\FP\}\hat{\solU}),
w_{2\wN}  \{\FI-\FP\}\hat{\solU}  \right\rangle.
\end{multline*}
We furthermore define 
$$
\Ga_3 
= 
\rmre\left\langle
\{\FI-\FP\} \left( 2 i \hat{\phi} k\cdot \vel \FM^{1/2}q_1 \right),
w_{2\wN}  \{\FI-\FP\}\hat{\solU}
\right\rangle
$$
We will estimate each of the
three terms in \eqref{app.vw.p05.int1.new}.

As a result of the rapid decay in the coefficients of \eqref{form.p} we obtain
$$
\left| \Ga_2 \right| \le
\eta \nsm \{\FI-\FP\}\hat{\solU}\nsm^2_{\sigma, \wN}
+
C_\eta |k|^2 \left( \nsm w_{-\wE}\{\FI-\FP\}\hat{\solU}\nsm^2_{2}+\nsm \FP \hat{\solU}\nsm^2_{2} \right), 
$$
which holds for any small $\eta >0$ and any large $\wE>0$. 
Similarly 
$$
\left| \Ga_3 \right| \le
\eta \nsm w_{-\wE}\{\FI-\FP\}\hat{\solU}\nsm^2_{2}
+
C_\eta |k|^2 |\hat{\phi}|^2
 % \left( \nsm w^{-\wE}\{\FI-\FP\}\hat{\solU}\nsm^2_{L^2}+\nsm \FP \hat{\solU}\nsm^2_{L^2} \right), 
$$
For the linear estimate, we invoke Lemma 9 in \cite{SG2} 
to achieve the coercive bound
$$
\rmre  \langle w_{2\wN}\FL\{\FI-\FP\} \hat{\solU},\{\FI-\FP\} \hat{\solU}\rangle
 \ge \la \nsm \{\FI-\FP\}\hat{\solU}\nsm^2_{\sigma, \wN}
 -
 C  \nsm\{\FI-\FP\}\hat{\solU}\nsm_{L^2(B_{C})}^2.
$$
We plug these last few estimates into \eqref{app.vw.p05.int1.new} to obtain \eqref{macroWeightINEQ}.

We furthermore remark, following the same procedure as above, that we get
\begin{equation}\label{macroWeightINEQ.noM}
\frac{1}{2}\frac{d}{dt}\nsm w_{\wN}\hat{\solU}(t,k)\nsm^2_{2} 
+
\la \nsm \hat{\solU}(t,k)\nsm^2_{\sigma, \wN}
\lesssim
   \nsm\hat{\solU}\nsm_{L^2(B_{C})}^2
   +
   |k|^2 |\hat{\phi}|^2
+
\left| \langle w_{2\wN} \hat{\sourceG}, \hat{\solU}\rangle \right|.
\end{equation}
In other words, if we multiply \eqref{ls-1f}$_1$ by $w_{2\wN}\overline{\hat{\solU}(t,k)}$, integrate in $\vel$ and use the same estimates as in the last case it follows that we obtain \eqref{macroWeightINEQ.noM}.

{\bf Estimate on the macro dissipation}

Let us apply those computations in Section \ref{sec.me}.
Taking the mean value of every
two equations with $\pm$ sign for \eqref{m0}, \eqref{m1}, \eqref{m2}
and noticing $\FP \sourceG=0$, one has
\begin{equation}\label{macro.1}
    \left\{
    \begin{array}{l}
      \dis \pa_t\left(\frac{a_++a_-}{2}\right)+\na_x\cdot b=0,\\
      \dis \pa_t b_i+\pa_i\left(\frac{a_++a_-}{2}+2c\right)+\frac{1}{2}\sum_{j=1}^3\pa_j\highG_{ij}(\{\FI-\FP\}f\cdot [1,1])=0,\\
      \dis \pa_t c+ \frac{1}{3}\na_x\cdot b +\frac{5}{6}\sum_{i=1}^3\partial_i \highB_i(\{\FI-\FP\}f \cdot [1,1])=0,
    \end{array}\right.
\end{equation}
for $1\leq i\leq 3$,

where moment functions $\highG(\cdot)$ and
$\highB(\cdot)$ are defined in \eqref{def.gala},  {and we used the following facts
\begin{eqnarray*}
% \nonumber to remove numbering (before each equation)
&&\langle \FM^{1/2},g_\pm \rangle =\langle ([1,0]+[0,1])\FM^{1/2}, g\rangle=0,\\
&&\langle \vel_i\FM^{1/2},g_+ + g_- \rangle =\langle {[\vel_i,\vel_i]}\FM^{1/2}, g\rangle=0,\ \ 1\leq i\leq 3,\\
&&\langle \frac{1}{6}(|\vel|^2-3)\FM^{1/2},g_+ + g_- \rangle =\langle [|\vel|^2,|\vel|^2]\FM^{1/2}, g\rangle=0,
\end{eqnarray*}
due to $\FP g=0$ and likewise for $\FL \solU =[\FL_+  \solU, \FL_-  \solU ]$ due to $\FP \FL  \solU=0$.} Similarly, it
follows from \eqref{m2ii}, \eqref{m2ij} and \eqref{m3} that
\begin{equation}\label{macro.2}
    \left\{
    \begin{array}{l}
      \dis \pa_t \left[\frac{1}{2}\highG_{ij}(\{\FI-\FP\} \solU \cdot [1,1]) +2c\de_{ij}\right]\\
      \dis \hspace{1.5cm}+\pa_i b_j+\pa_j b_i
      =
      \frac{1}{2}\highG_{ij}((l_++l_-)+(\sourceG_++\sourceG_-)),\\
      \dis \frac{1}{2}\pa_t \highB_i(\{\FI-\FP\}  \solU \cdot [1,1])+\pa_i c=\frac{1}{2}\highB_i((l_++l_-)+(\sourceG_++\sourceG_-)),
    \end{array}\right.
\end{equation}
for $ 1\leq i,j\leq 3$, where $l_\pm$ is still defined in
\eqref{def.l}, and $\de_{ij}$ denotes as usual the Kronecker delta.

\begin{lemma}
There is a time-frequency functional  {$\CE_{\rm int}^{(1)}(t,k)$} defined by
\begin{eqnarray}
% \nonumber to remove numbering (before each equation)
\notag
  \CE_{\rm int}^{(1)}(t,k) &=& \frac{1}{1+|k|^2}\sum_{i=1}^3\frac{1}{2} (i k_i \hat{c}\mid \highB_i(\{\FI-\FP\}\hat{\solU}\cdot [1,1]))\\
  \notag
  &&+\frac{\kappa_1}{1+|k|^2}\sum_{ {i,j}=1}^3(ik_i \hat{b}_j+ik_j\hat{b}_i\mid \frac{1}{2} 
  \highG_{ij}(\{\FI-\FP\}\hat{\solU}\cdot [1,1])+2 {\hat{c}}\de_{ij})
  \\
  && +\frac{\kappa_2}{1+|k|^2}\sum_{i=1}^3\left(ik_i\frac{\hat{a}_++\hat{a}_-}{2}\mid \hat{b}_i\right),
  \label{def.int1}
\end{eqnarray}
with two properly chosen constants $0<\kappa_2\ll\kappa_1\ll 1$ such that
\begin{multline}
% \nonumber to remove numbering (before each equation)
\dis \pa_t \rmre \CE_{\rm int}^{(1)}(t,k)+\frac{\la |k|^2}{1+|k|^2} \left(|\widehat{a_++a_-}|^2+|\hat{b}|^2+|\hat{c}|^2\right)\\
\dis 
\lesssim
{\nsm \{\FI-\FP\}\hat{\solU}\nsm_{2,-m}^2+\nsm \hat{\sourceG}\nsm_{2,-m }^2},
\label{diss-macro+}
\end{multline}
holds for any $t\geq 0$ and $k\in \R^3$ and $m\ge 0$.
\end{lemma}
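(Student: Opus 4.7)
The plan is to follow the classical macroscopic-dissipation strategy adapted to Fourier space with an appropriate low/high-frequency normalization by $(1+|k|^2)^{-1}$. I would work directly with the Fourier transforms of the macroscopic balance laws \eqref{macro.1} and the high-order moment equations \eqref{macro.2}, treating the three summands of $\CE_{\rm int}^{(1)}(t,k)$ as a hierarchy that successively captures dissipation of $c$, of $b$, and finally of $a_++a_-$, with the small constants $0<\kappa_2\ll\kappa_1\ll 1$ chosen to absorb cross-terms in the right order.

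First I would differentiate the $\hat c$--$\highB_i$ piece. Using \eqref{macro.2}$_2$ to replace $\partial_t\highB_i$ produces, after taking the real part and summing in $i$, the clean term $\frac{|k|^2}{1+|k|^2}|\hat c|^2$; the leftover factor $\frac{1}{1+|k|^2}(ik_i\partial_t\hat c\mid\highB_i)$ is then handled by substituting $\partial_t\hat c$ from \eqref{macro.1}$_3$, which generates couplings of the form $|k||\hat b|\cdot|\highB|$ and $|k|\cdot|\highB|^2$; by Cauchy--Schwarz with a small parameter $\eta>0$ these become $\eta\tfrac{|k|^2}{1+|k|^2}|\hat b|^2$ plus terms controlled by $\|\{\FI-\FP\}\hat\solU\|_{2,-m}^2+\|\hat\sourceG\|_{2,-m}^2$. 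Next I differentiate the $\hat b$--$(\frac12\highG+2\hat c\delta)$ piece. Using \eqref{macro.2}$_1$ for $\partial_t\highG_{ij}$ yields the key term $\frac{|k|^2}{1+|k|^2}|\hat b|^2$ (modulo a lower-order combination involving $|\hat c|^2$), while using \eqref{macro.1}$_2$ for $\partial_tb_i$ produces cross-couplings of the form $|k|\cdot|\widehat{a_++a_-}|\cdot|\highG|$ and $|k|\cdot|\hat c|\cdot|\highG|$, again absorbed by Cauchy--Schwarz. Finally, differentiating the $\hat b$--$\widehat{(a_++a_-)/2}$ piece and using \eqref{macro.1}$_{1,2}$ gives the last clean term $\frac{|k|^2}{1+|k|^2}|\widehat{a_++a_-}|^2$, with cross-terms controlled by $\frac{|k|^2}{1+|k|^2}(|\hat b|^2+|\hat c|^2)$ and by the $\{\FI-\FP\}$ contributions.

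I would then form the weighted sum with coefficients $1,\kappa_1,\kappa_2$. The hierarchy works because the dissipation $\tfrac{|k|^2}{1+|k|^2}|\hat c|^2$ appears with coefficient of order $1$, $\tfrac{|k|^2}{1+|k|^2}|\hat b|^2$ with coefficient of order $\kappa_1$, and $\tfrac{|k|^2}{1+|k|^2}|\widehat{a_++a_-}|^2$ with coefficient of order $\kappa_2$. The cross-terms generated by the first block at level $|\hat b|^2$ have size $\eta$; by choosing $\eta\ll\kappa_1$ they are absorbed into the $\kappa_1$-term. Likewise the second block's cross-terms at level $|\widehat{a_++a_-}|^2$ have size $\kappa_1\eta'$, and choosing $\kappa_2\gg\kappa_1\eta'$ (but still $\kappa_2\ll\kappa_1$) lets them be absorbed. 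All the residual moment error terms are $\lesssim\|\{\FI-\FP\}\hat\solU\|_{2,-m}^2+\|\hat\sourceG\|_{2,-m}^2$ because each $\highG,\highB$, and each $l_\pm$ (through \eqref{def.l} and the local coercivity of $\FL$) is a bounded operator from the $L^2_{2,-m}$ space into a weighted $L^2_k$-space via rapid decay of $\FM^{1/2}$; the factor $1/(1+|k|^2)$ exactly compensates the $|k|$ appearing in the $\vel\cdot\na_x$ part of $l_\pm$ after Cauchy--Schwarz.

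The main obstacle I anticipate is the handling of the electric-field contribution: the linearized equation contains $2E\cdot v\FM^{1/2}q_1$, which when inserted into the moment equations produces a term that is \emph{not} small compared to $|k||\widehat{a_++a_-}|$. However, because the coupling $2E\cdot v\FM^{1/2}q_1$ is proportional to $q_1=[+1,-1]$ and our macroscopic equations \eqref{macro.1} are obtained by \emph{averaging} the $\pm$ equations, the field term cancels in \eqref{macro.1}$_2$; it survives only in the difference $a_+-a_-$, which is not part of the dissipation being constructed here (that is controlled separately through \eqref{diss-micr} and the Poisson relation $|k|^2\hat\phi=\widehat{a_+-a_-}$). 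This cancellation is what makes the construction analogous to the pure Landau case and allows all the cross terms to be absorbed, yielding \eqref{diss-macro+}.
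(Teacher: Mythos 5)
Your proposal is correct and follows essentially the same route as the paper, which simply defers to the standard macroscopic interaction-functional argument (Lemma 4.1 of \cite{D-Hypo}) built from the Fourier transforms of \eqref{macro.1} and \eqref{macro.2}, with the hierarchy $0<\kappa_2\ll\kappa_1\ll 1$ absorbing the cross-terms exactly as you describe. Your observation that the field term $2E\cdot v\FM^{1/2}q_1$ cancels upon averaging the $\pm$ equations (so that \eqref{macro.1} is field-free and the argument reduces to the pure collisional case) is precisely why the paper can cite the force-free result directly.
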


\begin{proof}
The proof can be found in Lemma 4.1 of \cite{D-Hypo}.  Although Lemma 4.1 in  \cite{D-Hypo} studies the pure Boltzmann equation without force terms our case is directly similar.  It follows from  \eqref{macro.1} and \eqref{macro.2}, which are otherwise not used following.
\end{proof}

In order to further obtain the dissipation rate related to
$\hat{a}_\pm$ from the formula
$$
|\hat{a}_+|^2+|\hat{a}_-|^2=\frac{|\widehat{a_++a_-}|^2}{2}+\frac{|\widehat{a_+-a_-}|^2}{2},
$$
we need to consider the dissipation of $\widehat{a_+-a_-}$. For
that, taking difference of two equations with $\pm$ sign for
\eqref{m0}, \eqref{m1} and also noticing  {$\FP \sourceG=0$ which implies $\langle \FM^{1/2}, g_\pm \rangle =0$}, one has
\begin{eqnarray}
% \nonumber to remove numbering (before each equation)
&&\pa_t (a_+-a_-)+\na_x\cdot \Jcont=0,\label{m0-}\\
&&\pa_t \Jcont + \na_x (a_+-a_-)-4E+ \na_x\cdot \highG (\{\FI-\FP\}f\cdot q_1)\label{m1-}\\
\notag
&&\qquad\qquad\qquad\qquad\qquad\qquad\qquad= {\langle [\vel,-\vel] \FM^{1/2}, g - \FL \{\FI-\FP\} f \rangle}.
\end{eqnarray}
Note that here and hereafter
$
\left(\na_x\cdot \highG\right)_j (\cdot)
=
\partial_i \highG_{ij} (\cdot).
$
Together with
\begin{equation}\label{divE}
   - \Delta \phi =a_+-a_-,
\end{equation}
one has the following lemma.

\begin{lemma}\label{lem.a}
For any $t\geq 0$ and $k\in \R^3$,
it holds uniformly that
\begin{equation}
\dis \frac{\pa_t \rmre (\hat{\Jcont}\mid ik \widehat{({a}_+-{a}_-)})}{(1+|k|^2)} +\la |\widehat{a_+-a_-}|^2\\
\dis
\lesssim
{\nsm \{\FI-\FP\}\hat{\solU}\nsm_{2,-m}^2+\nsm \hat{\sourceG}\nsm_{2,-m }^2}.
\label{lem.a-1}
\end{equation}
This inequality will be true for any $m \ge 0$.
\end{lemma}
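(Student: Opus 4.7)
The plan is to construct the quantity $\rmre(\hat{\Jcont}\mid ik\widehat{(a_+-a_-)})$ whose time derivative, via the moment equations \eqref{m0-}, \eqref{m1-}, and Poisson \eqref{divE}, produces the coercive term $|\widehat{a_+ - a_-}|^2$ plus errors that are controlled by the non-fluid dissipation and the source. First I would apply the product rule to write
\begin{equation*}
\frac{d}{dt}\rmre(\hat{\Jcont}\mid ik\widehat{(a_+-a_-)})
=
\rmre(\pa_t \hat{\Jcont}\mid ik\widehat{(a_+-a_-)})+\rmre(\hat{\Jcont}\mid ik\, \pa_t\widehat{(a_+-a_-)}).
\end{equation*}
The Fourier version of the continuity equation \eqref{m0-} gives $\pa_t \widehat{(a_+-a_-)}=-ik\cdot\hat{\Jcont}$, so the second piece reduces to the real positive quantity $|k\cdot \hat{\Jcont}|^2$. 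Since $\hat{\Jcont}=\langle[\vel,-\vel]\FM^{1/2},\{\FI-\FP\}\hat{\solU}\rangle$, Cauchy--Schwarz with a polynomial weight bounds it by $|k|^2\,\nsm\{\FI-\FP\}\hat{\solU}\nsm_{2,-m}^2$, which after dividing by $1+|k|^2$ is absorbed into the right-hand side.

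Next I would plug in \eqref{m1-} for $\pa_t\hat{\Jcont}$. The key step is to combine the pressure gradient term $-ik\widehat{(a_+-a_-)}$ with the self-consistent field term $4\hat{E}$ using the Poisson relation \eqref{divE}: since $\hat{E}=-ik\hat{\phi}$ and $|k|^2\hat{\phi}=\widehat{(a_+-a_-)}$, one has $\hat{E}=-ik\,\widehat{(a_+-a_-)}/|k|^2$, and therefore
\begin{equation*}
-ik\widehat{(a_+-a_-)}+4\hat{E}=-\Bigl(1+\tfrac{4}{|k|^2}\Bigr)ik\,\widehat{(a_+-a_-)}.
\end{equation*}
Taking the inner product with $ik\widehat{(a_+-a_-)}$ produces $-(|k|^2+4)|\widehat{(a_+-a_-)}|^2$. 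This is the origin of the coercive term; after dividing through by $(1+|k|^2)$, the coefficient $(|k|^2+4)/(1+|k|^2)\ge 1$ yields $\la |\widehat{(a_+-a_-)}|^2$ uniformly in $k$.

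The remaining pieces in $\pa_t\hat{\Jcont}$, namely $-ik\cdot \widehat{\highG}(\{\FI-\FP\}\hat{f}\cdot q_1)$ and $\langle [\vel,-\vel]\FM^{1/2}, \hat{g}-\FL\{\FI-\FP\}\hat{f}\rangle$, are estimated by Cauchy--Schwarz against $ik\widehat{(a_+-a_-)}$, then Young's inequality with a small parameter $\eta>0$. Using that $|k|^2/(1+|k|^2)\le 1$, each of these contributions is majorized by $\eta |\widehat{(a_+-a_-)}|^2+C_\eta(\nsm\{\FI-\FP\}\hat{\solU}\nsm_{2,-m}^2+\nsm\hat{\sourceG}\nsm_{2,-m}^2)$; the moments of $\FL\{\FI-\FP\}\hat{f}$ are controlled in an $m$-independent weighted norm thanks to the fast Gaussian factor hidden in $[\vel,-\vel]\FM^{1/2}$. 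Choosing $\eta$ small enough to absorb $\eta|\widehat{(a_+-a_-)}|^2$ into the coercive term completes the argument.

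The main obstacle is the low-frequency regime $|k|\to 0$: the pressure-gradient contribution $-ik\widehat{(a_+-a_-)}$ alone would only give $|k|^2|\widehat{(a_+-a_-)}|^2$, which after dividing by $1+|k|^2$ degenerates as $k\to 0$ and would not yield a uniform dissipation on $\widehat{a_+-a_-}$. The resolution, and the point where \eqref{divE} is essential, is that the electrostatic force $4\hat{E}$ contributes an additional $-4|\widehat{(a_+-a_-)}|^2$ (via the factor $4/|k|^2$ from Poisson inversion exactly cancelling the Fourier weight), which regularizes the low-frequency coercivity and produces the clean constant appearing in \eqref{lem.a-1}.
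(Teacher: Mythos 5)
Your argument is correct and follows essentially the same route as the paper's proof: the product rule together with the Fourier-transformed continuity equation \eqref{m0-} produces $|k\cdot\hat{\Jcont}|^2$, the momentum equation \eqref{m1-} combined with the Poisson relation \eqref{divE} turns the pressure-gradient plus field terms into the coercive quantity $(|k|^2+4)|\widehat{a_+-a_-}|^2$, and the remaining moment and source terms are handled by Cauchy--Schwarz before dividing by $1+|k|^2$. Your closing remark correctly identifies the role of the $4\hat{E}$ term in making the dissipation uniform as $k\to 0$, which is exactly the point of the paper's computation.
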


\begin{proof}
In fact, taking the
Fourier transform in $x$ for \eqref{m0-}, \eqref{m1-} and \eqref{divE} gives
\begin{equation}\label{lem.a-1.p1}
\left\{\begin{array}{l}
 \dis \pa_t \widehat{(a_+-a_-)}+ik\cdot \hat{\Jcont}=0,\\[3mm]
\dis \pa_t \hat{\Jcont}+ik \widehat{(a_+-a_-)}-4\widehat{E}+ik\cdot \highG(\{\FI-\FP\}\hat{\solU}\cdot q_1)\\[3mm]
\dis \qquad\qquad\qquad\qquad\qquad\qquad\qquad= {\langle [\vel,-\vel] \FM^{1/2}, \hat{\sourceG} -\FL \{\FI-\FP\} \hat{\solU} \rangle},\\[3mm]
\dis  |k|^2 \hat{\phi}=\widehat{a_+-a_-}.
\end{array}\right.
\end{equation}
On one hand, notice from \eqref{lem.a-1.p1}$_3$ (where  recall $\hat{E} = -ik\hat{\phi}$) that
\begin{equation}
\notag
    (ik \widehat{(a_+-a_-)}-4\widehat{E}\mid ik \widehat{(a_+-a_-)})=(|k|^2+4)|\widehat{a_+-a_-}|^2.
\end{equation}
On the other hand, it follows from \eqref{lem.a-1.p1}$_2$ that
\begin{eqnarray*}
% \nonumber to remove numbering (before each equation)
&&  (ik \widehat{(a_+-a_-)}-4\widehat{E}\mid ik \widehat{(a_+-a_-)})\\
&& =(-\pa_t \hat{\Jcont} -ik\cdot \highG (\{\FI-\FP\}\hat{\solU}\cdot q_1) {+\langle [\vel,-\vel] \FM^{1/2}, \hat{g}-\FL \{\FI-\FP\} \hat{\solU} \rangle}\mid ik \widehat{(a_+-a_-)})\\
&&=-\pa_t (\hat{\Jcont}\mid ik \widehat{(a_+-a_-)})+(\hat{\Jcont}\mid ik \pa_t\widehat{(a_+-a_-)})\\
&&\ \ \ -\left(ik\cdot \highG(\{\FI-\FP\}\hat{\solU}\cdot q_1) {-\langle [\vel,-\vel] \FM^{1/2}, \hat{g}-\FL \{\FI-\FP\} \hat{\solU} \rangle}\mid ik \widehat{(a_+-a_-)}\right).
\end{eqnarray*}
Combining the above two equations
and using \eqref{lem.a-1.p1}$_1$, one has
\begin{multline*}
\pa_t (\hat{\Jcont}\mid ik \widehat{(a_+-a_-)})
+
(4+ |k|^2) ~ |\widehat{a_+-a_-}|^2
\\
=(\hat{\Jcont}\mid k ~k \cdot \hat{\Jcont})
\\
 -\left(ik\cdot \highG(\{\FI-\FP\}\hat{\solU}\cdot q_1) {-\langle [\vel,-\vel] \FM^{1/2}, \hat{g}-\FL \{\FI-\FP\} \hat{\solU} \rangle}\mid ik \widehat{(a_+-a_-)}\right).
\end{multline*}
It follows using Cauchy's inequality that
\begin{multline*}
\pa_t\rmre (\hat{\Jcont}\mid ik \widehat{(a_+-a_-)})+\la (1+|k|^2)|\widehat{a_+-a_-}|^2\\
\leq  |k\cdot \hat{\Jcont}|^2+C|k\cdot \highG(\{\FI-\FP\}\hat{\solU}\cdot q_1)|^2
 {+C|\langle [\vel,-\vel]\FM^{1/2},\hat{g}\rangle|^2}\\
 {+C|\langle [\vel,-\vel]\FM^{1/2},\FL \{\FI-\FP\}\hat{\solU}\rangle|^2}\\
\lesssim
{(1+|k|^2)}{\nsm \{\FI-\FP\}\hat{\solU}\nsm_{2,-m}^2+\nsm \hat{\sourceG}\nsm_{2,-m }^2}.
\end{multline*}
Notice further that from \eqref{divE}, the dissipation rate in
\eqref{lem.a-1} can be rewritten as
\begin{equation} \label{diss.f}
|\widehat{a_+-a_-}|^2= |k|^4 | \hat{\phi}|^2.
\end{equation}
Then \eqref{lem.a-1} holds by further dividing the previous inequality by $1+|k|^2$.
\end{proof}

% {INSERT DISCUSSION...}

{\bf Derivation of the  time-frequency Lyapunov inequality}

Now, we are in a position to prove

\begin{theorem}\label{thm.tfli}
Let $U=[f,\phi]$ be the solution to the Cauchy problem \eqref{VPLlinear} and \eqref{poissonLINEAR}
with $g=0$. Then for $\wN \in \R$ there is a time-frequency functional $\CE_\wN(t,k)$
such that
\begin{equation}\label{thm.tfli.1}
    \CE_\wN(t,k)\approx \| w_\wN \hat{\solU}\|_{2}^2+| k |^2 | \hat{\phi} |^2,
\end{equation}
such that for any $t\geq 0$ and $k\in \R^3$ we uniformly have
\begin{equation}\label{thm.tfli.2}
    \pa_t \CE_\wN (t,k)+\la \left( 1 \wedge |k|^2\right) \CE_{\wN-1}(t,k)\leq 0.
\end{equation}
Above we use the notation $1 \wedge |k|^2 \eqdef\min\{1,|k|^2\}$.
\end{theorem}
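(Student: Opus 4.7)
The plan is to construct $\CE_\wN(t,k)$ as a carefully weighted combination of the basic weighted energy together with the two interactive functionals already prepared in \eqref{def.int1} and in Lemma \ref{lem.a}, namely
\begin{equation*}
  \CE_\wN(t,k)
  \eqdef
  \nsm w_\wN \hat{\solU}\nsm_2^2 + 2|k|^2|\hat{\phi}|^2
  + \kappa_3 \, \rmre \CE^{(1)}_{\rm int}(t,k)
  + \kappa_4 \, \frac{\rmre (\hat{\Jcont}\mid i k\, \widehat{(a_+-a_-)})}{1+|k|^2},
\end{equation*}
with constants $0<\kappa_4 \ll \kappa_3 \ll 1$ to be tuned below. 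The equivalence \eqref{thm.tfli.1} will follow from Cauchy--Schwarz: each of the two correctors is bounded in absolute value by a constant multiple of $\nsm \hat{\solU}\nsm_2^2 + |k|^2|\hat{\phi}|^2$ (note that the $(1+|k|^2)^{-1}$ normalization inside $\CE^{(1)}_{\rm int}$ precisely compensates the $ik_i$ factors, and that $|\widehat{a_+-a_-}|^2 = |k|^4|\hat{\phi}|^2$), so by choosing $\kappa_3,\kappa_4$ small the correctors are absorbed into the principal part and \eqref{thm.tfli.1} holds with constants depending only on $\wN$ and the $\kappa_j$.

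To obtain \eqref{thm.tfli.2} I would differentiate $\CE_\wN$ in $t$ and collect the four contributions. The principal term yields, by combining \eqref{diss-micr} with the weighted estimate \eqref{macroWeightINEQ.noM} (and using $|\hat{\solU}|^2_{\sigma,\wN} \gtrsim |\hat{\solU}|^2_{2,\wN-1/2}$ from \eqref{lbound}),
\begin{equation*}
  \frac{d}{dt}\bigl(\nsm w_\wN \hat{\solU}\nsm_2^2 + 2|k|^2|\hat{\phi}|^2\bigr)
  + \la \nsm \hat{\solU}\nsm_{\sigma,\wN}^2
  \lesssim \nsm\hat{\solU}\nsm^2_{L^2(B_C)} + |k|^2|\hat{\phi}|^2,
\end{equation*}
since the source $\sourceG=0$. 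The $\CE^{(1)}_{\rm int}$ contribution, via \eqref{diss-macro+}, produces the macroscopic dissipation
$\lambda \frac{|k|^2}{1+|k|^2}(|\widehat{a_++a_-}|^2+|\hat{b}|^2+|\hat{c}|^2)$ at the price of a term controlled by $\nsm\{\FI-\FP\}\hat{\solU}\nsm_{2,-m}^2$. The $\hat{\Jcont}$-corrector, via \eqref{lem.a-1}, produces $\lambda|\widehat{a_+-a_-}|^2 = \lambda|k|^4|\hat{\phi}|^2$, again up to $\nsm\{\FI-\FP\}\hat{\solU}\nsm_{2,-m}^2$. Multiplying the second line by the small $\kappa_3$ and the third by the still smaller $\kappa_4$, all error terms in $\{\FI-\FP\}\hat{\solU}$ and the bad $|k|^2|\hat{\phi}|^2$ on the right of the first line get absorbed into the microscopic dissipation $\la\nsm\hat{\solU}\nsm^2_{\sigma,\wN}$ (for $|k|\le 1$) and into the $\kappa_4|k|^4|\hat{\phi}|^2$ term (for $|k|\ge 1$ one simply has $|k|^2|\hat{\phi}|^2 \lesssim |k|^4|\hat{\phi}|^2$).

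After these absorptions, the resulting differential inequality has the form
\begin{equation*}
  \pa_t \CE_\wN(t,k)
  + \la \nsm \hat{\solU}\nsm^2_{\sigma,\wN}
  + \la\, \frac{|k|^2}{1+|k|^2}\bigl(|\hat{a}_+|^2 + |\hat{a}_-|^2+|\hat{b}|^2+|\hat{c}|^2\bigr)
  + \la\, (1\wedge |k|^2)\, |k|^2|\hat{\phi}|^2 \le 0,
\end{equation*}
where to recover $|\hat{a}_\pm|^2$ I combine $|\widehat{a_++a_-}|^2$ from $\CE^{(1)}_{\rm int}$ with $|\widehat{a_+-a_-}|^2=|k|^4|\hat{\phi}|^2$ from the $\hat{\Jcont}$-corrector via the identity recorded just before Lemma \ref{lem.a}. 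Using $\nsm \hat{\solU}\nsm^2_{\sigma,\wN}\gtrsim \nsm \hat{\solU}\nsm^2_{2,\wN-1/2} \gtrsim (1\wedge |k|^2)\nsm w_{\wN-1}\{\FI-\FP\}\hat{\solU}\nsm_2^2$ trivially for the microscopic part, and observing that the $\FP$-part is the macroscopic quantity $|\hat{a}_\pm|^2+|\hat{b}|^2+|\hat{c}|^2$ (with an extra $(1+|k|^2)^{-1}$ already converted to $1\wedge |k|^2$), the dissipation above is bounded below by $\la (1\wedge |k|^2) \CE_{\wN-1}(t,k)$, which yields \eqref{thm.tfli.2}.

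The main obstacle is the bookkeeping of the $1\wedge |k|^2$ factor across the three regimes (microscopic vs.\ macroscopic, high vs.\ low frequency) and verifying that every error term of the form $\nsm \{\FI-\FP\}\hat{\solU}\nsm_{2,-m}^2$ produced by the two interactive functionals is genuinely absorbed by a sufficiently small multiple of the microscopic dissipation $\la\nsm\hat{\solU}\nsm^2_{\sigma,\wN}$. Selecting $\kappa_3,\kappa_4$ small enough, in this precise order, is what makes the absorption work.
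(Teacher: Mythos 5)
Your overall architecture (base energy plus the interactive functionals $\CE_{\rm int}^{(1)}$ and the $\hat{\Jcont}$--corrector) is the same as the paper's, but there is a genuine gap at the very first step: the differential inequality you claim for the principal term does not close. You take $\nsm w_\wN\hat{\solU}\nsm_2^2$ as the principal part for \emph{all} frequencies and control its derivative by \eqref{macroWeightINEQ.noM}. That estimate has the form
\begin{equation*}
\frac{1}{2}\frac{d}{dt}\nsm w_{\wN}\hat{\solU}\nsm_{2}^2+\la\nsm\hat{\solU}\nsm_{\sigma,\wN}^2
\le C\nsm\hat{\solU}\nsm_{L^2(B_C)}^2+C|k|^2|\hat{\phi}|^2,
\end{equation*}
where $C$ is a \emph{large} constant produced by the weighted coercivity of $\FL$ (it compensates for the failure of positivity of $\langle w_{2\wN}\FL g,g\rangle$ on a bounded velocity region) while $\la$ is small. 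You then assert that $C\nsm\hat{\solU}\nsm_{L^2(B_C)}^2$ is absorbed by $\la\nsm\hat{\solU}\nsm_{\sigma,\wN}^2$. It cannot be: on $B_C$ all velocity weights are comparable, so $\nsm\hat{\solU}\nsm_{L^2(B_C)}^2$ and the restriction of $\nsm\hat{\solU}\nsm_{\sigma,\wN}^2$ to $B_C$ are equivalent quantities, and a large multiple of one is never dominated by a small multiple of the other. Worse, this remainder contains the full macroscopic part $\nsm\FP\hat{\solU}\nsm_{L^2(B_C)}^2\approx |\hat a_\pm|^2+|\hat b|^2+|\hat c|^2$ with \emph{no} factor of $|k|^2$, whereas the only macroscopic dissipation available (from $\CE_{\rm int}^{(1)}$) carries the degenerate prefactor $|k|^2/(1+|k|^2)$; so at low frequency no choice of $\kappa_3,\kappa_4$ rescues the absorption.

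The missing ingredient is a two--tier construction with an additional small parameter in front of the \emph{weighted} energy, combined with a frequency splitting in the choice of which weighted estimate to invoke. One first closes the unweighted case $\wN=0$ using \eqref{diss-micr} — whose coercivity \eqref{coercLANDAU} produces no $L^2(B_C)$ remainder — together with the two correctors, arriving at \eqref{big.step}. Only then does one add $\kappa_4\nsm w_\wN\{\FI-\FP\}\hat{\solU}\nsm_2^2$ for $|k|\le 1$, governed by \eqref{macroWeightINEQ}, whose remainders $\nsm\{\FI-\FP\}\hat{\solU}\nsm_{L^2(B_C)}^2$ and $|k|^2\nsm\hat{\solU}\nsm_{2,-1}^2$ are both absorbable by the dissipation already present in \eqref{big.step} once $\kappa_4$ is small; and one adds $\kappa_5\nsm w_\wN\hat{\solU}\nsm_2^2$ for $|k|>1$, where \eqref{macroWeightINEQ.noM} is admissible because there $|k|^2/(1+|k|^2)\ge 1/2$ and the full $\nsm\hat{\solU}\nsm_{L^2(B_C)}^2$ is controlled by the nondegenerate macroscopic plus microscopic dissipation of the $\wN=0$ functional. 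Without the small multiples $\kappa_4,\kappa_5$ relative to an already--coercive base functional, and without switching to the microscopic--only weighted estimate at low frequency, the argument does not close.
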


\begin{proof}
We initially consider the case when $\wN=0$.
Let
\begin{equation}\label{thm.tfli.p1}
    \CE_0 (t,k)\eqdef \nsm \hat{\solU} \nsm_{2}^2+2|k|^2 |\hat{\phi}|^2+\kappa_3\rmre (\CE_{\rm int}^{(1)}(t,k)+\CE_{\rm int}^{(2)}(t,k)),
\end{equation}
for a constant $\kappa_3>0$ to be determined later, where $\CE_{\rm int}^{(1)}(t,k)$ is given by \eqref{def.int1} and $\CE_{\rm int}^{(2)}(t,k)$ is denoted by 
\begin{equation}\label{def.int2}
% \nonumber to remove numbering (before each equation)
  \CE_{\rm int}^{(2)}(t,k) \eqdef \frac{( {\hat{\Jcont}}\mid ik \widehat{({a}_+-{a}_-)})}{(1+|k|^2)}.
%  +
%  \frac{|k|^2(\hat{\Jcont}\mid ik\hat{\phi})}{(1+|k|^2)^2}.
\end{equation}
One can then fix $\kappa_3>0$ small enough such that \eqref{thm.tfli.1} holds true for $\wN =0$. The rest is to check \eqref{thm.tfli.2}. In fact, the linear combination of \eqref{diss-micr}, \eqref{diss-macro+}, \eqref{lem.a-1} and \eqref{diss.f} according to the definition \eqref{thm.tfli.p1}
implies
\begin{equation*}
% \nonumber to remove numbering (before each equation)
\dis\pa_t \CE_0(t,k)+\la \nsm\{\FI-\FP\}\hat{\solU}\nsm_{\sigma}^2
+\frac{\la |k|^2}{1+|k|^2}
\left(|\hat{a}_\pm|^2+|\hat{b}|^2+|\hat{c}|^2+ |k|^2 |\hat{\phi}|^2\right)
  \leq 0,
\end{equation*}
that is
\begin{equation}\label{big.step}
% \nonumber to remove numbering (before each equation)
\dis\pa_t \CE_0 (t,k)+\la \nsm\{\FI-\FP\}\hat{\solU}\nsm_{\sigma}^2
+
\frac{\la |k|^2}{1+|k|^2}
\left(
\nsm\FP \hat{\solU}\nsm_{2,-1}^2 + |k|^2 |\hat{\phi}|^2 
\right) 
\leq 0,
\end{equation}
since one has
$$
|\hat{a}_\pm|^2+|\hat{b}|^2+|\hat{c}|^2\sim 
\nsm\FP \hat{\solU} \nsm_{2,-1}^2.
$$
We split into $|k| \le 1$ and $|k| \ge 1$; we use that 
when $|k|> 1$ then 
$
\frac{ |k|^2}{1+|k|^2} \ge \frac{1}{2},
$
and when $|k|\le 1$ it holds that
$
\frac{ |k|^2}{1+|k|^2} \ge \frac{|k|^2}{2}.
$
We also use the inequality 
$$
\CE_{0-1}(t,k) \lesssim \nsm\{\FI-\FP\}\hat{\solU}\nsm_{\sigma}^2 + \nsm\FP \hat{\solU}\nsm_{2,-1}^2 + |k|^2 |\hat{\phi}|^2.
$$  
Collecting these facts, then
\eqref{thm.tfli.2} follows in the case $\wN = 0$.

To do the weighted estimates (when $0 \ne \wN \in \R$), we use the energy splitting from \cite{sNonCutOp} 
 as follows.
With \eqref{thm.tfli.p1} we define 
\begin{equation}\notag
\begin{split}
    \CE_\wN^0(t,k)\eqdef &
    \ind_{|k|\le 1} \left(
    \CE(t,k)
    +\kappa_4  \nsm w_{\wN}\{\FI-\FP\}\hat{\solU}(t,k)\nsm_{2}^2 \right),
    \\
        \CE_\wN^1(t,k)\eqdef &
    \ind_{|k|> 1} \left(
    \CE(t,k)
    +\kappa_5  \nsm w_{\wN}\hat{\solU}(t,k)\nsm_{2}^2 \right),
    \end{split}
\end{equation}
where $ \kappa_4, \kappa_5>0$ will be determined just below.  Here $\ind_A$ is the usual indicator function of the set 
$A$.

Now when $|k| \le 1$, we choose $\kappa_4 >0$ sufficiently small so that from \eqref{thm.tfli.2} (with $\wN =0$) and \eqref{macroWeightINEQ}  (with $\sourceG =0$) and \eqref{big.step} one has  \eqref{thm.tfli.2} (when $0 \ne \wN \in \R$) when $|k| \le 1$ and $\partial_t \CE_\wN(t,k)$ is replaced by $\partial_t \CE_\wN^0(t,k)$.

Similarly when $|k| > 1$, we choose $\kappa_5 >0$ sufficiently small so that adding \eqref{thm.tfli.2} (with $\wN =0$) to \eqref{macroWeightINEQ.noM}  (with $\sourceG =0$) one has  \eqref{thm.tfli.2} (when $0 \ne \wN \in \R$) when $|k| > 1$ and $\partial_t \CE_\wN(t,k)$ is replaced by $\partial_t \CE_\wN^1(t,k)$.

Adding the two inequalities derived in the previous two paragraphs yields \eqref{thm.tfli.2} (when $0 \ne \wN \in \R$).
\end{proof}

% {INSERT DISCUSSION...}

\subsubsection{ Proof of time-decay of linear solutions}

The proof of time decay of linear solutions is now obtained using the interpolation argument as in Section 2.3 of \cite{sNonCutOp} and the energy inequalities in \eqref{thm.tfli.2}.   We omit it, since it is the same as in \cite{sNonCutOp}.

\section{Non-linear energy inequalities}\label{energy inequality}

In this section we prove a collection of non-linear energy inequalities for the local solution which was constructed in Section \ref{local}.  These energy inequalities will be used later on in Section \ref{proofmain} to prove that the solutions are in fact global in time.

Notice that in this section we use $\int g$ to mean $\int_{\R^3} dx ~g$ if $g=g(x)$ and alternatively it means 
$\int_{\R^3} dv \int_{\R^3} dx ~g$ when $g=g(x,v)$.

\begin{lemma}
\label{xenergy0} Let $f_{0}\in \testF$ and assume $f$ is the
solution constructed in Theorem \ref{localsolution} with $\mathcal{\tilde{E}}
_{2;2}(f)\leq M.$ Then 
\begin{eqnarray}
&&\frac{d}{dt}\left[ \int \sum_{\pm }\frac{(f_{\pm })^{2}}{2}+\int |\nabla \phi |^{2}\right] +\int
\langle Lf,f\rangle  \notag \\
&\lesssim & \sqrt{\mathcal{\tilde{E}}
_{2;2}(f)}\left( ||\{\FI-\FP\} f||_\si^2+||\na_x \phi||_2^2+||\na_x \FP f||_2^2 \right).\label{en0}
\end{eqnarray}
\end{lemma}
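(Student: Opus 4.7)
The plan is to take the $L^2_{x,v}$ inner product of equation \eqref{vl} with $f_\pm$, sum over the two sign configurations, and integrate by parts. The transport term $v\cdot\nabla_x f_\pm\cdot f_\pm$ vanishes after integration by parts in $x$, and the force term $\pm E\cdot\nabla_v f_\pm\cdot f_\pm$ vanishes after integration by parts in $v$ since $\nabla_v\cdot E=0$. The macroscopic source $\mp 2(E\cdot v)\sqrt{\mu}$ contributes $-2\int E\cdot \Jcont\,dx$, where $\Jcont=\int v\sqrt{\mu}(f_+-f_-)\,dv$. Using $E=-\nabla_x\phi$, the Poisson equation \eqref{p}, and the continuity relation $\partial_t\rho+\nabla_x\cdot\Jcont=0$ (which follows from \eqref{m0} with $\FP \sourceG=0$ by subtracting the $\pm$ sign configurations), a single integration by parts in $x$ identifies this contribution with $\frac{d}{dt}\|\nabla_x\phi\|_2^2$. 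The linear Landau term gives $\int\langle Lf,f\rangle$, completing the LHS of \eqref{en0}.

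Two nonlinear terms remain on the RHS: the cubic field--velocity term $\sum_\pm \pm\int E\cdot v\, f_\pm^2\,dv\,dx$ and the collision term $\sum_\pm\int\Gamma_\pm(f,f)f_\pm\,dv\,dx$. For the cubic term I split $f_\pm=\FP_\pm f+\{\FI-\FP\}_\pm f$. The piece involving $(\FP_\pm f)^2$ reduces by exact Gaussian $v$-integration to a quadratic polynomial in the hydrodynamic fields $(a_\pm,b,c)$; H\"older in $x$ with exponents $(2,3,3)$ combined with the interpolation $\|g\|_3\lesssim\|g\|_2^{1/2}\|\nabla g\|_2^{1/2}$ and the elliptic bound $\|\nabla E\|_2\lesssim \|\rho\|_2\lesssim\|f\|_2$ produce the pairing $\|E\|_2\,\|\nabla_x\FP f\|_2$ times energy factors, and Young's inequality yields $\sqrt{\mathcal{\tilde{E}}_{2;2}}(\|E\|_2^2+\|\nabla_x\FP f\|_2^2)$. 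The piece involving $(\{\FI-\FP\}f_\pm)^2$ is handled by Cauchy--Schwarz in $v$, $\int|v|h^2\,dv\le \|\langle v\rangle^2 h\|_{L^2_v}|h|_\sigma$; the first factor is bounded in $L^\infty_x$ by $\sqrt{\mathcal{\tilde{E}}_{2;2}}$ via Sobolev $H^2_x\hookrightarrow L^\infty_x$ and the $\langle v\rangle^4$ weight present in $\mathcal{\tilde{E}}_{2;2}$, and Cauchy--Schwarz in $x$ against $\|E\|_2$ plus Young yields $\sqrt{\mathcal{\tilde{E}}_{2;2}}(\|E\|_2^2+\|\{\FI-\FP\}f\|_\sigma^2)$. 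The cross term is treated analogously.

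For the collision term, the key algebraic input is the conservation identity $\int\Gamma_\pm(f,f)\sqrt{\mu}\,\psi\,dv=0$ for $\psi\in\mathrm{span}\{1,v_i,|v|^2\}$, which gives $\int\Gamma(f,f)\FP f\,dx\,dv=0$ and so $\int\Gamma(f,f)f\,dx\,dv=\int\Gamma(f,f)\{\FI-\FP\}f\,dx\,dv$. I apply Proposition \ref{nonlinearTHM} with $\alpha=\beta=0$, $\wN=0$, $g_1=g_2=f$, $g_3=\{\FI-\FP\}f$ to obtain the pointwise-in-$x$ estimate $|\langle\Gamma(f,f),\{\FI-\FP\}f\rangle|\lesssim|f|_{2,-\wB}|f|_\sigma|\{\FI-\FP\}f|_\sigma$. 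Integrating in $x$ by H\"older with exponents $(6,3,2)$, the first factor satisfies $\||f|_{2,-\wB}\|_{L^6_x}\lesssim\sqrt{\mathcal{\tilde{E}}_{2;2}}$ via \eqref{6Sob} applied to $\nabla_x(\langle v\rangle^{-\wB}f)$, the middle factor is interpolated by $\||f|_\sigma\|_{L^3_x}\lesssim\|f\|_\sigma^{1/2}\|\nabla_x f\|_\sigma^{1/2}$, and after splitting each $\sigma$-norm into $\FP$ and $\{\FI-\FP\}$ parts and using the Gaussian decay $\|\FP f\|_\sigma\lesssim\|\FP f\|_2$ as well as $\|\nabla_x\FP f\|_\sigma\lesssim\|\nabla_x\FP f\|_2$, Young's inequality produces the desired bound.

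The main obstacle is that $\R^3_x$ lacks Poincar\'e's inequality, so $\|\FP f\|_2$ is not controlled by $\|\nabla_x\FP f\|_2$. This is overcome by three devices: (i) the conservation identity $\int\Gamma(f,f)\FP f=0$, which routes the collision term entirely through $\{\FI-\FP\}f$ and thereby produces a $\sigma$-dissipation factor for free; (ii) Gagliardo--Nirenberg interpolation via the Sobolev inequalities \eqref{6Sob} and \eqref{4Sob}, which convert $L^p_x$-norms of hydrodynamic quantities into $\|\nabla_x\FP f\|_2$; and (iii) the smallness hypothesis $\mathcal{\tilde{E}}_{2;2}\le M$, which permits absorbing any leftover factor $\mathcal{\tilde{E}}_{2;2}^{\alpha}$ with $\alpha>1/2$ into the $\sqrt{\mathcal{\tilde{E}}_{2;2}}$ prefactor after Young's inequality.
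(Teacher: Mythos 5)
Your overall skeleton matches the paper's: the energy identity via \eqref{vl} and the continuity equation, the macro--micro split of the cubic field term, and the orthogonality $\int\sum_\pm\Gamma_\pm(f,f)\,\FP_\pm f\,dv\,dx=0$ so that only $\{\FI-\FP\}f$ tests the collision term. However, your treatment of the collision term has a genuine gap. You apply Proposition \ref{nonlinearTHM} directly to $\Gamma(f,f)$ with $g_2=f$, producing the pointwise factor $|f|_\sigma$ in the middle slot, and then place it in $L^3_x$ via the interpolation $\||f|_\sigma\|_{L^3_x}\lesssim\|f\|_\sigma^{1/2}\|\nabla_x f\|_\sigma^{1/2}$. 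After splitting into macro and micro parts this unavoidably produces the factors $\|\FP f\|_2$ and $\|\nabla_x\{\FI-\FP\}f\|_\sigma$. Neither appears in the dissipation on the right-hand side of \eqref{en0}, and $\|\FP f\|_2$ in particular is controlled by \emph{no} dissipation functional in this paper --- this is exactly the whole-space obstruction (failure of Poincar\'e) that the lemma is designed to circumvent. Young's inequality cannot rescue this: a term of the schematic form $\sqrt{\mathcal{\tilde{E}}}\,\|\FP f\|_2^{1/2}\|\nabla_x\FP f\|_2^{1/2}\|\{\FI-\FP\}f\|_\sigma$ can only be closed by paying $\|\FP f\|_2^2$ or by producing sub-quadratic powers of dissipation quantities, and substituting $\|\FP f\|_2\lesssim\sqrt{\mathcal{\tilde{E}}}$ leaves an unabsorbable $\mathcal{\tilde{E}}^{3/2}$ with no dissipation attached.

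The paper avoids this by first decomposing $\Gamma(f,f)=\Gamma(\FP f,\FP f)+\Gamma(\{\FI-\FP\}f,\FP f)+\Gamma(f,\{\FI-\FP\}f)$ as in \eqref{decompGa} \emph{before} invoking Proposition \ref{nonlinearTHM}. The point of this decomposition is that the $\sigma$-norm coming from the second argument then lands either on $\FP f$ --- in which case Gaussian decay gives $|\FP f|_\sigma\lesssim|[a,b,c]|$ and an $L^6_x$ placement converts it into the available dissipation $\|\nabla_x\FP f\|_2$ via \eqref{6Sob} --- or on $\{\FI-\FP\}f$ in the $L^2_x$ slot, matching $\|\{\FI-\FP\}f\|_\sigma$ with no extra $x$-derivative; the remaining factor always sits in an $L^\infty_x$ or $L^3_x$ slot bounded by $\sqrt{\mathcal{\tilde{E}}_{2;2}}$. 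Every term then has exactly the structure $\sqrt{\mathcal{\tilde{E}}}\cdot D^{1/2}\cdot D^{1/2}$ with $D$ among the three dissipation quantities in \eqref{en0}. A secondary, more easily repaired issue: in the micro part of the cubic term you place $\|\langle v\rangle^2\{\FI-\FP\}f\|_{L^2_v}$ in $L^\infty_x$, which via \eqref{inftySob} requires $\|\langle v\rangle^2\nabla_x^2 f\|_2$; but the weight in $\mathcal{\tilde{E}}_{2;2}$ at $|\alpha|=2$ is $\langle v\rangle^0$, so this is not controlled. The paper instead puts $E$ in $L^\infty_x$ (controlled by $\|\nabla_x\phi\|_2+\|\nabla_x\FP f\|_2$ through the Poisson equation) and keeps both $f$-factors in $L^2_x$, where the $\langle v\rangle^4$ weight at zero derivatives suffices.
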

\begin{proof}
We use \eqref{vl} and also the continuity equation, e.g.  \eqref{continEQ}, to obtain
\begin{eqnarray*}
&&\frac{d}{dt}\left[ \int \sum_{\pm }\frac{(f_{\pm })^{2}}{2}+\int |\nabla \phi |^{2}\right] +\int
\langle Lf,f\rangle  \notag \\
&= &\int \sum_{\pm }f_{\pm }\left(  \pm\{E\cdot v\}f_\pm+\Ga_\pm(f,f)           \right).
\end{eqnarray*}
For $\int \sum_{\pm }f_{\pm } ( \pm\{E\cdot v\}f_\pm)$, we use the macro-micro decomposition to get
\begin{eqnarray*}
&&\int \sum_{\pm }\FP f_{\pm } ( \pm\{E\cdot v\}f_\pm)\\
&\lesssim & ||\FP f||_{ L^2_v L^6_x}||E||_{L^2_x}||vf||_{ L^2_v L^3_x}\\
&\lesssim & ||\na_x \FP f||_{L^2_x L^2_v}||E||_{L^2_x}\left(||vf||_{L^2_x L^2_v}+||~|v|~\na_x f||_{L^2_x L^2_v}\right)\\
&\lesssim & \sqrt{\mathcal{\tilde{E}}
_{2;2}(f)}\left( ||\na_x \phi||_2^2+||\na_x \FP f||_2^2 \right),
\end{eqnarray*}
and using \eqref{inftySob} and then \eqref{2Sob} we have
\begin{eqnarray*}
&&\int \sum_{\pm }\{\FI-\FP\} f_{\pm }  (\pm\{E\cdot v\}f_\pm)\\
&\lesssim & ||\langle v\rangle^{-1/2}\{\FI-\FP\} f||_{ L^2_v L^2_x}||E||_{L^\infty_x}||\langle v\rangle^{3/2}f||_{ L^2_v L^2_x}\\
&\lesssim & ||\{\FI-\FP\} f||_{\si}\left(||\na_x E||_{L^2_x}+||\na_x^2 E||_{L^2_x}\right)\sqrt{\mathcal{\tilde{E}}
_{2;2}(f)}\\
&\lesssim & ||\{\FI-\FP\} f||_{\si}\left(|| E||_{L^2_x}+||\na_x \FP f||_{2}\right)\sqrt{\mathcal{\tilde{E}}
_{2;2}(f)}\\
&\lesssim & \sqrt{\mathcal{\tilde{E}}
_{2;2}(f)}\left(||\{\FI-\FP\} f||_{\si}^2+ ||\na_x \phi||_2^2+||\na_x \FP f||_2^2 \right).
\end{eqnarray*}

For $\int \sum_{\pm }f_{\pm }\Ga_\pm(f,f)=\int \sum_{\pm }\left(\{\FI-\FP\}_{\pm}f \right)\Ga_\pm(f,f)$, we use the well-known expansion
\begin{equation}\label{decompGa}
\Ga(f,f)=\Ga(\FP f, \FP f)+\Ga(\{\FI-\FP\} f, \FP f)+\Ga(f, \{\FI-\FP\} f).
\end{equation}
Recall (\ref{form.p}), we have 
\begin{eqnarray*}
&&\int \sum_{\pm }\left(\{\FI-\FP\}_{\pm}f \right)\Ga_\pm(\FP f,\FP f)\\
&\lesssim & |||a|^2+|b|^2+|c|^2||_{ L^2_x}||\{\FI-\FP\}f||_\si\\
&\lesssim & |||a|+|b|+|c|||_{ L^6_x}|||a|+|b|+|c|||_{ L^3_x}||\{\FI-\FP\}f||_\si\\
&\lesssim & |||\na_x a|+|\na_x b|+|\na_x c|||_{ L^2_x}\sqrt{\mathcal{\tilde{E}}
_{2;2}(f)}||\{\FI-\FP\}f||_\si\\
&\lesssim & \sqrt{\mathcal{\tilde{E}}
_{2;2}(f)}\left(||\{\FI-\FP\} f||_{\si}^2+ ||\na_x \FP f||_2^2 \right).
\end{eqnarray*}

For $ \int \sum_{\pm }\left( \{\FI-\FP\}_{\pm}f \right) \Ga_{\pm}(\{\FI-\FP\} f, \FP f) $, from Proposition \ref{nonlinearTHM}, we have
\begin{eqnarray*}
&&\int \sum_{\pm }\left( \{\FI-\FP\}_{\pm}f \right) \Ga_\pm(\{\FI-\FP\} f,\FP f)\\
&\lesssim & ||\{\FI-\FP\} f||_\si || \{\FI-\FP\} f||_{L^3_x L^2_v} ||\FP f||_{L^6_x L^2_v}\\
&\lesssim & \sqrt{\mathcal{\tilde{E}}
_{2;2}(f)}\left(||\{\FI-\FP\} f||_{\si}^2+||\na_x \FP f||_2^2\right).
\end{eqnarray*}
Finally for  $ \int \sum_{\pm } \left( \{\FI-\FP\}_{\pm}f \right) \Ga_{\pm}( f, \{\FI-\FP\} f) $, from Proposition \ref{nonlinearTHM} we have
\begin{multline*}
\int \sum_{\pm } \left( \{\FI-\FP\}_{\pm}f \right)  \Ga_\pm( f,\{\FI-\FP\} f)
\\
\lesssim  ||\{\FI-\FP\} f||_\si || f||_{L^\infty_x L^2_v} ||\{\FI-\FP\} f||_\si
\lesssim  \sqrt{\mathcal{\tilde{E}}
_{2;2}(f)}||\{\FI-\FP\} f||_{\si}^2.
\end{multline*}
This completes the proof of (\ref{en0}).
\end{proof}

\begin{lemma}
\label{xenergy1} Let $f_{0}\in \testF$ and assume $f$ is the
solution constructed in Theorem \ref{localsolution} with $\mathcal{\tilde{E}}_{2;2}(f)\leq M.$ For any $K\geq 2$, and for $1\leq |\alpha |\leq K,$ we have
\begin{eqnarray}
&&\frac{d}{dt}\left[ \int \sum_{\pm }\frac{e^{\pm 2\phi }(\partial ^{\alpha
}f_{\pm })^{2}}{2}+\int |\nabla \partial ^{\alpha }\phi |^{2}\right] +\int
\langle L\partial ^{\alpha }f,\partial ^{\alpha }f\rangle  \notag \\
&\lesssim &\sum_{\pm }\int |\phi _{t}|(\partial ^{\alpha }f_{\pm })^{2}+%
\sqrt{\mathcal{\tilde{E}}%
_{K,K}(f)}\left(\sum_{1\leq |\alpha ^{\prime }|\leq K}||\partial ^{\alpha ^{\prime
}}f||_{\sigma }^{2}+||\na_x \phi||^2_2\right).  \label{pure2}
\end{eqnarray}%
For $|\alpha |=m\geq 3$ and $l\geq m$,
we have for any $\eta >0,$%
\begin{eqnarray}
&&\frac{d}{dt}\left[ \int \sum_{\pm }\frac{e^{\pm 2\phi }(\partial ^{\alpha
}f_{\pm })^{2}}{2}+\int |\nabla \partial ^{\alpha }\phi |^{2}\right] +\int
\langle L\partial ^{\alpha }f,\partial ^{\alpha }f\rangle  \notag \\
&\lesssim &\sum_{\pm }\int |\phi _{t}||\partial ^{\alpha }f_{\pm }|^{2}+%
\sqrt{\mathcal{\tilde{E}}%
_{2;2}(f)}\left(\int  |\partial ^{\alpha }f_{\pm }|_{\sigma }^{2}+||\na_x \phi||^2_2\right)+\eta \sum_{|\alpha
|=m}||\partial ^{\alpha ^{\prime }}f||_{\sigma }^{2}  \notag \\
&&+C_{m,\eta }[\mathcal{D}_{2;2}(f)\mathcal{\tilde{E}}_{m;l}(f)+\{1+\mathcal{\tilde{E}}%
_{m-1;l}(f)\}\mathcal{D}_{m-1;l}(f)].  \label{pure3}
\end{eqnarray}
\end{lemma}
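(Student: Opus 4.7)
The plan is to mimic the derivation in Lemma 10 of \cite{G0} (reproduced as the starting point of Lemma \ref{mixnenergy}), but specialized to pure spatial derivatives $\partial^\alpha$ (i.e.\ $\beta=0$), and to then close the estimate in $\R^3_x$ rather than on the torus. First I would apply $\partial^\alpha$ to \eqref{vl}, multiply the resulting equation for $f_\pm$ by $e^{\pm 2\phi}\partial^\alpha f_\pm$, integrate over $\R^3_x\times\R^3_v$, and sum in $\pm$. The factor $e^{\pm 2\phi}$ is chosen so that integration by parts in $v$ converts the streaming term $\pm E\cdot \nabla_v \partial^\alpha f_\pm$ into the good combination producing $\mp \langle \phi_t, e^{\pm 2\phi}(\partial^\alpha f_\pm)^2\rangle$ together with terms that either pair with $\pm (E\cdot v)f_\pm$ to cancel, or survive as commutator terms $\partial^{\alpha-\alpha_1}E \cdot \nabla_v \partial^{\alpha_1}f_\pm$ with $\alpha_1<\alpha$, exactly as in \eqref{local3}--\eqref{local6} with $\beta=0$. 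Independently, $\frac{d}{dt}\|\nabla\partial^\alpha\phi\|_2^2$ is controlled by applying $\partial^\alpha$ to the continuity equation \eqref{continEQ} and using the Poisson relations \eqref{phit}, which yields a bound by $\|\nabla\partial^\alpha\phi\|_2$ times $\|\partial^\alpha j\|_2\lesssim \|\mu^\delta \partial^\alpha f\|_2$, absorbable into the $L$--coercivity term (or into the dissipation).

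The main terms to estimate on the right-hand side are then: (a) the commutator $\partial^{\alpha-\alpha_1}E\cdot\nabla_v\partial^{\alpha_1}f_\pm$ and the analogous $\partial^{\alpha-\alpha_1}E\cdot v\,\partial^{\alpha_1}f_\pm$ with $\alpha_1<\alpha$; (b) the forcing term $\mp 2\partial^\alpha E\cdot v\sqrt{\mu}$; (c) the linear $K$-piece hidden inside $L$ which contributes a compact error on $\FP f$; and (d) the nonlinear collision term $\partial^\alpha \Gamma_\pm(f,f)$. For (a)--(b), I would split by Sobolev embedding in $x$: when $1\le|\alpha|\le K$ with $K\ge 2$, put the lowest-order factor in $L^\infty_x$ via \eqref{inftySob} and the rest in $L^2$, using the $e^{\pm 2\phi}\approx 1$ bound from smallness of $\mathcal{\tilde{E}}_{2;2}(f)$, and extract a common factor $\sqrt{\mathcal{\tilde{E}}_{K;K}(f)}$ together with $\|\partial^{\alpha'}f\|_\sigma^2+\|\nabla_x\phi\|_2^2$; the $\phi_t$-piece \eqref{local5} is kept as the isolated $\int|\phi_t|(\partial^\alpha f_\pm)^2$ term in \eqref{pure2}. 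For (d) I would invoke Proposition \ref{nonlinearTHM} with $\beta=0$ and the decomposition \eqref{decompGa}, estimating each piece by placing one factor in $L^\infty_xL^2_v$ via Sobolev and the other two in the $|\cdot|_{\sigma}$/$L^2$ norms, exactly as done in Lemma \ref{xenergy0}.

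For the top-order case $|\alpha|=m\ge 3$ giving \eqref{pure3}, the only real modification is the $\Gamma$--term: I cannot afford to pass the whole derivative onto one factor and still place that factor in $L^\infty_x$. Instead I would distribute $\partial^\alpha$ using Leibniz and split according to whether $|\alpha_1|\le[m/2]$ or $|\alpha-\alpha_1|\le[m/2]$. The ``half-derivative'' factor goes into $L^\infty_x$ via $H^{3/4}_x\hookrightarrow L^\infty$ (matching the $H^{3/4}$ norms appearing in Lemma \ref{mixnenergy}), while the remaining two factors sit in $L^2_x$ and absorb either $\sqrt{\mathcal{D}_{2;2}(f)\mathcal{\tilde{E}}_{m;l}(f)}$ or, after a small parameter $\eta$ via Cauchy--Schwarz, into $\eta\sum_{|\alpha'|=m}\|\partial^{\alpha'}f\|_\sigma^2$; this is precisely the structure of \eqref{gamma3x}. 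The commutator terms with $E$ of intermediate order contribute the $C_{m,\eta}\{1+\mathcal{\tilde{E}}_{m-1;l}\}\mathcal{D}_{m-1;l}$ piece after interpolation, analogously to \eqref{fieldnonlinear}.

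The part I expect to be the main obstacle is the $\Gamma$ estimate at $|\alpha|=m$ together with the need to retain the precise right-hand side of \eqref{pure3}: one must be careful that the ``top factor'' in Proposition \ref{nonlinearTHM} is always paired with a $|\cdot|_{\sigma}$ norm (so that it can be absorbed by $\mathcal{D}_{m;l}$ after a small-$\eta$ step), while the other top factor carries a velocity-decaying weight coming from the convolution with $\Phi$ so that its $L^2_xL^2_v$ norm is controlled by $\mathcal{D}_{2;2}(f)^{1/2}$ — this is the only way to end up with the clean product $\mathcal{D}_{2;2}(f)\mathcal{\tilde{E}}_{m;l}(f)$. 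Once this term is handled, combining all the estimates and absorbing the coercive contribution from $\langle L\partial^\alpha f,\partial^\alpha f\rangle$ on the left, plus the $e^{\pm 2\phi}\approx 1$ reduction from smallness, yields both \eqref{pure2} and \eqref{pure3}.
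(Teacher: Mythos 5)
Your overall architecture matches the paper's: the weighted energy identity obtained by testing $\partial^\alpha$ of \eqref{vl} against $e^{\pm 2\phi}\partial^\alpha f_\pm$, the case-by-case Sobolev splitting of the commutator terms \eqref{x5}--\eqref{x6}, the macro--micro decomposition \eqref{decompGa} combined with Proposition \ref{nonlinearTHM} for the collision term, and the reduction of the top-order case $|\alpha|=m\ge 3$ to the structure of \eqref{gamma3x}. There is, however, one genuine gap: you treat the linear forcing term $\mp 2\{E\cdot v\}\sqrt{\mu}$ and the quantity $\frac{d}{dt}\int |\nabla\partial^\alpha\phi|^2$ as two \emph{independent} items to be estimated. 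The forcing term, once tested, is only bilinear in the solution (one factor $\nabla_x\partial^\alpha\phi$, one factor $\partial^\alpha f_\pm$), so unlike the genuinely cubic commutator terms in your item (a) there is no third factor to place in $L^\infty_x$, and you cannot ``extract a common factor $\sqrt{\mathcal{\tilde{E}}_{K;K}(f)}$'' from it. Estimated by Cauchy--Schwarz it produces an $O(1)$ multiple of $\|\nabla_x\partial^\alpha\phi\|_2^2+\|\mu^\delta\partial^\alpha f\|_2^2$; the macroscopic piece of the latter is not controlled by the coercive term $\int\langle L\partial^\alpha f,\partial^\alpha f\rangle$ on the left, and the right-hand side of \eqref{pure2} allows $\|\nabla_x\phi\|_2^2$ and $\sum\|\partial^{\alpha'}f\|_\sigma^2$ only with the small prefactor $\sqrt{\mathcal{\tilde{E}}_{K,K}(f)}$. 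That smallness is essential downstream: in \eqref{zeta1} the macroscopic dissipation is recovered later only with small coefficients, so an $O(1)$ loss here would prevent Proposition \ref{E33} from closing.

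The fix is the exact cancellation the paper builds into the identity \eqref{x1}--\eqref{x6}: summing over $\pm$, the unweighted part of $\mp 2\int \nabla_x\partial^\alpha\phi\cdot v\sqrt{\mu}\,\partial^\alpha f_\pm$ equals $-2\int\nabla_x\partial^\alpha\phi\cdot\partial^\alpha j$, which by the continuity equation \eqref{continEQ} and the Poisson equation is precisely $-\frac{d}{dt}\int|\nabla\partial^\alpha\phi|^2$. Only the residual \eqref{x2}, carrying the factor $(e^{\pm 2\phi}-1)=O(\sqrt{M})$, survives, and it is this factor --- not a Sobolev embedding --- that supplies the required small prefactor. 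With this cancellation in place, the remainder of your outline (commutators, $\Gamma$ via \eqref{decompGa} and Proposition \ref{nonlinearTHM}, and the $[m/2]$ splitting at top order) goes through as in the paper.
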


\begin{proof}
By (\ref{vl}) and the continuity equation \eqref{continEQ}, we have
\begin{eqnarray}
&&\frac{d}{dt}\left[ \int \sum_{\pm }\frac{e^{\pm 2\phi }(\partial ^{\alpha
}f_{\pm }^{{}})^{2}}{2}+\int |\nabla \partial ^{\alpha }\phi |^{2}\right]
+\int \langle L\partial ^{\alpha }f,\partial ^{\alpha }f\rangle  \notag \\
&=&\sum_{\pm }\int e^{\pm 2\phi }\phi _{t}(\partial ^{\alpha }f_{\pm })^{2}
\label{x1} \\
&&+2\sum_{\pm }\int e^{\pm 2\phi }\nabla _{x}\partial ^{\alpha }\phi \cdot v%
\sqrt{\mu }\partial ^{\alpha }f_{\pm }(e^{\pm 2\phi }-1)  \label{x2} \\
&&+\sum_{\pm }\int (1-e^{\pm 2\phi })\partial ^{\alpha }f_{\pm }L_{\pm
}\partial ^{\alpha }f  \label{x3} \\
&&+\sum_{\pm }\int e^{\pm 2\phi }\partial ^{\alpha }f_{\pm }\partial
^{\alpha }\Gamma _{\pm }(f,f)  \label{x4} \\
&&+\sum_{\pm ,\alpha _{1}<\alpha }C_{\alpha }^{\alpha _{1}}\int e^{\pm 2\phi
}\partial ^{\alpha }f_{\pm }\partial ^{\alpha -\alpha _{1}}\nabla _{x}\phi
\cdot \nabla _{v}\partial ^{\alpha _{1}}f_{\pm }  \label{x5} \\
&&-\sum_{\pm ,\alpha _{1}<\alpha }C_{\alpha }^{\alpha _{1}}\int e^{\pm 2\phi
}\partial ^{\alpha }f_{\pm }\partial ^{\alpha -\alpha _{1}}\nabla _{x}\phi
\cdot v\partial ^{\alpha _{1}}f_{\pm }.  \label{x6}
\end{eqnarray}
We have $|1-e^{\pm \phi
}|\lesssim ||\phi ||_{\infty }\lesssim \sqrt{\mathcal{\tilde{E}}_{2;2}(f)}\lesssim 
\sqrt{M}$. We observe that 
\begin{eqnarray}
\int e^{\pm 2\phi }\phi _{t}(\partial ^{\alpha }f_{\pm })^{2}
&\lesssim& \int |\phi _{t}|(\partial ^{\alpha }f_{\pm })^{2},\label{x1bound}\\
\int e^{\pm 2\phi }\nabla _{x}\partial ^{\alpha }\phi \cdot v%
\sqrt{\mu }\partial ^{\alpha }f_{\pm }(e^{\pm 2\phi }-1) &\lesssim& 
\sqrt{\mathcal{\tilde{E}}_{2;2}(f)}\left(||\na_x \partial^{\alpha} \phi||^2_2+||\partial^{\alpha } f||_{\sigma }^2\right).\label{x2bound}
\end{eqnarray}
By Lemma 5 of \cite{G1}, we have 
\begin{equation}\label{x3bound}
(\ref{x3})\lesssim \sqrt{\mathcal{\tilde{E}}_{2;2}(f)}||\partial ^{\alpha }f||_{\sigma }^{2}.
\end{equation}
Now the rest of the proof for (\ref{pure3}) is the same as the proof of Lemma 14 in \cite{G0}. 

Now we focus on the proof of (\ref{pure2}). We first deal with (\ref{x5}) and (\ref{x6}). We have
\begin{eqnarray}
&&(\ref{x5}) + (\ref{x6})\notag\\
&\lesssim &\int |\partial ^{\alpha }f_{\pm }|_{\sigma }|\partial ^{\alpha
-\alpha _{1}}\nabla _{x}\phi ||\langle v\rangle ^{3/2}\partial ^{\alpha
_{1}}f_{\pm }|_{2}dx.  \label{unibound}
\end{eqnarray}

When $|\al|=1$ and $\al_1=0$, we place $\partial^\al\na_x\phi$ in $L^\infty_x$ and place the other two terms in (\ref{unibound}) in $L^2_x L^2_v$. Since $||\partial^\al\na_x\phi||_\infty\lesssim ||\na_x^3\phi||_2^2+||\na_x^4 \phi||_2^2\lesssim ||\na_x f||_\si^2+||\na_x^2 f||_\si^2$, we have the desired estimate in this case.

When $|\al|=2$ and $\al_1=0$, we place the three factors in the expression of (\ref{unibound})  in $L^2_x L^2_v$, $L^4_x $ and $L^4_x L^2_v$ respectively. This gives us the desired estimate in this case.

When $|\al|=2$ and $|\al_1| \ge 1$, we place the three factors in the expression of (\ref{unibound})  in $L^2_x L^2_v$, $L^\infty_x $ and $L^2_x L^2_v$ respectively. This gives us the desired estimate in this case.

When $|\al|\geq 3$ and $\al_1=0$, we place the three factors in the expression of (\ref{unibound})  in $L^2_x L^2_v$, $L^4_x $ and $L^4_x L^2_v$ respectively. This gives us the desired estimate in this case.

When $|\al|\geq 3$ and $K\geq |\al_1|\geq 1$, we place the three factors in the expression of (\ref{unibound}) in $L^2_x L^2_v$, $L^\infty_x $ and $L^2_x L^2_v$ respectively. This gives us the desired estimate in this case.

Now we turn to the estimate of (\ref{x4}). We use (\ref{decompGa}) again.   Recall (\ref{form.p}), we have 
\begin{eqnarray*}
&&\int \sum_{\pm } \left(\{\FI-\FP\}_{\pm}\pa^\al f \right)\pa^\al\Ga_\pm(\FP f,\FP f)\\
&\lesssim &
\sum_{\al_1 \le \al}
 || ~|\pa^{\al-\al_1}[a,b,c]|~|\pa^{\al_1}[a,b,c]|~||_{ L^2_x}
||\{\FI-\FP\}\pa^\al f||_{\si}.
\end{eqnarray*}
When $\al_1=\al$, we place $\pa^{\al-\al_1}[a,b,c]$ in $L^\infty_x$ and place $\pa^{\al_1}[a,b,c]$ in $L^2_x$. Since $$||[a,b,c]||_{L^\infty_x}\lesssim\sqrt{\mathcal{\tilde{E}}_{2;2}(f)},$$
 and 
 $$||\pa^{\al}[a,b,c]||_{L^2_x}\lesssim||\pa^\al f ||_\si,$$
 we have the desired estimate.
 
 When $1\leq |\al-\al_1|\leq K-1$, we place $\pa^{\al-\al_1}[a,b,c]$ in $L^3_x$ and place $\pa^{\al_1}[a,b,c]$ in $L^6_x$. Since $$||\pa^{\al-\al_1}[a,b,c]||_{L^3_x}\lesssim\sqrt{\mathcal{\tilde{E}}_{K;K}(f)},$$
 and 
 $$||\pa^{\al_1}[a,b,c]||_{L^6_x}\lesssim\sum_{|\al'|=|\al_1|+1}||\pa^{\al'}f ||_\si,$$
 we have the desired estimate.
 
 When $\al_1=0$, we place $\pa^{\al-\al_1}[a,b,c]$ in $L^2_x$ and place $\pa^{\al_1}[a,b,c]$ in $L^\infty_x$. This is the same as the first case. So we have the desired estimate.

For $ \int \sum_{\pm }\left(\pa^\al\{\FI-\FP\}_{\pm} f \right)\pa^\al \Ga_{\pm}(\{\FI-\FP\} f, \FP f) $, from Proposition \ref{nonlinearTHM}, we have
\begin{eqnarray*}
&&\int \sum_{\pm }\left( \pa^\al\{\FI-\FP\}_{\pm} f \right) \pa^\al \Ga_{\pm}(\{\FI-\FP\} f, \FP f)\\
&\lesssim & ||\pa^\al\{\FI-\FP\}  f||_{\si} || ~ |\pa^{\al-\al_1}\{\FI-\FP\} f|_{L^2_v} |\pa^{\al_1}\FP f|_{ L^2_v}~||_{L^2_x}.
\end{eqnarray*}

When $\al_1=\al$, we place $\pa^{\al-\al_1}\{\FI-\FP\} f$ in $L^\infty_x$ and place $\pa^{\al_1}\FP f$ in $L^2_x$. Since 
$$|||\{\FI-\FP\} f|_{L^2_v}||_{L^\infty_x}\lesssim\sqrt{\mathcal{\tilde{E}}_{2;2}(f)},$$
 and 
 $
 ||~|\pa^{\al}\FP f|_{L^2_v}~||_{L^2_x}\lesssim||\pa^\al f ||_\si,
 $
 we have the desired estimate.
 
 When $1\leq |\al-\al_1|\leq K-1$, we place $\pa^{\al-\al_1}\{\FI-\FP\} f$ in $L^3_x$ and place $\pa^{\al_1}\FP f$ in $L^6_x$. Since $$
 \| ~ |\pa^{\al-\al_1}\{\FI-\FP\} f|_{L^2_v}~ \|_{L^3_x}
 \lesssim\sqrt{\mathcal{\tilde{E}}_{K;K}(f)},
 $$
 and 
 $
 \| ~ |\pa^{\al_1}\FP f|_{L^2_v}~\|_{L^6_x}\lesssim\sum_{|\al'|=|\al_1|+1}||\pa^{\al'}f ||_\si,
 $
 we have the desired estimate.
 
 When $\al_1=0$, we place $\pa^{\al-\al_1}\{\FI-\FP\} f$ in $L^2_x$ and place $\pa^{\al_1}\FP f$ in $L^\infty_x$. Since
 $$
 \| ~ |\pa^{\al}\{\FI-\FP\} f|_{L^2_v}~\|_{L^2_x}
 \lesssim\sqrt{\mathcal{\tilde{E}}_{K;K}(f)},$$
 and 
$
\|~ |\FP f |_{L^2_v}~\|_{L^\infty_x}
\lesssim
||\na_x \FP f||_2+||\na_x^2 \FP f||_2,
$ 
 we have the desired estimate.

Finally for  $ \int \sum_{\pm }\left( \pa^\al\{\FI-\FP\}_{\pm}f \right)\pa^\al\Ga_{\pm}( f, \{\FI-\FP\} f) $, from Proposition \ref{nonlinearTHM} we have
\begin{eqnarray*}
&& \int \sum_{\pm }\pa^\al\{\FI-\FP\}_{\pm}f\pa^\al\Ga_{\pm}( f, \{\FI-\FP\} f)\\
&\lesssim & ||\pa^\al\{\FI-\FP\} f||_\si || | \pa^{\al-\al_1}f|_{ L^2_v} |\pa^{\al_1}\{\FI-\FP\} f|_\si||_{L^2_x}.
\end{eqnarray*}
When $\al_1=\al$, we place $ \pa^{\al-\al_1}f$ in $L^\infty_x$ and place $\pa^{\al_1}\{\FI-\FP\} f$ in $L^2_x$. Since
$$|||f|_{L^2_v}||_{L^\infty_x}\lesssim \sqrt{\mathcal{\tilde{E}}_{K;K}(f)},$$
and
$|||\pa^{\al}\{\FI-\FP\} f|_{\si}||_{L^2_x}\lesssim||\pa^\al\{\FI-\FP\} f||_\si ,$
 we have the desired estimate.
 
 When $1\leq |\al_1|\leq K-1$, we place $ \pa^{\al-\al_1}f$ in $L^4_x$ and place $\pa^{\al_1}\{\FI-\FP\} f$ in $L^4_x$. Since
$$|||\pa^{\al-\al_1}f|_{L^2_v}||_{L^4_x}\lesssim \sqrt{\mathcal{\tilde{E}}_{K;K}(f)},$$
and
$|||\pa^{\al_1}\{\FI-\FP\} f|_\si||_{L^4_x}\lesssim\sum_{1\leq |\al'|\leq K}||\pa^{\al'}\{\FI-\FP\} f||_\si ,$
 we have the desired estimate.

When $\al_1=0$, we place $ \pa^{\al-\al_1}f$ in $L^2_x$ and place $\pa^{\al_1}\{\FI-\FP\} f$ in $L^\infty_x$. Since
$$|||\pa^{\al}f|_{L^2_v}||_{L^2_x}\lesssim \sqrt{\mathcal{\tilde{E}}_{K;K}(f)},$$
and
$|||\{\FI-\FP\} f|_\si||_{L^\infty_x}\lesssim\sum_{1\leq |\al'|\leq K}||\pa^{\al'}\{\FI-\FP\} f||_\si ,$
 we have the desired estimate.
 
 This completes the proof of (\ref{pure2}).
\end{proof}

\begin{lemma}
\label{xenergy0I-P} Let $f_{0}\in \testF$ and assume $f$ is the
solution constructed in Theorem \ref{localsolution} with $\mathcal{\tilde{E}}
_{2;2}(f)\leq M.$ Then 
\begin{multline}
\frac{d}{dt}\left[ \int \sum_{\pm }\frac{(\{\FI-\FP\}f_{\pm })^{2}}{2}+\int |\nabla \phi |^{2}\right] +\int
\langle Lf,f\rangle
\\
\lesssim  \sqrt{\mathcal{\tilde{E}}
_{2;2}(f)}\left( ||\{\FI-\FP\} f||_\si^2+||\na_x\{\FI-\FP\} f||_\si^2+||\na_x \phi||_2^2+||\na_x \FP f||_2^2 \right)
\\
+
\eta ||\{\FI-\FP\} f||_\si^2+C_\eta \left(||\na_x \FP f||_2^2+||\na_x\{\FI-\FP\} f||_\si^2\right).
\label{enI-P0}
\end{multline}
Above, $\eta>0$ may be any small number.
\end{lemma}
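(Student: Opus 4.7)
The proof parallels Lemma~\ref{xenergy0}, but tests equation~\eqref{vl} against $\{\FI-\FP\}f_\pm$ instead of $f_\pm$. After multiplying the $\pm$-components of \eqref{vl} by $\{\FI-\FP\}f_\pm$, summing over $\pm$, and integrating in $(x,v)$, two orthogonality facts simplify the bookkeeping. First, since $\pa_t \FP f(t,x,\cdot) \in \CN$ pointwise and $\FP$ is the orthogonal projection onto $\CN$ in $L^2_\vel \times L^2_\vel$, we have $\sum_\pm \int \{\FI-\FP\}f_\pm\,\pa_t \FP f_\pm\,dv = 0$, so the time derivative collapses to $\tfrac{d}{dt}\sum_\pm \int (\{\FI-\FP\}f_\pm)^2/2$. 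Second, because $\FL f \in \CN^\perp$ and $\FL \FP = 0$, we have $\langle \FL f, \{\FI-\FP\}f\rangle = \langle \FL f, f\rangle$, giving exactly the dissipation term in the statement.

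\textbf{Continuity-equation identity for the source.} The crucial new ingredient (compared to Lemma~\ref{xenergy0}) is that the inhomogeneous source $\mp 2\{E\cdot v\}\sqrt{\mu}$ on the LHS of \eqref{vl}, tested against $\{\FI-\FP\}f_\pm$ and summed over $\pm$, produces
\begin{equation*}
\sum_\pm \int \{\FI-\FP\}f_\pm \,(\mp 2)(E\cdot v)\sqrt{\mu}\,dv\,dx
\;=\; -2\int E\cdot \Jcont\,dx
\;=\; \frac{d}{dt}\int|\na_x\phi|^2\,dx,
\end{equation*}
where $\Jcont(t,x) = \int v\sqrt{\mu}(f_+-f_-)dv$; the $\{\FI-\FP\}$ inside the $v$-integral can be dropped because $\int v\sqrt{\mu}(\FP_+ f - \FP_- f)dv = 0$, a direct consequence of \eqref{form.p}. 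The last equality uses $-\De\phi = \rho$, the continuity equation \eqref{continEQ}, and integration by parts in $x$. Moving this to the LHS assembles the full energy $\int\sum (\{\FI-\FP\}f_\pm)^2/2 + \int |\na\phi|^2$ appearing in the statement.

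\textbf{Remaining linear and nonlinear terms.} For the streaming term $v\cdot\na_x f_\pm$, split $f = \FP f + \{\FI-\FP\}f$: the $\{\FI-\FP\}$--$\{\FI-\FP\}$ piece integrates to zero in $x$ by parity, and the cross term is bounded by Cauchy--Schwarz in $v$ together with \eqref{lbound}, giving $|\sum_\pm \int \{\FI-\FP\}f_\pm v\cdot\na_x\FP f_\pm\,dv\,dx| \lesssim \|\{\FI-\FP\}f\|_\si \|\na_x \FP f\|_2 \le \eta\|\{\FI-\FP\}f\|_\si^2 + C_\eta \|\na_x \FP f\|_2^2$. The field terms $\pm E\cdot \na_v f_\pm$ and $\pm(E\cdot v)f_\pm$ are split analogously, with the $\{\FI-\FP\}$--$\{\FI-\FP\}$ pieces being killed by integration by parts in $v$; the remaining pieces are controlled by H\"older in $x$, the Sobolev inequalities \eqref{inftySob}--\eqref{2Sob}, and the Poisson equation to convert norms of $E$ into $\|\na_x\phi\|_2$, $\|\na_x\FP f\|_2$, or $\|\na_x\{\FI-\FP\}f\|_\si$, each accompanied by a $\sqrt{\mathcal{\tilde{E}}_{2;2}}$ factor since $\mathcal{\tilde{E}}_{2;2}$ is small. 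The collision term $\Ga_\pm(f,f)$ is expanded via \eqref{decompGa} and handled exactly as in Lemma~\ref{xenergy0}, applying Proposition~\ref{nonlinearTHM} and Sobolev embeddings.

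\textbf{Main obstacle.} Without a Poincar\'e inequality on $\R^3_x$ one cannot control $\|\FP f\|_2$ or $\|\phi\|_2$ from the dissipation, so every low-order norm of $\phi$ must be paid for by a time derivative. The continuity-equation identity above is precisely what packages the otherwise troublesome source $\mp 2(E\cdot v)\sqrt{\mu}$ into $\tfrac{d}{dt}\|\na_x\phi\|_2^2$, sidestepping any uncontrolled $\|\FP f\|_2$. The presence of $\|\na_x\{\FI-\FP\}f\|_\si^2$ on the right-hand side (with both $\sqrt{\mathcal{\tilde{E}}_{2;2}}$ and $C_\eta$ coefficients) comes from a few field and nonlinear terms where, to avoid an $\|\FP f\|_2$, one pays an extra $x$-derivative on $\{\FI-\FP\}f$ after integration by parts; these are benign because they belong to the dissipation at the next level of the hierarchy.
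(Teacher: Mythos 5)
Your argument follows essentially the same route as the paper's: project onto the microscopic part, use the continuity equation together with $-\Delta\phi=\rho$ to convert $-2\int E\cdot\Jcont\,dx$ into $\frac{d}{dt}\int|\na_x\phi|^2$, kill the pure $\{\FI-\FP\}$--$\{\FI-\FP\}$ streaming contribution as a perfect $x$-derivative, absorb the cross term $v\cdot\na_x\FP f$ with Cauchy--Schwarz into $\eta\|\{\FI-\FP\}f\|_\si^2+C_\eta\|\na_x\FP f\|_2^2$, and treat the field and $\Gamma$ terms exactly as in Lemma~\ref{xenergy0} with $\|E\|_\infty^2\lesssim\|\na_x\phi\|_2^2+\|\na_x\FP f\|_2^2$. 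One small correction: the $\{\FI-\FP\}$--$\{\FI-\FP\}$ piece of $\pm(E\cdot v)f_\pm$ is \emph{not} killed by integration by parts in $v$ (there is no $v$-derivative there, so $\pm\int (E\cdot v)(\{\FI-\FP\}f_\pm)^2$ survives); it is, however, harmless, being bounded by $\|E\|_\infty\,\|\langle v\rangle^{3/2}\{\FI-\FP\}f\|_2\,\|\langle v\rangle^{-1/2}\{\FI-\FP\}f\|_2\lesssim \sqrt{\mathcal{\tilde{E}}_{2;2}(f)}\bigl(\|\{\FI-\FP\}f\|_\si^2+\|\na_x\phi\|_2^2+\|\na_x\FP f\|_2^2\bigr)$, which is exactly how the paper handles that term without any splitting.
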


\begin{proof}
By (\ref{vl}) and the continuity equation \eqref{continEQ}, we have
\begin{eqnarray*}
&&\frac{d}{dt}\left[ \int \sum_{\pm }\frac{(\{\FI-\FP\}f_{\pm })^{2}}{2}+\int |\nabla \phi |^{2}\right] +\int
\langle Lf,f\rangle  \notag \\
&= &\int \sum_{\pm }\{\FI-\FP\}f_{\pm }\{\FI-\FP\}\left(  \pm\{E\cdot v\}f_\pm+\Ga_\pm(f,f)-v\cdot\na_x f_\pm\mp E\cdot\na_vf_\pm           \right).
\end{eqnarray*}
For $\int \sum_{\pm }\left( \{\FI-\FP\}f_{\pm } \right) \{\FI-\FP\}(  \pm\{E\cdot v\}f_\pm)$, we have
\begin{eqnarray}
&&\int \sum_{\pm }\{\FI-\FP\}f_{\pm }\{\FI-\FP\}(  \pm\{E\cdot v\}f_\pm)\notag\\
&\lesssim &   \left(\int \sum_{\pm }|\langle v \rangle^{1/2}\{\FI-\FP\}(vf_{\pm })|^2\right)^{1/2}\notag\\
&&\cdot \left(\int \sum_{\pm }|\langle v \rangle^{-1/2}\{\FI-\FP\}f_{\pm }|^2+||E||^2_{L_x^\infty}\right)\notag\\
&\lesssim &  \sqrt{\mathcal{\tilde{E}}
_{2;2}(f)}\left( ||\{\FI-\FP\} f||_\si^2+||\na_x \phi||_2^2+||\na_x \FP f||_2^2\right).\label{I-P1}
\end{eqnarray}
In the last line above we have used \eqref{inftySob} and \eqref{2Sob} as in
$$||E||^2_{L_x^\infty}\lesssim ||\na_x E||^2_{L^2_x} +||\na_x^2 E||^2_{L^2_x}\lesssim || E||^2_{L^2_x} +||\na_x^2 E||^2_{L^2_x}\lesssim ||\na_x \phi||_2^2+||\na_x \FP f||_2^2.$$
For $\int \sum_{\pm }\{\FI-\FP\}f_{\pm }\{\FI-\FP\}( \Ga_\pm(f,f))$, the estimate is in the proof of Lemma \ref{xenergy0}.

For $\int \sum_{\pm }\{\FI-\FP\}f_{\pm }\{\FI-\FP\}( v\cdot\na_x f_\pm)$, we have
\begin{eqnarray}
&&\int \sum_{\pm }\{\FI-\FP\}f_{\pm }\{\FI-\FP\}( v\cdot\na_x f_\pm)\notag\\
&= &  \int \sum_{\pm }\{\FI-\FP\}f_{\pm }\{\FI-\FP\}( v\cdot\na_x \FP f_\pm)\notag\\
&&+\int \sum_{\pm }\{\FI-\FP\}f_{\pm }\{\FI-\FP\}( v\cdot\na_x \{\FI-\FP\}f_\pm)\notag\\
&= &  \int \sum_{\pm }\{\FI-\FP\}f_{\pm }\{\FI-\FP\}( v\cdot\na_x \FP f_\pm)\notag\\
&&-\int \sum_{\pm }\{\FI-\FP\}f_{\pm }\FP( v\cdot\na_x \{\FI-\FP\}f_\pm).\label{I-P3}
\end{eqnarray}
We give bounds for the two terms above. For the first term in \eqref{I-P3}, we have
\begin{eqnarray}
&&\int \sum_{\pm }\{\FI-\FP\}f_{\pm }\{\FI-\FP\}( v\cdot\na_x \FP f_\pm)\notag\\
&\lesssim &   \frac{1}{2}\eta \int \sum_{\pm }|\{\FI-\FP\}f_{\pm }\langle v\rangle^{-1/2}|^2\notag\\
&&+ \frac{1}{2}C_\eta \int \sum_{\pm }|\langle v\rangle^{1/2}\{\FI-\FP\}( v\cdot\na_x \FP f_\pm)|^2\notag\\
&\lesssim &   \frac{1}{2}\eta ||\{\FI-\FP\} f||_\si^2+ \frac{1}{2}C_\eta ||\na_x \FP f||_2^2.\label{I-P3-1}
\end{eqnarray}
For the second term in \eqref{I-P3}, we have
\begin{eqnarray}
&&\int \sum_{\pm }\{\FI-\FP\}f_{\pm }\FP( v\cdot\na_x \{\FI-\FP\}f_\pm)\notag\\
&\lesssim & \frac{1}{2} \eta \int \sum_{\pm }|\{\FI-\FP\}f_{\pm }\langle v\rangle^{-1/2}|^2\notag\\
&&+\frac{1}{2}C_\eta \int \sum_{\pm }|\langle v\rangle^{1/2}\FP( v\cdot\na_x \{\FI-\FP\}f_\pm)|^2\notag\\
&\lesssim &  \frac{1}{2}\eta ||\{\FI-\FP\} f||_\si^2+\frac{1}{2}C_\eta ||\na_x \{\FI-\FP\} f||_\si^2.\label{I-P3-2}
\end{eqnarray}
We finally turn to $\int \sum_{\pm }\{\FI-\FP\}f_{\pm }\{\FI-\FP\}\left( \mp E\cdot\na_vf_\pm           \right).$  We have
\begin{eqnarray}
&&\int \sum_{\pm }\{\FI-\FP\}f_{\pm }\{\FI-\FP\}\left( \mp E\cdot\na_vf_\pm           \right)\notag\\
&\lesssim &  \left(\int \sum_{\pm }|\langle v \rangle^{1/2}\left( \mp E\cdot\na_vf_\pm           \right)|^2\right)^{1/2}\notag\\
&&\cdot\left(\int \sum_{\pm }|\langle v \rangle^{-1/2}\{\FI-\FP\}f_{\pm }|^2+||E||^2_{L_x^\infty}\right)^{1/2}
\notag\\
&\lesssim & \sqrt{\mathcal{\tilde{E}}
_{2;2}(f)}\left( ||\{\FI-\FP\} f||_\si^2+||\na_x \phi||_2^2+||\na_x \FP f||_2^2\right)\label{I-P4}.
\end{eqnarray}
Combining \eqref{I-P1}, \eqref{I-P3-1}, \eqref{I-P3-2}, \eqref{I-P4} and the estimate of  $\int \sum_{\pm }\{\FI-\FP\}f_{\pm }\{\FI-\FP\}( \Ga_\pm(f,f))$ in the proof of Lemma \ref{xenergy0} gives \eqref{enI-P0}. This completes of proof. 
\end{proof}

Lemma \ref{xenergy0}, \ref{xenergy1} and \ref{xenergy0I-P} deal with pure energy  inequalities without weight. Now we turn to energy inequalities with a weight. 

\begin{lemma}
\label{xenergyK} Let $f_{0}\in \testF$ and assume $f$ is the solution constructed in Theorem \ref{localsolution} with 
$\mathcal{\tilde{E}}_{2;2}(f)\leq M$.  For any $K\geq 2$, and for $1\leq |\alpha |\leq K$, $w(\al,\be)=\langle v\rangle^{2(K-|\al|-|\be|)}$, we have%
\begin{eqnarray}
&&\frac{d}{dt} \int \sum_{\pm }\frac{e^{\pm 2\phi }w^2(\partial ^{\alpha
}f_{\pm })^{2}}{2} +||\partial^\al f||^2_{\si,w}\notag \\
&\lesssim &\sum_{\pm }\int \{||\phi _{t}||_{\infty}+||\na_x \phi ||_{\infty}\}w^2(\partial ^{\alpha }f_{\pm })^{2}+%
\sqrt{\mathcal{\tilde{E}}_{K,K}(f)}\mathcal{\tilde{\tilde{D}}}_{K,K}(f)\notag\\
&&+\eta||\partial^\al f||_{\si,w}^2+C_\eta||\partial^\al f||_\si^2+\eta ||\na_x \phi||^2_2,  \label{xenergyKbound}
\end{eqnarray}
where $\eta>0$ can be arbitrary small.
\end{lemma}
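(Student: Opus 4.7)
The plan is to mimic the structure of the proof of Lemma \ref{xenergy1}, but carry the velocity weight $w = w(\alpha,0) = \langle v\rangle^{2(K-|\alpha|)}$ throughout, and replace each unweighted estimate by its weighted counterpart based on Proposition \ref{nonlinearTHM} and the weighted coercivity of $L$. Concretely, apply $\partial^\alpha$ to \eqref{vl}, multiply by $e^{\pm 2\phi} w^2 \partial^\alpha f_\pm$, integrate in $(x,v)$, and sum over $\pm$. The transport term combines with the factor $e^{\pm 2\phi}$ to produce, after an integration by parts in $x$, the contribution $\mp \int \phi_t e^{\pm 2\phi} w^2 (\partial^\alpha f_\pm)^2$, which is the first source of the $\|\phi_t\|_\infty$ factor on the right of \eqref{xenergyKbound}. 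The field term $\pm \nabla_x\phi\cdot\nabla_v$ only reshuffles the weight and yields (after one integration by parts in $v$) the $\|\nabla_x\phi\|_\infty$ factor.

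The key pieces are then as follows. First, the linear operator $L$ contributes, via the weighted coercivity estimate (of the type in Lemma 9 of \cite{SG2}, compare also \eqref{ax}--\eqref{av} with $\beta=0$), a dissipation $\gtrsim \|\partial^\alpha f\|_{\sigma,w}^2 - C\|\chi\,\partial^\alpha f\|_2^2$, where the $\chi$-localized term is absorbed into the $C_\eta\|\partial^\alpha f\|_\sigma^2$ part on the right since $\chi$ cuts off large $|v|$. Second, the source $\mp 2\nabla_x\partial^\alpha\phi\cdot v\sqrt{\mu}\,(e^{\pm 2\phi}-1)$ is controlled by $\sqrt{\tilde{\mathcal{E}}_{2;2}(f)}$ times $\|\nabla_x\partial^\alpha\phi\|_2^2 + \|\partial^\alpha f\|_\sigma^2$, the first of which is interpolated by $\|\nabla_x\phi\|_2^2 + \|\nabla_x \FP f\|_2^2$ from the Poisson equation \eqref{p} and absorbed into $\eta\|\nabla_x\phi\|_2^2 + \sqrt{\tilde{\mathcal{E}}_{K,K}(f)}\,\tilde{\tilde{\mathcal{D}}}_{K,K}(f)$. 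The small factor $(e^{\pm 2\phi}-1)$ in the $L$ cross-term is handled as in \eqref{x3bound} via Lemma 5 of \cite{G1} to yield $\sqrt{\tilde{\mathcal{E}}_{2;2}(f)}\,\|\partial^\alpha f\|_{\sigma,w}^2$, hence absorbable into $\tilde{\tilde{\mathcal{D}}}_{K,K}(f)$.

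For the nonlinear term, I would apply Proposition \ref{nonlinearTHM} with $\beta = 0$ and $\ell = K-|\alpha|$ after decomposing $\Gamma(f,f)$ as in \eqref{decompGa}. Each of the three pieces is estimated by placing the lowest-order factor in $L^\infty_x$ via \eqref{inftySob} and the other two in $L^2_x L^2_v$ (or in $L^4_x L^2_v$, $L^4_x L^2_v$) when $|\alpha|$ is small, and, when $|\alpha|$ is larger, switching the roles so that the $L^\infty_x$ slot is filled by a factor of order $\le [K/2]$, which is controlled by $\sqrt{\tilde{\mathcal{E}}_{K,K}(f)}$. Each resulting bound is of the form $\sqrt{\tilde{\mathcal{E}}_{K,K}(f)}\,\tilde{\tilde{\mathcal{D}}}_{K,K}(f)$, precisely as needed. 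The commutator-type terms $\int e^{\pm 2\phi} w^2 \partial^\alpha f_\pm \,\partial^{\alpha-\alpha_1}\nabla_x\phi\cdot\nabla_v\partial^{\alpha_1}f_\pm$ and $\int e^{\pm 2\phi} w^2 \partial^\alpha f_\pm \,\partial^{\alpha-\alpha_1}\nabla_x\phi\cdot v\,\partial^{\alpha_1}f_\pm$ are handled by distributing $|\alpha|$ derivatives using the Sobolev inequalities \eqref{inftySob}--\eqref{4Sob}; since $|\alpha|\ge 1$, at least one $\nabla_x$ acts on $\phi$, so we may convert it into a derivative of $\FP f$ or of $E$ through the Poisson equation and bound everything by $\sqrt{\tilde{\mathcal{E}}_{K,K}(f)}\,\tilde{\tilde{\mathcal{D}}}_{K,K}(f) + \eta\,\|\nabla_x\phi\|_2^2$.

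The main obstacle will be carrying the weight $w^2$ through the nonlinear estimate without losing a power, which is exactly where Proposition \ref{nonlinearTHM} is essential: it allows one of the three factors to be placed in the weak-weighted norm $|\cdot|_{2,-\wB}$, which is estimated trivially by $\sqrt{\tilde{\mathcal{E}}_{K,K}(f)}$ via Sobolev embeddings in $x$, while the other two factors carry the full $\sigma$-dissipation with the weight $w$ to pair with the contribution coming from $L$. The second potential difficulty is that the field term $\pm\{E\cdot v\}f_\pm$ produces, after projection onto $\partial^\alpha f_\pm$ against $w^2$, a factor of $v$ which is precisely what the gain in weight in $\tilde{\tilde{\mathcal{D}}}_{K,K}$ is designed to absorb; I would split $E\cdot v f_\pm = E\cdot v \FP f_\pm + E\cdot v\{\FI-\FP\} f_\pm$ and treat the macro part through the Sobolev embedding of $\nabla_x\FP f$ (giving $\|\nabla_x\FP f\|_2^2$ absorbable in $\tilde{\tilde{\mathcal{D}}}_{K,K}$), and the micro part via the weighted $\sigma$-norm. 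Collecting all these bounds yields \eqref{xenergyKbound}.
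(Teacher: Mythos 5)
Your proposal follows essentially the same route as the paper's proof: the paper simply reuses the weighted identity \eqref{local1}--\eqref{local11} from Lemma \ref{mixnenergy} with $f^{n+1}=f^{n}=f$ and $\beta=0$, bounds the linear pieces via \eqref{ax}, \eqref{dfield}, \eqref{ineqlocal7} and Lemma 8 of \cite{SG1}, treats the commutators \eqref{local3}--\eqref{local4} by the same macro--micro split at $\alpha_1=0$ followed by an $L^2_x$--$L^3_x$--$L^6_x$ (or $L^\infty_x$) distribution of factors, and estimates $\Gamma$ through Proposition \ref{nonlinearTHM} exactly as you describe. The one imprecision is your closing remark that the factor of $v$ coming from $\pm\{E\cdot v\}f_{\pm}$ is absorbed by a ``gain in weight'' in $\mathcal{\tilde{\tilde{D}}}_{K,K}$: the $\sigma$-norm actually loses $\langle v\rangle^{-1/2}$, and this term is instead cancelled by the $e^{\pm 2\phi}$ factor you already built into the energy, leaving only the bounded weight-commutator $\tfrac{2(l-|\alpha|)}{1+|v|^{2}}\nabla_x\phi\cdot v$ of \eqref{local5}, which is controlled by $\|\nabla_x\phi\|_{\infty}$ as in \eqref{dfield}.
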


\begin{proof}
We are going to use the computations in \eqref{local1} through \eqref{local11}. The difference here is that we have $f^{n+1}=f^n=f$, so we don't have \eqref{local10} and \eqref{local11}. Moreover since we don't have a derivative with respect to the momentum variable $v$, we don't have \eqref{local2}. For \eqref{local1}, \eqref{local5} and \eqref{local7}, we use the estimates \eqref{ax}, \eqref{dfield} and \eqref{ineqlocal7}. Now we turn to \eqref{local3} and \eqref{local4}.  From the proof of Lemma 8 in \cite{G0}, we have 
\begin{multline}\label{field0}
-\int w^{2}\partial ^{\alpha }f_{\pm }\partial ^{\alpha -\alpha
_{1}}\nabla _{x}\phi \cdot \nabla _{v}\partial ^{\alpha _{1}}f_{\pm
} 
 \\
\lesssim \int |w\langle v\rangle ^{-1/2}\partial ^{\alpha }f_{\pm
}\partial ^{\alpha -\alpha _{1}}\nabla _{x}\phi \cdot \langle v\rangle
^{-2[|\alpha |-|\alpha _{1}|-1]}w(\alpha _{1},0 )\langle v\rangle
^{-3/2}\nabla _{v}\partial ^{\alpha _{1}}f_{\pm }|  
 \\
\lesssim 
\int |\partial ^{\alpha }f_{\pm }|_{\sigma ,w}|\partial
^{\alpha -\alpha _{1}}\nabla _{x}\phi ||\partial ^{\alpha
_{1}}f_{\pm }|_{\sigma ,\frac{w(\alpha _{1},0 )}{\langle v\rangle
^{2[|\alpha |-|\alpha _{1}|-1]}}}dx,  
\end{multline}
and 
\begin{eqnarray*}
& &|\int w^{2}\partial ^{\alpha }f_{\pm }\partial ^{\alpha
-\alpha _{1}}\nabla _{x}\phi \cdot v\partial ^{\alpha _{1}}f_{\pm
}]|\\
&\leq &\int |w\langle v\rangle ^{-1/2}\partial ^{\alpha }f_{\pm
}\partial ^{\alpha -\alpha _{1}}\nabla _{x}\phi \cdot \frac{w(\alpha
_{1},0 )}{\langle v\rangle ^{2[|\alpha |-|\alpha _{1}|-1]}}\langle
v\rangle ^{-\frac{3}{2}}\partial^{\alpha _{1}}f_{\pm }| \\
&\leq &C_{m}\int |\partial ^{\alpha }f_{\pm }|_{\sigma ,w(\alpha
,0 )}|\partial ^{\alpha -\alpha _{1}}\nabla _{x}\phi ||\partial^{\alpha _{1}}f_{\pm }|_{\sigma ,\frac{w(\alpha _{1},\beta )}{\langle
v\rangle ^{2[|\alpha |-|\alpha _{1}|-1]}}}.
\end{eqnarray*}
When $|\al_1|\geq 1$, we just use the estimates given in the proof of Lemma 8 in \cite{G0}. When $\al_1=0$, we split $\na_v f_{\pm}$ into $\na_v \FP f_{\pm}+\na_v\{\FI-\FP\} f_{\pm}$ and split $f_{\pm}$ into $ \FP f_{\pm}+\{\FI-\FP\} f_{\pm}$. For the terms involving $\na_v\{\FI-\FP\} f_{\pm}$ and $\{\FI-\FP\} f_{\pm}$, we use the same technique as in the proof of Lemma 8 in \cite{G0}. All these estimates don't involve the term $||\FP f||_{\si,w}$, so they are boumded by $\sqrt{\mathcal{\tilde{E}}_{K,K}(f)}\mathcal{\tilde{\tilde{D}}}_{K,K}(f)$. The remaining terms to be estimated are
$$
\int |w\langle v\rangle ^{-1/2}\partial ^{\alpha }f_{\pm
}\partial ^{\alpha -\alpha _{1}}\nabla _{x}\phi \cdot \langle v\rangle
^{-2[|\alpha |-|\alpha _{1}|-1]}w(\alpha _{1},0 )\langle v\rangle
^{-3/2}\nabla _{v}\partial ^{\alpha _{1}}\FP f_{\pm }|
$$
and 
$\int |w\langle v\rangle ^{-1/2}\partial ^{\alpha }f_{\pm
}\partial ^{\alpha -\alpha _{1}}\nabla _{x}\phi \cdot \frac{w(\alpha
_{1},0 )}{\langle v\rangle ^{2[|\alpha |-|\alpha _{1}|-1]}}\langle
v\rangle ^{-\frac{3}{2}}\partial^{\alpha _{1}}\FP f_{\pm }| $.

We place $w\langle v\rangle ^{-1/2}\partial ^{\alpha }f_{\pm}$  and  $\partial ^{\alpha -\alpha _{1}}\nabla _{x}\phi $ in $L^2_vL^2_x$ and $L^3_x$ respectively. We then place $$\langle v\rangle
^{-2[|\alpha |-|\alpha _{1}|-1]}w(\alpha _{1},0 )\langle v\rangle
^{-3/2}\nabla _{v}\partial ^{\alpha _{1}}\FP f_{\pm }$$
 and 
 $$\frac{w(\alpha
_{1},0 )}{\langle v\rangle ^{2[|\alpha |-|\alpha _{1}|-1]}}\langle
v\rangle ^{-\frac{3}{2}}\partial^{\alpha _{1}}\FP f_{\pm }$$ in $L^2_vL^6_x$. With \eqref{6Sob}, we have the estimate:
\begin{equation}
\eqref{local3}+\eqref{local4}\lesssim \sqrt{\mathcal{\tilde{E}}_{K,K}(f)}\mathcal{\tilde{\tilde{D}}}_{K,K}(f).\label{fieldnonlinear2}
\end{equation}
Now we turn to \eqref{local6}. By Lemma 8 in \cite{SG1}, we have
\begin{equation}
\eqref{local6}\lesssim \sqrt{\mathcal{\tilde{E}}_{2,2}(f)}\mathcal{\tilde{\tilde{D}}}_{K,K}(f).\label{smalll2}
\end{equation}
Next we turn to \eqref{local8}. Again by Lemma 8 in \cite{SG1}, we have
\begin{equation}
\eqref{local8}\lesssim \eta ||\partial^\al f||^2_{\si, w}+C_\eta ||\partial^\al f||_\si^2.\label{kbound2}
\end{equation}
Finally we turn to \eqref{local9}. By Proposition \ref{nonlinearTHM}, we have
\begin{equation}
\eqref{local9}\lesssim\int_{\R^3}\sum_{\alpha_1 \le \alpha}
|\partial^{\alpha _1}f|_{2,w(0,0)^{-1}}
|\partial ^{\alpha -\alpha _1}f|_{\sigma ,w}
|\partial ^\alpha f|_{\sigma ,w}dx.
\end{equation}

When $|\al_1|\leq |\al|-2$, we place $|\partial^{\alpha _1}f|_{2,w(0,0)^{-1}}$, $|\partial ^{\alpha -\alpha _1}f|_{\sigma ,w}$ and $|\partial ^\alpha f|_{\sigma ,w}$ in spaces $L^\infty_x$, $L^2_x$ and $L^2_x$ respectively.

When $|\al_1|= |\al|-1$, we place $|\partial^{\alpha _1}f|_{2,w(0,0)^{-1}}$, $|\partial ^{\alpha -\alpha _1}f|_{\sigma ,w}$ and $|\partial ^\alpha f|_{\sigma ,w}$ in spaces $L^4_x$, $L^4_x$ and $L^2_x$ respectively.

When $|\al_1|= |\al|$, we split $\partial ^{\alpha -\alpha _1}f$ into $\partial ^{\alpha -\alpha _1}\FP f+\partial ^{\alpha -\alpha _1}\{\FI-\FP\}f$. For $\partial ^{\alpha -\alpha _1}\{\FI-\FP\}f$, we place $|\partial^{\alpha _1}f|_{2,w(0,0)^{-1}}$, $|\partial ^{\alpha -\alpha _1}\{\FI-\FP\}f|_{\sigma ,w}$ and $|\partial ^\alpha f|_{\sigma ,w}$ in spaces $L^2_x$, $L^\infty_x$ and $L^2_x$ respectively.
For $\partial ^{\alpha -\alpha _1}\FP f$, we realize that
$$|\partial^{\alpha _1}f|_{2,w(0,0)^{-1}}\lesssim |\partial^{\alpha _1}f|_{\si,w}$$
and 
$|\partial ^{\alpha -\alpha _1}\FP f|_{\sigma ,w}\lesssim |\partial ^{\alpha -\alpha _1}\FP f|_{2}.$
So we still place $|\partial^{\alpha _1}f|_{2,w(0,0)^{-1}}$, $|\partial ^{\alpha -\alpha _1}\FP f|_{\sigma ,w}$ and $|\partial ^\alpha f|_{\sigma ,w}$ in spaces $L^2_x$, $L^\infty_x$ and $L^2_x$ respectively. These estimates give
\begin{equation}
\eqref{local9}\lesssim \sqrt{\mathcal{\tilde{E}}_{K,K}(f)}\mathcal{\tilde{\tilde{D}}}_{K,K}(f).\label{gammabound}
\end{equation}
Combing the estimates given above completes the proof of this lemma.
\end{proof}

\begin{lemma}
\label{I-PenergyK}Assume $f_{0}\in \testF$ and assume $f$ is the
solution constructed in Theorem \ref{localsolution} with $\mathcal{\tilde{E}}%
_{2;2}(f)\leq M.$ For any $K\geq 2$, $w(\al,\be)=\langle v\rangle^{2(K-|\al|-|\be|)}$, $|\alpha |+ |\beta| \leq K$ with $|\al|\leq K-1$ we have%
\begin{multline}\label{I-PK}
\frac{d}{dt} \int \sum_{\pm }\frac{e^{\pm 2\phi }w^2(\partial_{\beta} ^{\alpha
}\{\FI-\FP\}f_{\pm })^{2}}{2}+||\partial_{\beta}^\al\{\FI-\FP\}f ||_{\si, w(\al, \beta)}^2 
\\
\lesssim \{||\phi _{t}||_{\infty}+||\na_x \phi ||_{\infty}\} \sum_{\pm }\int w^2(\partial_\beta ^{\alpha }\{\FI-\FP\}f_{\pm })^{2}
\\
+\mathcal{\tilde{E}}_{K,K}(f)\mathcal{\tilde{D}}_{K,K}(f)+\eta\mathcal{\tilde{D}}_{K,K}(f)+\mathcal{\tilde{D}}_{|\al|+|\beta|-1,K}(f)
\\
+
\eta||\partial^\al_\be\{\FI-\FP\}f_{\pm}||^2_{\si,w}+C_\eta||\na_x^{|\al|+1}f_\pm||^2_\si+C_{\eta}||\partial^\al\na_x\phi||^2_2,  
\end{multline}
where $\eta>0$ can be arbitrarily small.
\end{lemma}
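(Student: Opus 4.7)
The plan is to apply $\{\FI-\FP\}$ followed by $\pa^\al_\be$ to the Vlasov--Poisson--Landau system \eqref{vl}, then pair with the weighted test function $e^{\pm 2\phi} w^2(\al,\be)\pa^\al_\be\{\FI-\FP\}f_\pm$ and integrate over $(x,v)$.  Since $\FP$ commutes with $\pa_t$ and $\na_x$ and satisfies $L\FP=0$, this yields an equation of the form
$$
\pa_t\pa^\al_\be\{\FI-\FP\}f_\pm + v\cdot\na_x\pa^\al_\be\{\FI-\FP\}f_\pm + L_\pm\pa^\al_\be\{\FI-\FP\}f = \pa^\al_\be\{\FI-\FP\}(\text{RHS})_\pm + [\text{commutators}],
$$
with commutators arising from $[\FP,v\cdot\na_x]$ and $[\pa_\be,v\cdot\na_x]$.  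The time-derivative pairing produces $\frac{d}{dt}\int e^{\pm 2\phi}w^2(\pa^\al_\be\{\FI-\FP\}f_\pm)^2/2$ plus a $\phi_t$ error, while the $L$-pairing, via the weighted coercivity of Lemma 9 in \cite{SG2} (already invoked in the derivation of \eqref{macroWeightINEQ}), produces the desired dissipation $\|\pa^\al_\be\{\FI-\FP\}f\|_{\si,w(\al,\be)}^2$ modulo a velocity-compactly supported remainder absorbable into $\mathcal{\tilde{D}}_{|\al|+|\be|-1,K}(f)$.

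For the remaining terms I would proceed in analogy with the proof of Lemma \ref{xenergyK}.  The Maxwellian source $\mp 2(E\cdot v)\sqrt\mu$, after integration by parts in $v$ to move $\pa_\be$ onto the Gaussian factor as in \eqref{ineqlocal7}, produces $C_\eta\|\pa^\al\na_x\phi\|_2^2$ plus absorbable pieces; the $[\pa_\be,v\cdot\na_x]$ commutator yields the lower-order streaming term $\de_\be^{\mathbf{e}_i}\pa^{\al+\mathbf{e}_i}_{\be-\mathbf{e}_i}\{\FI-\FP\}f_\pm$, bounded as in \eqref{streamlocal} by $\eta\,\mathcal{\tilde{D}}_{K,K}(f)$ (the requirement $|\al|\le K-1$ ensures that $\al+\mathbf{e}_i$ is still admissible); the field terms $\pa^{\al-\al_1}\na_x\phi\cdot\pa_\be\na_v\pa^{\al_1}f_\pm$ and $\pa^{\al-\al_1}\na_x\phi\cdot\pa_\be(v\pa^{\al_1}f_\pm)$ for $\al_1<\al$ are handled by Sobolev embedding exactly as in \eqref{fieldnonlinear2} to yield $\sqrt{\mathcal{\tilde{E}}_{K,K}(f)}\,\mathcal{\tilde{D}}_{K,K}(f)$, while the $\al_1=\al$ pieces (field not differentiated) contribute the prefactor $(\|\phi_t\|_\infty+\|\na_x\phi\|_\infty)\int w^2(\pa^\al_\be\{\FI-\FP\}f_\pm)^2$.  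The nonlinear collision term is estimated via Proposition \ref{nonlinearTHM} by splitting the Leibniz sum into $|\al_1|+|\be_1|\le[K/2]$ and $|\al_1|+|\be_1|>[K/2]$, placing the small-index factor in $L^\infty_x$ through \eqref{inftySob} and absorbing the rest by Cauchy--Schwarz, giving the $\mathcal{\tilde{E}}_{K,K}(f)\mathcal{\tilde{D}}_{K,K}(f)$ contribution as in \eqref{gammabound}.

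The main obstacle is the projection commutator $[\FP,v\cdot\na_x]$, which under $\pa^\al_\be$ produces terms of the form $\pa^\al_\be\FP(v\cdot\na_x\{\FI-\FP\}f)$ and $\pa^\al_\be\{\FI-\FP\}(v\cdot\na_x\FP f)$.  Because the range of $\FP$ consists of polynomials in $v$ times $\sqrt\mu$, each such term carries a fast Gaussian velocity decay, so after integrating the $v$-polynomial coefficients of $\FP$ against $\sqrt\mu$, their $L^2_v$-norms are controlled by a constant multiple of $|\na_x^{|\al|+1}f|_\si$.  Pairing with $\pa^\al_\be\{\FI-\FP\}f$ and applying Cauchy--Schwarz then yields exactly $C_\eta\|\na_x^{|\al|+1}f\|_\si^2 + \eta\|\pa^\al_\be\{\FI-\FP\}f\|_{\si,w}^2$, the two error terms on the right-hand side of \eqref{I-PK}.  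The hypothesis $|\al|\le K-1$ is sharply needed here so that $\na_x^{|\al|+1}f$ remains at most a top-order $\na_x^K f$, hence controlled by the dissipation functionals on the RHS.
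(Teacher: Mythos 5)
Your proposal follows essentially the same route as the paper: apply $\{\FI-\FP\}$ to \eqref{vl}, take $\partial_\beta^\alpha$, pair with $e^{\pm 2\phi}w^2\partial_\beta^\alpha\{\FI-\FP\}f_\pm$, reuse the estimates from the local-existence energy lemma for the common terms, and control the new projection commutators $\partial_\beta^\alpha[\FP(v\cdot\nabla_x f_\pm)-v\cdot\nabla_x\FP f_\pm]$ (and the field analogues) by exploiting the Gaussian decay of the range of $\FP$, which yields exactly the $C_\eta\|\nabla_x^{|\alpha|+1}f\|_\sigma^2+\eta\|\partial_\beta^\alpha\{\FI-\FP\}f\|_{\sigma,w}^2$ terms. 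Your observation that $|\alpha|\le K-1$ is what keeps both the streaming commutator and $\nabla_x^{|\alpha|+1}f$ within the admissible dissipation is also how the paper uses that hypothesis.
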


\begin{proof}
By \eqref{vl}, we have using the macro-micro decomposition that
\begin{eqnarray}
&&\partial _{t}\{\FI-\FP\}f_{\pm }+v\cdot \nabla _{x}\{\FI-\FP\}f_{\pm }\pm E\cdot \nabla _{v}\{\FI-\FP\}f_{\pm }\notag\\
&&+\{\FI-\FP\}(\mp2\{E\cdot v\}\sqrt{\mu})\notag\\
&=&-
L_{\pm }f 
\pm \{E\cdot v\}\{\FI-\FP\}f_{\pm }+\Gamma _{\pm
}(f,f)\notag\\
&&\mp E\cdot \FP(vf_{\pm }) \pm \{E\cdot v\}\FP f_{\pm } -v\cdot \nabla _{x}\FP f_{\pm }+\FP(v\cdot \nabla _{x}f_{\pm })\notag\\
&&\mp E\cdot \nabla _{v}\FP f_{\pm }\pm E\cdot \FP(\nabla _{v}f_{\pm }).\label{vlI-P}
\end{eqnarray}
So we have

\begin{eqnarray}
&&\frac{d}{dt}\left\{ \int \frac{e^{\pm 2\phi }w^{2}(\partial
_{\beta }^{\alpha }\{\FI-\FP\}f_{\pm })^{2}}{2}\right\} -\int \langle
w^{2}\partial _{\beta }^{\alpha }A\{\FI-\FP\}f,\partial _{\beta }^{\alpha
}\{\FI-\FP\}f_{\pm }\rangle  \label{local1'} \\
&=&-\int e^{\pm 2\phi }w^{2}\delta _{\beta }^{\mathbf{e}%
_{i}}\partial _{\beta -\mathbf{e}_{i}}^{\alpha +\mathbf{e}_{i}}\{\FI-\FP\}f_{\pm }\partial _{\beta }^{\alpha }\{\FI-\FP\}f_{\pm }  \label{local2'} \\
&&\pm \sum_{\alpha _{1}<\alpha }C_{\alpha }^{\alpha _{1}}\int e^{\pm
2\phi }w^{2}\partial _{\beta }^{\alpha }\{\FI-\FP\}f_{\pm }\partial
^{\alpha -\alpha _{1}}\nabla _{x}\phi \cdot \nabla _{v}\partial _{\beta
}^{\alpha _{1}}\{\FI-\FP\}f_{\pm }  \label{local3'} \\
&&\mp \sum_{\alpha _{1}<\alpha }C_{\alpha }^{\alpha _{1}}\int e^{\pm
2\phi }w^{2}\partial _{\beta }^{\alpha }\{\FI-\FP\}f_{\pm }\partial
^{\alpha -\alpha _{1}}\nabla _{x}\phi\cdot \partial _{\beta }[v\partial
^{\alpha _{1}}\{\FI-\FP\}f_{\pm }]  \label{local4'} \\
&&\mp \int [\frac{2(l-|\alpha |-|\beta |)}{1+|v|^{2}}\nabla _{x}\phi
\cdot v-\phi _{t}]e^{\pm 2\phi }w^{2}(\partial _{\beta
}^{\alpha }\{\FI-\FP\}f_{\pm })^{2}  \label{local5'} \\
&&+\mathcal{C} \label{local6''}
\end{eqnarray}
where $\mathcal{C}$ in  \eqref{local6''} is given by 
\begin{eqnarray}
\mathcal{C}&=&\int w^{2}(e^{\pm 2\phi }-1)\partial _{\beta }^{\alpha }\{\FI-\FP\}f_{\pm }\partial _{\beta }^{\alpha }A\{\FI-\FP\}f_{\pm }  \label{local6'} \\
&&+2\int w^2e^{\pm 2\phi}\partial^\al_\be\{\FI-\FP\}f_\pm\partial^\al_\be\{\FI-\FP\}(\mp\{E\cdot v\}\sqrt{\mu})\label{local7'}\\
&&+\int w^{2}e^{\pm 2\phi }\partial _{\beta }^{\alpha }K_{\pm
}\{\FI-\FP\}f_{\pm }\partial _{\beta }^{\alpha }\{\FI-\FP\}f_{\pm }  \label{local8'} \\
&&+\int w^{2}e^{\pm 2\phi }\partial _{\beta }^{\alpha }\Gamma _{\pm
}(f,f)\partial _{\beta }^{\alpha }\{\FI-\FP\}f_{\pm }  \label{local9'} \\
&&\mp \sum_{\alpha _{1}<\alpha }C_{\alpha }^{\alpha _{1}}\int\{ e^{\pm
2\phi }w^{2}\partial _{\beta }^{\alpha }\{\FI-\FP\}f_{\pm }\partial
^{\alpha -\alpha _{1}}\nabla _{x}\phi\notag\\
&&\cdot \partial _{\beta }[v\partial
^{\alpha _{1}}\FP f_{\pm }-\FP(v\partial
^{\alpha _{1}}f_{\pm })]\}  \label{local4''} \\
&&\pm \sum_{\alpha _{1}<\alpha }C_{\alpha }^{\alpha _{1}}\int \{e^{\pm
2\phi }w^{2}\partial _{\beta }^{\alpha }\{\FI-\FP\}f_{\pm }\partial
^{\alpha -\alpha _{1}}\nabla _{x}\phi \notag\\
&&\cdot [\nabla _{v}\partial _{\beta
}^{\alpha _{1}}\FP f_{\pm }-\partial _{\beta
}^{\alpha _{1}}\FP(\na_v f_{\pm })] \} \label{local3''} \\
&&+\int e^{\pm
2\phi }w^{2}\partial _{\beta }^{\alpha }\{\FI-\FP\}f_{\pm }\partial _{\beta }^{\alpha } [\FP(v\cdot\na_x f_{\pm})
-v\cdot\na_x\FP f_{\pm}]. 
   \label{local2''}
\end{eqnarray}

Above 
\begin{equation*}
\delta _{\beta }^{\mathbf{e}_{i}}=1\text{ \ if \ \ \ }\mathbf{e}_{i}\leq
\beta ;\text{ \ }\ \text{or \ \ }\delta _{\beta }^{\mathbf{e}_{i}}=0,\text{
\ otherwise}. 
\end{equation*}
Now for \eqref{local1'} through \eqref{local9'}, we can use the estimates given in the proof of Lemma \ref{mixnenergy}. Note that we have
$$ \mathcal{D}_{K,K}(\{\FI-\FP\}f) \leq \mathcal{\tilde{D}}_{K,K}(f). $$
For \eqref{local4''} and \eqref{local3''}, we have two cases.

 When $|\al_1|\geq 1$, we have 
$$||\langle v\rangle^{-2}w\partial _{\beta }^{\alpha }\{\FI-\FP\}f_{\pm }||_{L^2_vL^2_x}\leq ||\partial _{\beta }^{\alpha }\{\FI-\FP\}f_{\pm }||_{\si,w},$$  
$$||\partial
^{\alpha -\alpha _{1}}\nabla _{x}\phi ||_{L^\infty_x}\leq \sqrt{\mathcal{\tilde{E}}_{K,K}(f)},$$
and 
\begin{eqnarray*}
&&||w \langle v\rangle^{2}  \partial _{\beta }[v\partial
^{\alpha _{1}}\FP f_{\pm }-\FP(v\partial
^{\alpha _{1}}f_{\pm })]||_{L^2_vL^2_x}\\
&&+||w \langle v\rangle^{2}  \partial _{\beta }[\nabla _{v}\partial _{\beta
}^{\alpha _{1}}\FP f_{\pm }-\partial _{\beta
}^{\alpha _{1}}\FP(\na_v f_{\pm })]||_{L^2_vL^2_x} \\
&\leq &\sqrt{\mathcal{\tilde{D}}_{K,K}(f)}.        
\end{eqnarray*}

When $\al_1=0$, we have
$$||\partial
^{\alpha -\alpha _{1}}\nabla _{x}\phi ||_{L^3_x}\leq \sqrt{\mathcal{\tilde{E}}_{K,K}(f)},$$
and 
\begin{eqnarray*}
&&||w \langle v\rangle^{2}  \partial _{\beta }[v\partial
^{\alpha _{1}}\FP f_{\pm }-\FP(v\partial
^{\alpha _{1}}f_{\pm })]||_{L^2_vL^6_x}\\
&&+||w \langle v\rangle^{2}  \partial _{\beta }[\nabla _{v}\partial _{\beta
}^{\alpha _{1}}\FP f_{\pm }-\partial _{\beta
}^{\alpha _{1}}\FP(\na_v f_{\pm })]||_{L^2_vL^6_x} \\
&\leq &||\na_x \FP f||_{\si, w}\\
&\leq& \sqrt{\mathcal{\tilde{D}}_{K,K}(f)},        
\end{eqnarray*}
where we have used \eqref{6Sob}. These give the estimate of \eqref{local4''} and \eqref{local3''}.

We finally turn to \eqref{local2''}. We have 
$$||\langle v\rangle^{-2}w\partial _{\beta }^{\alpha }\{\FI-\FP\}f_{\pm }||_{L^2_vL^2_x}\leq ||\partial _{\beta }^{\alpha }\{\FI-\FP\}f_{\pm }||_{\si,w},$$ and 
$$||w \langle v\rangle^{2}\partial _{\beta }^{\alpha } [\FP(v\cdot\na_x f_{\pm})-v\cdot\na_x\FP f_{\pm}]||_{L^2_vL^2_x}\leq ||\na_x^{|\al|+1} f_{\pm }||_{\si}.$$
The above bounds give the desired estimate of \eqref{local2''}. Combining the estimates of \eqref{local1'} through \eqref{local2''} completes the proof of this lemma.
\end{proof}

The next three lemmas allow us to include more terms into our dissipation in our energy inequality. So in our final energy inequality, we can absorb many terms in the bounds  of the lemmas  already given in this section. Recall that 
\begin{equation}
\{\partial _{t}+v\cdot \nabla _{x}\}f_{\pm }\mp 2\{E\cdot v\}\sqrt{\mu }%
+L_{\pm }f = N_{\pm }(f),  
 \label{N}
\end{equation}
where
$$
N_{\pm }(f) \equiv \mp E\cdot \nabla _{v}f_{\pm }\pm \{E\cdot v\}f_{\pm }+\Gamma _{\pm
}(f,f).  
$$
The following lemma enables us to include the term $||\na_x \phi||^2_2$ in our dissipation.

\begin{lemma}
\label{dissE}Let $f_{0}\in \testF$ and assume $f$ is the
solution constructed in Theorem \ref{localsolution} with $\mathcal{\tilde{E}}%
_{2;2}(f)\leq M.$ Then we have
\begin{eqnarray}
&&\frac{d}{dt} \int -\langle [v,-v]\mu^{1/2},\{\FI-\FP\}f\rangle\cdot E dx+\lambda ||E||^2_2 \notag \\
&\lesssim &||\{\FI-\FP\}f||_{\si}^{2}+||\na_x f||^2_{\si}+||\langle [v,-v]\mu^{1/2},[N_+(f), N_-(f)]\rangle||_{L^2_x}^2,  \label{Ebound}
\end{eqnarray}
for some positive $\lambda$.
\end{lemma}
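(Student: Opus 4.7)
The plan is to use an interactive functional tying together the electric field with the microscopic part of the distribution, extracting $\|E\|_2^2$ as a dissipation rate from the $-4E$ source term in the vector-valued moment equation \eqref{m1-}.

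Concretely, I would apply \eqref{m1-} with the microscopic source $g$ replaced by the nonlinear forcing $[N_+(f),N_-(f)]$ of \eqref{N}:
\begin{equation*}
\pa_t \Jcont + \na_x(a_+-a_-) - 4E + \na_x\cdot \highG(\{\FI-\FP\}f\cdot q_1) = \langle [v,-v]\mu^{1/2}, [N_+,N_-] - \FL\{\FI-\FP\}f\rangle,
\end{equation*}
where $\Jcont \eqdef \langle [v,-v]\mu^{1/2}, \{\FI-\FP\}f\rangle$. Differentiating $-\int \Jcont \cdot E\,dx$ and using this equation to eliminate $\pa_t \Jcont$, the $-4E$ term contributes exactly $-4\|E\|_2^2$, while integration by parts on $\int \na_x(a_+-a_-)\cdot E\,dx$ combined with the Poisson relation $\na_x\cdot E = a_+-a_-$ produces the nonpositive $-\|a_+-a_-\|_2^2$, which I simply drop.

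The remaining source terms arising from $-\int \pa_t \Jcont\cdot E$ are handled by Cauchy-Schwarz, pairing each estimate with a small $\eta\|E\|_2^2$ that gets absorbed into the good $4\|E\|_2^2$. The $\na_x\cdot\highG$ contribution is controlled by $\|\na_x\{\FI-\FP\}f\|_\sigma\,\|E\|_2$, since $\highG$ is a velocity moment against the rapidly decaying kernel $(v_iv_j-1)\mu^{1/2}$, so the divergence in $x$ just shifts the derivative onto $\{\FI-\FP\}f$. The $\FL\{\FI-\FP\}f$ contribution gives $\lesssim \|\{\FI-\FP\}f\|_\sigma\,\|E\|_2$ using the Schwartz-class nature of the test function $v\mu^{1/2}$ together with standard $\FL$-bounds. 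The $[N_+,N_-]$ contribution is exactly the third term on the right-hand side of \eqref{Ebound}.

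Finally, for the term $-\int \Jcont\cdot \pa_t E\,dx$ I would use the continuity equation \eqref{continEQ} together with $-\Delta\phi = a_+-a_-$ to obtain $\pa_t E = -\na_x \Delta_x^{-1}(\na_x\cdot \Jcont)$; the $L^2_x$-boundedness of the Riesz transform then yields $\|\pa_t E\|_2 \lesssim \|\Jcont\|_2 \lesssim \|\{\FI-\FP\}f\|_\sigma$, where the last bound uses the Gaussian decay of $\mu^{1/2}$ in $\Jcont$. A further Cauchy-Schwarz bounds this term by $\|\{\FI-\FP\}f\|_\sigma^2$. Choosing $\eta$ small so that $\lambda := 4-C\eta >0$ yields \eqref{Ebound}. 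The computation is largely a clean bookkeeping of signs; the only delicate point is to distribute $\|E\|_2$ via Cauchy-Schwarz so that only $\|\{\FI-\FP\}f\|_\sigma^2$, $\|\na_x f\|_\sigma^2$, and the $N$-term survive on the right, rather than $\|E\|_2^2$ or $\|a_+-a_-\|_2^2 \approx \|\na_x E\|_2^2$, neither of which is available on the right-hand side of \eqref{Ebound}.
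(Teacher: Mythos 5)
Your proposal is correct and follows essentially the same route as the paper: you form the interactive functional $-\int \Jcont\cdot E\,dx$ with $\Jcont=\langle [v,-v]\mu^{1/2},\{\FI-\FP\}f\rangle$, extract $4\|E\|_2^2$ from the $-4E$ term in the difference moment equation \eqref{m1-}, and control $\int \Jcont\cdot\pa_t E\,dx$ via the continuity equation exactly as in \eqref{Eb3}. The only cosmetic differences are that you dispose of $\int\na_x(a_+-a_-)\cdot E\,dx$ by integrating by parts to get the nonpositive term $-\|a_+-a_-\|_2^2$ (the paper instead absorbs it with Cauchy--Schwarz into $\eta\|E\|_2^2+C_\eta\|\na_x f\|_\si^2$), and you explicitly retain and bound the $\langle[v,-v]\mu^{1/2},\FL\{\FI-\FP\}f\rangle$ moment, which the paper's display \eqref{eqnE} silently drops but which is harmlessly controlled by $\|\{\FI-\FP\}f\|_\si\|E\|_2$ as you indicate.
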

\begin{proof}
By \eqref{m1}, we have
\begin{eqnarray}
&& 
\pa_t [b_i+  \langle \vel_i \FM^{1/2},\{\FI_\pm-\FP_\pm\} f\rangle
]+\pa_i (a_\pm+2c)\mp 2E_i
\notag
\\
&&
\qquad 
+\na_x\cdot \langle \vel\vel_i
\FM^{1/2},\{\FI_\pm-\FP_\pm\} f\rangle
=
\langle\vel_i  \FM^{1/2},N_{\pm }(f) {-\FL_\pm f}\rangle.
\label{m1N}
\end{eqnarray}
We take the difference between the above equation involving $f_+$ and the above equation involving $f_-$. We use the vector form for $v=[v_1, v_2, v_3]$. Thus
\begin{eqnarray}
&&\partial_t\langle [v,-v]\mu^{1/2},\{\FI-\FP\}f\rangle+\na_x (a_+-a_-)-4E\notag\\
&&+\na_x\cdot\highG(\{\FI-\FP\}f\cdot[1,-1])
=\langle [v,-v]\mu^{1/2},N(f)\rangle.\label{eqnE}
\end{eqnarray}
Here $N(f) = [N_+(f),N_-(f)]$.  
Now we take the dot product of \eqref{eqnE} with $E$ and integrate with respect to $x$:
\begin{eqnarray}
&&\frac{d}{dt} \int -\langle [v,-v]\mu^{1/2},\{\FI-\FP\}f\rangle\cdot E dx+ 4 ||E||^2_2 \notag \\
&=&\int \na_x(a_+-a_-)\cdot E dx +\int \na_x\cdot\highG(\{\FI-\FP\}f\cdot[1,-1])\cdot E dx\notag\\
&&-\int \langle [v,-v]\mu^{1/2},N(f)\rangle\cdot E dx-\int \langle [v,-v]\mu^{1/2},\{\FI-\FP\}f\rangle\cdot \partial_t E dx.
\label{Eb1}
\end{eqnarray}
Notice that
\begin{eqnarray}
&&|\int \na_x (a_+-a_-)\cdot E dx +\int \na_x\cdot\highG(\{\FI-\FP\}f\cdot[1,-1])\cdot E dx\notag\\
&&-\int \langle [v,-v]\mu^{1/2},N(f)\rangle\cdot E dx|\notag\\
&\le& \eta ||E||^2_2+  C_\eta  ||\na_x f||^2_{\si} + C_\eta ||\langle [v,-v]\mu^{1/2},N(f)\rangle||_{L^2_x}^2.\label{Eb2}
\end{eqnarray}
For the last term in \eqref{Eb1}, we use the continuity equation \eqref{continEQ}. Observe that
\begin{eqnarray}
|\int \langle [v,-v]\mu^{1/2},\{\FI-\FP\}f\rangle\cdot \partial_t E dx|
\lesssim ||\{\FI-\FP\}f||_{\si}^2.\label{Eb3}
\end{eqnarray}
combining \eqref{Eb1}, \eqref{Eb2} and \eqref{Eb3} gives \eqref{Ebound}. This completes the proof.
\end{proof}

Next we want to include the term $||\na_x\FP f||_2^2$ in our dissipation.
Now we redo \eqref{macro.1} with $g_{\pm}=N_{\pm}$ and get
\begin{equation}\label{macro.1'}
    \left\{
    \begin{array}{l}
       \dis \pa_t\left(\frac{a_++a_-}{2}\right)+\na_x\cdot b=\frac{1}{2}\langle \mu^{1/2},N_++N_-\rangle,\\
       \dis \pa_t b_i+\pa_i\left(\frac{a_++a_-}{2}+2c\right)+\frac{1}{2}\sum_{j=1}^3\pa_j\highG_{ij}(\{\FI-\FP\}f\cdot [1,1])\\
       \dis \qquad \qquad \qquad \qquad \qquad \qquad \qquad \qquad =\frac{1}{2}\langle v_i\mu^{1/2},N_++N_-\rangle,\\
       \dis \pa_t c+ \frac{1}{3}\na_x\cdot b +\frac{5}{6}\sum_{i=1}^3\partial_i \highB_i(\{\FI-\FP\}f \cdot [1,1])\\
       \dis \qquad \qquad \qquad \qquad \qquad \qquad \qquad \qquad  =\frac{1}{12}\langle (|v|^2-3)\mu^{1/2},N_++N_-\rangle,
    \end{array}\right.
\end{equation}
for $1\leq i\leq 3$.

We then redo \eqref{macro.2} with $g_{\pm}=N_{\pm}$ and get
\begin{equation}\label{macro.2'}
    \left\{
    \begin{array}{l}
      \dis \pa_t \left[\frac{1}{2}\highG_{ij}(\{\FI-\FP\} \solU \cdot [1,1]) +2c\de_{ij}\right]+\pa_i b_j+\pa_j b_i\\
      \dis \hspace{1.5cm}
      =
      \frac{1}{2}\highG_{ij}((l_++l_-)+(N_++N_-))+\frac{1}{2}\de_{ij}\langle \mu^{1/2},N_++N_-\rangle,\\
      \dis \frac{1}{2}\pa_t \highB_i(\{\FI-\FP\}  \solU \cdot [1,1])+\pa_i c=\frac{1}{2}\highB_i((l_++l_-)+(N_++N_-)),
    \end{array}\right.
\end{equation}
for $ 1\leq i,j\leq 3$.

We also redo the computation for $a_+-a_-$ and get
\begin{eqnarray}
% \nonumber to remove numbering (before each equation)
&&\pa_t (a_+-a_-)+\na_x\cdot \Jcont=\langle \mu^{1/2},N_+-N_-\rangle,\label{m0-'}\\
&&\pa_t \Jcont + \na_x (a_+-a_-)-4E+ \na_x\cdot \highG (\{\FI-\FP\}f\cdot q_1)\notag\\
\notag
&&\qquad\qquad\qquad\qquad\qquad\qquad\qquad= {\langle [\vel,-\vel] \FM^{1/2}, N - \FL \{\FI-\FP\} f \rangle}.
\end{eqnarray}
Following the proof of \eqref{diss-macro+} and \eqref{lem.a-1} we have the following lemma:
\begin{lemma}\label{naPlemma}
 Let $f_{0}\in \testF$ and assume $f$ is the
solution constructed in Theorem \ref{localsolution} with $\mathcal{\tilde{E}}%
_{2;2}(f)\leq M.$ There is a time-frequency functional  {$\CE_{\rm int}^{(1)}(t,k)$} defined by
\begin{eqnarray}
% \nonumber to remove numbering (before each equation)
\notag
  \CE_{\rm int}^{(1)}(t,k) &=& \frac{1}{1+|k|^2}\sum_{i=1}^3\frac{1}{2} (i k_i \hat{c}\mid \highB_i(\{\FI-\FP\}\hat{\solU}\cdot [1,1]))\\
  \notag
  &&+\frac{\kappa_1}{1+|k|^2}\sum_{ {i,j}=1}^3(ik_i \hat{b}_j+ik_j\hat{b}_i\mid \frac{1}{2} 
  \highG_{ij}(\{\FI-\FP\}\hat{\solU}\cdot [1,1])+2 {\hat{c}}\de_{ij})
  \\
  && +\frac{\kappa_2}{1+|k|^2}\sum_{i=1}^3\left(ik_i\frac{\hat{a}_++\hat{a}_-}{2}\mid \hat{b}_i\right),
  \label{def.int1'}
\end{eqnarray}
with two properly chosen constants $0<\kappa_2\ll\kappa_1\ll 1$ such that
\begin{eqnarray}
% \nonumber to remove numbering (before each equation)
&& \pa_t \rmre \CE_{\rm int}^{(1)}(t,k)+\frac{\pa_t \rmre (\hat{\Jcont}\mid ik \widehat{({a}_+-{a}_-)})}{(1+|k|^2)}+\frac{\la |k|^2}{1+|k|^2} \left(|\widehat{a_+}|^2+|\widehat{a_-}|^2+|\hat{b}|^2+|\hat{c}|^2\right)\notag\\
 &\lesssim&
\nsm \{\FI-\FP\}\hat{\solU}\nsm_{\si}^2+\sum_{1\leq i,j,l\leq3}\frac{1}{1+|k|^2}[\nsm\langle \hat{N},\mu^{1/2}\rangle\nsm^2  \notag\\
&&+\nsm\langle \hat{N},v_i\mu^{1/2}\rangle\nsm^2+\nsm\langle \hat{N},v_iv_j\mu^{1/2}\rangle\nsm^2\notag+\nsm\langle \hat{N},v_iv_jv_l\mu^{1/2}\rangle\nsm^2],\label{diss-macro+'}
\end{eqnarray}
holds for any $t\geq 0$ and $k\in \R^3$.
\end{lemma}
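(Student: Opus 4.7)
The plan is to follow exactly the structure of the proofs of \eqref{diss-macro+} (Lemma 4.1 of \cite{D-Hypo}) and of Lemma \ref{lem.a}, retaining the same interactive functionals, but now tracking the additional inhomogeneous terms produced by replacing the moment equations \eqref{macro.1}--\eqref{macro.2} and \eqref{m0-}--\eqref{m1-} by their forced analogues \eqref{macro.1'}--\eqref{macro.2'} and \eqref{m0-'}. The combined Lyapunov-type functional I would use is $\CE_{\rm int}^{(1)}(t,k)$ from \eqref{def.int1} plus the auxiliary term $\rmre(\hat{\Jcont}\mid ik\widehat{(a_+-a_-)})/(1+|k|^2)$ from Lemma \ref{lem.a}. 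The reason both are needed is the identity $|\hat{a}_+|^2+|\hat{a}_-|^2=\tfrac{1}{2}(|\widehat{a_++a_-}|^2+|\widehat{a_+-a_-}|^2)$: the first piece provides dissipation of $\widehat{a_++a_-}$ (together with $\hat{b}$ and $\hat{c}$), while the second piece provides dissipation of $\widehat{a_+-a_-}$, which via Poisson gives the $|k|^2|\hat\phi|^2$ control already incorporated in $\CE_0$.

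Concretely, I would first apply $\CF_x$ to \eqref{macro.1'}, \eqref{macro.2'}, \eqref{m0-'} and the $\Jcont$-equation, so that each macroscopic scalar now satisfies a linear ODE in $t$ with an inhomogeneity of the form $\langle \hat N_\pm, \psi(\vel)\mu^{1/2}\rangle$ for some polynomial $\psi$ of degree at most three. I would then differentiate $\rmre\CE_{\rm int}^{(1)}(t,k)+\rmre(\hat\Jcont\mid ik\widehat{(a_+-a_-)})/(1+|k|^2)$ in $t$ and substitute these equations. The linear contributions reproduce verbatim the dissipation of Lemma 4.1 of \cite{D-Hypo} and of Lemma \ref{lem.a}, namely
\begin{equation*}
\frac{\la|k|^2}{1+|k|^2}\left(|\widehat{a_+}|^2+|\widehat{a_-}|^2+|\hat b|^2+|\hat c|^2\right),
\end{equation*}
plus higher-order error terms of the form $\nsm\{\FI-\FP\}\hat\solU\nsm_\sigma^2$ (coming from the moments of $l_\pm$ and of $\vel_i\vel_j\FM^{1/2}$-type integrands) which are already controlled.

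The only new terms to estimate are the cross-products of the $\hat N_\pm$-moments against the macroscopic quantities $\hat a_\pm,\hat b,\hat c$ or against $ik\hat b, ik\hat c$, weighted by the factor $1/(1+|k|^2)$. Each such term I would bound by Cauchy--Schwarz with a small parameter $\eta>0$: the macroscopic factor with its $|k|^2/(1+|k|^2)$ prefactor is absorbed into the dissipation on the left, while the remainder produces exactly the moment norms $\nsm\langle\hat N,\mu^{1/2}\rangle\nsm^2$, $\nsm\langle\hat N,v_i\mu^{1/2}\rangle\nsm^2$, $\nsm\langle\hat N,v_iv_j\mu^{1/2}\rangle\nsm^2$, and $\nsm\langle\hat N,v_iv_jv_l\mu^{1/2}\rangle\nsm^2$ appearing on the right of \eqref{diss-macro+'}; the cubic moment $v_iv_jv_l\mu^{1/2}$ is the one generated by the $\na_x\cdot\langle\vel\vel_i\FM^{1/2},\{\FI-\FP\}f\rangle$ and $\highB_i$ terms hitting $N_\pm$ after the integration by parts used to convert $ik$ into $|k|^2/(1+|k|^2)$. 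The main obstacle is purely bookkeeping: one must check that every inhomogeneity introduced by $N_\pm$ pairs with a factor that carries either a $|k|$ (so that the standard inequality $|k|/(1+|k|^2)^{1/2}\le 1$ yields the desired $1/(1+|k|^2)$ weight in the final bound) or a rapidly decaying velocity weight $\mu^{1/2}$; no new algebraic cancellations are required beyond those already used in the linear case.
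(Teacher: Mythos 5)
Your proposal follows essentially the same route as the paper, which proves this lemma simply by the remark "following the proof of \eqref{diss-macro+} and \eqref{lem.a-1}" applied to the forced moment equations \eqref{macro.1'}, \eqref{macro.2'}, \eqref{m0-'}. Your more detailed account — reusing the same two interactive functionals, reproducing the linear dissipation verbatim, and absorbing the new $\langle \hat N,\psi\mu^{1/2}\rangle$ source terms by Cauchy--Schwarz with the $1/(1+|k|^2)$ weight — is exactly the intended argument.
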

The next lemma deals with the term 
$$
\nsm\langle \hat{N},\mu^{1/2}\rangle\nsm^2 +\nsm\langle \hat{N},v_i\mu^{1/2}\rangle\nsm^2+\nsm\langle \hat{N},v_iv_j\mu^{1/2}\rangle\nsm^2\notag+\nsm\langle \hat{N},v_iv_jv_l\mu^{1/2}\rangle\nsm^2.
$$

\begin{lemma}\label{Nlemma}
Let $f_{0}\in \testF$ and assume $f$ is the solution constructed in Theorem \ref{localsolution} with $\mathcal{\tilde{E}}%
_{2;2}(f)\leq M.$ Let $N$ be defined by \eqref{N}.  Let $\tilde{\mu}$ denote any of the functions $\mu^{1/2}, v_i\mu^{1/2}, v_iv_j\mu^{1/2}, v_iv_jv_l\mu^{1/2}$. For $\al$ with $|\al|\leq 2$, we have
\begin{eqnarray}
||\langle \partial^\al{N},\tilde{\mu}\rangle||^2_{L^2_x}\lesssim \tilde{\mathcal{E}}_{2,2}(f)(||\na_x f||^2_\si+||\na_x^2 f||^2_\si).\label{Nbound}
\end{eqnarray}
\end{lemma}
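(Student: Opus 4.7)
The plan is to split $\pa^\al N_\pm$ into the three contributions $\mp\pa^\al(E\cdot\na_v f_\pm)$, $\pm\pa^\al((E\cdot v)f_\pm)$, and $\pa^\al\Ga_\pm(f,f)$, and to estimate each pointwise in $x$ before taking $L^2_x$ norms; throughout I exploit the Gaussian decay of $\tilde\mu$ to absorb arbitrary polynomial $v$-weights.

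For the two field contributions I Leibniz-expand in $\al$ and integrate by parts in $v$ to move $\na_v$ off of $f_\pm$ and onto $\tilde\mu$. Each summand reduces to a product $\pa^{\al-\al_1}E(x)\cdot\langle\pa^{\al_1}f_\pm(x,\cdot),\psi(v)\rangle$ with $\psi\in\{\na_v\tilde\mu,\,v\tilde\mu\}$ still rapidly decaying, so the $v$-inner product is bounded pointwise by $|\pa^{\al_1}f(x,\cdot)|_{2,-\wB}$ with $\wB$ arbitrarily large. I then control the $L^2_x$ norm by H\"older, choosing the split according to how many $x$-derivatives sit on $f$: if $|\al_1|\le 1$, I place the inner product in $L^6_x$ (Sobolev \eqref{6Sob} and the decay of $\psi$ give the bound $\|\na_x^{|\al_1|+1}f\|_\si\le \|\na_x f\|_\si+\|\na_x^2 f\|_\si$) and the field factor $\pa^{\al-\al_1}E$ in $L^3_x$ (bounded by $\sqrt{\mathcal{\tilde{E}}_{2;2}(f)}$ via interpolation, \eqref{4Sob}, and the Poisson relation $\|\na_x^{k+2}\phi\|_2\lesssim\|\na_x^k f\|_2$); if instead $|\al_1|=|\al|=2$, I place the inner product in $L^2_x$ (bounded by $\|\na_x^2 f\|_\si$) and the undifferentiated $E$ in $L^\infty_x$ (bounded by $\sqrt{\mathcal{\tilde{E}}_{2;2}(f)}$ via \eqref{inftySob} and Poisson).

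For the collision contribution, I Leibniz-expand $\pa^\al\Ga(f,f)=\sum_{\al_1\le\al}\binom{\al}{\al_1}\Ga(\pa^{\al_1}f,\pa^{\al-\al_1}f)$ and invoke Proposition \ref{nonlinearTHM} at $\al=\be=0$ with the choice $g_3:=\tilde\mu\, w_{-2\wN}$, which yields the pointwise-in-$x$ bound $|\langle\Ga(\pa^{\al_1}f,\pa^{\al-\al_1}f),\tilde\mu\rangle|\lesssim|\pa^{\al_1}f|_{2,-\wB}\,|\pa^{\al-\al_1}f|_\si$ (the constant $|g_3|_{\si,\wN}$ being harmless since $\tilde\mu$ decays rapidly). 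For the extreme summands $\al_1\in\{0,\al\}$, I place the weighted $L^2$ factor in $L^2_x$ (bounded by the unweighted $L^2_{x,v}$ norm of $\pa^{\le 2}f$, and hence by $\sqrt{\mathcal{\tilde{E}}_{2;2}(f)}$) and the $\si$ factor in $L^\infty_x$, using the pointwise Minkowski-type bound $\|\na_x^k|g|_\si\|_{L^2_x}\le \|\na_x^k g\|_\si$ together with \eqref{inftySob} to produce the factor $\|\na_x f\|_\si+\|\na_x^2 f\|_\si$. For the remaining mixed case (only possible when $|\al|=2$, $|\al_1|=1$), I use an $L^4_x$-$L^4_x$ H\"older split together with the Gagliardo--Nirenberg inequality $\|g\|_{L^4_x}^2\lesssim\|g\|_{L^2_x}\|\na_x g\|_{L^2_x}$, and exploit $\|\na_x f\|_\si\le \sqrt{\mathcal{\tilde{E}}_{2;2}(f)}$ to close at $\sqrt{\mathcal{\tilde{E}}_{2;2}(f)}\,\|\na_x^2 f\|_\si$.

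The main obstacle is arranging the H\"older/Sobolev splits so that no estimate requires a third $x$-derivative of $f$: since $\mathcal{\tilde{E}}_{2;2}$ provides only two $x$-derivatives of $f$ (Poisson then yielding four of $\phi$), and Sobolev embedding into any $L^p_x$ with $p>2$ costs one extra derivative, the bookkeeping forces the case distinction above; in the worst case $|\al|=|\al_1|=2$ the only workable split is $L^2_x$–$L^\infty_x$ with $E$ (or a low-order $\pa E$) placed in $L^\infty_x$ via \eqref{inftySob} and Poisson. Squaring each pointwise bound and summing over the finitely many Leibniz summands produces \eqref{Nbound}.
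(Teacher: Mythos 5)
Your overall strategy coincides with the paper's: Leibniz-expand each of the three pieces of $N_\pm$, reduce to pointwise-in-$x$ products using the rapid decay of $\tilde\mu$ (for the field terms) and the product estimate $|\langle\Gamma(\pa^{\al_1}f,\pa^{\al-\al_1}f),\tilde\mu\rangle|\lesssim|\pa^{\al_1}f|_{2}\,|\pa^{\al-\al_1}f|_\si$ (the paper quotes Lemma 7 of \cite{G1}; your derivation of the same bound from Proposition \ref{nonlinearTHM} with $\al=\be=0$ and $g_3$ a Maxwellian-type test function is legitimate), then close with case-by-case H\"older--Sobolev splits in $x$. Your $L^3_x\times L^6_x$ split for the field terms, using the Poisson relation to control $\|\pa^{\al-\al_1}E\|_{L^3_x}$, is a cosmetic variant of the paper's $L^2_x\times L^\infty_x$ splits and is sound.

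However, your treatment of the collision term's extreme summand $\al_1=0$ with $|\al|\ge 1$ fails as written. There the pointwise bound reads $|f|_{2,-\wB}\,|\pa^{\al}f|_\si$, and you prescribe placing the $\si$-factor in $L^\infty_x$; by \eqref{inftySob} together with your Minkowski-type bound this costs $\|\na_x\pa^\al f\|_\si+\|\na_x^2\pa^\al f\|_\si$, i.e.\ up to four $x$-derivatives of $f$ in the $\si$-norm when $|\al|=2$, which is controlled neither by the right-hand side of \eqref{Nbound} nor by $\mathcal{\tilde{E}}_{2;2}(f)$ --- precisely the constraint you yourself articulate in your closing paragraph. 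The repair is immediate and is what the paper does: in this case reverse the H\"older split, placing $|f|_{2,-\wB}$ in $L^\infty_x$ (which by \eqref{inftySob} costs only $\|\na_x f\|_2+\|\na_x^2 f\|_2\lesssim\sqrt{\mathcal{\tilde{E}}_{2;2}(f)}$) and $|\pa^\al f|_\si$ in $L^2_x$, yielding $\sqrt{\mathcal{\tilde{E}}_{2;2}(f)}\,\|\na_x^{|\al|}f\|_\si$. With that one swap your argument closes and agrees with the paper's proof.
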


\begin{proof}
We first deal with $\mp E\cdot \nabla _{v}f_{\pm }\pm \{E\cdot v\}f_{\pm }$. We have 
$$
\partial^\al(\mp E\cdot \nabla _{v}f_{\pm }\pm \{E\cdot v\}f_{\pm })=\sum_{\al_1\leq\al}C_{\al}^{\al_1}\partial^{\al_1}(\mp E)\cdot \partial^{\al-\al_1}(\nabla _{v}f_{\pm }-vf_{\pm }).
$$

When $| \al-\al_1 |=0$, we place $\partial^{\al_1}(\mp E)$ and $\partial^{ \al-\al_1 }(\nabla _{v}f_{\pm }-vf_{\pm })\tilde{\mu}$ in the spaces $ L^2_x$ and $L^1_vL^\infty_x$ respectively. Notice that we use \eqref{inftySob} to handle $\partial^{\al-\al_1}(\nabla _{v}f_{\pm }-vf_{\pm })\mu^{1/4}$ in this case.

When $| \al-\al_1 |\geq 1$, we place $\partial^{\al_1}(\mp E)$ and $\partial^{\al-\al_1}(\nabla _{v}f_{\pm }-vf_{\pm })\tilde{\mu}$ in the spaces $ L^\infty_x$ and $L^1_vL^2_x$ respectively. Notice that we use \eqref{inftySob} to handle $\partial^{\al_1}(\mp E)$ in this case. 

Now we deal with $\Gamma _{\pm}(f,f)$. By Lemma 7 of \cite{G1}, we have
\begin{eqnarray}
|\langle\Gamma(\partial^{\al_1}f, \partial^{\al-\al_1}f),\tilde{\mu}\rangle|
&\lesssim&|\partial^{\al_1}f|_2|\partial^{\al-\al_1}f|_\si.\label{expreGa}
\end{eqnarray}

When $| \al-\al_1 |=0$, we place $|\partial^{\al_1}f|_2$ and $|\partial^{\al-\al_1}f|_\si$ in the spaces $L^2_x$ and $L^\infty_x$ respectively and use \eqref{inftySob} to handle $|\partial^{\al-\al_1}f|_\si$.

When $| \al-\al_1 |=1$, we place $|\partial^{\al_1}f|_2$ and $|\partial^{\al-\al_1}f|_\si$ in the spaces $L^4_x$ and $L^4_x$ respectively and use \eqref{4Sob} to handle both terms.

When $| \al-\al_1 |=2$, we place $|\partial^{\al_1}f|_2$ and $|\partial^{\al-\al_1}f|_\si$ in the spaces $L^\infty_x$ and $L^2_x$ respectively and use \eqref{inftySob} to handle $|\partial^{\al_1}f|_2$.

This completes the proof of the lemma.
\end{proof}

Now we are ready to give our energy inequalities.

\begin{proposition}\label{E33}
 Let $f_{0}\in \testF$ and assume $f$ is the
solution constructed in Theorem \ref{localsolution} with $\mathcal{\tilde{E}}%
_{3;3}(f)\leq M.$ Let $ w(\al,\be)=\langle v\rangle^{2(3-|\al|-|\be|)}$. Then there exist continuous energy functionals  
$\zeta(t)$ and $\zeta^h(t)$ such that 
$\zeta\simnew\tilde{\mathcal{E}}_{3,3}$
 and 
\begin{eqnarray*}
\zeta^h &\simnew&\sum_{|\alpha |+|\beta |\leq 3}\sum_{\pm}
||\partial _{\beta }^{\alpha }\{\FI_{\pm}-\FP_{\pm}\}f(t)||_{2,w(\alpha ,\beta )}^{2}
\\
&&+||E(t)||_2^2
+
\sum_{1\leq |\alpha |\leq 3}\sum_{\pm}||\partial^{\alpha }\FP_{\pm}f(t)||_{2}^{2}.
\end{eqnarray*}
Here $\zeta$ is defined in \eqref{defzeta} and $\zeta^h$ is defined in \eqref{defzetah}.  These functional satisfy
\begin{eqnarray}
&&\frac{d}{dt}\zeta+\lambda\tilde{\mathcal{D}}_{3,3}\lesssim  (||\phi_t||_{\infty}+||\na_x \phi||_\infty)\zeta,\label{energyine1}\\
&&\frac{d}{dt}\zeta^h+\lambda\tilde{\mathcal{D}}_{3,3}\lesssim  (||\phi_t||_{\infty}+||\na_x \phi||_\infty)\zeta^h+||\na_x \FP f||^2_2,\label{energyine2}
\end{eqnarray}
where $\lambda$ is some positive number.
\end{proposition}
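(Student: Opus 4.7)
The plan is to construct $\zeta$ and $\zeta^h$ as weighted linear combinations of the instant-energy functionals on the left of Lemmas~\ref{xenergy0}--\ref{I-PenergyK}, plus small multiples of the two cross-interaction functionals supplied by Lemmas~\ref{dissE} and \ref{naPlemma}, all specialized to $K=3$ and $l=3$; the role of the latter two is to upgrade the left-hand-side dissipation so that it contains the macroscopic piece $\sum_{1\le|\alpha|\le 3}\|\partial^\alpha\FP f\|_2^2$ and the electric piece $\|E\|_2^2$ of $\tilde{\mathcal{D}}_{3,3}$.

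For $\zeta$, I would first sum \eqref{en0} together with \eqref{pure2} for $1\le|\alpha|\le 3$, the weighted pure-$x$ estimate \eqref{xenergyKbound} for $1\le|\alpha|\le 3$, and the weighted mixed estimate \eqref{I-PK} for all $|\alpha|+|\beta|\le 3$ with $|\alpha|\le 2$. The mixed case is handled by induction on $|\beta|$ so that the $\tilde{\mathcal{D}}_{|\alpha|+|\beta|-1,3}$ term appearing on the right of \eqref{I-PK} is already absorbed at the previous inductive level. Using the coercivity \eqref{coercLANDAU} (extended to the weighted setting via Lemma~5 of \cite{G1}), the $\langle L\partial_\beta^\alpha f,\partial_\beta^\alpha f\rangle$ terms on the left reconstruct the full microscopic dissipation $\sum_{|\alpha|+|\beta|\le 3}\|\partial_\beta^\alpha\{\FI-\FP\}f\|_{\sigma,w(\alpha,\beta)}^2$, modulo $\eta\,\tilde{\mathcal{D}}_{3,3}$ errors absorbed by small $\eta$ and $\sqrt{M}\,\tilde{\mathcal{D}}_{3,3}$ errors absorbed by smallness of $M$. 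What remains on the right is the desired multiplicative factor $(\|\phi_t\|_\infty+\|\nabla_x\phi\|_\infty)\zeta$ together with two residuals: $\|\nabla_x\FP f\|_2^2$ and $\|E\|_2^2$.

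To dispose of these residuals, I would add $\kappa_1$ times the integrated-in-$k$ version of \eqref{diss-macro+'} from Lemma~\ref{naPlemma}, which produces $\sum_{1\le|\alpha|\le 3}\|\partial^\alpha\FP f\|_2^2$ on the left by splitting $\{|k|\le 1\}$ and $\{|k|\ge 1\}$ as in the proof of Theorem~\ref{thm.tfli}, and $\kappa_2$ times \eqref{Ebound}, which produces $\|E\|_2^2$ on the left. The nonlinear source terms $\|\langle N,\tilde\mu\rangle\|_{L^2_x}^2$ generated by both of these auxiliary functionals are controlled via Lemma~\ref{Nlemma} by $\tilde{\mathcal{E}}_{2,2}(f)\,\tilde{\mathcal{D}}_{3,3}\le M\,\tilde{\mathcal{D}}_{3,3}$, hence absorbable. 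Choosing the hierarchy $\eta\ll\kappa_2\ll\kappa_1\ll 1$ with $M$ sufficiently small preserves the equivalence $\zeta\simnew\tilde{\mathcal{E}}_{3,3}$ and proves \eqref{energyine1}.

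For $\zeta^h$ the construction is identical except that the zero-order summand from Lemma~\ref{xenergy0} is omitted, so that the macroscopic low-frequency piece $\|\FP f\|_2^2$ does not appear in $\zeta^h$; the weighted zero-order $\{\FI-\FP\}$ piece $\|\{\FI-\FP\}f\|_{2,w(0,0)}^2$ comes instead from the $|\alpha|=|\beta|=0$ application of \eqref{I-PK}. Because $\|\FP f\|_2^2$ is no longer present in the energy, the residual $\|\nabla_x\FP f\|_2^2$ arising, for instance, from the $|\alpha|=0,|\beta|\ge 1$ versions of \eqref{I-PK} can no longer be fully reabsorbed into $\zeta^h$ while maintaining the equivalence with the claimed combination, and must be left as the external source on the right of \eqref{energyine2}. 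The principal obstacle throughout is the delicate ordered absorption of the small constants $\eta,\kappa_1,\kappa_2,M$: they must be fixed in a specific order so that every triangular absorption closes, and misordering would destroy the equivalence between the constructed functional and its target.
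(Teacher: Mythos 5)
Your construction follows essentially the same route as the paper: $\zeta$ and $\zeta^h$ are built in \eqref{defzeta} and \eqref{defzetah} exactly as you describe, by summing the estimates of Lemmas \ref{xenergy0}--\ref{I-PenergyK} with a hierarchy of coefficients and then adjoining the interaction functionals of Lemmas \ref{dissE} and \ref{naPlemma}, with Lemma \ref{Nlemma} controlling the resulting nonlinear sources and the zero-order macroscopic energy dropped from $\zeta^h$ (the paper supplies the zero-order microscopic piece there via Lemma \ref{xenergy0I-P}). One mechanism in your write-up would not deliver what you claim: the dissipation produced by \eqref{diss-macro+'} is $\frac{\la |k|^2}{1+|k|^2}\left(|\hat{a}_\pm|^2+|\hat{b}|^2+|\hat{c}|^2\right)$, and no splitting into $\{|k|\le 1\}$ and $\{|k|\ge 1\}$ can convert its $k$-integral into $\sum_{1\le|\alpha|\le 3}\|\partial^\alpha\FP f\|_2^2$, since pointwise in $k$ at most one spatial derivative of $\FP f$ is available; that splitting served a different purpose in Theorem \ref{thm.tfli}. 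The paper instead multiplies the frequency-pointwise inequality by $|k|^{2(l-1)}(1+|k|^2)$ for $l=1,2,3$ before integrating (this is the sum over $l$ in \eqref{defzeta}), and the $\frac{1}{1+|k|^2}$ prefactor on the source terms in \eqref{diss-macro+'} is precisely what makes the extra factor of $(1+|k|^2)$ harmless after Lemma \ref{Nlemma} is applied. With that correction, and keeping the ordered choice of constants you already emphasize, your argument closes as in the paper.
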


\begin{proof}
Let 
$$
\zeta_1=\int \sum_{\pm}\frac{f^2_{\pm}}{2}+\int |\na_x \phi|^2+\sum_{1\leq |\al|\leq 3}\left(\int \sum_{\pm}e^{\pm2\phi}\frac{\partial^\al f^2_{\pm}}{2}+\int |\na_x \partial^\al \phi|^2\right).$$
By \eqref{coercLANDAU}, Lemma \ref{xenergy0} and Lemma \ref{xenergy1}, since $\tilde{\mathcal{E}}_{3,3}(t)$ is sufficiently small, we have
\begin{eqnarray}
&&\frac{d}{dt}\zeta_1+\lambda^{(1)}_1\sum_{0\leq |\al|\leq 3}||\partial^\al \{\FI-\FP\}f||_\si^2\notag\\
&\lesssim&  ||\phi_t||_\infty \zeta_1+\sqrt{\tilde{\mathcal{E}}_{3;3}}(||E||^2_2+\sum_{1\leq |\al|\leq 3}||\partial^\al \FP f||_\si^2),\label{zeta1}
\end{eqnarray}
for some positive $\lambda^{(1)}_1$.
Let 
\begin{eqnarray*}
\zeta_2&=&\zeta_1+\sum_{1\leq |\al|\leq 3}\eta^{(2)}_{\al}\int \sum_{\pm }\frac{e^{\pm 2\phi }w^2(\partial ^{\alpha
}f_{\pm })^{2}}{2}\\
&&+\sum_{|\al|+|\be|\leq 3, |\al|\leq 2}\eta^{(2)}_{\al,\be}\int \sum_{\pm }\frac{e^{\pm 2\phi }w^2(\partial_{\beta} ^{\alpha
}\{\FI-\FP\}f_{\pm })^{2}}{2}
\end{eqnarray*}
with $\eta^{(2)}_{\al},\eta^{(2)}_{\al,\be}$ suitably small. By Lemma \ref{xenergyK}, Lemma \ref{I-PenergyK} 
and \eqref{zeta1}, we have
\begin{multline}
\frac{d}{dt}\zeta_2+\lambda^{(2)}_1\sum_{0\leq|\al|\leq 3}||\partial^\al \{\FI-\FP\}f  ||^2_\si
\\
+\lambda^{(2)}_2(\eta)\sum_{1\leq |\al|\leq 3}||\partial^\al f||_{\si,w}^2+\lambda^{(2)}_3(\eta)\sum_{1\leq |\al|+|\be|\leq 3,|\al|\leq2}||\partial^\al_\be \{\FI-\FP\}f||^2_{\si,w}
\\
\lesssim  (||\phi_t||_{\infty}+||\na_x \phi||_\infty)\zeta_2
+
\sqrt{\tilde{\mathcal{E}}_{3;3}}\left(||E||^2_2 + \sum_{1\leq|\al|\leq3}||\partial^\al\FP f||^2_2\right)\\
+
\eta\left(||\na_x \phi||^2_2+\sum_{1\leq|\al|\leq3}||\partial^\al\FP f||^2_2\right),
\label{zeta2}
\end{multline}
 where $\lambda^{(2)}_i,1\leq i\leq 3$ are positive numbers and $\eta>0$ can be arbitrarily small. Note that $\lambda^{(2)}_2$, $\lambda^{(2)}_3$ depend on $\eta$. We also make the observation that $\zeta_2\simnew \tilde{\mathcal{E}}_{3;3}$.

Now let
\begin{multline} \label{defzeta}
\zeta = \zeta_2+\eta_0^{(3)} \int -\langle [v,-v]\mu^{1/2},\{\FI-\FP\}f\rangle\cdot E dx
\\
+\sum_{1\leq l\leq 3}\eta_l^{(3)}\int |k|^{2(l-1)} ((1+|k|^2)\rmre \CE_{\rm int}^{(1)}(t,k)+\rmre (\hat{\Jcont}\mid ik \widehat{({a}_+-{a}_-)}) dk,  
\end{multline}
for suitable small positive numbers $\eta_l^{(3)},0\leq l\leq 3$.
Then $\zeta$ satisfies \eqref{energyine1} by \eqref{zeta2}, Lemma \ref{dissE}, Lemma \ref{naPlemma}, Lemma \ref{Nlemma} and the assumption $\mathcal{\tilde{E}}%
_{3;3}(f)\leq M$ for $M$ sufficiently small.

Now for \eqref{energyine2}, we let
\begin{multline}
\zeta^h_1=\int \sum_{\pm}\frac{ \left(\{\FI-\FP\}f_{\pm} \right)^2 }{2}+\int |\na_x \phi|^2
\\
+\sum_{1\leq |\al|\leq 3}C_\al \left(\int \sum_{\pm}e^{\pm2\phi}\frac{\left(\partial^\al f_{\pm}\right)^2}{2}+\int |\na_x \partial^\al \phi|^2\right),
\end{multline}
for $C_\al>0, 1\leq |\al|\leq 3$ suitably large. Then by \eqref{coercLANDAU}, Lemma \ref{xenergy0I-P} and Lemma \ref{xenergy1}, we have
\begin{eqnarray}
&&\frac{d}{dt}\zeta^h_1+\lambda^{(1)}_1\sum_{0\leq |\al|\leq 3}||\partial^\al \{\FI-\FP\}f||_\si^2\notag\\
&\lesssim&  ||\phi_t||_\infty \zeta^h_1+\sqrt{\tilde{\mathcal{E}}_{3;3}}(||E||^2_2+\sum_{1\leq |\al|\leq 3}||\partial^\al \FP f||_\si^2)+C||\na_x\FP f||^2_2,\label{zetah1}
\end{eqnarray}
for some positive ${\lambda^h}^{(1)}_1$. We then proceed exactly as in the previous case.  We define 
\begin{multline}
\zeta^h=\zeta^h_1
+\sum_{1\leq |\al|\leq 3}\eta^{(2)}_{\al}\int \sum_{\pm }\frac{e^{\pm 2\phi }w^2(\partial ^{\alpha
}f_{\pm })^{2}}{2}
\\
+\sum_{|\al|+|\be|\leq 3, |\al|\leq 2}\eta^{(2)}_{\al,\be}\int \sum_{\pm }\frac{e^{\pm 2\phi }w^2(\partial_{\beta} ^{\alpha
}\{\FI-\FP\}f_{\pm })^{2}}{2}
\\
+\eta_0^{(3)} \int -\langle [v,-v]\mu^{1/2},\{\FI-\FP\}f\rangle\cdot E dx
\\
+\sum_{1\leq j\leq 3}\eta_j^{(3)}\int (1+|k|^2)|k|^{2(j-1)}\CE_{\rm int}^{(1)}(t,k)dk. \label{defzetah}
\end{multline}
As in the previous construction, we again have \eqref{energyine2}. This completes the proof.
\end{proof}

\begin{proposition}
 Let $f_{0}\in \testF$ and assume $f$ is the
solution constructed in Theorem \ref{localsolution} with $\mathcal{\tilde{E}}%
_{3;l}(f)\leq M$ for some $l\geq 3$. Let $ w(\al,\be)=\langle v\rangle^{2(l-|\al|-|\be|)}$. Then there exist continuous energy functionals 
$\zeta_{3;l}(t)\simnew\tilde{\mathcal{E}}_{3,l}(t)$
 and 
\begin{eqnarray*}
\zeta^h_{3;l}(t)&\simnew&\sum_{|\alpha |+|\beta |\leq 3}\sum_{\pm
}||\partial _{\beta }^{\alpha }\{\FI_{\pm}-\FP_{\pm}\}f(t)||_{2,w(\alpha ,\beta )}^{2}\\
&&+||E(t)||_2^2+\sum_{1\leq |\alpha |\leq 3}\sum_{\pm
}||\partial^{\alpha }\FP_{\pm}f(t)||_{2}^{2}
\end{eqnarray*}
  with
\begin{eqnarray}
&&\frac{d}{dt}\zeta_{3;l}+\lambda\tilde{\mathcal{D}}_{3,l}\lesssim  (||\phi_t||_{\infty}+||\na_x \phi||_\infty)\zeta_{3;l},\label{energyine1l}\\
&&\frac{d}{dt}\zeta^h_{3;l}+\lambda\tilde{\mathcal{D}}_{3,l}\lesssim  (||\phi_t||_{\infty}+||\na_x \phi||_\infty)\zeta_{3;l}^h+||\na_x \FP f||^2_2,\label{energyine2l}
\end{eqnarray}
where $\lambda$ is some positive number.
\end{proposition}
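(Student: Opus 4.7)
The proof strategy is a direct generalization of the proof of Proposition \ref{E33}, carrying the weight exponent $l$ through the argument instead of fixing $l = 3$. The construction proceeds in three stages: an unweighted $L^2$ base functional on derivatives up to order three, a weighted correction using the higher lemmas with weight $w(\alpha,\beta) = \langle v \rangle^{2(l-|\alpha|-|\beta|)}$, and finally the macroscopic interaction functionals to recover dissipation on $E$ and $\nabla_x \FP f$.

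First I would set
$$\zeta_{3;l,1} \eqdef \int \sum_{\pm}\frac{f_{\pm}^2}{2} + \int |\nabla_x \phi|^2 + \sum_{1 \leq |\alpha| \leq 3}\left(\int \sum_{\pm} e^{\pm 2\phi}\frac{(\partial^{\alpha} f_{\pm})^2}{2} + \int |\nabla_x \partial^{\alpha} \phi|^2\right),$$
which is independent of $l$. Lemma \ref{xenergy0} together with Lemma \ref{xenergy1} (applied with $K = 3$) yields an inequality of the shape \eqref{zeta1}. Next I would add
$$\sum_{1 \leq |\alpha| \leq 3} \eta^{(2)}_{\alpha}\int \sum_{\pm} e^{\pm 2\phi}\frac{w(\alpha,0)(\partial^{\alpha} f_{\pm})^2}{2} + \sum_{\substack{|\alpha|+|\beta| \leq 3 \\ |\alpha| \leq 2}} \eta^{(2)}_{\alpha,\beta} \int \sum_{\pm} e^{\pm 2\phi}\frac{w(\alpha,\beta)(\partial^{\alpha}_{\beta}\{\FI-\FP\}f_{\pm})^2}{2},$$
and control its time derivative by Lemma \ref{xenergyK} and Lemma \ref{I-PenergyK}. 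Both lemmas apply with $w(\alpha,\beta) = \langle v \rangle^{2(l-|\alpha|-|\beta|)}$ since their proofs only use $l \geq |\alpha|+|\beta|$. Choosing the parameters $\eta^{(2)}_{\alpha}$ and $\eta^{(2)}_{\alpha,\beta}$ suitably small, starting from the highest order $|\alpha|+|\beta|$ and working downward, lets the unweighted dissipation absorb the error terms $C_\eta \|\partial^{\alpha} f\|_{\sigma}^2$ and $C_\eta \|\nabla_x^{|\alpha|+1} f_{\pm}\|_{\sigma}^2$ produced on the right-hand sides of Lemma \ref{xenergyK} and Lemma \ref{I-PenergyK}.

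Finally I would add $\eta^{(3)}_0 \int -\langle[v,-v]\mu^{1/2}, \{\FI-\FP\}f\rangle \cdot E \, dx$ together with the Fourier-weighted interaction functionals appearing in \eqref{defzeta}. The dissipation on $\|E\|_2^2$ is supplied by Lemma \ref{dissE}, while Lemma \ref{naPlemma} and Lemma \ref{Nlemma} recover $\sum_{1 \leq |\alpha| \leq 3}\|\partial^{\alpha}\FP f\|_2^2$ in the dissipation, after using $\sqrt{\tilde{\mathcal{E}}_{3;l}(f)} \leq \sqrt{M}$ to absorb the quadratic nonlinear source terms from \eqref{Nbound} into the unweighted $\|\nabla_x f\|_{\sigma}^2 + \|\nabla_x^2 f\|_{\sigma}^2$ portion of the dissipation. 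The resulting functional $\zeta_{3;l}$ satisfies \eqref{energyine1l}. The high-order variant $\zeta^h_{3;l}$ is built identically, except that the $L^2$ piece $\sum_{\pm} f_{\pm}^2$ in the base functional is replaced by $\sum_{\pm}(\{\FI-\FP\}f_{\pm})^2$ and Lemma \ref{xenergy0I-P} is invoked in place of Lemma \ref{xenergy0}; the extra term $\|\nabla_x \FP f\|_2^2$ on the right of \eqref{enI-P0} is precisely the inhomogeneity that appears in \eqref{energyine2l}.

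The main obstacle is to observe that the smallness constant $M = M(l)$ may now be allowed to depend on $l$, since the nonlinear estimates in Proposition \ref{nonlinearTHM} and the weighted commutator terms in Lemma \ref{I-PenergyK} pick up $l$-dependent constants. Consequently one must verify that the hierarchy of smallness conditions on $\eta^{(2)}_{\alpha}$, $\eta^{(2)}_{\alpha,\beta}$, $\eta^{(3)}_l$ and on $M$ can still be satisfied for fixed $l \geq 3$, which follows the same ordering of choices as in the proof of Proposition \ref{E33}, but with each threshold rescaled by a constant depending on $l$.
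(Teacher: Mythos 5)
Your proposal is correct and follows essentially the same route as the paper: the paper simply observes that Lemmas \ref{xenergyK} and \ref{I-PenergyK} carry over verbatim with the weight $\langle v\rangle^{2(l-|\alpha|-|\beta|)}$ and then repeats the construction of Proposition \ref{E33}, which is exactly the three-stage build-up you describe. The only detail the paper makes explicit that you leave implicit is the interpolation bound $C\|\partial^\alpha\nabla_x\phi\|_2^2\leq\eta\|E\|_2^2+C_\eta\|\nabla_x^{|\alpha|+1}f\|_\sigma^2$ used to absorb the field term from Lemma \ref{I-PenergyK}, a minor point.
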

\begin{proof}
With $ w(\al,\be)=\langle v\rangle^{2(l-|\al|-|\be|)}$, following the proof of \eqref{xenergyKbound} we obtain
\begin{eqnarray}
&&\frac{d}{dt} \int \sum_{\pm }\frac{e^{\pm 2\phi }w^2(\partial ^{\alpha
}f_{\pm })^{2}}{2} +||\partial^\al f||^2_{\si,w}\notag \\
&\lesssim &\sum_{\pm }\int \{||\phi _{t}||_{\infty}+||\na_x \phi ||_{\infty}\}w^2(\partial ^{\alpha }f_{\pm })^{2}+%
\sqrt{\mathcal{\tilde{E}}_{3,l}(f)}\mathcal{\tilde{\tilde{D}}}_{3,l}(f)\notag\\
&&+\eta||\partial^\al f||_{\si,w}^2+C_\eta||\partial^\al f||_\si^2+\eta ||\na_x \phi||^2_2,  \label{xenergyKboundl}
\end{eqnarray}
for $1\leq |\al|\leq 3$.
Also \eqref{I-PK} becomes
\begin{eqnarray}
&&\frac{d}{dt} \int \sum_{\pm }\frac{e^{\pm 2\phi }w^2(\partial_{\beta} ^{\alpha
}\{\FI-\FP\}f_{\pm })^{2}}{2}+||\partial_{\beta}^\al\{\FI-\FP\}f_{\pm }||_{\si, w(\al, \beta)} \notag \\
&\lesssim &\sum_{\pm }\int \{||\phi _{t}||_{\infty}+||\na_x \phi ||_{\infty}\}w^2(\partial_\beta ^{\alpha }\{\FI-\FP\}f_{\pm })^{2}\notag\\
&&+\mathcal{\tilde{E}}_{3;l}(f)\mathcal{\tilde{D}}_{3;l}(f)+\eta\mathcal{\tilde{D}}_{3,l}(f)+\mathcal{\tilde{D}}_{|\al|+|\beta|-1,l}(f)\notag\\
&&+\eta||\partial^\al_\be\{\FI-\FP\}f_{\pm}||^2_{\si,w}+C_\eta||\na_x^{|\al|+1}f_\pm||^2_\si,  \label{I-PKl}
\end{eqnarray}
for $|\al|+|\be|\leq 3$ and $|\al|\leq 2$.
Note that $C \| \partial^\alpha \nabla_x \phi \|_2^2\leq\eta||E||^2_2+C_\eta||\na^{|\al|+1}_x f||_\sigma^2$ by interpolation. 
Then the proof of Proposition \ref{E33} can be applied here. 
Under the assumption $\mathcal{\tilde{E}}%
_{3;l}(f)\leq M$, we have \eqref{energyine1l} and \eqref{energyine2l}.
\end{proof}

\section{Proof of our main theorem}\label{proofmain}
This section is devoted to the proof of Theorem \ref{maintheorem}. 
We start out with some basic estimates.  
The next lemma deals with the nonlinear term $N$.

\begin{lemma}\label{EandGa}
Let $N=[N_+, N_-]$ be defined as in \eqref{N}. Then we have
\begin{equation}
||N||_{\CH^1}+||N||_{Z_1}\lesssim \tilde{\mathcal{E}}_{3;3}(f).  \label{Nbootstrap}
\end{equation}
\end{lemma}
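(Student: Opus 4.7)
The plan is to decompose $N_\pm = \mp E\cdot \nabla_v f_\pm \pm (E\cdot v) f_\pm + \Gamma_\pm(f,f)$ and bound each of the three types of terms separately in $\CH^1 = L^2_v(H^1_x)$ and in $Z_1 = L^2_v(L^1_x)$. A preliminary step controls the electrostatic field by the energy: from $-\Delta\phi = a_+-a_-$, elliptic regularity yields $\|\nabla_x^{k+1}E\|_{L^2_x} \lesssim \|\nabla_x^k \FP f\|_{L^2_v L^2_x}$ for $k\ge 0$, which combined with $\|E\|_{L^2_x}^2\le \tilde{\mathcal{E}}_{3;3}(f)$ and the Sobolev inequality \eqref{inftySob} produces $\|E\|_{L^\infty_x} + \|E\|_{H^2_x} \lesssim \sqrt{\tilde{\mathcal{E}}_{3;3}(f)}$.

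For the two field contributions, writing $G\in \{\nabla_v f_\pm, v f_\pm\}$ schematically, the $\CH^1$ norm of $EG$ is handled by Leibniz on $\nabla_x(EG)$ with each factor $E$ or $\nabla_x E$ placed in $L^\infty_x$ via the above Sobolev control, and $G$ or $\nabla_x G$ placed in $L^2_v L^2_x$ using the weight $\langle v\rangle^{2(3-|\alpha|-|\beta|)}$ in $\tilde{\mathcal{E}}_{3;3}$ to absorb the $v$-factor in $G$ (since $|\alpha|+|\beta|\le 1$ here, the weight is at least $\langle v\rangle^4$). For the $Z_1$ norm, pointwise-in-$v$ Cauchy--Schwarz in $x$ gives $\|EG\|_{L^2_v L^1_x} \leq \|E\|_{L^2_x}\|G\|_{L^2_v L^2_x} \lesssim \tilde{\mathcal{E}}_{3;3}(f)$.

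For $\Gamma_\pm(f,f)$, the goal is to establish a pointwise-in-$x$ bound of the schematic form
\begin{equation*}
|\Gamma(f,f)(x,\cdot)|_{2} \lesssim |f(x)|_{2,-\wB}\cdot \mathcal{N}(f(x)),
\end{equation*}
for any large $\wB$, where $\mathcal{N}(f(x))$ is an $L^2_v$-based weighted Sobolev-type quantity in $v$ of order at most two, all of whose constituents are included in the $|\beta|\le 3$ terms of $\tilde{\mathcal{E}}_{3;3}$. This follows from the explicit kinetic formulation of $\Gamma$ after noting the pointwise bound $|\phi^{ij}\ast\sqrt{\mu}g_1|(v)\lesssim \langle v\rangle^{-1}|g_1|_{L^2_v}$, derived by Cauchy--Schwarz on the convolution and the decay of $\mu$. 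The analogous estimate for $\nabla_x\Gamma(f,f) = \Gamma(\nabla_x f,f) + \Gamma(f,\nabla_x f)$ follows by Leibniz. Integrating in $x$ via H\"older, placing the less-regular factor in $L^\infty_x$ (using $H^2_x\hookrightarrow L^\infty_x$) and the other in $L^2_x$, yields $\|\Gamma(f,f)\|_{\CH^1}\lesssim \tilde{\mathcal{E}}_{3;3}(f)$. For $Z_1$, I apply the Minkowski-type inequality $\|\cdot\|_{L^2_v L^1_x}\le \|\cdot\|_{L^1_x L^2_v}$ and then Cauchy--Schwarz in $x$.

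The main technical obstacle is the pointwise $L^2_v$ bound on $\Gamma$: Proposition \ref{nonlinearTHM} is stated as a trilinear form estimate well adapted to coercive energy arguments, so extracting a genuine bilinear operator bound requires redoing its proof --- Taylor expansion of $\phi^{ij}(v-v_*)$, region splitting, and the HLS step of Proposition 3.5 of \cite{gsNonCutJAMS} --- but now keeping the $v$-derivatives on the input $f$ rather than transferring them to a test function, balanced by the higher $v$-regularity of $f$ available through the $|\beta|\le 3$ contributions in $\tilde{\mathcal{E}}_{3;3}$.
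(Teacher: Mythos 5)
Your treatment of the field terms and your overall strategy for $\Gamma$ (a pointwise-in-$x$, $L^2_v$-based bilinear bound that deliberately loses $v$-derivatives, followed by H\"older in $x$ with \eqref{inftySob} and \eqref{4Sob}) coincide with the paper's proof. The one real divergence is your final paragraph: you identify as the ``main technical obstacle'' the need to rerun the machinery of Proposition \ref{nonlinearTHM} (Taylor expansion of $\Phi^{ij}(v-v_*)$, region splitting, the HLS step) in bilinear-operator form. That machinery is only needed when one must land in the degenerate $\sigma$-norm without losing $v$-regularity; here, since $\tilde{\mathcal{E}}_{3;3}$ already controls up to three $v$-derivatives, the paper simply invokes the divergence-form expansion of $\Gamma$ from Lemma 6 of \cite{G1} and the elementary convolution bound $|\Phi^{ij}*[\mu^{\delta}g]|\lesssim |\mu^{\delta/2}g|_{L^2_v}$ to get
\begin{equation*}
|\tilde{\Gamma}(f,f)|_{L^2_v}\lesssim |f|_{L^2_v}\,|\nabla_v f|_{L^2_v}+|f|_{L^2_v}\,|\nabla_v^2 f|_{L^2_v}+|\nabla_v f|_{L^2_v}^2,
\end{equation*}
in a few lines. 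If you do go the direct route, the only point requiring care is that any derivative $\partial_i$ falling on the kernel must be transferred inside the convolution (i.e.\ written as $\Phi^{ij}*\partial_i[\mu^{1/2}g]$), since $\partial_i\Phi^{ij}=O(|v|^{-2})\notin L^2_{loc}(\R^3)$ and the naive Cauchy--Schwarz on the convolution would fail; keeping the kernel at $O(|v|^{-1})\in L^2_{loc}$ is exactly what the expansion in Lemma 6 of \cite{G1} arranges. Finally, a small slip: in the $x$-integration you should place the factor carrying the \emph{top-order} $x$-derivatives in $L^2_x$ and the lower-order factor in $L^\infty_x$ (or use $L^4_x\times L^4_x$ via \eqref{4Sob}), not the other way around.
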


\begin{proof}
We first deal with the terms $E\cdot\na_v f$ and $E\cdot vf$. We have
\begin{eqnarray}
&&||E\cdot\na_v f||_{\CH^1}+||E\cdot vf||_{\CH^1}\notag\\
&\lesssim& (||E||_\infty +||\na_x E||_\infty)(||\na_v f||_{2}+||\na_x\na_v f||_{2})\notag\\
&&+(||E||_\infty +||\na_x E||_\infty)(||v f||_{2}+||v \na_x f||_{2})\notag\\
&\lesssim& \tilde{\mathcal{E}}_{3;3}(f),\label{Ebootstrap1}
\end{eqnarray}
and 
\begin{eqnarray}
&&||E\cdot\na_v f||_{Z_1}+||E\cdot vf||_{Z_1}\notag\\
&\lesssim& (||E||_2 +||\na_x E||_2)(||\na_v f||_{2}+||\na_x\na_v f||_{2})\notag\\
&&+(||E||_2 +||\na_x E||_2)(||v f||_{2}+||v \na_x f||_{2})\notag\\
&\lesssim& \tilde{\mathcal{E}}_{3;3}(f).\label{Ebootstrap2}
\end{eqnarray}
Now we turn to the term $\Ga(f,f)$. Now, using \eqref{gamma}, it suffices to estimate
\begin{multline*}
\tilde{\Gamma} [g_{1} , g_2]
\eqdef 
\partial _i \left( \{\Phi ^{ij}*[\mu^{1/2} g_1]\}\partial _jg_2 \right)
-
\{\Phi ^{ij}*[v_i\mu^{1/2} g_1]\} \partial _j g_2
\\
-
\partial _i \left(\{\Phi ^{ij}*[\mu^{1/2}\partial _j g_1]\} g_2 \right)
+
\{\Phi ^{ij}*[v_i\mu^{1/2}\partial _j g_1]\}  g_2
\end{multline*}
This expansion can be found in Lemma 6 of \cite{G1}.
Above and below we implicitly sum over indices $i$ and $j$ when they are repeated. Since $\Phi^{ij}\lesssim \frac{1}{|v|}$, we have
$$\Phi^{ij}*[\mu^{\delta}g]\lesssim ((\Phi^{ij})^2*\mu^{\delta})^{1/2}|\mu^{\delta/2}g|_2\lesssim |\mu^{\delta/2}g|_{L^2_v}.$$
Thus we have
\begin{equation*}
|\tilde{\Gamma}(f,f) |_{L^2_v}
\lesssim  
|f|_{L^2_v}|\na_v f|_{L^2_v}+|f|_{L^2_v}|\na_v^2 f|_{L^2_v}+|\na_v f|_{L^2_v}|\na_v f|_{L^2_v}.
\end{equation*}
Hence
\begin{eqnarray}
&&||\tilde{\Gamma}(f,f)||_2\notag\\
&\lesssim & ||f||_{L^4_xL^2_v}||\na_v f||_{L^4_xL^2_v}+||f||_{L^4_xL^2_v}||\na_v^2 f||_{L^4_xL^2_v}+||\na_v f||_{L^4_xL^2_v}||\na_v f||_{L^4_xL^2_v}\notag\\
&\lesssim & \tilde{\mathcal{E}}_{3;3}(f),\label{Gabootstrap1}
\end{eqnarray}
where we have used \eqref{4Sob} in the last line above.

Similarly, we have
\begin{eqnarray*}
&&|\na_x \tilde{\Gamma} (f,f)|_{L^2_v}
\\
&\lesssim & |\na_x f|_{L^2_v}|\na_v f|_{L^2_v}+|\na_x f|_{L^2_v}|\na_v^2 f|_{L^2_v}+|\na_x \na_v f|_{L^2_v}|\na_v f|_{L^2_v}\\
&&+|f|_{L^2_v}|\na_x\na_v f|_{L^2_v}+|f|_{L^2_v}|\na_x\na_v^2 f|_{L^2_v}+|\na_v f|_{L^2_v}|\na_x\na_v f|_{L^2_v},
\end{eqnarray*}
and
\begin{eqnarray}
&&||\na_x \tilde{\Gamma} (f,f)||_2\notag\\
&\lesssim & ||\na_x f||_{L^4_xL^2_v}||\na_v f||_{L^4_xL^2_v}+||\na_x f||_{L^4_xL^2_v}||\na_v^2 f||_{L^4_xL^2_v}\notag\\
&&+||\na_x\na_v f||_{L^4_xL^2_v}||\na_v f||_{L^4_xL^2_v}
+||f||_{L^4_xL^2_v}||\na_x\na_v f||_{L^4_xL^2_v}\notag\\
&&+||f||_{L^\infty_xL^2_v}||\na_x\na_v^2 f||_{L^2_xL^2_v}+||\na_v f||_{L^4_xL^2_v}||\na_x\na_v f||_{L^4_xL^2_v}\notag\\
&\lesssim & \tilde{\mathcal{E}}_{3;3}(f),\label{Gabootstrap2}
\end{eqnarray}
where we have used \eqref{inftySob} and \eqref{4Sob} in the last line above.

Finally we estimate $|| \tilde{\Gamma} (f,f)||_{Z_1}$ as follows:  
\begin{multline}
||\tilde{\Ga}(f,f)||_{Z_1}
\lesssim  || f||_{L^2_vL^2_x}||\na_v f||_{L^2_vL^2_x}+|| f||_{L^2_vL^2_x}||\na_v^2 f||_{L^2_vL^2_x}
\\
+||\na_v f||_{L^2_vL^2_x}||\na_v f||_{L^2_vL^2_x}
\lesssim  
\tilde{\mathcal{E}}_{3;3}(f).
\label{Gabootstrap3}
\end{multline}
This completes the proof of Lemma \ref{EandGa}.
\end{proof}

The following two lemmas grant time decay of the energy, locally in time.

\begin{lemma}
\label{decay}Assume that for $0\leq t\leq T,$ $\sup_{0\leq t\leq T}\mathcal{\tilde{E}}_{3;3+\frac{3}{4p'}}(f)(t)\leq M$ sufficiently small for some fixed $p'\in (0,1/2)$, and 
\begin{equation}
\text{ }\int_{0}^{T}(||\phi _{t}(s)||_{\infty }+\| \nabla_x \phi \|_\infty)ds\leq 1.
\label{integrability}
\end{equation}%
 We have for $0\leq t\leq T$ that
\begin{eqnarray}
\mathcal{\tilde{E}}_{3;3}(f(t))&\lesssim &C_{p'}(1+t)^{-\frac{3}{2}}\left(\mathcal{\tilde{E}}_{3;3+\frac{3}{4p'}}(f_{0})+||f_0||_{Z_1}^2\right).
\label{polydecay1'} \end{eqnarray}
\end{lemma}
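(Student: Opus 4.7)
My plan combines the weighted energy inequality \eqref{energyine1l} with a velocity-time splitting argument and a Duhamel bootstrap via the linear decay Theorem~\ref{thm.ls} to control the one piece of the energy that is not captured by the dissipation $\tilde{\mathcal{D}}_{3;3}$, namely $\|\FP f\|_2^2$.

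First, applying \eqref{energyine1l} at the weight $\ell=3+\frac{3}{4p'}$, together with the hypothesis $\int_0^T(\|\phi_t\|_\infty+\|\nabla_x\phi\|_\infty)\,ds\le 1$ and Gronwall's inequality, I obtain the uniform bound $\sup_{0\le s\le T}\tilde{\mathcal{E}}_{3;\ell}(f(s))+\int_0^T\tilde{\mathcal{D}}_{3;\ell}(s)\,ds\lesssim \tilde{\mathcal{E}}_{3;\ell}(f_0)$. Next, I split the velocity space at the radius $R(t)=(1+t)^{p'}$. On $\{|v|>R\}$ every term of $\tilde{\mathcal{E}}_{3;3}(f)$ picks up a factor $R^{-2(\ell-3)}=(1+t)^{-3/2}$ relative to its counterpart with weight $\langle v\rangle^{2(\ell-|\alpha|-|\beta|)}$, yielding $\tilde{\mathcal{E}}_{3;3}(f)\,\ind_{|v|>R}\lesssim (1+t)^{-3/2}\tilde{\mathcal{E}}_{3;\ell}(f_0)$. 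On $\{|v|\le R\}$, the $\{\FI-\FP\}$ contributions obey $\|g\|_{2,w}^2\,\ind_{|v|\le R}\lesssim (1+R)\|g\|_{\sigma,w}^2$ (using $\|\cdot\|_\sigma\gtrsim \|\langle v\rangle^{-1/2}\cdot\|_2$) and are therefore absorbed into $(1+t)^{p'}\tilde{\mathcal{D}}_{3;3}$, while every $\partial^\alpha\FP f$ with $|\alpha|\ge 1$ is already present in $\tilde{\mathcal{D}}_{3;3}$. The only obstruction remaining is $\|\FP f(t)\|_2^2$.

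To dispose of this macroscopic residual I view \eqref{vl}--\eqref{p} as the linearized system \eqref{VPLlinear}--\eqref{poissonLINEAR} with source $N=[N_+,N_-]$ and apply Duhamel's formula together with Theorem~\ref{thm.ls} at $r=1$, $m=0$, inner weight $0$, and an auxiliary velocity weight $\wE\in(3/2,\,3/(2p'))$. Combined with Lemma~\ref{EandGa} (in a weighted form giving $\|\langle v\rangle^\wE N(s)\|_{\CH^0}+\|\langle v\rangle^\wE N(s)\|_{Z_1}\lesssim \tilde{\mathcal{E}}_{3;\ell}(f(s))$), this produces
$$\|f(t)\|_{\CL^2}\lesssim (1+t)^{-3/4}\Bigl(\sqrt{\tilde{\mathcal{E}}_{3;\ell}(f_0)}+\|f_0\|_{Z_1}\Bigr)+\int_0^t(1+t-s)^{-3/4}\,\tilde{\mathcal{E}}_{3;3}(f(s))\,ds,$$
which in particular controls $\|\FP f(t)\|_2$.

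Finally I close by a bootstrap on $\mathcal{N}(t)\equiv \sup_{0\le s\le t}(1+s)^{3/2}\tilde{\mathcal{E}}_{3;3}(f(s))$; combining the three ingredients above with the convolution estimates of the type $\int_0^t(1+t-s)^{-3/4}(1+s)^{-3/2}\,ds\lesssim (1+t)^{-3/4}$ collected in Appendix~\ref{timeDecayInt}, the integrated energy inequality together with the Duhamel decay yields $\mathcal{N}(t)\lesssim \tilde{\mathcal{E}}_{3;\ell}(f_0)+\|f_0\|_{Z_1}^2+M\,\mathcal{N}(t)$, and the smallness of $M$ closes this to give \eqref{polydecay1'}. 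The main obstacle I anticipate is that Theorem~\ref{thm.ls} is stated for microscopic sources while $\FP N\ne 0$ in general; resolving this will require either a careful decomposition of $N$ with its macroscopic part reabsorbed into the linearized operator, or a direct Fourier-side Lyapunov argument for $\|\FP f\|_2$ adapted from Theorem~\ref{thm.tfli} via the macroscopic balance laws \eqref{m0}--\eqref{m2}.
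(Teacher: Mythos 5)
Your proposal is correct and follows essentially the same route as the paper: a Gronwall/weighted-energy bound at the heavier weight $\ell=3+\frac{3}{4p'}$, a velocity split at $\langle v\rangle\sim t^{p'}$ trading weight for the $(1+t)^{-3/2}$ decay of the high-velocity part and absorbing the low-velocity microscopic part into $t^{p'}\mathcal{\tilde{D}}_{3;3}$, a Duhamel representation with Theorem~\ref{thm.ls} ($r=1$, $m=0$, large negative inner weight) to control the residual $\|\FP f\|_2^2$, and a bootstrap on $\sup_s(1+s)^{3/2}\mathcal{\tilde{E}}_{3;3}(f(s))$ closed by the convolution estimates of Appendix~\ref{timeDecayInt}. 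The obstacle you flag at the end (that $\FP N\neq 0$ in general while the linear theory is stated for microscopic sources) is handled in the paper exactly as you anticipate, by applying the linear decay estimate directly to the Duhamel term with source $N$.
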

\begin{proof}
The proof of this lemma benefits from developments in \cite[Theorem 1.2]{sNonCutOp}. Recall the definition of $\zeta$ 
from \eqref{defzeta}.  Now for $1/2>p'>0$, we define $\zeta_{low}$ and $\zeta_{high}$ as follows:
\begin{eqnarray}
\zeta_{low}&=&\int_{\langle v\rangle <t^{p'}} \sum_{\pm}\frac{f^2_{\pm}}{2}+\int |\na_x \phi|^2+
\sum_{1\leq |\al|\leq 3}\int_{\langle v\rangle <t^{p'}} \sum_{\pm}e^{\pm2\phi}\frac{(\partial^{\alpha}f_{\pm })^{2}}{2}
\notag
\\
&&+\sum_{|\al|+|\be|\leq 3, |\al|\leq 2}\eta^{(2)}_{\al,\be}\int_{\langle v\rangle <t^{p'}} \sum_{\pm }\frac{e^{\pm 2\phi }w^2(\partial_{\beta} ^{\alpha
}\{\FI-\FP\}f_{\pm })^{2}}{2}\notag\\
&&+\sum_{1\leq |\al|\leq 3}\eta^{(2)}_{\al}\int_{\langle v\rangle <t^{p'}} \sum_{\pm }\frac{e^{\pm 2\phi }w^2(\partial ^{\alpha
}f_{\pm })^{2}}{2}, \label{defzetalow}
\end{eqnarray}
\begin{eqnarray}
\zeta_{high}&=&\int_{\langle v\rangle \geq t^{p'}} \sum_{\pm}\frac{f^2_{\pm}}{2}
+
\sum_{1\leq |\al|\leq 3}\int_{\langle v\rangle \geq t^{p'}} \sum_{\pm}e^{\pm2\phi}\frac{(\partial ^{\alpha}f_{\pm })^{2}}{2}\notag\\
&&+\sum_{|\al|+|\be|\leq 3, |\al|\leq 2}\eta^{(2)}_{\al,\be}\int_{\langle v\rangle \geq t^{p'}} \sum_{\pm }\frac{e^{\pm 2\phi }w^2(\partial_{\beta} ^{\alpha
}\{\FI-\FP\}f_{\pm })^{2}}{2}\notag\\
&&+\sum_{1\leq |\al|\leq 3}\eta^{(2)}_{\al}\int_{\langle v\rangle \geq t^{p'}} \sum_{\pm }\frac{e^{\pm 2\phi }w^2(\partial ^{\alpha
}f_{\pm })^{2}}{2}. \label{defzetahigh}
\end{eqnarray}
Then we have $\zeta\simnew \zeta_{low}+\zeta_{high}$.  From \eqref{defzetalow}, we have
\begin{equation}
\frac{\zeta_{low}(t)}{ t^{p^\prime} }\lesssim \frac{\|\FP \solU\|_{\CL^2}^2}{t^{p^\prime}}+  \tilde{\mathcal{D}}_{3;3}(f)(t),
\label{soft.up.d}
\end{equation}
for $t>1$. Conbining this and \eqref{energyine1} gives
\begin{eqnarray}
\frac{d}{dt}\zeta(t)
+
\lambda p t^{p-1}\zeta(t)
&\lesssim&
(||\phi_t||_\infty+||\na_x \phi||_\infty)\zeta+
t^{p-1}\|\FP \solU\|_{2}^2\notag\\
&&
+
 t^{p-1} \zeta_{high}(t)+\lambda p t^{p-1}||\na_x \phi 1_{\{t<1\}}||^2_2,\label{energyine3}
\end{eqnarray}
where we defined $\lambda= \frac{\la^\prime}{ p } $ with $p= -p^\prime+1>0$. Use the factor 
$$e^{-\lambda t^p+\int_0^t (||\phi_t||_\infty+||\na_x\phi||_\infty)ds}$$
to obtain
\begin{eqnarray}
\zeta(t)&\lesssim& e^{-\lambda t^p+\int_0^t (||\phi_t||_\infty+||\na_x\phi||_\infty)ds}\zeta(0)\notag\\
&&+\int_0^t e^{-\lambda t^p+\lambda s^p+\int_s^t (||\phi_t||_\infty+||\na_x\phi||_\infty)ds_1}\notag\\
&&\cdot\left(s^{p-1}\|\FP \solU\|_{2}^2+s^{p-1} \zeta_{high}(s)+\lambda p s^{p-1}||\na_x \phi 1_{\{s<1\}}(s)||^2_2\right)ds.\label{energyine4}
\end{eqnarray}
Because of \eqref{integrability}, \eqref{energyine4} becomes
\begin{eqnarray}
\zeta(t)&\lesssim& e^{-\lambda t^p}\zeta(0)\notag\\
&&+\int_0^t e^{-\lambda t^p+\lambda s^p}\left(s^{p-1}\|\FP \solU\|_{2}^2+s^{p-1} \zeta_{high}(s)+\lambda p s^{p-1}||\na_x \phi 1_{\{s<1\}}(s)||^2_2\right)ds.\label{energyine4'}
\end{eqnarray}
From \eqref{defzetahigh}, we have
\begin{equation}
\label{decay.high}
\zeta_{high}(t)
\lesssim
(1+t)^{-\frac{3}{2}} \tilde{\mathcal{E}}_{3,3+\frac{3}{4p'}}(f_0),
\end{equation}
where we have used \eqref{integrability} and \eqref{energyine1l}.

Next we deal with the term $||\FP f||_2^2$ in \eqref{energyine4'}.
We have
\begin{equation}  \label{Boltz.rep}
   \solU(t)=I_0(t)+I_1(t),
\end{equation}
with 
$$
  I_0(t) =  \semiG(t)\solU_0, \quad
  I_1(t) =\int_0^t\semiG(t-s)N(f)(s)ds.
$$
Using \eqref{thm.ls.1.soft} with $m=0$, $r=1$ and $\wN=-\wB\ge 0$ with $b$ sufficiently large, we obtain
\begin{equation}  \notag
    \|\langle v\rangle^{-\wB}I_0(t)\|_{2}
    \lesssim
    (1+t)^{-\frac{\dim}{4}} \|\langle v\rangle^{-\wB+\wE} \solU_0\|_{{L^2_{x,v}}\cap Z_1}\lesssim
    (1+t)^{-\frac{\dim}{4}} \| \solU_0\|_{{L^2_{x,v}}\cap Z_1},
\end{equation}
and
\begin{multline*}
    \|\langle v\rangle^{-\wB} I_1(t)\|_{2}\leq \int_0^t\|\langle v\rangle^{-\wB}\semiG(t-s)N(f)(s)\|_{2}ds
    \\
\lesssim
    \int_0^t(1+t-s)^{-\frac{3}{4}}\|\langle v\rangle^{-\wB+\wE}N(\solU)(s)\|_{{L^2_{x,v}}\cap Z_1}ds\\
  \lesssim
    \int_0^t(1+t-s)^{-\frac{3}{4}}\|N(\solU)(s)\|_{{L^2_{x,v}}\cap Z_1}ds  .
\end{multline*}
  Define
\begin{equation}\label{def.einfty}
    \zeta^\infty(t) \eqdef \sup_{0\leq s\leq t}(1+s)^{\frac{\dim}{2}}\zeta(s).
\end{equation}
For $I_1(t)$, from \eqref{Nbootstrap} and the definition \eqref{def.einfty} of $\zeta^\infty(t)$, it holds that
\begin{multline*}
    \|\langle v\rangle^{-\wB} I_1(t)\|_{2}\lesssim
    \int_0^t(1+t-s)^{-\frac{3}{4}}\zeta(s)ds
    \\
    \lesssim \zeta^\infty(t)\int_0^t(1+t-s)^{-\frac{3}{4}}(1+s)^{-\frac{3}{2}}ds\\
    \lesssim \zeta^\infty(t)(1+t)^{-\frac{3}{4}}.
\end{multline*}
Here we used the decay estimates for the time integrals as in  \cite[Proposition 4.5]{strainSOFT}.

Collecting the estimates on $I_1(t)$ and $I_2(t)$ above, with \eqref{form.p}, implies 
\begin{multline}
\|\FP \solU(t)\|_{2}^2\lesssim\|\langle v\rangle^{-\wB}\solU(t)\|_{2}^2\lesssim
\|\langle v\rangle^{-\wB} I_0(t)\|_{2}^2
+
\|\langle v\rangle^{-\wB} I_1(t)\|_{2}^2
\\
\lesssim
(1+t)^{-\frac{3}{2}} \|\solU_0\|_{L_{x,v}^2\cap Z_1}^2
+
 (1+t)^{-\frac{3}{2}}[\zeta^\infty(t)]^2.\label{thm.ns.p2}
\end{multline}
For the term $\lambda p ||\na_x \phi 1_{\{t<1\}}||^2_2$, we have
\begin{equation}
\lambda p ||\na_x \phi 1_{\{t<1\}}||^2_2\lesssim
 \lambda p(1+t)^{-\frac{3}{2}}\zeta^\infty(t).\label{thm.ns.p2'}
\end{equation}
Now we plug \eqref{thm.ns.p2}, \eqref{thm.ns.p2'} and
\eqref{decay.high} 
into 
\eqref{energyine4'} to obtain \eqref{polydecay1'}  since $M$ is sufficiently small and $\lambda$ can be chosen to be small. Note that we used Lemma \ref{BasicDecayCor}.   \end{proof}

\begin{lemma}
\label{decay'}Assume that for $0\leq t\leq T,$ $||f_0||_{Z_1}+\sup_{0\leq t\leq T}\mathcal{\tilde{E}}_{3;\frac{5}{2}+\frac{5}{4p'}}(f)(t)\leq M$ sufficiently small for some fixed $p'\in (0,1/2)$, and 
\begin{equation}
\text{ }\int_{0}^{T}(||\phi _{t}(s)||_{\infty }+\| \nabla_x \phi \|_\infty)ds\leq 1.
\label{integrability'}
\end{equation}%
 We have 
\begin{eqnarray}
&  &||\partial _{t}\phi (t)||_{\infty }+||\nabla _{x}\phi (t)||_{\infty
}\notag\\
 &\lesssim &C_{p'}(1+t)^{-\frac{5}{4}+\frac{p'}{2}}\left(\sqrt{\mathcal{\tilde{E}}_{3;\frac{5}{2}+\frac{5}{4p'}}(f_{0})}+||f_0||_{Z_1}\right).
\label{polydecay2'} 
\end{eqnarray}
\end{lemma}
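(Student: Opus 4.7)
The overall strategy is to prove $\|\nabla_x\phi\|_\infty+\|\partial_t\phi\|_\infty \lesssim \sqrt{\zeta^h(t)}$ and then $\zeta^h(t)\lesssim (1+t)^{-5/2+p'}$, where $\zeta^h\equiv \zeta^h_{3;3}$ is the second energy functional from Proposition \ref{E33}, which critically omits the zeroth-order $\|\FP f\|_2^2$. For the reduction step, I would use Gagliardo--Nirenberg in $\R^3$, $\|g\|_\infty \lesssim \|g\|_2^{1/4}\|\nabla^2 g\|_2^{3/4}$, together with the Poisson identity $-\Delta\phi = a_+-a_-$: then $\|\nabla_x\phi\|_\infty = \|E\|_\infty \lesssim \|E\|_2^{1/4}\|\nabla^2 E\|_2^{3/4}\lesssim \|E\|_2^{1/4}\|\nabla_x\FP f\|_2^{3/4}\lesssim \sqrt{\zeta^h}$, since $\|E\|_2^2$ and $\|\nabla_x\FP f\|_2^2$ both appear in $\zeta^h$. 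For $\|\partial_t\phi\|_\infty$, the continuity equation gives $-\Delta\partial_t\phi = \nabla_x\cdot j$ with $j = \langle [v,-v]\sqrt{\mu},\{\FI-\FP\}f\rangle$ (the $b$-moment cancels), so \eqref{inftySob} yields $\|\partial_t\phi\|_\infty \lesssim \|j\|_2^{1/2}\|\nabla_x j\|_2^{1/2}$, and both weighted moments of $\{\FI-\FP\}f$ are present in $\zeta^h$.

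For the decay of $\zeta^h$, the plan mimics Lemma \ref{decay}: split $\zeta^h = \zeta^h_{low}+\zeta^h_{high}$ at the threshold $\langle v\rangle = t^{p'}$. In the low region, bounded velocity weights and $|g|_\sigma \gtrsim |\langle v\rangle^{-1/2}g|_2$ yield $\zeta^h_{low}/t^{p'}\lesssim \tilde{\mathcal{D}}_{3;3}$ --- crucially with no $\|\FP f\|_2^2$ remainder, precisely because $\zeta^h$ excludes this term (which in Lemma \ref{decay} was the source of the extra $(1+s)^{-3/2}$ term in \eqref{soft.up.d}). In the high region, the extra weight index $\ell-3 = 5/(4p')-1/2$ present in the hypothesis converts into time decay: $\zeta^h_{high}(s)\lesssim s^{-2p'(\ell-3)}\tilde{\mathcal{E}}_{3;\ell}(f)(s) \lesssim (1+s)^{-5/2+p'}$. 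Feeding both into the energy inequality \eqref{energyine2l} and integrating by Gronwall with the factor $e^{-\lambda t^p/p}$ (where $p=1-p'$), using \eqref{integrability'} to absorb the $\phi$-dependent coefficient, gives
\begin{equation*}
\zeta^h(t) \lesssim e^{-\lambda t^p/p}\zeta^h(0) + \int_0^t e^{-\lambda(t^p-s^p)/p}\bigl[\lambda s^{p-1}\zeta^h_{high}(s) + \|\nabla_x\FP f(s)\|_2^2\bigr]\,ds.
\end{equation*}

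The remaining input is control of $\|\nabla_x\FP f\|_2^2$, obtained via Duhamel: $f(t) = \semiG(t)f_0 + \int_0^t \semiG(t-s)N(f)(s)\,ds$. Theorem \ref{thm.ls} with $r=1$, $m=1$ and a sufficient weight index (which the hypothesis $\ell \ge 5/2+5/(4p')$ supplies for small $p'$) produces the linear decay $\|\nabla_x\semiG(t)U_0\|_2\lesssim (1+t)^{-5/4}$. For the nonlinear part, Lemma \ref{EandGa} yields $\|N(s)\|_{\CH^1\cap Z_1}\lesssim \tilde{\mathcal{E}}_{3;3}(f)(s)$, and Lemma \ref{decay} (whose weight hypothesis $\ell\ge 3+3/(4p')$ is implied by $\ell\ge 5/2+5/(4p')$ for $p'\le 1$) gives $\tilde{\mathcal{E}}_{3;3}(f)(s)\lesssim (1+s)^{-3/2}$. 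The time-integral estimate $\int_0^t(1+t-s)^{-5/4}(1+s)^{-3/2}\,ds\lesssim (1+t)^{-5/4}$ from Appendix \ref{timeDecayInt} then yields $\|\nabla_x\FP f(t)\|_2\lesssim (1+t)^{-5/4}$, so $\|\nabla_x\FP f\|_2^2\lesssim (1+t)^{-5/2}$. A standard Laplace-type estimate shows each integral in the Gronwall bound above is of order $(1+t)^{-5/2+p'}$ (the $s^{p-1}$ weight on the $\zeta^h_{high}$ term exactly normalizes the exponent, and the unweighted $(1+s)^{-5/2}$ term loses one factor of $t^{p'}$ to the boundary layer near $s=t$). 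Hence $\zeta^h(t)\lesssim (1+t)^{-5/2+p'}\bigl(\tilde{\mathcal{E}}_{3;\ell}(f_0)+\|f_0\|_{Z_1}^2\bigr)$, and taking square roots together with Step 1 yields \eqref{polydecay2'}.

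The main obstacle is the reduction step: $\|\nabla_x\phi\|_\infty$ must be controlled using only quantities present in $\zeta^h$, which forbids direct use of $\|\FP f\|_2 \simnew \|[a,b,c]\|_2$. This forces the higher-order Gagliardo--Nirenberg exponent $3/4$ on $\|\nabla^2 E\|_2$ (equivalently $\|\nabla_x a\|_2$), the only control on second spatial derivatives of $\phi$ available through $\zeta^h$; it is precisely the structural exclusion of $\|\FP f\|_2^2$ from $\zeta^h$ that both enables the improved decay rate $(1+t)^{-5/4+p'/2}$ and dictates this indirect route through Riesz transforms and the continuity identity.
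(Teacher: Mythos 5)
Your proposal is correct and follows essentially the same route as the paper: the same split $\zeta^h=\zeta^h_{low}+\zeta^h_{high}$ at $\langle v\rangle=t^{p'}$, the same use of \eqref{energyine2} and Gronwall with the factor $e^{-\lambda t^p}$, the same Duhamel/Theorem~\ref{thm.ls} ($m=1$, $r=1$) bound $\|\nabla_x\FP f\|_2^2\lesssim(1+t)^{-5/2}$ via Lemmas~\ref{EandGa} and~\ref{decay}, and the same appendix integrals to close. Your reduction $\|\partial_t\phi\|_\infty+\|\nabla_x\phi\|_\infty\lesssim\sqrt{\zeta^h}$ via \eqref{inftySob}, \eqref{2Sob}, the Poisson equation and the continuity identity is exactly the justification the paper leaves implicit.
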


\begin{proof}
The proof of this lemma is similar to that of Lemma \ref{decay}. Recall the definition of $\zeta^h$ from \eqref{defzetah}.  Now for $1/2>p'>0$, we define $\zeta^h_{low}$ and $\zeta^h_{high}$ as follows:
\begin{eqnarray}
\zeta^h_{low}&=&\int_{\langle v\rangle <t^{p'}} \sum_{\pm}\frac{\{\FI-\FP\}f^2_{\pm}}{2}+\int |\na_x \phi|^2+\sum_{1\leq |\al|\leq 3}C_\al\int_{\langle v\rangle <t^{p'}} \sum_{\pm}e^{\pm2\phi}\frac{\left(\partial^\al f_{\pm}\right)^2}{2}
\notag\\
&&+\sum_{|\al|+|\be|\leq 3, |\al|\leq 2}\eta^{(2)}_{\al,\be}\int_{\langle v\rangle <t^{p'}} \sum_{\pm }\frac{e^{\pm 2\phi }w^2(\partial_{\beta} ^{\alpha
}\{\FI-\FP\}f_{\pm })^{2}}{2}\notag\\
&&+\sum_{1\leq |\al|\leq 3}\eta^{(2)}_{\al}\int_{\langle v\rangle <t^{p'}} \sum_{\pm }\frac{e^{\pm 2\phi }w^2(\partial ^{\alpha
}f_{\pm })^{2}}{2}, \label{defzetahlow}
\end{eqnarray}
\begin{eqnarray}
\zeta^h_{high}&=&\int_{\langle v\rangle \geq t^{p'}} \sum_{\pm}\frac{\{\FI-\FP\}f^2_{\pm}}{2}
+
\sum_{1\leq |\al|\leq 3}C_\al\int_{\langle v\rangle \geq t^{p'}} \sum_{\pm}e^{\pm2\phi}\frac{\left(\partial^\al f_{\pm}\right)^2}{2}\notag\\
&&
+
\sum_{|\al|+|\be|\leq 3, |\al|\leq 2}\eta^{(2)}_{\al,\be}\int_{\langle v\rangle \geq t^{p'}} 
\sum_{\pm }\frac{e^{\pm 2\phi }w^2(\partial_{\beta} ^{\alpha}\{\FI-\FP\}f_{\pm })^{2}}{2}\notag\\
&&+\sum_{1\leq |\al|\leq 3}\eta^{(2)}_{\al}\int_{\langle v\rangle \geq t^{p'}} \sum_{\pm }\frac{e^{\pm 2\phi }w^2(\partial ^{\alpha
}f_{\pm })^{2}}{2}. \label{defzetahhigh}
\end{eqnarray}
Then we have $\zeta^h\simnew \zeta^h_{low}+\zeta^h_{high}$.  From \eqref{defzetahlow}, we have
\begin{equation}
\frac{\zeta^h_{low}(t)}{ t^{p^\prime} }\lesssim  \tilde{\mathcal{D}}_{3;3}(f)(t),
\label{soft.up.d'}
\end{equation}
for $t>1$. 
Conbining this and \eqref{energyine2} gives
\begin{eqnarray}
\frac{d}{dt}\zeta^h(t)
+
\lambda p t^{p-1}\zeta^h(t)
&\lesssim&
(||\phi_t||_\infty+||\na_x \phi||_\infty)\zeta^h+
\|\na_x\FP \solU\|_{2}^2\notag\\
&&
+
 t^{p-1} \zeta^h_{high}(t)+\lambda p t^{p-1}||\na_x \phi 1_{\{t<1\}}||^2_2.\label{energyine3'}
\end{eqnarray}
Following the exact procedure used to obtain \eqref{energyine4'}, we achieve
\begin{eqnarray}
\zeta^h(t)&\lesssim& e^{-\lambda t^p}\zeta^h(0)\notag\\
&&+\int_0^t e^{-\lambda t^p+\lambda s^p}\left(\|\na_x\FP \solU\|_{2}^2+s^{p-1} \zeta^h_{high}(s)+\lambda p s^{p-1}||\na_x \phi 1_{\{s<1\}}(s)||^2_2\right)ds.\label{energyine5}
\end{eqnarray}
>From \eqref{defzetahhigh}, \eqref{integrability} and \eqref{energyine1l}, we have
\begin{equation}
\label{decay.high'}
\zeta^h_{high}(t)
\lesssim
(1+t)^{-\frac{5}{2}+p'} \tilde{\mathcal{E}}_{3,\frac{5}{2}+\frac{5}{4p'}}(f_0).
\end{equation}
Next we deal with the term $||\na_x\FP f||_2^2$ in \eqref{energyine5}.
It follows from Theorem \ref{thm.ls} and \eqref{form.p} for $\wB \ge 0$ that
\begin{multline}
\notag
\|\na_x\FP \solU(t)\|_{2}^2
\lesssim 
\|\langle v\rangle^{-\wB}\na_x\FP \solU(t)\|_{2}^2
\lesssim 
(1+t)^{-\frac{5}{2}} \|\langle v\rangle^{-\wB+\wE}\solU_0\|_{\CHd^{1}\cap Z_1}^2\\
 + \int_0^t (1+t-s)^{-\frac{5}{2}} \|\langle v\rangle^{-\wB+\wE} N(\solU)(s)\|^2_{\CHd^{1}\cap Z_1}ds.
\end{multline}
Here $\wE$ is defined as in Theorem \ref{thm.ls}.  We can take $b$ sufficiently large to make $-b+j<0$. Since $3+\frac{3}{4p'}<\frac{5}{2}+\frac{5}{4p'}$, we can apply Lemma \ref{decay} together with Lemma \ref{EandGa} and Proposition 4.5 in \cite{strainSOFT} to get
 \begin{eqnarray}
&&\|\na_x\FP \solU(t)\|_{2}^2\notag\\
&\lesssim &
(1+t)^{-\frac{5}{2}} \|\solU_0\|_{\CHd^{1}\cap Z_1}^2
 + \int_0^t (1+t-s)^{-\frac{5}{2}} \| N(\solU)(s)\|^2_{\CHd^{1}\cap Z_1}ds\notag\\
 &\lesssim &
(1+t)^{-\frac{5}{2}} \|\solU_0\|_{\CHd^{1}\cap Z_1}^2
 + \int_0^t (1+t-s)^{-\frac{5}{2}}\zeta(s)ds\notag\\
  &\lesssim& (1+t)^{-\frac{5}{2}}\left( \|\solU_0\|_{\CHd^{1}\cap Z_1}^2+ (\mathcal{\tilde{E}}_{3;3+\frac{3}{4p'}}(f_{0})+||f_0||_{Z_1}^2)^2\right)\notag\\
&\lesssim& (1+t)^{-\frac{5}{2}}\left( \|\solU_0\|_{\CHd^{1}\cap Z_1}^2+ (\mathcal{\tilde{E}}_{3;3+\frac{3}{4p'}}(f_{0})+||f_0||_{Z_1}^2)\right)\notag\\
&\lesssim& (1+t)^{-\frac{5}{2}}\left (\mathcal{\tilde{E}}_{3;\frac{5}{2}+\frac{5}{4p'}}(f_{0})+||f_0||_{Z_1}^2\right),\label{napf}
\end{eqnarray}
where we have used the smallness of $M$ in the last line above.
  Define
\begin{equation}\label{def.ehinfty}
    \zeta^{h,\infty}(t) \eqdef \sup_{0\leq s\leq t}(1+s)^{\frac{5}{2}-p'}\zeta^h(s).
\end{equation}
For the term $\lambda p ||\na_x \phi 1_{\{t<1\}}||^2_2$, we have
\begin{equation}
\lambda p ||\na_x \phi 1_{\{t<1\}}||^2_2\lesssim
 \lambda p(1+t)^{-\frac{5}{2}+p'}\zeta^{h,\infty}(t).\label{thm.ns.p2h'}
\end{equation}
Now we plug \eqref{napf}, \eqref{thm.ns.p2h'} and
\eqref{decay.high'} 
into 
\eqref{energyine5} and to obtain
\begin{equation}
\zeta^{h,\infty} \lesssim C_{p'}\left(\sqrt{\mathcal{\tilde{E}}_{3;\frac{5}{2}+\frac{5}{4p'}}(f_{0})}+||f_0||_{Z_1}\right)^2.
\label{polydecay2''} 
\end{equation}
Note that we have used Lemma \ref{BasicDecay} and Lemma \ref{BasicDecayCor} in our integral. 

Notice that $||\phi_t||_\infty+||\na_x\phi||_\infty\lesssim \sqrt{\zeta^h(t)}$. So \eqref{polydecay2'} follows from \eqref{polydecay2''}. 
\end{proof}

Here is a lemma which deals with the the mixed $v$ and $x$ derivative estimates.
\begin{lemma}
\label{mixenergy}Let $f_{0}\in \testF$ and assume $f$ is the
solution constructed in Theorem \ref{localsolution}. Assume for $M>0$
sufficiently small,  that 
$
\mathcal{E}_{3;\frac{5}{2}+\frac{5}{4p'}}(f)\leq M.
$

(1) For $l\geq \frac{5}{2}+\frac{5}{4p'}$, we have%
\begin{eqnarray}
&&\tilde{\mathcal{E}}_{3;l}(f)+\int_{0}^{t}\mathcal{D}_{3;l}(f)ds  \label{mix3}
\\
&\lesssim &\{\tilde{\mathcal{E}}_{3;l}(f_{0})+\int_{0}^{t}\sum_{|\alpha |\leq
3}||\partial ^{\alpha }f||_{\sigma }^{2} +\int_0^t ||\na_x \phi||^2_2 \notag\\
&& +\int_{0}^{t}[\mathcal{D}%
_{3;l}(f)+||\nabla \phi ||_{\infty }+||\partial _{t}\phi ||_{\infty }]%
\tilde{\mathcal{E}}_{3;l}(f)ds\}.  \notag
\end{eqnarray}%
\ (2) If $m\geq 4$ and $l\geq \frac{5}{2}+\frac{5}{4p'}$, we have 
\begin{eqnarray}
&&\tilde{\mathcal{E}}_{m;l}(f)+\int_{0}^{t}\mathcal{D}_{m;l}(f)ds  \label{mix4}
\\
&\leq &C_{l,m}\{\tilde{\mathcal{E}}_{m;l}(f_{0})+\int_{0}^{t}[\mathcal{D}%
_{m-1;l}(f)+||\nabla _{x}\phi ||_{\infty }+||\partial _{t}\phi ||_{\infty
}]\tilde{\mathcal{E}}_{m;l}(f)\notag\\
&&+\int_{0}^{t}\sum_{|\alpha |=m}||\partial ^{\alpha
}f||_{\sigma }^{2}+\int_0^t ||\na_x\phi||^2_2  \notag \\
&&+\int_{0}^{t}[\tilde{\mathcal{E}}_{m-1;l}(f)+1]\mathcal{D}_{m-1;l}(f)\}. 
\notag
\end{eqnarray}
\end{lemma}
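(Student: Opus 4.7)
The plan is to assemble a weighted Lyapunov functional in the same spirit as $\zeta$ in Proposition~\ref{E33}, but carrying the higher weight $l$ in place of $3$, and then derive the desired integrated inequalities by combining Lemma~\ref{xenergyK}, Lemma~\ref{I-PenergyK}, Lemma~\ref{dissE}, Lemma~\ref{naPlemma}, and Lemma~\ref{Nlemma}, with coefficients organized into a hierarchy decreasing in $|\alpha|+|\beta|$. The smallness assumption $\mathcal{\tilde{E}}_{3;5/2+5/(4p')}(f)\le M$ will be invoked to absorb all nonlinear contributions of the form $\sqrt{\mathcal{\tilde{E}}_{K,K}(f)}\,\mathcal{\tilde{\tilde{D}}}_{K,K}(f)$ or $\mathcal{\tilde{E}}_{K,K}(f)\,\mathcal{\tilde{D}}_{K,K}(f)$ into the positive dissipation on the left-hand side.

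For part (1), I would first apply Lemma~\ref{xenergyK} with $K=3$ for each $1\le|\alpha|\le 3$ to control $\|\partial^\alpha f\|_{\sigma,w(\alpha,0)}^2$, and Lemma~\ref{I-PenergyK} with $K=3$ for each $1\le|\alpha|+|\beta|\le 3$ with $|\alpha|\le 2$ and $|\beta|\ge 1$ to control $\|\partial^\alpha_\beta\{\FI-\FP\}f\|_{\sigma,w(\alpha,\beta)}^2$, adding the unweighted basic inequality of Lemma~\ref{xenergy0} at the base level. These estimates are summed with coefficients $\eta_{\alpha,\beta}$ strictly decreasing in $|\alpha|+|\beta|$, so that the $\eta$-small residuals of the form $\eta\|\partial^\alpha_\beta\{\FI-\FP\}f\|_{\sigma,w}^2$ and the lower-order dissipation $\mathcal{\tilde{D}}_{|\alpha|+|\beta|-1,l}(f)$ appearing on the right of Lemma~\ref{I-PenergyK} are absorbed at the appropriate level. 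The residual term $C_\eta\|\partial^\alpha\nabla_x\phi\|_2^2$ from Lemma~\ref{I-PenergyK} is handled by the standard interpolation $\|\partial^\alpha\nabla_x\phi\|_2^2\le\eta\|E\|_2^2+C_\eta\|\nabla_x^{|\alpha|+1}f\|_\sigma^2$, whose first summand goes into the macroscopic dissipation and whose second is then bounded by $\mathcal{D}_{3;l}(f)$. To bring $\|E\|_2^2+\sum_{1\le|\alpha|\le 3}\|\partial^\alpha\FP f\|_2^2$ onto the left as part of $\mathcal{D}_{3;l}(f)$, I would augment the functional with the interactive correctors from Lemma~\ref{dissE} and (after differentiating in $x$) Lemma~\ref{naPlemma}, controlling the $N$-type error terms via Lemma~\ref{Nlemma} together with the $L^4_x$ Sobolev estimate \eqref{4Sob}. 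Integration in time then yields \eqref{mix3}: the unabsorbable terms that remain on the right are precisely $\sum_{|\alpha|\le 3}\|\partial^\alpha f\|_\sigma^2$ (the $\alpha=0$ contribution contains $\|\FP f\|_\sigma^2$, which is not present in the dissipation) and the coupling $\{\|\nabla\phi\|_\infty+\|\partial_t\phi\|_\infty\}\mathcal{\tilde{E}}_{3;l}(f)$ generated by the factor $e^{\pm2\phi}$ and by the field-driven terms \eqref{local3}, \eqref{local4}, \eqref{local5}.

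Part (2) follows the same scheme with $K=m$, but is proved by induction on $m\ge 4$ using part (1) as the base. The decreasing-coefficient hierarchy gives absorption within the top level $|\alpha|+|\beta|=m$, while all error terms with index sum at most $m-1$ that descend from Lemma~\ref{I-PenergyK}'s $\mathcal{\tilde{D}}_{|\alpha|+|\beta|-1,l}(f)$ and from the nonlinear contribution \eqref{gamma3x}-type terms are grouped into $[\mathcal{\tilde{E}}_{m-1;l}(f)+1]\mathcal{D}_{m-1;l}(f)$, matching the last integrand in \eqref{mix4}. The $\eta\mathcal{D}_{m;l}(f)$ terms appearing on the right of Lemma~\ref{xenergyK} and Lemma~\ref{I-PenergyK} with small $\eta>0$ are swallowed into the left after choosing $\eta$ sufficiently small.

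The main obstacle will be the careful bookkeeping of the coefficient hierarchy $\eta_{\alpha,\beta}$ together with the weight exponent $2(l-|\alpha|-|\beta|)$, to ensure that cross-level losses (from $\partial^{\alpha-\alpha_1}\nabla_x\phi\cdot\nabla_v\partial^{\alpha_1}f$ terms where a derivative is transferred from $x$ to $v$ and the $v$-weight degrades by two powers, and from $\Gamma$-estimates via Proposition~\ref{nonlinearTHM} where the weight $w(\alpha,\beta)$ must be distributed among three factors) chain into a closed absorption scheme. The whole-space setting (absence of the Poincaré inequality) forces us to keep $\|\nabla_x\phi\|_2^2$ explicitly on the right of \eqref{mix3}-\eqref{mix4}, whereas in \cite{G0} such a term would be absorbed into $\mathcal{\tilde{E}}$ directly; this is the only structural departure from the torus-case proof, and it is compatible with subsequent use because $\|\nabla_x\phi\|_2^2\le\mathcal{\tilde{E}}_{3;l}(f)$ will be handled via the Gronwall step in Section~\ref{proofmain}.
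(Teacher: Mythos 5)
Your route differs from the paper's: the paper does not rebuild a Lyapunov functional from the Section~\ref{energy inequality} toolkit at weight $l$. Instead it recycles the local-existence iteration estimates of Lemma~\ref{mixnenergy} with $f^{n+1}=f^{n}=f$ (so \eqref{emn} already yields \eqref{mix4}), and for \eqref{mix3} the only new work is a re-estimate of the nonlinear term \eqref{local9} via Proposition~\ref{nonlinearTHM}, leading to \eqref{newlocal9}. In particular the macroscopic correctors of Lemmas~\ref{dissE} and \ref{naPlemma} are not used here at all: \eqref{mix3} is deliberately a ``raw'' weighted estimate that leaves $\int_0^t\|\na_x\phi\|_2^2$ and $\int_0^t\sum_{|\al|\le 3}\|\pa^\al f\|_\si^2$ on the right, these being supplied later by adding $C\times$\eqref{energyine1} in Section~\ref{proofmain}. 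Your corrector-augmented construction would instead reprove \eqref{energyine1l}, which is a different statement with a different hypothesis.

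That points to the genuine gap: your plan to ``absorb all nonlinear contributions of the form $\sqrt{\mathcal{\tilde{E}}_{K,K}(f)}\,\mathcal{\tilde{\tilde{D}}}_{K,K}(f)$ or $\mathcal{\tilde{E}}_{K,K}(f)\,\mathcal{\tilde{D}}_{K,K}(f)$ into the dissipation'' using only $\mathcal{\tilde{E}}_{3;\frac{5}{2}+\frac{5}{4p'}}(f)\le M$ does not work at weight $l$. When $\Gamma(f,f)$ is estimated via \eqref{langle} and the factor carrying the full weight $w(\al,\be)$ must be placed in $L^\infty_x$ (the case $|\al_1|+|\be_1|\ge 2$ in the paper's treatment), the resulting bound is $\sqrt{\tilde{\mathcal{E}}_{3;l}(f)}\,\mathcal{D}_{3;l}(f)$, and $\tilde{\mathcal{E}}_{3;l}$ is \emph{not} known to be small for $l>\frac{5}{2}+\frac{5}{4p'}$; likewise no smallness of $\tilde{\mathcal{E}}_{m;l}$ is assumed in part (2). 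Such terms cannot be swallowed by the left-hand side; they must be split as $\eta\,\mathcal{D}_{3;l}+C_\eta\,\tilde{\mathcal{E}}_{3;l}\,\mathcal{D}_{3;l}$ and the second piece retained --- which is precisely why \eqref{mix3} carries $\int_0^t\mathcal{D}_{3;l}\tilde{\mathcal{E}}_{3;l}$ and \eqref{mix4} carries $\int_0^t\mathcal{D}_{m-1;l}\tilde{\mathcal{E}}_{m;l}$ on the right, and is the content of \eqref{newlocal9}. Your stated list of ``unabsorbable'' remainders for part (1) omits this product term, so the inequality you would arrive at is stronger than \eqref{mix3} and does not follow from the available smallness; the rest of your scheme (coefficient hierarchy, absorption of the $\eta$-residuals, interpolation for $\|\pa^\al\na_x\phi\|_2^2$) is sound once this is corrected.
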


\begin{proof}
We use mostly the estimates given in the proof of Lemma \ref{mixnenergy}. Actually \eqref{emn} implies \eqref{mix4} already. For \eqref{mix3}, the only difference is in the estimate of \eqref{local9}. By \eqref{langle}, we have
\begin{eqnarray*}
\eqref{local9}\lesssim 
\sum_{\alpha_1 \le \alpha}
\sum_{\bar{\beta}\le\beta_1 \le \beta}
|\langle v\rangle^{-\wN}\partial _{\bar{\beta}}^{\alpha _1}f|_{2}
|\partial _{\beta -\beta _1}^{\alpha -\alpha _1}f|_{\sigma ,w(\al,\be)}
|\partial _\beta^\alpha f|_{\sigma ,w(\al,\be)}. 
\end{eqnarray*}
When $|\al_1|+|\be_1|\leq 1$, we place $|\langle v\rangle^{-\wN}\partial _{\bar{\beta}}^{\alpha _1}f|_{2}$ in $L^\infty_x$. We place both $|\partial _{\beta -\beta _1}^{\alpha -\alpha _1}f|_{\sigma ,w(\al,\be)}$ and $|\partial _\beta^\alpha f|_{\sigma ,w(\al,\be)}$ in $L^2_x$.

When $|\al_1|+|\be_1|\geq 2$, we place  $|\partial _{\beta -\beta _1}^{\alpha -\alpha _1}f|_{\sigma ,w(\al,\be)}$ in $L^\infty_x$. We place  $|\langle v\rangle^{-\wN}\partial _{\bar{\beta}}^{\alpha _1}f|_{2}$ and $|\partial _\beta^\alpha f|_{\sigma ,w(\al,\be)}$ in $L^2_x$.

In both cases, we have
\begin{equation}
\eqref{local9}\lesssim \sqrt{\tilde{\mathcal{E}}_{3;l}(f)}\mathcal{D}_{3;l}(f)\lesssim \eta \mathcal{D}_{3;l}(f)+C_{\eta}\tilde{\mathcal{E}}_{3;l}(f)\mathcal{D}_{3;l}(f).\label{newlocal9}
\end{equation}
Combining \eqref{newlocal9} and estimates given in the proof of Lemma \ref{mixnenergy} gives \eqref{mix3}.  \end{proof}

Now we are ready to prove Theorem \ref{maintheorem}.
\begin{proof}[Proof of Theorem \ref{maintheorem}] The strategy we use is from the proof of Theorem 1 in \cite{G0}.

\textit{Step 1. Global Small }$\mathcal{E}_{3;\frac{5}{2}+\frac{5}{4p'}}$\textit{\ Solutions. }

We denote 
\begin{eqnarray}
T_{\ast }&=&\sup_{t\geq 0}\left\{ \tilde{\mathcal{E}}_{3;\frac{5}{2}+\frac{5}{4p'}}(f)(t)+\int_{0}^{t}%
\mathcal{D}_{3;\frac{5}{2}+\frac{5}{4p'}}(f)(s)ds\leq M\right.\notag\\
&&\left.\text{ and}\int_{0}^{t}(||\nabla _{x}\phi
(s)||_{\infty }+||\phi_t||_\infty)ds\leq \sqrt{M}\right\} .  \label{t*}
\end{eqnarray}%
Clearly $T_{\ast }>0$ if $\mathcal{E}_{3;\frac{5}{2}+\frac{5}{4p'}}(f_{0})$ is sufficiently small
from Theorem \ref{localsolution}. Our goal is to show $T_{\ast }=\infty $ if
we further choose $\mathcal{E}_{3;\frac{5}{2}+\frac{5}{4p'}}(f_{0})$ small.

For $l\geq \frac{5}{2}+\frac{5}{4p'}$, we take a large constant $C\times \eqref{energyine1}+\eqref{mix3}$ to get
\begin{eqnarray}
&&\tilde{\mathcal{E}}_{3;l}(f)+\int_{0}^{t}\mathcal{D}_{3;l}(f)ds  \notag\\
&\lesssim &\{\tilde{\mathcal{E}}_{3;l}(f_{0})+\int_{0}^{t}||\FP f||_2^2
 +\int_{0}^{t}[\mathcal{D}%
_{3;l}(f)+||\nabla \phi ||_{\infty }+||\partial _{t}\phi ||_{\infty }]%
\tilde{\mathcal{E}}_{3;l}(f)ds\}.  \label{mix3'}
\end{eqnarray}
By Lemma \ref{decay}, we have
\begin{eqnarray}
\int_{0}^{t}||\FP f||_2^2&\lesssim& \int_0^t  (1+t)^{-\frac{3}{2}}\left(\mathcal{\tilde{E}}_{3;3+\frac{3}{4p'}}(f_{0})+||f_0||_{Z_1}^2\right)dt\notag\\
&\lesssim& \mathcal{\tilde{E}}_{3;3+\frac{3}{4p'}}(f_{0})+||f_0||_{Z_1}^2,
\end{eqnarray}
for $t\leq T_*$. Since $\int_{0}^{T_{\ast }}[||\partial _{t}\phi (s)||_{\infty }+||\nabla
_{x}\phi (s)||_{\infty }+\mathcal{D}_{3;\frac{5}{2}+\frac{5}{4p'}}(f)]ds\leq 1,$ The Gronwall's inequality such as Lemma 4 in \cite{G0} implies 
\begin{equation}
\tilde{\mathcal{E}}_{3;\frac{5}{2}+\frac{5}{4p'}}(f)+\int_{0}^{t}\mathcal{D}_{3;\frac{5}{2}+\frac{5}{4p'}}(f)ds\lesssim 
(\tilde{\mathcal{E}}_{3;\frac{5}{2}+\frac{5}{4p'}}(f_{0})+||f_0||_{Z_1}^2).  \label{mix3energy}
\end{equation}
>From Lemma \ref{decay'}, we obtain%
\begin{eqnarray*}
&&\int_{0}^{t}\{||\nabla _{x}\phi (s)||_{\infty }+||\partial _{t}\phi
(s)||_{\infty }\}ds\\
&\lesssim& \int_{0}^{t}C_{p'}(1+t)^{-\frac{5}{4}+\frac{p'}{2}}\left(\sqrt{\mathcal{\tilde{E}}_{3;\frac{5}{2}+\frac{5}{4p'}}(f_{0})}+||f_0||_{Z_1}\right)ds\\
&\lesssim& \sqrt{\mathcal{\tilde{E}}_{3;\frac{5}{2}+\frac{5}{4p'}}(f_{0})}+||f_0||_{Z_1}.
\end{eqnarray*}%
Upon choosing the initial condition $\sqrt{\mathcal{\tilde{E}}_{3;\frac{5}{2}+\frac{5}{4p'}}(f_{0})}+||f_0||_{Z_1}$ further
small, we deduce that for $0\leq t\leq T_{\ast },$ 
\begin{equation*}
\tilde{\mathcal{E}}_{3;\frac{5}{2}+\frac{5}{4p'}}(f(t))+\int_{0}^{t}\mathcal{D}_{3;\frac{5}{2}+\frac{5}{4p'}}(f)ds\leq \frac{M}{2%
}<M
\end{equation*}
and
\begin{equation*} 
\int_{0}^{T_{\ast }}\{||\nabla _{x}\phi (s)||_{\infty
}+||\partial _{t}\phi (s)||_{\infty }\}ds\leq \frac{M}{2}<M.
\end{equation*}%
This implies that $T_{\ast }=\infty $ and the solution is global. Then \eqref{polydecay1} and \eqref{polydecay2} follow from Lemma \ref{decay} and Lemma \ref{decay'}.

\textit{Step 2. Higher Moments and Higher Regularity. }
We shall prove \eqref{highnorm} by an induction of the total derivatives $|\al|+|\be|=m$. By \eqref{mix3'}, we know \eqref{highnorm} is valid when $m=3,l\geq \frac{5}{2}+\frac{5}{4p'}$.

Assume \eqref{highnorm} holds for $ m-1$. 
Recall \eqref{pure3}. For $|\alpha |=m,$ we have for any $\eta >0,$%
\begin{eqnarray*}
&&\frac{d}{dt}\left[ \int \sum_{\pm }\frac{e^{\pm 2\phi }(\partial ^{\alpha
}f_{\pm })^{2}}{2}+\int |\nabla \partial ^{\alpha }\phi |^{2}\right] +\int
\langle L\partial ^{\alpha }f,\partial ^{\alpha }f\rangle  \notag \\
&\lesssim &\sum_{\pm }\int |\phi _{t}||\partial ^{\alpha }f_{\pm }|^{2}+%
\sqrt{\mathcal{\tilde{E}}%
_{2;2}(f)}\left(\int  |\partial ^{\alpha }f_{\pm }|_{\sigma }^{2}+||\na_x \phi||^2_2\right)+\eta \sum_{|\alpha
|=m}||\partial ^{\alpha ^{\prime }}f||_{\sigma }^{2}  \notag \\
&&+C_{m,\eta }[\mathcal{D}_{2;2}(f)\mathcal{\tilde{E}}_{m;l}(f)+\{1+\mathcal{\tilde{E}}%
_{m-1;l}(f)\}\mathcal{D}_{m-1;l}(f)]. 
\end{eqnarray*}
We note that from Lemma 7 of \cite{G1}, we have
$$ \sum_{|\al|\leq m-1} ||\partial^\al N_{\pm}||^2_2\lesssim C_m  \{1+\mathcal{\tilde{E}}%
_{m-1;l}(f)\}\mathcal{D}_{m-1;l}(f).    $$
>From \eqref{def.int1'}, we have
\begin{eqnarray*}
&&\int \CE_{\rm int}^{(1)}(t,k)(1+|k|^2)|k|^{2(m-1)}dk\\
 &\lesssim &\sum_{|\alpha |=m}||\partial ^{\alpha
}f(t)||^{2}+\sum_{|\alpha |=m-1}||\partial ^{\alpha }f(t)||^{2} \\
&\lesssim &\sum_{|\alpha |=m}||\partial ^{\alpha }f(t)||^{2}+\tilde{\mathcal{E}}%
_{m-1;l}(f(t)) \\
&\lesssim &\sum_{|\alpha |=m}||\partial ^{\alpha }f(t)||^{2}+P_{m-1,l}(%
\tilde{\mathcal{E}}_{m-1;l}(f_{0}))
\end{eqnarray*}%
by the induction hypothesis.

We take the time derivative of
 $$\kappa_{m,l}\int \CE_{\rm int}^{(1)}(t,k)(1+|k|^2)|k|^{2(m-1)}dk+\sum_{|\al|=m}\left[ \int \sum_{\pm }\frac{e^{\pm 2\phi }(\partial ^{\alpha
}f_{\pm })^{2}}{2}+\int |\nabla \partial ^{\alpha }\phi |^{2}\right]$$ 
for some small multiple $\kappa_{m,l}$ and apply Lemma \ref{naPlemma} together with the inductive hypothesis to get
\begin{eqnarray*}
&&\sum_{|\alpha |=m}||\partial ^{\alpha }f||_{2}^{2}+\sum_{|\alpha
|=m}\int_{0}^{t}||\partial ^{\alpha }f||_{\sigma }^{2} \\
&\lesssim &\tilde{\mathcal{E}}_{m;l}(f_{0})+P_{m-1,l}(\tilde{\mathcal{E}}_{m;l}(f_{0}))+(%
\sqrt{M}+\eta )\int_{0}^{t}\sum_{|\alpha |=m}||\partial ^{\alpha }f||_{\sigma }^{2}\\
&&+\int_{0}^{t}\mathcal{D}_{2;2}(f)\tilde{\mathcal{E}}_{m;l}(f)
+C_{l,m,\eta }\int_{0}^{t}\{1+\tilde{\mathcal{E}}_{m-1;l}(f)\}\mathcal{D}%
_{m-1;l}(f) \\
&\lesssim &\tilde{\mathcal{E}}_{m;l}(f_{0})+(\sqrt{M}+\eta )\sum_{|\alpha
|=m}||\partial ^{\alpha}f||_{\sigma }^{2}+\int_{0}^{t}\mathcal{D}%
_{2;2}(f)\tilde{\mathcal{E}}_{m;l}(f) \\
&&+C_{l,m,\eta }[1+P_{m-1,l}(\tilde{\mathcal{E}}_{m;l}(f_{0}))]P_{m-1,l}(\tilde{\mathcal{E}}_{m;l}(f_{0}))
\end{eqnarray*}%
Here $P_{m-1,l}$ is a continuous, increasing function from the induction
hypothesis. For $M,\eta $ sufficiently small 
\begin{multline}
\sum_{|\alpha |=m}\left\{ |\partial ^{\alpha }f|^{2}+\int |\nabla \partial
^{\alpha }\phi |^{2}\right\} +\sum_{|\alpha |=m}\int_{0}^{t}||\partial
^{\alpha }f||_{\sigma }^{2}  
 \\
\lesssim 
C_{l,m}[1+P_{m-1,l}(\tilde{\mathcal{E}}_{m;l}(f_{0}))][\tilde{\mathcal{E}}_{m;l}(f_{0})+P_{m-1,l}(\tilde{\mathcal{E}}_{m;l}(f_{0}))]   
+\int_{0}^{t}\mathcal{D}_{2;2}(f)\tilde{\mathcal{E}}_{m;l}(f),
\label{pure3energy}
\end{multline}
where we have used $\tilde{\mathcal{E}}_{m-1;l}(f_{0})\leq \tilde{\mathcal{E}}_{m;l}(f_{0}),P_{m-1,l}(\tilde{\mathcal{E}}_{m-1;l}(f_{0}))\leq P_{m-1,l}(%
\tilde{\mathcal{E}}_{m;l}(f_{0})).$
Multiplying a large constant $C\times (\ref{pure3energy})+(\ref{mix4})$ to
absorb $\int_{0}^{t}\sum_{|\alpha |=m}||\partial ^{\alpha }f||_{\sigma }^{2}$
in (\ref{mix4})$,$ we obtain:%
\begin{eqnarray*}
\tilde{\mathcal{E}}_{m;l}(f)+\int_{0}^{t}\mathcal{D}_{m;l}(f) &\lesssim
&C_{l,m}[1+P_{m-1,l}(\tilde{\mathcal{E}}_{m;l}(f_{0}))][\tilde{\mathcal{E}}_{m;l}(f_{0})+P_{m-1,l}(\tilde{\mathcal{E}}_{m;l}(f_{0}))] \\
&&+C_{l,m}\int_{0}^{t}\left(\mathcal{D}_{m-1;l}(f)+||\na_x\phi||_\infty+||\partial_t\phi||_\infty\right)\tilde{\mathcal{E}}_{m;l}(f).
\end{eqnarray*}%
We use  Lemma 4 in \cite{G0} with
$$\int_{0}^{t}\left(\mathcal{D}_{m-1;l}(f)+||\na_x\phi||_\infty+||\partial_t\phi||_\infty\right)ds\lesssim P_{m-1,l}(\mathcal{E}_{m-1;l,q}(f_{0}))$$
 to get%
\begin{eqnarray*}
&&\tilde{\mathcal{E}}_{m;l}(f)+\int_{0}^{t}\mathcal{D}_{m;l}(f) \\
&\lesssim &C_{l,m}e^{C_{l,m}P_{m-1}(\tilde{\mathcal{E}}_{m;l}(f_{0}))}[1+P_{m-1,l}(%
\tilde{\mathcal{E}}_{m;l}(f_{0}))][\tilde{\mathcal{E}}_{m;l}(f_{0})+P_{m-1,l}(\tilde{\mathcal{E}}_{m;l}(f_{0}))] \\
&\equiv &P_{m,l}(\tilde{\mathcal{E}}_{m;l}(f_{0})).
\end{eqnarray*}%
This concludes the theorem for $f_{0}\in \testF.$ For a general
datum $f_{0}\in \mathcal{E}_{m;l}$ we can use a sequence of smooth
approximation $f_{0}^{k\text{ }}$ and take a limit$.$
\end{proof}

\appendix
\section{Time decay of certain integrals}\label{timeDecayInt}

In this appendix we give some basic time decay estimates of certain integrals.  
The first  lemma deals with the polynomial decay rate.

\begin{lemma}
\label{BasicDecay}
Suppose that $0<p \le 1$, $\lambda > 0$ and  $\mu \ge 0$.  Then
$$
\int_0^t  ds ~  e^{-\lambda \left(t^p- s^p  \right) }
(1+ s)^{-\mu}
\le
C(\lambda,\mu, p) (1+t)^{1-p}
(1+ t)^{-\mu},
$$
where $C(\lambda,\mu, p) >0$ only depends upon $\lambda$, $\mu$ and $p$.
\end{lemma}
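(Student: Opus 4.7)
The plan is to split the integral at $s = t/2$, handle each piece by a different mechanism, and deal with the small-$t$ regime separately.

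First, for the ``near'' part $s \in [t/2,t]$, I would exploit the fact that on this interval $(1+s)^{-\mu}\le 2^{\mu}(1+t)^{-\mu}$, so the weight can be pulled out. The remaining integral $\int_{t/2}^t e^{-\lambda(t^p-s^p)}\,ds$ is handled by the substitution $u=t^p-s^p$, giving $ds = du/(p\,s^{p-1})$. Because $0<p\le 1$ forces $p-1\le 0$, the function $s\mapsto s^{p-1}$ is non-increasing on $[t/2,t]$, so $s^{p-1}\ge t^{p-1}$, equivalently $1/s^{p-1}\le t^{1-p}$. Therefore
\[
\int_{t/2}^t e^{-\lambda(t^p-s^p)}\,ds
\;\le\;
\frac{t^{1-p}}{p}\int_0^{\infty} e^{-\lambda u}\,du \;=\; \frac{t^{1-p}}{p\lambda},
\]
which yields the desired bound $C(\lambda,p)(1+t)^{1-p}(1+t)^{-\mu}$ on this piece (absorbing small values of $t$ into the constant via $t^{1-p}\lesssim(1+t)^{1-p}$).

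Next, for the ``far'' part $s\in[0,t/2]$, I would use that $t^p - s^p\ge t^p-(t/2)^p = c_p\, t^p$ with $c_p = 1-2^{-p}>0$, so the exponential contributes a factor $e^{-\lambda c_p t^p}$ independent of $s$. Hence
\[
\int_0^{t/2} e^{-\lambda(t^p-s^p)}(1+s)^{-\mu}\,ds \;\le\; e^{-\lambda c_p t^p}\int_0^{t/2}(1+s)^{-\mu}\,ds.
\]
The remaining $s$-integral is at worst $O((1+t)^{\max(0,1-\mu)}\log(1+t))$ (the log appearing only when $\mu=1$), and is dominated by the stretched-exponential factor $e^{-\lambda c_p t^p}$ which decays faster than any polynomial in $t$. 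So this piece is $O((1+t)^{-N})$ for every $N$, in particular bounded by $C(\lambda,\mu,p)(1+t)^{1-p}(1+t)^{-\mu}$.

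Finally, for $0\le t\le 1$ the crude bound $e^{-\lambda(t^p-s^p)}\le 1$ and $(1+s)^{-\mu}\le 1$ yield an integral bounded by $t\le 1$, while the right-hand side $(1+t)^{1-p}(1+t)^{-\mu}\ge 2^{1-p-\mu}$ is bounded below, so the inequality is trivial there. No step here is genuinely hard; the only subtle point is the change-of-variables bound on $[t/2,t]$, where one must observe that the monotonicity direction of $s^{p-1}$ (coming from $p\le 1$) is what produces the factor $t^{1-p}$ rather than a diverging quantity. Combining the two pieces gives the claim with $C(\lambda,\mu,p) = C_1(\lambda,p) + C_2(\lambda,\mu,p)$.
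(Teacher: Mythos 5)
Your proof is correct and follows essentially the same route as the paper: both split the integral at $s=t/2$, kill the far piece with the stretched-exponential factor $e^{-\lambda(1-2^{-p})t^p}$, and extract the $(1+t)^{1-p}$ factor on the near piece from the monotonicity of $s^{p-1}$ combined with the exact antiderivative of $s^{p-1}e^{\lambda s^p}$ (your change of variables $u=t^p-s^p$ is the same computation). The only cosmetic difference is that you carry the harmless constant $2^{\mu}$ when pulling out $(1+s)^{-\mu}$ on $[t/2,t]$, which the paper absorbs into $\lesssim$.
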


\begin{remark}\label{sharpR}  Note further that (for $t\ge 1$) we have the following lower bound
\begin{multline}  \notag
\int_0^t  ds ~  e^{-\lambda \left(t^p- s^p  \right) }
(1+ s)^{-\mu}
\ge
\int_{t/2}^t  ds ~  e^{-\lambda \left(t^p- s^p  \right) }
(1+ s)^{-\mu}
\\
\ge
4^{1-p}(1+ t)^{-\mu}(1+t)^{1-p}
\int_{t/2}^t  ds ~  s^{p-1}e^{-\lambda \left(t^p- s^p  \right) }
\\
=
\frac{4^{1-p}}{\lambda p}
(1+ t)^{-\mu}(1+t)^{1-p}
\left(1 -  e^{-\lambda t^p\left(1 - 2^{-p}  \right) } \right).
\end{multline}  
We thus observe that the upper bound in Lemma \ref{BasicDecay} is asymptotically sharp.
\end{remark}

\noindent {\it Proof of Lemma \ref{BasicDecay}}. 
Without loss of generality it is safe to assume that $t \ge 1$.
 We split the integral as
\begin{gather*}
\int_0^t ds ~  
e^{-\lambda \left(t^p- s^p  \right) }
(1+ s)^{-\mu}
=
\int_0^{t/2}  
+
\int_{t/2}^t.
\end{gather*}
For the first integral we have the crude estimate of
\begin{gather*}
\int_0^{t/2}  ds
~  
e^{-\lambda \left(t^p- s^p  \right) }
(1+ s)^{-\mu}
\le
 t e^{-\lambda t^p \left(1 - 2^{-p}  \right) },
\end{gather*}
which will decay faster than any polynomial.

Now for the second integral we proceed as in Remark \ref{sharpR}:
\begin{multline}  \notag
\int_{t/2}^t  ds ~  e^{-\lambda \left(t^p- s^p  \right) }
(1+ s)^{-\mu}
\le
(1+ t)^{-\mu}(1+t)^{1-p}
\int_{t/2}^t  ds ~  s^{p-1}e^{-\lambda \left(t^p- s^p  \right) }
\\
=
\frac{1}{\lambda p}
(1+ t)^{-\mu}(1+t)^{1-p}
\left(1 -  e^{-\lambda t^p\left(1 - 2^{-p}  \right) } \right)
\\
\le
\frac{1}{\lambda p}
(1+ t)^{-\mu}(1+t)^{1-p}.
\end{multline}  
This completes the proof.
\qed

\begin{lemma}
\label{BasicDecayCor}
Suppose that $0<p \le 1$, $\lambda > 0$ and  $\mu \ge 0$.  
Then
$$
\int_0^t  ds ~  s^{p-1 }e^{-\lambda \left(t^p- s^p  \right) }
(1+ s)^{-\mu}
\le
C(\lambda,\mu, p) 
(1+ t)^{-\mu}
$$
where $C(\lambda,\mu, p) >0$ only depends upon $\lambda$, $\mu$ and $p$.
\end{lemma}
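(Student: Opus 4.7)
The plan is to exploit the precise algebraic identity
\begin{equation*}
\frac{d}{ds}\!\left(e^{\lambda s^p}\right) = \lambda p \, s^{p-1} e^{\lambda s^p},
\end{equation*}
which shows that the weight $s^{p-1}$ in the integrand combines with the exponential to form an exact derivative. This is what makes the present bound strictly better than the one in Lemma~\ref{BasicDecay}: the factor $(1+t)^{1-p}$ that appears there arises from estimating $\int_{t/2}^t s^{p-1}\,ds$ by $(1+t)^{1-p}$, but with the extra $s^{p-1}$ already present we lose that factor and obtain pure $(1+t)^{-\mu}$ decay.

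First I would reduce to $t\geq 1$, since for $0 \le t \le 1$ one has $\int_0^t s^{p-1} e^{-\lambda(t^p-s^p)}(1+s)^{-\mu}\,ds \le \int_0^t s^{p-1}\,ds = t^p/p$, which is uniformly bounded and hence comparable to $(1+t)^{-\mu}$ on $[0,1]$. For $t \ge 1$, I would then split the integral as $\int_0^{t/2}+\int_{t/2}^{t}$, following the strategy used in the proof of Lemma~\ref{BasicDecay}.

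On the large-$s$ piece $[t/2, t]$, use $(1+s)^{-\mu}\le 2^\mu (1+t)^{-\mu}$ and compute the remaining integral exactly via the antiderivative identity above:
\begin{equation*}
\int_{t/2}^t s^{p-1}\,e^{-\lambda(t^p-s^p)}\,ds
=\frac{1}{\lambda p}\left(1-e^{-\lambda t^p(1-2^{-p})}\right)\leq \frac{1}{\lambda p}.
\end{equation*}
This yields a contribution bounded by $C(\lambda,\mu,p)(1+t)^{-\mu}$, which is the desired bound. On the small-$s$ piece $[0, t/2]$, the exponent satisfies $e^{-\lambda(t^p-s^p)}\le e^{-\lambda t^p(1-2^{-p})}$, and one estimates crudely $\int_0^{t/2}s^{p-1}(1+s)^{-\mu}\,ds\le \int_0^{t/2}s^{p-1}\,ds = (t/2)^p/p$; the resulting bound is $(t/2)^p e^{-\lambda t^p(1-2^{-p})}/p$, which is stretched-exponentially small in $t$ and hence dominated by $C(\lambda,\mu,p)(1+t)^{-\mu}$.

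There is no real obstacle here: the only subtlety is recognizing the derivative structure that lets the $[t/2,t]$ integral be evaluated explicitly, so that no polynomial-in-$t$ loss occurs. Summing the two contributions completes the proof.
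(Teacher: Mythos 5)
Your proof is correct and follows essentially the same route as the paper, which proves Lemma \ref{BasicDecayCor} by repeating the argument of Lemma \ref{BasicDecay}: split at $s=t/2$, discard the $[0,t/2]$ piece as stretched-exponentially small, and evaluate the $[t/2,t]$ piece exactly via $\frac{d}{ds}e^{\lambda s^p}=\lambda p\,s^{p-1}e^{\lambda s^p}$. Your observation that the pre-existing factor $s^{p-1}$ is precisely what removes the $(1+t)^{1-p}$ loss is exactly the point.
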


The proof of Lemma \ref{BasicDecayCor} follows exactly as in the proof of Lemma \ref{BasicDecay}.

\begin{bibdiv}
\begin{biblist}

\bib{AV}{article}{
   author = {{ Alexandre},  R.},
   author={{Villani}, C.},
    title = {On the Landau approximation in plasma physics},
       date={2004},
       journal={  Ann. Inst. H. Poincar\'{e} Anal. Non Lin\'{e}aire},
       volume={21},
   number={1},
   pages={61--95},          
 }
 
\bib{AB}{article}{
   author = {{ Arsen'ev},  A. A.},
   author={{Buryak}, O.E.},
    title = {On the connection between a solution of
the Boltzmann equation and a solution of the Landau-Fokker-Planck equation},
       date={1991},
       journal={ Math. USSR. Sbornik},
       volume={69},
   number={2},
   pages={465--478},          
 }

\bib{D-Hypo}{article}{
   author={Duan, Renjun},
   title={Hypocoercivity of linear degenerately dissipative
kinetic equations},
   journal={preprint},
%   volume={244},
   date={2009},
%   number={12},
%   pages={3204--3234},
%   issn={0022-0396},
    eprint = {arXiv:0912.1733},
   %  review={\MR{2420519 (2009g:82051)}},
}

 \bib{DS}{article}{
   author={{Duan}, Renjun},
   author = {{Strain}, Robert~M.},
    title = {Optimal Large-Time Behavior of the Vlasov-Maxwell-Boltzmann System in the Whole Space},
       date={2011},
       journal={Commun. Pure Appl. Math},
       volume={64},
   number={11},
   pages={1497--1546},           
}

\bib{DS-VPB}{article}{
   author={{Duan}, Renjun},
   author = {{Strain}, Robert~M.},
    title = {Optimal Time Decay of the Vlasov-Poisson-Boltzmann System in $\R^3$},
       date={2011},
       journal={Arch. Ration. Mech. Anal.},
       volume={199},
   number={1},
   pages={291--328},                      
}

\bib{DYZ-VPBsoft}{article}{
   author={Duan, Renjun},
      author={Yang, Tong},
         author={Zhao, Huijiang},
   title={The Vlasov-Poisson-Boltzmann System for Soft Potentials},
   journal={preprint},
%   volume={244},
   date={2011},
%   number={12},
%   pages={3204--3234},
%   issn={0022-0396},
    eprint = {arXiv:1112.1453v1},
   %  review={\MR{2420519 (2009g:82051)}},
}

\bib{DYZ-VPL}{article}{
   author={Duan, Renjun},
      author={Yang, Tong},
         author={Zhao, Huijiang},
   title={Global Solutions to the Vlasov-Poisson-Landau System},
   journal={preprint},
%   volume={244},
   date={2012},
%   number={12},
%   pages={3204--3234},
%   issn={0022-0396},
    eprint = {arXiv:1112.3261v1},
   %  review={\MR{2420519 (2009g:82051)}},
}

 \bib{GrS2}{article}{
   author = {{Gressman}, Philip T.},
   author={{Strain}, R. M.},
    title = { Global Classical solutions of the Boltzmann equation with Long-Range interactions},
       date={2010},
       journal={  Proc. Nat. Acad. Sci. U.S.A. },
       volume={107},
       number={13},
   pages={5744--5749},        
             }       

\bib{gsNonCutJAMS}{article}{
   author={Gressman, Philip T.},
      author={Strain, Robert M.},
   title={Global Classical Solutions of the Boltzmann Equation without Angular Cut-off},
   journal={J. Amer. Math. Soc.},
   volume={24},
   date={2011},
   number={3},
   pages={771--847},
eprint={doi: 10.1090/S0894-0347-2011-00697-8}
}

\bib{G0}{article}{
author = {{Guo}, Yan},
title = {The Vlasov-Poisson-Landau equation in a periodic box},
date={2012},
journal={Journal of the American Mathematical Society},
eprint={DOI: http://dx.doi.org/10.1090/S0894-0347-2011-00722-4 },
}

\bib{G1}{article}{
author = {{Guo}, Yan},
title = {The Landau equation in a periodic box},
date={2002},
journal={Commun. Math. Phys.},
volume={231},
pages={391--434},
}

\bib{G2}{article}{
author = {{Guo}, Yan},
title = {The Vlasov-Poisson-Boltzmann system near Maxwellians},
date={2002},
journal={Comm. Pure Appl. Math.},
volume={LV.},
pages={1104-1135},
}

\bib{G4}{article}{
author = {{Guo}, Yan},
title = {Classical solutions to the Boltzmann equation for molecules with an angular cutoff},
date={2003},
journal={Arch. Ration. Mech. Anal.},
volume={169},
number={4},
pages={305--353},
} 

\bib{G5}{article}{
author = {{Guo}, Yan},
title = {The Vlasov-Maxwell-Boltzmann system near Maxwellians},
date={2003},
journal={Invent. Math.},
volume={153},
number={3},
pages={593--630},
}

\bib{GS}{article}{
   author = {{Guo}, Yan},
   author={{Strain}, R. M.},
    title = { Momentum Regularity and Stability of the Relativistic Vlasov-Maxwell-Boltzmann System},
           date={2012},
       journal={ Comm. Math. Phys.},     
       eprint={(DOI) 10.1007/s00220-012-1417-z},          
             }

\bib{hinton}{article}{
   author = {{Hinton}, F. L.},
       title = {Collisional Transport in Plasma},
        book={
        editor ={Rosenbluth, M. N.},
          editor ={Sagdeev, R.Z. },
                 series={Handbook of Plasma Physics, Volume I: Basic Plasma Physics I},
                 publisher={North-Holland Publishing Company},
                 date={1983},
        }
    pages={147},          
             }

\bib{HY}{article}{
   author = {{Hsiao}, L.},
   author={{Yu}, H.},
    title = { On the Cauchy problem of the Boltzmann and Landau equations with soft potentials},
       date={2007},
       journal={ Quart. Appl. Math.},
       volume={65},
       number={2},
   pages={281--315},          
             }
             
\bib{L}{article}{
   author = {{Lions}, P-L.},
    title = { On Boltzmann and Landau equations},
       date={1994},
       journal={Phil Trans. R. Soc. Lond.},
       volume={A 346},
   pages={191--204},          
             }

\bib{sNonCutOp}{article}{
   author = {{Strain}, Robert~M.},
    title = {Optimal time decay of the non cut-off Boltzmann equation in the whole space},
       date={2010},
                   eprint = {arXiv:1011.5561v2},
}
 
 \bib{S2}{article}{
   author={{Strain}, Robert M.},
    title = {The Vlasov-Maxwell-Boltzmann System in the Whole Space },
       date={2006},
       journal={ Comm. Math. Phys.},
       volume={268},
       number={2},
       pages={543--567},
 }

\bib{strainSOFT}{article}{
   author = {{Strain}, Robert~M.},
    title = {Asymptotic Stability of the Relativistic Boltzmann Equation for the Soft Potenstials},
       date={2010},
       journal={Comm. Math. Phys.},
       volume={300},
   number={2},
   pages={529--597},          
             
}

\bib{SG1}{article}{
   author={Strain, Robert M.},
   author={Guo, Yan},
title = {Almost exponential decay near Maxwellian},
date={2006},
journal={Comm. Partial Differential Equations},
volume={31},
number={1-3},
pages={417--429},
} 

\bib{SG2}{article}{
   author={Strain, Robert M.},
   author={Guo, Yan},
title = {Exponential decay for soft potentials near Maxwellian},
date={2008},
journal={Arch. Ration. Mech. Anal.},
volume={187},
number={2},
pages={287--339},
}

\bib{MR2100057}{article}{
   author={Strain, Robert M.},
   author={Guo, Yan},
   title={Stability of the relativistic Maxwellian in a collisional plasma},
   journal={Comm. Math. Phys.},
   volume={251},
   date={2004},
   number={2},
   pages={263--320},
%   issn={0010-3616},
   % review={\MR{2100057 (2005m:82155)}},
}

\bib{V}{article}{
   author = {{Villani}, C.},
    title = {On the Landau equation: Weak stability, global existence},
       date={1996},
       journal={Adv. Diff. Eq.},
       volume={1},
   number={5},
   pages={793--816},          
             }

\bib{Z1}{article}{
   author = {{Zhan},  M.-Q.},
    title = {Local existence of solutions to the Landau-Maxwell system},
       date={1994},
       journal={Math. Methods Appl. Sci.},
       volume={17},
   number={8},
   pages={613--641},          
             }

\bib{Z2}{article}{
   author = {{Zhan},  M.-Q.},
    title = {Local existence of classical solutions to the Landau equations},
       date={1994},
       journal={ Transport Theory Statist. Phys.},
       volume={23},
   number={4},
   pages={479--499},          
 }

\end{biblist}
\end{bibdiv}

\end{document}